       %------------------------------------------------------------------------------
% Beginning of journal.tex
%------------------------------------------------------------------------------
%
% AMS-LaTeX version 2 sample file for journals, based on amsart.cls.
%
%        ***     DO NOT USE THIS FILE AS A STARTER.      ***
%        ***  USE THE JOURNAL-SPECIFIC *.TEMPLATE FILE.  ***
%
% Replace amsart by the documentclass for the target journal, e.g., tran-l.
%
\documentclass[12pt,reqno,a4paper]{amsart}
\usepackage{extsizes}
\usepackage[margin=0.5in]{geometry}

\setlength{\textwidth}{\paperwidth}
\addtolength{\textwidth}{-1in}
%\addtolength{\topmargin}{3in}
\calclayout
%     If your article includes graphics, uncomment this command.
%\usepackage{showlabels}
\usepackage{graphicx}
\usepackage[utf8]{inputenc}
\usepackage{nicefrac}
\usepackage{mathabx}
\usepackage{subcaption}
\usepackage{amsrefs}
\usepackage{esint}
\usepackage{mathtools}

\usepackage{wrapfig}
\usepackage{xcolor}
%% Packages added by Fabian
\usepackage{cleveref}
\usepackage{enumitem}

\usepackage{autonum}

\newtheorem{theorem}{Theorem}[section]
\newtheorem{lemma}[theorem]{Lemma}

\theoremstyle{definition}
\newtheorem{definition}[theorem]{Definition}
\newtheorem{example}[theorem]{Example}

\newtheorem{prop}[theorem]{Proposition}
\newtheorem{cor}[theorem]{Corollary}

\allowdisplaybreaks
\theoremstyle{remark}
\newtheorem{remark}[theorem]{Remark}

\numberwithin{equation}{section}

\allowdisplaybreaks
%    Absolute value notation

\newcommand{\mres}{\mathbin{\vrule height 1.6ex depth 0pt width
0.13ex\vrule height 0.13ex depth 0pt width 1.3ex}}
%    Blank box placeholder for figures (to avoid requiring any
%    particular graphics capabilities for printing this document).

%%%%%%%%%% Kensuke added %%%%%%%%%%
\usepackage{amsmath}
\usepackage{braket} % \Set{} is available
%\usepackage{showkeys}
%%

%\def\changeKens{\marginpar{\textcolor{teal}{Kensuke after 14 Jan}}
%\textcolor{teal}}

%%%%%%%%%% Kensuke added %%%%%%%%%%

\makeatletter
\@namedef{subjclassname@2020}{%
  \textup{2020} Mathematics Subject Classification}
\makeatother

\begin{document}

\title[Anisotropic higher order Alt-Caffarelli problem]{An anisotropic Alt-Caffarelli problem of higher order}

% Alternative titles
%  'Around three and a half worlds in zero days' (ohne Formel)
%  'Around three hemispheres in zero days'
%  'The minimal curvature cost of short round-the-world trips'
%  'Infinitesimally short circumnavigations cost an additional hemisphere'
%  'On the cost of short world trips'

\author{Marius Müller}
\address{Universität Augsburg, Institut für Mathematik, 86159 Augsburg}
%%    Current address
%\curraddr{Department of Mathematics and Statistics,
%Case Western Reserve University, Cleveland, Ohio 43403}
\email{marius1.mueller@uni-a.de}

% Info  Fabian 
%\author[F.~Rupp]{Fabian Rupp}
%\address[F.~Rupp]{Faculty of Mathematics, University of Vienna, Oskar-Morgenstern-Platz 1, 1090 Vienna, Austria.}
%\email{fabian.rupp@univie.ac.at}
% Info Christian
%\author{Christian Scharrer}
%\address{Institute for Applied Mathematics, University of Bonn, Endenicher Allee 60, 53115 Bonn, Germany.}
%\email{scharrer@iam.uni-bonn.de}

%\author{Shinya Okabe}
%\address{Mathematical Institute, Tohoku University, Sendai, 980-8578, Japan}
%\email{shinya.okabe@tohoku.ac.jp}
%\author{Kensuke Yoshizawa}
%\address{Institute of Mathematics for Industry, Kyushu University, Fukuoka, 819-0395, Japan}
%\email{k-yoshizawa@imi.kyushu-u.ac.jp}
%%    Address of record for the research reported here

%%    \thanks will become a 1st page footnote.
%\thanks{.}
%%    Information for first author
%\author{Tatsuya Miura}
%%    Address of record for the research reported here
%\address{Department of Mathematics, Louisiana State University, Baton
%Rouge, Louisiana 70803}
%%    Current address
%\curraddr{Department of Mathematics and Statistics,
%Case Western Reserve University, Cleveland, Ohio 43403}
%\email{xyz@math.university.edu}
%%    \thanks will become a 1st page footnote.
%\thanks{The first author was supported in part by NSF Grant \#000000.}
%
%%    Information for second author
%\author{Marius MÃŒller}
%\address{Helmholtzstrasse 18, 89081 Ulm, Germany }
%\email{marius.mueller@uni-ulm.de}
%\thanks{Support information for the second author.}

%    General info
\subjclass[2020]{Primary: 35J30, 35R35, 
Secondary: 49Q20, 49J40, 74B99}

\date{\today}

%\dedicatory{This paper is dedicated to our advisors.}

\keywords{Alt-Caffarelli problem, Higher order elliptic PDEs, Green's function, Anisotropic bending, Variational free boundary problem}

\begin{abstract}
We study a higher order version of the Alt-Caffarelli problem in two dimensions, where the Dirichlet energy is replaced by an anisotropic bending energy. This extends a previous study of the isotropic case in \cite{MuellerAMPA}. It turns out that smooth anisotropies do not affect the optimal $C^{2,1}$-regularity of minimizers. The proof requires an anisotropic version of an estimate by Frehse for the fundamental solution of the bilaplacian. This generalization paves the way for further studies of various free boundary problems of higher order.  
\end{abstract}
\maketitle

%\tableofcontents

\section{Introduction}
Let $\Omega \subset \mathbb{R}^n$ be a smooth bounded domain, here in most cases $n= 2$. We seek to minimize
\begin{equation}\label{eq:EEEE}
 \mathcal{E}(u) := \int_\Omega [\mathrm{div}(A(x) \nabla u(x))]^2 \; \mathrm{d}x + |\{  u> 0 \}| \quad \textrm{among all $u \in W^{2,2}(\Omega): u\vert_{\partial \Omega} = u_0,$} 
\end{equation}
where the boundary datum $u_0 \in C^\infty(\overline{\Omega})$ satisfies $u_0 > 0$ and the \emph{coefficient matrix} $A \in C^{\infty}(\overline{\Omega}; \mathbb{R}^{n\times n})$ is symmetric and \emph{uniformly elliptic}, i.e.\  there exists $\lambda > 0$ such $\lambda |\xi|^2 \leq (A(x) \xi, \xi)$ for all $\xi \in \mathbb{R}^n$. Moreover, $|\{u > 0 \}|$ is shorthand for the Lebesgue measure of $\{ x \in \Omega : u(x) > 0 \}.$  
This problem can be regarded as a higher order version of the \emph{Alt-Caffarelli free boundary problem \cite{AC81}} for anisotropic materials. In the classical problem one minimizes
\begin{equation}\label{eq:1point2}
    \mathcal{F}(u) := \int_\Omega A(x) \nabla u(x) \cdot \nabla u(x) \; \mathrm{d}x + |\{u > 0 \}| \quad \textrm{among all $u \in W^{1,2}(\Omega): u\vert_{\partial \Omega} = u_0.$}
\end{equation}
Existence and optimal regularity of minimizers of $\mathcal{F}$ have been studied by Alt and Caffarelli in \cite{AC81}. %, also in dimensions $n \geq 2$.
 The authors show (best possible) global $W^{1,\infty}$-regularity of any minimizer $u$ in any dimension and smoothness of the free boundary $\partial \{ u > 0 \}$ for $n =2$. %Moreover, $|\nabla u| \equiv 1$ on $\partial \{ u > 0 \}.$
 Later on, the regularity of the free boundary has also been investigated in higher dimensions, with positive smoothness results for $n \in \{ 3,4 \}$ and counterexamples for $n = 7$, cf. \cite{CJK04,JDS09,JS15}. The authors of these articles specialize on the case $A= I_n$, but Alt and Caffarelli remark in \cite{AC81} that this is not a restriction, despite their proof relies substantially on mean value properties of (super)harmonic functions. Such mean value properties have been established for (super)solutions of $-\mathrm{div}(A\nabla u)= 0$ in \cite{CaffarelliFermi}, though a rigorous derivation requires extra details, provided by Blank and Hao in \cite{BH15}. A detailed study of the minimization of \eqref{eq:1point2} for general $A$ can be found in \cite{AB19,dPT16}. Nonconstant coefficients are actually important in the applications: As already observed in \cite{Friedrichs}, one needs nonconstant coefficients for the description of an axissymetric jet flow coming out of a nozzle, which is one of the main applications of the problem. %$A(x_1,x_2) = \begin{pmatrix}
 %   \frac{1}{2x_2} & 0 \\ 0 & 1
%\end{pmatrix}$. 
  Also degenerate, nonlinear and nonlocal free boundary problems of Alt-Caffarelli-type have been considered, see e.g. \cite{ACELLIPTIC,Danielli,Sire10}  
  and references therein. 
  %Minimizers of $\mathcal{E}$ have so far to the best of the authors' knowledge only been studied for $A= I_2$ in \cite{DKV19}, \cite{DKV20} and \cite{MuellerAMPA}.

The functionals $\mathcal{E}$ and $\mathcal{F}$ have one thing in common: minimizers have to find a balance between \emph{not bending too much} and \emph{being nonpositive on a large set}. Due to the fact that reaching the zero level requires bending, these interests are conflicting. There is however one major difference between $\mathcal{E}$ and $\mathcal{F}$: The maximum principle implies that each minimizer $u \in W^{1,2}(\Omega)$ of $\mathcal{F}$ must satisfy $u \geq 0$. This can easily be seen by comparison with $\mathcal{F}(u^+)$, where $u^+ = \mathrm{max}(u,0) \in W^{1,2}(\Omega)$ (which is admissible due to the positivity of $u_0$). In particular, $\Omega$ is divided into the sets $\{u > 0 \}$ and $\{u = 0\}$. %For a small value of 
For functions $u \in W^{1,2}(\Omega)$ with small values of 
$\mathcal{F}(u)$, $\{ u = 0 \}$ is hence expected to be a large set which can be thought of as a \emph{flat island} inside $\Omega$. 
 %As already observed   

On the contrary, minimizers of $\mathcal{E}$ behave differently. In the case of $A= I_{2}$ it has been obtained in \cite{MuellerAMPA,MuellerPoly} that minimizers $u$ satisfy $u \in C^{2,1}(\overline{\Omega})$ and $\nabla u \neq 0$ on $\{u = 0 \}$ in dimension $n= 2$. As a consequence, $u$ is \emph{not} flat around the zero level but inevitably changes sign and drops below the zero level. Hence, $\Omega$ is divided into $\{u > 0 \}$ and $\{u <0 \}$, both limited by the $C^{2,1}$-interface $\{ u = 0 \}$, which has Lebesgue measure zero (unlike in the classical problem). This article retrieves a similar behavior in the nonconstant coefficient case. Considering that the first summand of $\mathcal{E}$ measures elastic bending, this seems natural --- flatness at the zero level would require unnecessary bending. 

The functional $\mathcal{E}$ has recently been studied for $A=I_n$, e.g. in \cite{DKV19,DKV20,MuellerAMPA}. We believe it deserves some attention due to its possible applications. %One is given by the mechanical interpretation (bending with gravity) in \cite[Section 6]{DKV20}. 
One mechanical application (which is in most cases accompanied by an additional obstacle constraint) is the description of how elastic bodies contact rough solid surfaces. An example for this is given by \emph{gecko adhesion} to tree surfaces see e.g. \cite{gecko1,gecko2,gecko3}.  Here, the most accurate functional describing the energy is not $\mathcal{E}$ but rather the \emph{adhesive Willmore bending energy}
\begin{equation}\label{eq:Willmore}
    \mathcal{W}(u) := \int_\Omega H[u]^2 \; \mathrm{d}S + |\{ u > 0 \}|,
\end{equation}
where $H[u] := \mathrm{div}(\frac{\nabla u}{\sqrt{1+|\nabla u|^2}})$ denotes the mean curvature of the graph of $u$ and $\mathrm{d}S := \sqrt{1 + |\nabla u|^2} \; \mathrm{d}x$ is the surface element.  Minimizers of \eqref{eq:Willmore} have been studied mathematically in dimension $n=1$ by Miura \cite{Miura,Miura2} in different settings. A two-dimensional approach based on Kirchhoff-Love plate theory is proposed in \cite{gecko2}. 
%proposes to use an approximation similar to $\mathcal{E}.$ 
If $|\nabla u|$ is assumed small the functional $\mathcal{W}$ is well approximated by $\mathcal{E}$ with $A= I_2$. Since this is only an approximation it seems reasonable to study how robust the behavior of minimizers is with respect to changes of the operator. Furthermore, $L:=-\mathrm{div}(A\nabla)$ appears (up to a term of order zero) as a linearized version of the \emph{anisotropic mean curvature}, cf. \cite{Palmer} and references therein. At first sight, the mechanical application only seems convincing when the rough solid is not deformable, leading to the obstacle constraint $u \geq 0$. However, if the touched solid itself is deformable then it might seem reasonable to relax this constraint. In some applications it is observed that deformations of the solid must be taken into account --- for example on microscopic scale due to the effect of \emph{elasto-capillarity} \cite{Elastocapillarity}.

%It has for example been observed, that the surface tension of a microscopic liquid droplet is able to deform an elastic surface, in a phenomenon called \emph{elasto-capillarity} \cite{ElastoCapillarity} %In elasto-capliarity 

%One has to remark that the mechanical interpretation is only convincing for the one-dimensional consideration, leading to adhesive problems for elastic curves.
%and model (when paired with an obstacle constraint) how elastic bodies contact rough solid surfaces. One application is the description of gecko adhesion to the tree surfaces see e.g. \cite{gecko1} (...). Without the obstacle constraint, the rough surface must be regarded as deformable, and indeed there are experiments where the rough surface can be deformed. 
%The motivation to study the 

%This application is different from the originally intended applications of the problem in
The original application of the classical problem \eqref{eq:1point2} was to describe the flow of a fluid through a nozzle, or alternatively the wake left behind by a fluid after it hits an obstacle. Higher order operators can also play a role in this type of application. Indeed, as seen in \cite[Chapter 3]{HappelBrenner}, the stream function of a creeping fluid is described by the biharmonic equation $\Delta^2 \psi = 0$. However, the functional $\mathcal{E}$ does not seem suitable for this application, as no flat islands are found in energy-minimal configurations. Therefore, it is not possible to identify the \emph{shadow zone} of the fluid flow.
%to a situation as in the applications one would have to show $|\nabla u|\equiv \mathrm{const}$ on $\partial \{ u > 0 \}$. 
If one intends to resemble the aforementioned \emph{flat island behavior} in a higher order setting, it seems more natural to study
\begin{equation}
    \mathcal{G}(u) := \int_\Omega [\mathrm{div}(A(x) \nabla u(x))]^2 \; \mathrm{d}x + |\{ u \neq 0 \}|.
\end{equation}
This has been done in \cite{GrunauMueller} in the case of $A= I_{n}$, $n \geq 2$, obtaining entirely different regularity results (such as impossibility of $C^2$-regularity for $\Omega =B_1(0)$ with constant boundary data). A functional similar to $\mathcal{G}$ also appears in the shape optimization problem for the buckling load of a clamped plate, see e.g. \cite{Ashbaugh,Stollenwerk}.   %However, the functional $\mathcal{E}$ still deserves attention. 

%Having discussed the applications and non-applications, we should remark that t
%A second goal of this article is however not to discuss a particular application but to pave the way for the study of multiple free boundary problems for anisotropic energies of higher order in general.
In the past there have been many contributions involving higher order free boundary problems for the \emph{isotropic} biharmonic operator, see e.g.\ \cite{Frehse1,CF79,CFT82,NO15} on the (parabolic) biharmonic obstacle problem and on higher order variational inequalities. Such problems seem to be less studied for anisotropic bending energies (at least in dimension $n \geq 2$) --- to the best of our knowledge we have only found \cite{Frehse2,KNS79}.
%Free boundary problems and variational inequalities for anisotropic bending energies in dimension $n \geq 2$ seem to be less studied in the literature, to the best of our knowledge we have only found \cite{Frehse2}. %This is so despite the fact that e.g. spectral problems for inhomogeneous plates are studied, cf. \cite{Stefanelli}. 

The isotropic results in \cite{Frehse1,CF79,CFT82,NO15} rely on one crucial observation: If $V$ is the fundamental solution of $(-\Delta)^2$ in dimension $n \geq 2$ then  
%$\Delta V$ is unbounded in any neighborhood of $x= 0$ but
\begin{equation}\label{eq:obisotropic}
    \partial^2_{x_jx_j} V- \tfrac{1}{2}  \Delta V \geq -1 \qquad \textrm{for all $j= 1,...,n$.}
\end{equation}
This bound is remarkable given that both summands have a singularity near $x= 0$ for any $n \geq 2$. Formula \eqref{eq:obisotropic} can be verified easily by direct computation using that (cf. \cite[Chapter 7.3]{Mitrea}) 
\begin{equation}\label{eq:Vdirect}
    V(x) = \begin{cases}
        c_n|x|^{4-n} & n \geq 5, n=3 \quad (c_n>0 \textrm{ a constant}), \\ -\frac{1}{8\pi^2}\log|x| & n = 4, \\  \frac{1}{8\pi} |x|^2 \log|x| & n = 2.
    \end{cases}
\end{equation}
 For higher dimensions $n \geq 3$, \eqref{eq:obisotropic} even holds true with $0$ instead of $-1$ on the right hand side, cf. \cite[p. 11]{Frehse1}. Better estimates exist in the special case of dimension $n= 2$: also a bound from above can be obtained in \eqref{eq:obisotropic} and the mixed derivative $\partial^2_{x_1x_2} V$ %$\frac{\partial^2V}{\partial x_1 \partial x_2}$ 
 is also bounded from above and below.
%This is however not essential in this article, and also not in \cite{CF79}.
Estimate \eqref{eq:obisotropic} provides an important link between the Laplacian $\Delta V$ and the full second derivative $D^2 V$, which is an integral part of the arguments in \cite{Frehse1,CF79,CFT82,NO15,MuellerAMPA}. More precisely, in dimension $n=2$ one has
\begin{equation}\label{eq:linkanosotropic}
    D^2 V = \tfrac{1}{2} \Delta V \cdot I_2 +  M, \qquad \textrm{where} \quad  M= \begin{pmatrix}
        \partial_{x_1x_1}^2 V- \frac{1}{2} \Delta V & \partial^2_{x_1x_2}V \\ \partial^2_{x_1x_2}V & \partial_{x_2x_2}^2 V- \frac{1}{2} \Delta V 
    \end{pmatrix} \quad \textrm{has only bounded entries.}
\end{equation}
 We refer to this from now on as \emph{Frehse's observation} and shall propose a generalisation of it in this article to operators of the form $(-\mathrm{div}(A \nabla))^2$ for nonconstant coefficients $A$. 
  If the coefficients $A$ are analytic, a fundamental solution $V_A$ for $(-\mathrm{div}(A\nabla))^2$ was constructed in \cite[Chapter III]{Fritz}. Due to the less explicit form it is nonstandard to obtain a result like \eqref{eq:linkanosotropic} in this case.  As already observed in \cite{Frehse1}, \eqref{eq:linkanosotropic} carries over to distributional solutions $u$ of $(-\Delta)^2 u = \mu$, where $\mu$ is a finite Radon measure. %Such solutions often arise in higher order free boundary problems.  
  Therefore, the generalization to anisotropic settings requires the study of $[-\mathrm{div}(A\nabla)]^2 u = \mu$. 
  %In this article we obtain a generalization of \eqref{eq:linkanosotropic} for solutions of this measure-valued equation.
  It turns out that $I_2$ in \eqref{eq:linkanosotropic} must be replaced by $A^{-1}$. This is in our opinion nontrivial and requires the consideration of a special frame of matrices that are orthonormal in a certain Riemannian metric associated to the operator.  For details we refer to Section \ref{sec:FrehsesObservation}.

  The anisotropic version of Frehse's observation can actually be seen as a main novelty of this article. The present result about the anisotropic higher order Alt-Caffarelli problem is only one use case: Once the anisotropic Frehse observation is obtained, the proof becomes just a modification of the methods in \cite{MuellerAMPA,MuellerPoly} using the mean value properties derived in \cite{BH15}. In our opinion the result paves the way for further study of various higher order free boundary problems, \cite{Frehse1,CF79,CFT82,NO15} provides only an incomplete list.  The details shall be subject of future research. 
  
  % We can think of multiple higher order free boundary problems and measure-valued equations that can be studied with this.

  %in the sense that Green's function $V_A$ of $[-\mathrm{div}(A\nabla)]^2$ in $\Omega$  satisfies    
%\begin{equation}
%    D^2 V_A = \frac{1}{2} \mathrm{div}(A\nabla V_A) \cdot A^{-1} + N \qquad \textrm{where $N$ has only (locally) bounded entries}
%\end{equation}

%There have been multiple  

\subsection{Main results}
Unless stated otherwise we assume that $A \in C^\infty(\overline{\Omega};\mathbb{R}^{2\times 2})$ is symmetric and uniformly elliptic. Our approach would actually work with minor modifications for less regular coefficients, precisely with $A \in C^{4,\alpha}(\overline{\Omega}; \mathbb{R}^{2\times 2})$. It becomes however visible, for example in the proof of Lemma \ref{lem:optragularity}, that the argument must be adapted in case that $A \not \in C^{4,\alpha}$. %If we state a different regularity requirement for $A$ the symmetry and uniform ellipticity shall not be affected. 
Throughout the article we work in dimension $n=2$ and define for a suitably smooth $u$ function $Lu := -\mathrm{div}(A\nabla u)$ and $L^2 u:= L(Lu)$. Furthermore, $\mathbb{R}^{2 \times 2}_{sym}$ denotes the set of symmetric $2\times 2$-matrices.
Our first main theorem deals with the anisotropic Frehse observation, which we formulate here for solutions of the measure valued equation $L^2 u = \mu$.

\begin{theorem}[Anisotropic Frehse Observation]\label{thm:1.1}
   %Suppose that $A \in C^{4,\alpha}(\overline{\Omega};\mathbb{R}^{2\times 2})$.
   Let $u \in W^{2,2}_{loc}(\Omega)$ be such that for some finite Radon measure $\mu$ one has $L^2u = \mu$ in the sense of distributions, i.e. 
    \begin{equation}
        \int_\Omega Lu L\varphi  \; \mathrm{d}x = \int_\Omega \varphi \; \mathrm{d}\mu \qquad \textrm{for all } \varphi \in C_0^\infty(\Omega).
    \end{equation}
   % for some finite signed Radon measure $\mu$.
    Then, there exists a Borel measurable function $K: \Omega \times \Omega \rightarrow \mathbb{R}^{2\times 2}_{sym}$ which is locally bounded and smooth in $(\Omega \times \Omega) \setminus \{(x,x): x \in \Omega \}$ and $H \in C^\infty(\Omega; \mathbb{R}^{2\times 2}_{sym})$ such that 
    \begin{equation}\label{eq:Greensfunction}
        D^2 u (x) =  \tfrac{1}{2}(-Lu(x))  A(x)^{-1} + \int_\Omega K(x,y) \; \mathrm{d}\mu(y) +H(x) \qquad \textrm{for almost every $x \in \Omega$}.
    \end{equation}
\end{theorem}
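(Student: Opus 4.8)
The plan is to reduce \eqref{eq:Greensfunction} to a Frehse-type estimate for a fundamental solution of $L^{2}$ and then to absorb a smooth $L^{2}$-free remainder into $H$. Since $A\in C^{\infty}(\overline{\Omega})$, the fourth order operator $L^{2}=(-\divi(A\nabla))^{2}$ is elliptic with smooth coefficients, hence it admits a fundamental solution $G(\cdot,y)$, $y\in\Omega$ (for instance the clamped Green function of $L^{2}$ on $\Omega$, or an exact fundamental solution patched from a Levi parametrix), which is smooth off the diagonal. For a finite Radon measure $\mu$ the potential $\mathcal G\mu(x):=\int_{\Omega}G(x,y)\,d\mu(y)$ lies in $W^{2,2}_{\mathrm{loc}}(\Omega)$ and solves $L^{2}\mathcal G\mu=\mu$ in $\Omega$ (here one uses that $D^{2}_{x}G(\cdot,y)$ and $L_{x}G(\cdot,y)$ are $L^{1}_{\mathrm{loc}}$ uniformly in $y$, a consequence of the estimates below). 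Hence $h:=u-\mathcal G\mu$ satisfies $L^{2}h=0$ in $\Omega$, so $h\in C^{\infty}(\Omega)$ by interior elliptic regularity. Setting
\[
K(x,y):=D^{2}_{x}G(x,y)-\tfrac12\bigl(-L_{x}G(x,y)\bigr)A(x)^{-1},\qquad
H(x):=D^{2}h(x)+\tfrac12\bigl(Lh(x)\bigr)A(x)^{-1},
\]
a direct rearrangement (using $\int_{\Omega}\bigl(-L_{x}G(x,y)\bigr)\,d\mu(y)=-Lu(x)+Lh(x)$) yields \eqref{eq:Greensfunction}, provided $K$ is locally bounded. Both $K$ and $H$ take values in $\R^{2\times2}_{sym}$ because $D^{2}$ and $A^{-1}$ are symmetric and $-Lu$ is scalar; $K$ is smooth off the diagonal by hypoellipticity of $L^{2}$, and Borel on $\Omega\times\Omega$ once it is locally bounded. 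So everything comes down to proving local boundedness of $K$ near the diagonal.

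\emph{Frozen coefficients.} Fix $y$ and write, near the diagonal, $G(x,y)=E^{A(y)}(x-y)+S(x,y)$, where for a symmetric positive definite $B$ the function $E^{B}$ denotes the fundamental solution of the constant-coefficient operator $(-\divi(B\nabla))^{2}$ (which equals $(-\operatorname{tr}(BD^{2}))^{2}$ since $B$ is constant), obtained from the biharmonic fundamental solution $V$ of \eqref{eq:Vdirect} by the linear substitution $x=B^{1/2}w$; explicitly $E^{B}(x)=(\det B)^{-1/2}V(B^{-1/2}x)$. Under this substitution the operator $-\operatorname{tr}(BD^{2})$ turns into $-\Delta_{w}$, so the classical Frehse observation \eqref{eq:linkanosotropic} pulls back to
\[
D^{2}_{z}E^{B}(z)=\tfrac12\bigl(-\operatorname{tr}(BD^{2}_{z}E^{B}(z))\bigr)B^{-1}+(\text{a function bounded near }z=0),
\]
i.e. the flat identity $I_{2}$ of \eqref{eq:linkanosotropic} is replaced by $B^{-1}$; this is the origin of the factor $A(x)^{-1}$. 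Conceptually, the natural way to make this cancellation transparent — and to organise the transfer step below — is to decompose $D^{2}u$ in the frame of symmetric matrices that is orthonormal for the inner product $\langle P,Q\rangle_{A(x)}:=\operatorname{tr}(A(x)PA(x)Q)$. Its unit ``trace direction'' is $A(x)^{-1}/\sqrt2$, and $\langle D^{2}u,A(x)^{-1}\rangle_{A(x)}=\operatorname{tr}(A(x)D^{2}u)=-Lu$ up to first order terms, so the entire logarithmic singularity of $D^{2}u$ is carried by this single direction (matched by $-\tfrac12 Lu\,A(x)^{-1}$), while the two $A$-traceless directions remain bounded.

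\emph{Transfer to variable coefficients.} Writing $G=E^{A(y)}(\cdot-y)+S$ and $L^{(y)}:=-\divi(A(y)\nabla)$, one expands
\begin{align*}
K(x,y)={}&\Bigl[D^{2}_{z}E^{A(y)}-\tfrac12\bigl(-\operatorname{tr}(A(y)D^{2}_{z}E^{A(y)})\bigr)A(y)^{-1}\Bigr]_{z=x-y}\\
&+\;r_{1}(x,y)+r_{2}(x,y)+\Bigl[D^{2}_{x}S(x,y)+\tfrac12\bigl(-L_{x}S(x,y)\bigr)A(x)^{-1}\Bigr],
\end{align*}
a sum of four contributions, each bounded near the diagonal. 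The first bracket is bounded by the frozen computation. The term $r_{1}$ arises from replacing $L_{x}$ by $L^{(y)}_{x}$ acting on $E^{A(y)}(\cdot-y)$: the difference $L_{x}-L^{(y)}_{x}$ has top-order coefficient $A(x)-A(y)=O(|x-y|)$ (and a first order part with bounded coefficients) applied to a function with $D^{2}=O(\log|x-y|)$, so $r_{1}=O(|x-y|\,\lvert\log|x-y|\rvert)$. The term $r_{2}$ arises from replacing $A(x)^{-1}$ by $A(y)^{-1}$ in the leading term: since $-L^{(y)}_{x}E^{A(y)}(x-y)$ is, modulo smooth terms, the fundamental solution of the second order operator $L^{(y)}$ and hence $O(\log|x-y|)$, while $A(x)^{-1}-A(y)^{-1}=O(|x-y|)$, again $r_{2}=O(|x-y|\,\lvert\log|x-y|\rvert)$. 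Finally $D^{2}_{x}S$ and $L_{x}S$ are bounded near the diagonal because $S$ is one order more regular than $E^{A(y)}(\cdot-y)$, which is exactly what the parametrix construction provides; off the diagonal $S$, and hence $K$, is smooth. Adding the four pieces gives local boundedness of $K$, and combined with the splitting above this proves \eqref{eq:Greensfunction}.

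The crux is the exact cancellation underlying the last two steps: at the support of $\mu$ every entry of $D^{2}u$ and the scalar $-Lu$ blow up logarithmically, so no term-by-term estimate is possible; one has to exhibit precisely how the singular parts coincide after multiplication by $A^{-1}$. The substitution $z=B^{1/2}w$ disposes of the leading singularity cleanly, but the delicate point is to check that none of the lower-order pieces — the coefficient variation $r_{1},r_{2}$, the zeroth/first order part of $L$, and the parametrix remainder $S$ — reintroduces an unbounded term; this is precisely where the $A(x)$-orthonormal frame pays off, by isolating the single unbounded direction once and for all. I expect this bookkeeping, together with establishing the required regularity of the remainder $S$, to be the main technical work.
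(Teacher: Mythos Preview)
Your high-level architecture matches the paper's exactly: represent $u=\mathcal G\mu+h$ with $h\in C^\infty$ (the paper's Lemma~\ref{lem:RepresentationFormula}), reduce to a Frehse-type identity for the Green function, and absorb the smooth remainder into $H$. The definitions of $K$ and $H$ you write down are precisely what the paper arrives at.

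Where you genuinely diverge is in the mechanism for the Frehse identity itself. You freeze coefficients at $y$, pull back to the biharmonic case by the substitution $x=A(y)^{1/2}w$, read off $A(y)^{-1}$ from the classical identity \eqref{eq:linkanosotropic}, and then control the three error terms $r_1,r_2,S$ coming from unfreezing. The paper instead works with the variable-coefficient function $\psi_x(y)=A(y)^{-1}(y-x)\cdot(y-x)$ throughout (Lemma~\ref{lem:2.4}) and proves the cancellation by an explicit computation in a Riemannian orthonormal frame $\{A_1=\tfrac1{\sqrt2}A,A_2,A_3\}$ for the metric $g(y)(M_1,M_2)=\mathrm{tr}(A(y)^{-1}M_1A(y)^{-1}M_2)$ (Lemmas~\ref{lem:Riegeo}--\ref{lem:3.5}): one shows directly that $\mathrm{div}(A_i\nabla(\psi_x\log\psi_x))$ is bounded for $i=2,3$, so only the $A$-direction carries the singularity. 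Note this is the metric dual to the one you mention, with $A$ rather than $A^{-1}$ as the distinguished direction; the content is equivalent. Your route is closer to classical parametrix technology and makes the origin of $A^{-1}$ transparent via the substitution; the paper's route avoids the freezing step entirely and handles the variable-coefficient object in one pass, at the cost of the frame algebra. Both lead to the same remainder regularity requirement, which the paper establishes as $f_2(x,\cdot)\in W^{3,p}_{loc}$ for all $p<\infty$ (Lemma~\ref{lem:2.4}(ii)); this is exactly the ``one order more regular'' fact you flag as the main technical work, and it does require real computation. One small point you gloss over: to pass from the pointwise identity for $G$ to \eqref{eq:Greensfunction} you need $\mu(\{x\})=0$, which holds for a.e.\ $x$ since $\mu$ has at most countably many atoms; the paper makes this explicit.
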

The proof is based on the study of Green's function $G_{L^2}$ for the Navier problem 
\begin{equation}
    \begin{cases}
        L^2 G_{L^2}(x, \cdot) = \delta_x & \textrm{in $\Omega$} \\ G_{L^2}(x,\cdot) = L G_{L^2}(x,\cdot)= 0 & \textrm{on $\partial \Omega$}
    \end{cases}.
\end{equation}
The representation in \eqref{eq:Greensfunction} a posteriori also holds for $G_{L^2}(z,\cdot)$ with $\mu=\delta_z$ (the Dirac measure with point mass at $z \in \Omega$), yielding that $D_y^2G_{L^2}(z,y)$ can be represented up to locally bounded terms by $\frac{1}{2}(-LG_{L^2}(z,y))A(y)^{-1}.$ This is again remarkable as both terms have a logarithmic singularity at $y= z$. 
%, yielding
%\begin{equation}
%    D_x^2G_{L^2}(z,x) = \tfrac{1}{2} (-Lu(x)) A(x)^{-1} + K(x,z) + H(x) \qquad \textrm{for almost every $x \in \Omega$,} 
%\end{equation}
%where $K : \Omega \times \Omega \rightarrow \mathbb{R}^{2 \times 2}_{sym}$ is locally bounded and smooth where $x \neq z$ and $H$ is smooth on $\Omega$. 
An important observation needed in the proof is that $G_{L^2}$ is locally given by a term of the form 
\begin{equation}
    G_{L^2}(x,y)= \psi_x(y) \log(\psi_x(y)) + \textrm{smoother terms}, \quad \textrm{where} \quad \psi_x(y) = A(y)^{-1}(y-x) \cdot (y-x).
\end{equation}
A further crucial step is the following: if $\{ A_1(y)=A(y) , A_2(y), A_3(y) \}, (y \in \overline{\Omega})$ is an \emph{orthogonal frame} of $\mathbb{R}^{2\times 2}_{sym}$ with respect to the \emph{associated Riemannian metric} $g_A(y)(M_1,M_2) := \mathrm{tr}(A(y)^{-1}M_1M_2A(y)^{-1})$ ($y \in \overline{\Omega}, M_1,M_2 \in \mathbb{R}^{2 \times 2}_{sym}$), then $\mathrm{div}(A_i \nabla (\psi_x \log \psi_x))$ is bounded for $i = 2,3$ (but not for $i= 1$). 

The second main result applies these findings to the higher order Alt-Caffarelli problem and generalizes the results of \cite{MuellerAMPA,MuellerPoly}. Recall that we seek to minimize $\mathcal{E}$ given in \eqref{eq:EEEE} in the admissible set 
  $  \mathcal{A}(u_0) := \{ u \in W^{2,2}(\Omega) : u- u_0 \in W_0^{1,2}(\Omega) \}.$ Existence of minimizers in $\mathcal{A}(u_0)$ is a standard result, see Proposition \ref{prop:Convenience}, but uniqueness is in general false, even for $A=I_2$ (see \cite[Section 8]{MuellerAMPA}).  
\begin{theorem}[Higher Order Alt-Caffarelli Problem] \label{thm:1.2} Let $u \in \mathcal{A}(u_0)$ be a minimizer of $\mathcal{E}$. Then, $u \in C^{2,1}(\overline{\Omega}) \cap C^\infty(\overline{\Omega} \setminus \{ u = 0 \})$, $\nabla u \neq 0$ on $\{u = 0 \}$ and the set $\{ u < 0 \}$ is a union of finitely many $C^{2,1}$-domains compactly contained in  $\Omega$. Moreover, $Lu\vert_{\partial \Omega}= 0$ and 
\begin{equation}\label{eq:ELequation}
   2 \int_\Omega Lu L \varphi \; \mathrm{d}x = - \int_{\{u=0\}} \frac{1}{|\nabla u|} \varphi \, \mathrm{d}\mathcal{H}^1 \qquad \textrm{for all } \varphi \in W^{2,2}(\Omega) \cap W_0^{1,2}(\Omega). 
\end{equation}
\end{theorem}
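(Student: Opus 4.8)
The plan is to follow the strategy of \cite{MuellerAMPA,MuellerPoly} closely, inserting Theorem \ref{thm:1.1} wherever the isotropic identity \eqref{eq:linkanosotropic} was used, and the anisotropic mean value inequalities of \cite{BH15} wherever \cite{AC81} invoked (sub)harmonic mean values. Since $W^{2,2}(\Omega)\hookrightarrow C^0(\overline\Omega)$ in dimension two, the sets $\{u>0\}$ and $\{u<0\}$ are open, and because $u_0\in C^\infty(\overline\Omega)$ is strictly positive, $u$ stays positive on a collar $U$ of $\partial\Omega$, so $\{u\le0\}$ is compactly contained in $\Omega$. Perturbing by $\varphi\in W^{2,2}(\Omega)\cap W_0^{1,2}(\Omega)$ with $\mathrm{supp}\,\varphi\subset\{u\neq0\}\cup U$ leaves $|\{u>0\}|$ unchanged for $|t|$ small, so minimality forces $\int_\Omega Lu\,L\varphi\,\mathrm{d}x=0$ for all such $\varphi$; since $L^2$ is a fourth order uniformly elliptic operator with smooth coefficients, interior elliptic regularity gives $u\in C^\infty(\{u\neq0\})$, and near $\partial\Omega$ this identity is the weak formulation of the Navier problem for $L^2$ with data $u_0$, so elliptic regularity gives $u\in C^\infty$ up to $\partial\Omega$ together with the natural boundary condition $Lu|_{\partial\Omega}=0$. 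Next, a one-sided variation $u-t\varphi$ with $0\le\varphi\in C_0^\infty(\Omega)$, $t>0$, satisfies $\{u-t\varphi>0\}\subseteq\{u>0\}$, hence $0\le\mathcal{E}(u-t\varphi)-\mathcal{E}(u)\le-2t\int_\Omega Lu\,L\varphi\,\mathrm{d}x+t^2\int_\Omega(L\varphi)^2\,\mathrm{d}x$; dividing by $t$ and letting $t\downarrow0$ shows $\int_\Omega Lu\,L\varphi\,\mathrm{d}x\le0$ for all such $\varphi$. Thus $\mu:=L^2u$ is a nonpositive Radon measure, finite (testing with a cutoff $\chi$ equal to $1$ near $\{u=0\}$ gives $|\mu|(\Omega)\le\|Lu\|_{L^2(\Omega)}\|L\chi\|_{L^2(\Omega)}$) and supported in $\{u=0\}$.

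Now I apply Theorem \ref{thm:1.1} to $u$ with this $\mu$: writing $w:=Lu$, one gets $D^2u=\tfrac12(-w)A^{-1}+\int_\Omega K(\cdot,y)\,\mathrm{d}\mu(y)+H$ with the last two summands locally bounded. Since $Lw=\mu$ and the fundamental solution of the second order operator $L$ in two dimensions has only a logarithmic singularity, $w\in L^p_{loc}(\Omega)$ for every $p<\infty$; hence $D^2u\in L^p_{loc}(\Omega)$, so $u\in W^{2,p}_{loc}(\Omega)$ for every $p<\infty$ and therefore $u\in C^{1,\alpha}_{loc}(\Omega)$ for all $\alpha<1$. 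In particular $\nabla u$ is continuous, so at every $x_0\in\{u=0\}$ the dichotomy $\nabla u(x_0)=0$ versus $\nabla u(x_0)\neq0$ is meaningful and the coarea formula will be available on the level sets of $u$ near the free boundary.

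The heart of the proof — and the step I expect to be the main obstacle — is the non-degeneracy statement $\nabla u\neq0$ on $\{u=0\}$. As in \cite{MuellerAMPA,MuellerPoly} this is obtained by a blow-up and comparison argument: if $\nabla u(x_0)=0$ at a free boundary point, then $u$ is sublinear near $x_0$, and one constructs an admissible competitor (replacing $u$ near $x_0$ by an $L^2$-harmonic piece, or by a suitable flattening) that lowers the bending energy by strictly more than the — controlled — increase of $|\{u>0\}|$, contradicting minimality; the pointwise bounds on $w=Lu$ that make this quantitative use that $\mathrm{div}(A\nabla w)=-\mu\ge0$, to which the mean value inequalities of \cite{BH15} for subsolutions apply in place of the classical sub-mean-value property of subharmonic functions. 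Once $\nabla u\neq0$ on $\{u=0\}$, the implicit function theorem (applicable since $u\in C^{1,\alpha}$) shows that $\{u=0\}$ is locally a $C^{1,\alpha}$ curve separating $\{u>0\}$ from $\{u<0\}$; consequently $\mu$ is a single layer distribution supported on this curve, so $w=Lu$ is continuous across it, whence $D^2u\in L^\infty_{loc}$ and $u\in C^{1,1}_{loc}(\Omega)$, and a Schauder-type bootstrap on the two sides (where $u$ is already $C^\infty$) improves the curve to $C^{2,1}$ and $u$ to $C^{2,1}$ up to $\{u=0\}$ from either side — and no further, since the normal derivative of $w$ jumps across $\{u=0\}$ by the density appearing in \eqref{eq:ELequation}. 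Together with the smoothness near $\partial\Omega$ and the fact that $\{u=0\}$ is compactly contained in $\Omega$, this yields $u\in C^{2,1}(\overline\Omega)\cap C^\infty(\overline\Omega\setminus\{u=0\})$. Finally, the compact set $\{u=0\}$ is now a $C^{2,1}$ embedded one-manifold without boundary, hence a finite disjoint union of $C^{2,1}$ closed curves, and $u$ changes sign across each; therefore $\{u<0\}$ consists of finitely many $C^{2,1}$-domains compactly contained in $\Omega$.

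It remains to derive \eqref{eq:ELequation}. For $\varphi\in W^{2,2}(\Omega)\cap W_0^{1,2}(\Omega)$ both signs of $t$ are now admissible; since $\{u=0\}$ is a $C^{1}$ curve with $|\nabla u|\neq0$, a standard coarea computation gives $\frac{d}{dt}\big|_{t=0}|\{u+t\varphi>0\}|=\int_{\{u=0\}}\frac{1}{|\nabla u|}\varphi\,\mathrm{d}\mathcal{H}^1$, while $\frac{d}{dt}\big|_{t=0}\int_\Omega(L(u+t\varphi))^2\,\mathrm{d}x=2\int_\Omega Lu\,L\varphi\,\mathrm{d}x$. Stationarity of $\mathcal{E}$ at the minimizer $u$ therefore yields $2\int_\Omega Lu\,L\varphi\,\mathrm{d}x=-\int_{\{u=0\}}\frac{1}{|\nabla u|}\varphi\,\mathrm{d}\mathcal{H}^1$, which is \eqref{eq:ELequation}; the boundary condition $Lu|_{\partial\Omega}=0$ was already established in the first step.
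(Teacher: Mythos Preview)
Your overall architecture is right, and the opening steps (positivity collar, $C^\infty$ away from $\{u=0\}$, Navier condition $Lu|_{\partial\Omega}=0$, the one-sided variation producing a nonpositive measure $\mu$ supported on $\{u=0\}$, and the application of Theorem~\ref{thm:1.1} to reach $C^{1,\alpha}$) match the paper closely. There are, however, two genuine gaps.

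\textbf{Non-degeneracy.} Your sketch (``$u$ is sublinear, replace by an $L^2$-harmonic piece or a flattening, the bending energy drops more than the measure term rises'') does not describe a working argument, and it is not what \cite{MuellerAMPA} actually does. The mechanism in the paper is: (i) from Theorem~\ref{thm:1.1} and $Lu\le 0$ one obtains \emph{local semiconvexity} of $u$; (ii) at Lebesgue points of $D^2u$ one then has, by an Aleksandrov-type expansion, a genuine quadratic blow-up $\tfrac12 z^T B z$; (iii) the one-sided variational estimate $|\{0<u<\varepsilon\}|\le C\varepsilon$ (not a competitor construction) forces $B=0$ or $B$ positive definite; (iv) $B$ positive definite is ruled out by a localized negative bump competitor together with $\mu(\{x_0\})=0$, and $B=0$ is ruled out by the \emph{strong maximum principle} applied to the supersolution $-Lu$; (v) non-Lebesgue points of $D^2u$ are exactly the points where $(-Lu)^*=\infty$, and there Theorem~\ref{thm:1.1} makes $u$ strictly convex nearby, so such points are isolated in $\{u\le 0\}$; the ``$L^2$-harmonic replacement'' you mention is used \emph{only} at this final step, to show smoothness at isolated points and obtain a contradiction. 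None of the ingredients (semiconvexity, Aleksandrov, the level-set estimate, the good/bad dichotomy) appear in your sketch, and without them the contradiction does not close.

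\textbf{The ``single layer'' claim and order of arguments.} You assert that once $\{u=0\}$ is a $C^{1,\alpha}$ curve, ``consequently $\mu$ is a single layer distribution'', and then use continuity of $w=Lu$ to bootstrap. But knowing only $\mathrm{spt}(\mu)\subset\{u=0\}$ does not give absolute continuity of $\mu$ with respect to $\mathcal{H}^1$ with bounded density; that is precisely the content of \eqref{eq:ELequation}, which you derive only afterwards. The paper resolves this by first identifying $\mu$ explicitly via an \emph{inner} (domain) variation $x\mapsto x+t\psi(x)$---which requires only the $W^{3,q}$ regularity already available and no a priori structure of $\mu$---and only then bootstraps to $W^{3,p}$ for all $p$ and finally $C^{2,1}$ via the signed-distance construction of \cite{MuellerPoly,MuellerEllipticSurface}. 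Your outer-variation/coarea derivation of \eqref{eq:ELequation} is a legitimate alternative once $\nabla u\neq 0$ on the $C^{1,\alpha}$ free boundary, but you must place it \emph{before} the regularity bootstrap, not after, and you still owe the justification that $t\mapsto|\{u+t\varphi>0\}|$ is differentiable at $t=0$ at that level of regularity.
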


We remark that the $C^{2,1}$ regularity is optimal, even for $A=I_2$. Indeed, \cite[Section 6]{DKV20} shows that even in one dimension $u \not \in C^3(\Omega)$ and \cite[Section 10]{MuellerAMPA} exposes minimizers for radial data which do not lie in $C^3(\Omega)$. Another way to see this is as follows: \eqref{eq:ELequation} and \cite[Section 2.3]{MuellerEllipticSurface} imply that $Lu$ can not lie in $C^1(\Omega)$ if $\{u = 0 \} \neq \emptyset$ (which can indeed be satisfied for each minimizer, cf.\ Example \ref{ex:4.8}).  

A crucial property in Theorem \ref{thm:1.2} is that $\nabla u \neq 0$ on $\{ u=  0 \}$ for each minimizer $u \in \mathcal{A}(u_0)$. The main ingredient in its proof is a blow-up technique that excludes points $x_0$ with $u(x_0)= 0,\nabla u(x_0)= 0$. This blow-up is constructed as follows: with the aid of Theorem \ref{thm:1.1} and variational inequalities one can show local semiconvexity of the minimizer. Thereupon, a version of the Aleksandrov theorem yields a second order Taylor expansion at each Lebesgue point of $D^2u$. In particular, if $x_0$ is a Lebesgue point of $D^2 u$ and $u(x_0)=0,\nabla u(x_0)=0$ one can identify a $2$-homogeneous blow-up profile for $u$ at $x_0$. Given this explicit profile we can study how the Lebesgue measure term behaves around $x_0$ and rule out the existence of $x_0$ with variational arguments. %The Lebesgue measure term does not permit most such blow-up profiles though.

The case that $x_0$ is a non-Lebesgue point of $D^2u$ has to be treated seperately. As a consequence of Theorem \ref{thm:1.1} (and some additional treatment of atoms of $\mu$) non-Lebesgue points of $D^2u$ must also be non-Lebesgue points of $-Lu$. Since $-Lu$ is easily seen to be $L$-superharmonic, properties of $L$-superharmonic functions can be used to exclude the existence of such $x_0$. More precisely, we use that $L$-superharmonic functions must attain large values around non-Lebesgue points (see Lemma \ref{lem:lebpt}). Notice that large values of $-Lu$ account for a \emph{very convex} behavior of $u$ around $x_0$ (again using Theorem \ref{thm:1.1}). Such very convex behavior can not occur in energy-minimal configurations.    
%While $C^{2,1}$-regularity is optimal for minimizers, it is open whether further regularity of the free boundary $\partial \{ u < 0 \}$ can be obtained. The results in \cite{KNS79} in this direction do not apply directly, as the prerequisites are not fulfilled. Developing further regularity results based on \cite{KNS79} shall be subject of future research.  

%In the classical minimization problem for $\mathcal{F}$, \cite[Theorem 7.1 and p. 673]{KNS79} (applied in the special case of $m= 1, \Omega^+= \{ u> 0 \}, g(x,\xi_1,\xi_2)=\xi_1^2-1$) yields the following observation: nondegeneracy of $\nabla u$ on $\partial \{u > 0 \}$ must imply that $\partial \{ u> 0 \}$ is analytic. Despite this theorem can also be applied to higher order equations, its prerequisites are not fulfilled for minimizers of $\mathcal{E}$. 
%While \cite[Theorem 7.1 and p. 673]{KNS79} also applies to higher order operators

%(or equivalently $\partial \{ u < 0 \} = \{ u = 0 \}$), the free boundary. It has been shown in \cite[Theorem 7.1 and p. 673]{KNS79} (in the special case $g(x,\xi_1,\xi_2)= \xi_2-\xi_1$) that whenever $u \in C^1(\Omega)$ where $\Omega = \Omega^+ \cup \Gamma \cup \Omega^-$ satisfies
%\begin{equation}
%    \begin{cases}
%        L^2u = 0 & \textrm{in $\Omega^+$} \\ L^2 u = 0 & \textrm{in $\Omega^-$}
%    \end{cases}
%\end{equation}
%\subsection{Future research}
%As one sees 

%\subsection{Structur of the article}
The article is structured as follows. In Section \ref{sec:Prelim} we collect preliminaries about elliptic differential operators, such as consequences of \cite{BH15} for supersolutions of elliptic equations and some first properties of Green's function. In Section \ref{sec:FrehsesObservation} we prove Theorem \ref{thm:1.1}. In Section \ref{sec:VariationalMethods} we collect some basic facts and consequences of Theorem  \ref{thm:1.1} about minimizers of $\mathcal{E}$ based on \cite[Section 2 and 3]{MuellerAMPA}. Section \ref{sec:AnalysisNodalSet} is devoted to the analysis of the nodal set of minimizers using methods from \cite[Section 4]{MuellerAMPA}. Section \ref{eq:Regularity} deals with the derivation of \eqref{eq:ELequation} using inner variation techniques. Moreover, applying techniques from \cite{MuellerPoly} for higher order measure valued equations we obtain the optimal regularity for minimizers.

\section{Preliminaries about elliptic operators}\label{sec:Prelim}

For $A \in W_{loc}^{1,1}(\Omega; \mathbb{R}^{2 \times 2}), A(x)= (a_{ij}(x))_{i,j= 1}^d$ we define the \emph{matrix divergence} $\mathbf{div}(A) \in L^1_{loc}(\Omega;\mathbb{R}^2)$ via $ \mathbf{div}(A(x)) := \sum_{i,j = 1}^2 \partial_i a_{ij}(x) e_j$. For $A_1,A_2 \in \mathbb{R}^{2\times 2}$ matrices we denote by $A_1:A_2 := \mathrm{tr}(A_1^TA_2)$ the \emph{Hilbert-Schmidt inner product} of $A_1,A_2$. For $x,y \in \mathbb{R}^2$ we use the \emph{dot product notation} $x \cdot y := x^T y$. As explained above we denote  $L :=- \mathrm{div}(A \nabla)$. Furthermore, we say for $f \in L^1(\Omega)$ that $x_0 \in \Omega$ is a \emph{Lebesgue point} of $f$ if $\lim_{r \rightarrow 0} \fint_{B_r(x_0)} f \; \mathrm{d}x =: f^*(x_0)$ exists and 
$
    \lim_{r \rightarrow 0} \fint_{B_r(x_0)} |f(x) - f^*(x_0)|\; \mathrm{d}x = 0.
$
Here, $\fint_B := \frac{1}{|B|}\int_B$ denotes the average integral. For us the term \emph{almost everywhere} refers to the Lebesgue measure on $\mathbb{R}^2$. If another measure $\mu$ is addressed, we write \emph{$\mu$-almost everywhere} instead. The \emph{characteristic function} of a set $M \subset \Omega$ is denoted by $\chi_M$.  Finally, we say that a domain $\Omega'$ is \emph{compactly contained} in $\Omega$ and write $\Omega' \subset\subset \Omega$ if $\overline{\Omega'} \subset \Omega$.
%In this section we require $A$ to be measureable, symmetric and satisfy a two-sided \emph{ellipticity condition}, i.e. there exist $\lambda, \Lambda > 0$ such that 
%\begin{equation}
%    \lambda |\xi|^2 \leq (A(x) \xi , \xi) \leq \Lambda |\xi|^2 \qquad \forall x \in \Omega \qquad \forall \xi \in \mathbb{R}^n
%\end{equation}
    
%\end{defi}
\subsection{Supersolutions}

In this section we summarize some properties of $L$ and its \emph{weak supersolutions} of precisely defined as follows.
\begin{definition}
Let $\Omega \subseteq \mathbb{R}^n$ be a bounded domain.  We say that $u \in L^1_{loc}(\Omega)$ satisfies $L u \geq 0$ weakly, if 
    \begin{equation}
        \int u L\varphi \; \mathrm{d}x \geq 0 \qquad \textrm{for all } \varphi \in C_0^\infty(\Omega) \; \textrm{such that $\varphi \geq 0$}. 
    \end{equation}
\end{definition}

%An optimal regularity result for such weak supersolutions is known from \cite{AQ02,Veron}.
%, even for less regular coefficients $A$.
%One should remark that if $A \in C^1(\overline{\Omega})$, weak subsolutions of this kind lie automatically in $W^{1,q}_{loc}$ for some $q > 1$.
One property of interest is the regularity of such supersolutions. The following classical lemma shows the optimal possible regularity.  
\begin{lemma}[{Regularity of weak supersolutions, see \cite{AQ02,Veron}}]
   Each $u \in L^{1}_{loc}(\Omega)$ that satisfies $L u \geq 0$ weakly lies in $W^{1,q}_{loc}(\Omega)$ for all $q \in [1,\frac{n}{n-1})$. Moreover, there exists some Radon measure $\mu$ on $\Omega$ such that 
   \begin{equation}
       \int_\Omega u L \varphi \; \mathrm{d}x = \int \varphi \; \mathrm{d} \mu \qquad \textrm{for all } \varphi \in C_0^\infty(\Omega). 
   \end{equation}
\end{lemma}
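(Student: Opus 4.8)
The plan is to treat this as a standard result on weak supersolutions of uniformly elliptic operators in divergence form, combining Riesz-type representation with interior $W^{1,q}$ regularity obtained via truncation. The starting point: since $u \in L^1_{loc}(\Omega)$ satisfies $\int u L\varphi \dx \geq 0$ for all nonnegative $\varphi \in C_0^\infty(\Omega)$, the linear functional $\varphi \mapsto \int_\Omega u L\varphi \dx$ is a nonnegative distribution. By the classical fact that a nonnegative distribution is (represented by) a positive Radon measure, there exists a Radon measure $\mu$ on $\Omega$ with $\int_\Omega u L\varphi \dx = \int \varphi \dmu$ for all $\varphi \in C_0^\infty(\Omega)$. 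This gives the second assertion immediately and shows $L u = \mu \geq 0$ in the distributional sense.

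For the regularity statement, the main step is to show $u \in W^{1,q}_{loc}(\Omega)$ for $q < \frac{n}{n-1}$. First I would note that one may reduce to the case $u \geq 0$ locally (the measure $\mu$ is locally finite, and by solving $Lw = \mu$ locally with a bounded piece one can arrange positivity; alternatively argue directly with the Green representation against $L$ on a ball $B \subset\subset \Omega$). Writing $u$ via the Green function $G_L$ of $L$ on a ball $B$ and the boundary data, one gets $u(x) = \int_B G_L(x,y)\dmu(y) + h(x)$ with $h$ $L$-harmonic hence smooth. The Green function for a uniformly elliptic divergence-form operator satisfies the classical pointwise bounds $G_L(x,y) \leq C|x-y|^{2-n}$ (for $n \geq 3$; logarithmic for $n=2$) and the gradient bound $|\nabla_x G_L(x,y)| \leq C|x-y|^{1-n}$, due to Littman–Stampacchia–Weinberger and Grüter–Widman. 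Then $|\nabla u(x)| \lesssim \int_B |x-y|^{1-n}\dmu(y) + C$, and since $\mu$ is a finite measure on $B'$ and the Riesz kernel $|z|^{1-n}$ is in weak-$L^{n/(n-1)}$, a Minkowski/weak-type convolution estimate gives $\nabla u \in L^q_{loc}$ for every $q < \frac{n}{n-1}$, which is the claimed regularity.

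The main obstacle — and the place I expect the argument to need genuine care rather than routine manipulation — is justifying the Green's function representation and the gradient kernel bound at the level of regularity we actually have: $u$ is only a priori in $L^1_{loc}$, and the measure $\mu$ has no sign a priori beyond nonnegativity, so one must bootstrap. The clean way is: first upgrade $u$ to $W^{1,1}_{loc}$ (or $L^{n/(n-1),\infty}_{loc}$) by testing against $L$-solutions of smooth compactly supported right-hand sides and using the fundamental-solution bound for $L$, then, once $u \in W^{1,1}_{loc}$, the representation formula and the standard elliptic estimates for $G_L$ apply rigorously to yield the sharp exponent. Since the statement is explicitly attributed to \cite{AQ02,Veron}, I would not reproduce the full bootstrap but would cite those references for the regularity, and give only the short argument above for the existence of $\mu$, which is the elementary half. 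One subtlety worth flagging: the exponent $\frac{n}{n-1}$ is sharp precisely because of the fundamental solution's singularity — the function $|x|^{2-n}$ itself is a supersolution (up to sign and lower-order adjustments) with gradient exactly on the boundary of $L^{n/(n-1)}$ — so no improvement beyond $W^{1,q}$ for $q < \frac{n}{n-1}$ is possible, which is why the lemma is stated with the open interval.
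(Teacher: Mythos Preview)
Your argument is correct but takes a different route from the paper. The paper's sketch is purely functional-analytic: once the nonnegative distribution $Lu$ is identified with a Radon measure $\mu$, one observes that for $r \in (1,\tfrac{n}{n-1})$ the dual exponent $r' > n$ gives the Sobolev embedding $W_0^{1,r'}(\Omega) \hookrightarrow C(\overline{\Omega})$, so any locally finite Radon measure defines a bounded functional on $W_0^{1,r'}$, i.e.\ lies in $W^{-1,r}_{loc}(\Omega)$. Then the $W^{1,r}_{loc}$ regularity of $u$ follows from the $L^p$-theory for divergence-form operators (this is what the reference \cite{AQ02} supplies). Your approach is instead potential-theoretic: represent $u$ locally via the Green function, invoke the Gr\"uter--Widman gradient bound $|\nabla_x G_L(x,y)| \lesssim |x-y|^{1-n}$, and use a Riesz-potential estimate on the convolution with $\mu$. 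Both are standard and yield the same optimal exponent; the paper's route is shorter and avoids the bootstrap issue you correctly flag (since the $W^{-1,r}$ membership of $\mu$ and the elliptic $L^p$-estimate require no a priori regularity of $u$ beyond $L^1_{loc}$), while your route is more self-contained and makes the sharpness of the exponent transparent through the kernel singularity.
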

\begin{proof}[Sketch of Proof.]
    The proof is based on the observation that a finite signed Radon measure lies in $W^{-1,r}(\Omega)$ for all $r \in (1,\frac{n}{n-1})$ (which is clear since $W_0^{1,r'}(\Omega) \hookrightarrow C(\overline{\Omega})$ for $r' = \frac{r}{r-1} > n$). 
\end{proof}
%\begin{proof}
%    cf. \cite{AQ02}. 
%\end{proof}

Such weak supersolutions enjoy a \emph{mean value property} which has been stated by Caffarelli in \cite{CaffarelliFermi} and  proved rigorously by Blank and Hao in \cite{BH15}.

\begin{lemma}[{\cite[Theorem 6.3 and Corollary 6.4]{BH15}}]\label{lem:meanvalueprop}
    Let $L$ be as above. Then for any $x_0\in \Omega$ there exists an increasing family  of sets $(D_R(x_0))_{R \in (0,\infty)}$  such that 
    \begin{itemize}
        \item[$\mathrm{(i)}$] $B_{cR}(x_0) \subset D_R(x_0) \subset B_{CR}(x_0)$ for all $R \in (0,\infty)$ with $c,C > 0$ independent of $x_0$,%only depending on $n, \lambda, \Lambda$ and not on $x_0$. 
        \item[$\mathrm{(ii)}$] for any $v \in L^1_{loc}(\Omega)$ that satisfies $Lv \geq 0$ weakly\footnote{Notice that the sign in \cite[Theorem 6.3]{BH15} is different because the authors use a different convention for the operator $L$.}, the integral averages 
        $
            v_R(x_0) := \fint_{D_R(x_0)} v \; \mathrm{d}x
        $
        are decreasing in $R$ (for $R \in (0,R_0)$, where $R_0$ is so small that $D_{R_0}(x_0) \subset \Omega$),
        \item[$\mathrm{(iii)}$] the limit 
        $
           v^*(x_0) :=  \lim_{R \rightarrow 0} v_R(x_0) = \sup_{R \in (0,R_0)} v_R(x_0)
       $
        exists in $\mathbb{R} \cup \{ \infty\}$ and coincides with $v$ for almost every $x_0 \in \Omega$. 
    \end{itemize}
\end{lemma}

In particular, to each $v \in L^1_{loc}(\Omega)$ that satisfies $Lv \geq 0$ weakly we may associate a \emph{pointwise representative}
\begin{equation}\label{eq:pointwiseRepresentatiove}
    v^*(x)  := \lim_{R \rightarrow 0} v_R(x) \in \mathbb{R} \cup \{ \infty \} \qquad \textrm{for all } x \in \Omega.
\end{equation}

Next we collect some facts about this pointwise representative. 

\begin{lemma}\label{lem:lebpt}
    Let $v^*$ be a pointwise representative of some $v \in L^1_{loc}(\Omega)$ that satisfies $Lv \geq 0$ weakly. Moreover, let $x_0 \in \Omega.$ 
    \begin{itemize}
        \item[$\mathrm{(i)}$] $v^*$ is lower semicontinuous, i.e. for all sequences $(x_n)_{n \in \mathbb{N}} \subset \Omega$ such that $x_n \rightarrow x_0$  
        \begin{equation}
            v^*(x_0) \leq \liminf_{n \rightarrow \infty} v^*(x_n).
        \end{equation}
        \item[$\mathrm{(ii)}$] One has 
        \begin{equation}
            v^*(x_0) = \lim_{r \rightarrow 0} \left(  \inf_{D_r(x_0)} v^* \right).
        \end{equation}
        \item[$\mathrm{(iii)}$] If $v^*(x_0)< \infty$, then $x_0$ is automatically a Lebesgue point of $v^*$.
        %, i.e. 
        %\begin{equation}
        %    \lim_{r \rightarrow 0} \fint_{B_r(x_0)} |v^*(x) - v^*(x_0)| = 0.
        %\end{equation}
     \end{itemize}
\end{lemma}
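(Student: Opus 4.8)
The plan is to prove the three items in the order (i) $\Rightarrow$ (ii) $\Rightarrow$ (iii): only (i) has real content, while (ii) and (iii) then follow essentially formally from (i) together with the sandwiching $B_{cR}(x_0)\subseteq D_R(x_0)\subseteq B_{CR}(x_0)$ and the monotone mean value property of Lemma \ref{lem:meanvalueprop}. For (i) I would fix a neighbourhood $U\subset\subset\Omega$ of $x_0$ and a small $R_1>0$ with $D_{R_1}(x)\subset\subset\Omega$ for all $x\in U$. Since by Lemma \ref{lem:meanvalueprop}(ii) the averages $v_R(x)=\fint_{D_R(x)}v\,\mathrm{d}x$ are nonincreasing in $R$, one has $v^*(x)=\sup_{0<R<R_1}v_R(x)$ for every $x\in U$, so it suffices to show that $x\mapsto v_R(x)$ is continuous on $U$ for each fixed $R\in(0,R_1)$, a pointwise supremum of continuous functions being lower semicontinuous. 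Continuity of $v_R$ in turn follows from stability of the mean value sets in measure, $|D_R(x)\triangle D_R(x')|\to 0$ as $x'\to x$: granted this, absolute continuity of the integral of $|v|$ over the fixed compact set $\overline{B_{CR_1}(x_0)}$ (recall $v\in L^1_{loc}(\Omega)$) yields $\int_{D_R(x')}v\to\int_{D_R(x)}v$ and $|D_R(x')|\to|D_R(x)|$, hence $v_R(x')\to v_R(x)$. The stability $|D_R(x)\triangle D_R(x')|\to 0$ is precisely a continuous‑dependence statement for the obstacle‑type problem defining the mean value sets in \cite{BH15}, combined with non‑degeneracy of the associated free boundary (so that the relevant level sets are Lebesgue null); this is the one genuine obstacle in the lemma. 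If one prefers a self‑contained argument, it is enough to invoke this continuity along a sequence $R_k\downarrow 0$ of ``generic'' radii selected by the coarea formula so that $\partial D_{R_k}(x_0)$ is null.

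For (ii): since $x_0\in B_{cR}(x_0)\subseteq D_R(x_0)$ we have $\inf_{D_R(x_0)}v^*\le v^*(x_0)$ for every $R$, so the limit (which exists by monotonicity of $R\mapsto\inf_{D_R(x_0)}v^*$) is $\le v^*(x_0)$. Conversely, fix $t<v^*(x_0)$; by the lower semicontinuity from (i) there is $\rho>0$ with $v^*>t$ on $B_\rho(x_0)$, and for $R$ small enough that $D_R(x_0)\subseteq B_{CR}(x_0)\subseteq B_\rho(x_0)$ we get $\inf_{D_R(x_0)}v^*\ge t$; letting $R\to 0$ and then $t\uparrow v^*(x_0)$ (this also covers the case $v^*(x_0)=\infty$, taking $t$ arbitrarily large) gives the reverse inequality, and (ii) follows.

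For (iii), assume $v^*(x_0)<\infty$. Using $B_{cR}(x_0)\subseteq D_R(x_0)\subseteq B_{CR}(x_0)$ and $|B_{cR}(x_0)|\ge\kappa\,|D_R(x_0)|$ for a fixed $\kappa>0$, one has $\fint_{B_r(x_0)}|v-v^*(x_0)|\,\mathrm{d}x\le\kappa^{-1}\fint_{D_R(x_0)}|v-v^*(x_0)|\,\mathrm{d}x$ with $R=r/c$, so it suffices to prove $\fint_{D_R(x_0)}|v-v^*(x_0)|\,\mathrm{d}x\to 0$ as $R\to 0$. Split $|v-v^*(x_0)|=(v-v^*(x_0))+2\,(v^*(x_0)-v)^+$. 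The average of the first term over $D_R(x_0)$ equals $v_R(x_0)-v^*(x_0)$, which tends to $0$ by Lemma \ref{lem:meanvalueprop}(iii). For the second, fix $\varepsilon>0$; by (ii) there is $R_\varepsilon>0$ with $v^*\ge v^*(x_0)-\varepsilon$ on $D_R(x_0)$ for $R<R_\varepsilon$, and since $v=v^*$ a.e.\ this forces $(v^*(x_0)-v)^+\le\varepsilon$ a.e.\ on $D_R(x_0)$, hence $\fint_{D_R(x_0)}(v^*(x_0)-v)^+\,\mathrm{d}x\le\varepsilon$. Therefore $\limsup_{R\to 0}\fint_{D_R(x_0)}|v-v^*(x_0)|\,\mathrm{d}x\le 2\varepsilon$ for every $\varepsilon>0$, the limit is $0$, and $x_0$ is a Lebesgue point of $v^*$ (equivalently of $v$, since $v=v^*$ a.e.).
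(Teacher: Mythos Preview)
Your arguments for (ii) and (iii) are correct and are essentially the paper's: both use the sandwich $B_{cR}(x_0)\subset D_R(x_0)\subset B_{CR}(x_0)$ to pass between ball and $D_R$ averages, then combine the monotone mean value property with (i)/(ii). The cosmetic differences are that the paper proves (ii) by taking an approximate minimizing sequence $x_n\in D_{r_n}(x_0)$ and applying (i) to it directly, whereas you argue via sublevel sets of $v^*$; and for (iii) the paper writes $|v^*-v^*(x_0)|\le (v^*-\inf_{D_r}v^*)+(v^*(x_0)-\inf_{D_r}v^*)$ in place of your splitting $|v-v^*(x_0)|=(v-v^*(x_0))+2(v^*(x_0)-v)^+$. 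Both yield the same bound $\fint_{D_r(x_0)}|v^*-v^*(x_0)|\le 2\bigl(v^*(x_0)-\inf_{D_r(x_0)}v^*\bigr)$.

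For (i) the paper does not attempt a proof; it simply cites \cite[Corollary~6.4]{BH15}. Your route via continuity of $x\mapsto v_R(x)$ is a detour that still hinges on the same source, since the stability $|D_R(x)\triangle D_R(x')|\to 0$ is exactly the continuous-dependence statement for the obstacle problem in \cite{BH15}. Note, however, that your proposed ``self-contained'' alternative does not close this gap: choosing generic radii $R_k$ via the coarea formula so that $\partial D_{R_k}(x_0)$ is null says something only about the single base point $x_0$, whereas continuity of $x\mapsto v_{R_k}(x)$ at $x_0$ requires the symmetric-difference stability in $x$, for which nullity of $\partial D_{R_k}(x_0)$ alone is not sufficient. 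So either cite \cite{BH15} for (i) outright, as the paper does, or retain your first argument and acknowledge that it rests on the $L^1$-stability of $D_R(\cdot)$ proved there.
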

\begin{proof}
    For (i) we refer to \cite[Corollary 6.4]{BH15}. Now we turn to (ii).
    Choose an arbitrary sequence $r_n \rightarrow 0$. Let $x_n \in D_{r_n}(x_0)$ be chosen in such a way that 
    \begin{equation}\label{eq:estiiinf}
        \inf_{D_{r_n}(x_0)} v^* \leq v^*(x_n) \leq \inf_{D_{r_n}(x_0)} v^* + \tfrac{1}{n}.
    \end{equation}
    Since $x_n \in D_{r_n}(x_0) \subset B_{Cr_n}(x_0)$ we conclude that $x_n \rightarrow x_0$. Using (i) for $(x_n)_{n \in \mathbb{N}}$ and \eqref{eq:estiiinf} together we find
    \begin{equation}
        v^*(x_0) \leq \liminf_{n \rightarrow \infty} v^*(x_n) \leq \liminf_{n \rightarrow \infty} \left( \inf_{D_{r_n}(x_0)} v^* + \tfrac{1}{n} \right) = \liminf_{n \rightarrow \infty} \left( \inf_{D_{r_n}(x_0)} v^* \right).
    \end{equation}
    Furthermore, the elementary fact that $\inf_{D_{r_n}(x_0)} v^* \leq v^*(x_0)$ for all $n \in \mathbb{N}$ yields
    \begin{equation}
        \limsup_{n \rightarrow \infty}\left( \inf_{D_{r_n}(x_0)} v^* \right) \leq v^*(x_0).
    \end{equation}
    The previous two equations imply the claimed equality. 
    %$v^*(x_0) \leq \liminf_{n \rightarrow \infty} v^*(x_n)$. 
    For (iii) we estimate 
    \begin{align}
        & \fint_{B_{cr}(x_0)} |v^*(x) - v^*(x_0)| \; \mathrm{d}x  = \frac{1}{|B_{cr}(x_0)|} \int_{B_{cr}(x_0)}   |v^*(x) - v^*(x_0)|  \; \mathrm{d}x  \leq  \frac{1}{|B_{cr}(x_0)|} \int_{D_{r}(x_0)}   |v^*(x) - v^*(x_0)| \; \mathrm{d}x
        \\ &  = \left(\frac{C}{c}\right)^n  \frac{1}{|B_{Cr}(x_0)|} \int_{D_{r}(x_0)}   |v^*(x) - v^*(x_0)|   \leq  \left(\frac{C}{c}\right)^n   \fint_{D_{r}(x_0)}   |v^*(x) - v^*(x_0)| \; \mathrm{d}x.  \label{lemma:ControlDRBALL}
    \end{align}
    Now by the triangle inequality 
\begin{align}
   & \fint_{D_{r}(x_0)}   |v^*(x) - v^*(x_0)| \; \mathrm{d}x   = \fint_{D_{r}(x_0)}   |v^*(x) - \inf_{D_R(x_0)} v^* + \inf_{D_R(x_0)} v^* -  v^*(x_0)| \; \mathrm{d}x
    \\ & \leq \fint_{D_{r}(x_0)}   |v^*(x) - \inf_{D_r(x_0)} v^*| \; \mathrm{d}x  + |\inf_{D_r(x_0)} v^* -  v^*(x_0)|
   =   \fint_{D_{r}(x_0)}   v^*(x) \; \mathrm{d}x  - \inf_{D_r(x_0)} v^*  + (v^*(x_0) - \inf_{D_R(x_0)} v^*)
    \\ & = v_r(x_0) - \inf_{D_r(x_0)} v^* + (v^*(x_0) - \inf_{D_r(x_0)} v^*),
\end{align}
where $v_r$ is as in Lemma \ref{lem:meanvalueprop}. Due to Lemma \ref{lem:meanvalueprop} we have $v_r(x_0) \leq v^*(x_0)$. This allows us to estimate
\begin{equation}
    \fint_{D_{r}(x_0)}   |v^*(x) - v^*(x_0)| \; \mathrm{d}x \leq 2 (v^*(x_0) - \inf_{D_r(x_0)} v^*). 
\end{equation}
%Now choose an arbitrary sequence $r_n \rightarrow 0$ and let $x_n \in D_{r_n}(x_0)$ be chosen such that $v^*(x_n) \leq \inf_{D_{r_n}(x_0)} v^*  + \frac{1}{n}$. Then one has
%\begin{equation}
%     \lim_{n \rightarrow \infty} \fint_{D_{r_n}(x_0)}   |v^*(x) - v^*(x_0)| \; \mathrm{d}x  \leq \limsup_{n \rightarrow \infty} 2 (v^*(x_0) - v^*(x_n)) + \frac{2}{n} = 2(v^*(x_0) - \liminf_{n \rightarrow \infty} v^*(x_n)) \underset{\textrm{(i)}}{\leq} 0. 
%\end{equation}
Using (ii)  and $v^*(x_0)< \infty$ we infer that 
$
    \lim_{r \rightarrow 0 } \fint_{D_{r}(x_0)}   |v^*(x) - v^*(x_0)| \; \mathrm{d}x = 0.
$
Thereupon, \eqref{lemma:ControlDRBALL} yields the claim.  
\end{proof}
 
An important corollary is a \emph{strong maximum principle} for supersolutions.

\begin{cor}[Strong maximum principle] \label{cor.strangmaxpr}
    Suppose that $v \in L^1_{loc}(\Omega)$ solves $Lv \geq 0$ weakly in $\Omega$ and there exists $m \in \mathbb{R}$ such that and $v\geq m$ for almost every $x \in \Omega$. If there exists some $x_0 \in \Omega$ such that $v^*(x_0)=m$ then $v(x) = m$ for almost every $x \in \Omega.$
\end{cor}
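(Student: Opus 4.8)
The plan is to reduce to the case $m=0$ and then show that the set of points at which the pointwise representative vanishes is simultaneously nonempty, open and closed, so that it exhausts the connected domain $\Omega$.

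First I would set $w := v - m$. Testing against $\varphi \in C_0^\infty(\Omega)$ with $\varphi \geq 0$ and using $\int_\Omega \mathrm{div}(A\nabla\varphi)\,\mathrm{d}x = 0$ (divergence theorem, $\varphi$ compactly supported) gives $\int_\Omega w\, L\varphi\,\mathrm{d}x = \int_\Omega v\, L\varphi\,\mathrm{d}x \geq 0$, so $w$ again satisfies $Lw\geq 0$ weakly; moreover $w\geq 0$ a.e. Since the family $D_R(x)$ from Lemma \ref{lem:meanvalueprop} is fixed (it does not depend on the function), its averages transform additively, $w_R(x) = v_R(x) - m$, whence the pointwise representative satisfies $w^* = v^* - m$ and in particular $w^*(x_0)=0$. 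It thus suffices to prove $w = 0$ a.e.

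Next, consider $Z := \{\, x\in\Omega : w^*(x) = 0 \,\}$, which contains $x_0$. Because $w \geq 0$ a.e., every average $w_R(x) = \fint_{D_R(x)} w\,\mathrm{d}x$ is $\geq 0$, so $w^* \geq 0$ on $\Omega$; together with the lower semicontinuity of $w^*$ from Lemma \ref{lem:lebpt}(i), this makes $Z$ closed in $\Omega$. For openness, fix $x \in Z$: by Lemma \ref{lem:meanvalueprop}(iii) one has $0 = w^*(x) = \sup_{R\in(0,R_0)} w_R(x)$, and since each $w_R(x)\geq 0$ we conclude $w_R(x) = 0$, hence (as $w\geq 0$) $w = 0$ a.e.\ on $D_R(x)$, for every $R\in(0,R_0)$. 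By Lemma \ref{lem:meanvalueprop}(i), $B_{cR}(x)\subseteq D_R(x)$, so $w = 0$ a.e.\ on the genuine ball $B_{cR}(x)$. Finally, for any $y \in B_{cR}(x)$ and $\rho>0$ small enough that $D_\rho(y)\subseteq B_{C\rho}(y)\subseteq B_{cR}(x)$, we get $w_\rho(y)=0$, so $w^*(y) = \lim_{\rho\to 0} w_\rho(y) = 0$ and $y\in Z$. Thus $B_{cR}(x)\subseteq Z$, and $Z$ is open.

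Since $\Omega$ is a domain it is connected, so the nonempty clopen set $Z$ equals $\Omega$, i.e.\ $w^*\equiv 0$. As the representative $w^*$ coincides with $w$ almost everywhere (Lemma \ref{lem:meanvalueprop}(iii)), we get $w = 0$ a.e., that is $v = m$ a.e., as claimed. I expect the only real subtlety to be the openness step --- precisely, chaining the inclusions between the possibly non-ball sets $D_R$ and honest balls so that the a.e.-vanishing of $w$ propagates to a full neighborhood and then upgrades to vanishing of $w^*$ there; the remaining points are routine bookkeeping with the mean-value family of Lemma \ref{lem:meanvalueprop}.
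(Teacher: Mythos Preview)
Your proof is correct and follows essentially the same connectedness argument as the paper: both use the mean-value sets $D_R$ from Lemma~\ref{lem:meanvalueprop} to show that the set where $v^*$ attains its minimum is open, and then invoke connectedness of $\Omega$. The only cosmetic differences are that you first normalize to $m=0$ and prove $Z$ is closed directly (via lower semicontinuity) rather than showing its complement $\{v^*>m\}$ is open, and you spell out more carefully---via the ball inclusions $B_{cR}\subset D_R\subset B_{CR}$---why $w=0$ a.e.\ on $D_R(x)$ forces $w^*\equiv 0$ on a full neighborhood, a step the paper asserts without detail.
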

\begin{proof}
    This follows the lines of the classical maximum principle. For convenience we sketch the argument. Let $\Omega_1 := \{x \in \Omega : v^*(x) = m \}$ and $\Omega_2:= \{ x \in \Omega : v^*(x) > m \}$. Notice that by the lower semicontinuity of $v^*$ (cf. Lemma \ref{lem:lebpt}), $\Omega_2$  is open. Moreover, if $x_1 \in \Omega_1$ then $v \geq m$ a.e. and Lemma  \ref{lem:meanvalueprop} (ii) yields
    \begin{equation}
        m \leq \fint_{D_r(x_1)} v(x) \; \mathrm{d}x = v_r(x_1) \leq v^*(x_1) = m.
    \end{equation}
    We conclude (again using $v \geq m$ a.e.) that $v \equiv m$ a.e. on $D_r(x_1)$. In particular, $v^*\equiv m$ on $D_r(x_1)$ and thus $D_r(x_1) \subset \Omega_1$. This shows that $\Omega_1$ is also open. We have observed that $\Omega= \Omega_1 \cup \Omega_2$ is the union of two disjoint open sets. Since $\Omega$ is connected, one of these sets must be empty. Since by assumption $\Omega_1 \neq \emptyset$ we find $\Omega_2= \emptyset$. The claim follows. 
\end{proof}

%\subsection{Anisotropic Lebesgue points}

\subsection{Green's function in 2D}
%For this section we assume that $A \in C^{4,\alpha}(\overline{\Omega},\mathbb{R}^{2\times 2}_{sym})-$
We will now look at \emph{Green's functions} $G_L$ and $G_{L^2}$ for the Dirichlet and Navier problem respectively. %$L=-  \mathrm{div}(A\nabla) $ and $L^2=  \mathrm{div}(A\nabla \mathrm{div}(A \nabla \cdot ) )$. 
This means that for all $x \in \Omega$ we consider the unique solution $G_L(x,\cdot)$ of
\begin{equation}
\begin{cases}
    L G_{L}(x,\cdot) = \delta_x  & \textrm{in $\Omega$} \\ 
    G_L(x,\cdot) = 0 & \textrm{on $\partial \Omega$} 
\end{cases}
\end{equation}
and the unique solution $G_{L^2}(x,\cdot)$ of
 \begin{equation}\label{eq:FOLLANDSFRIEND}
\begin{cases}
    L^2 G_{L^2}(x,\cdot) = \delta_x  & \textrm{in $\Omega$} \\ 
    G_{L^2}(x,\cdot) = L G_{L^2}(x,\cdot)  = 0 & \textrm{on $\partial \Omega$} 
\end{cases}.
\end{equation}
For details on existence and uniqueness of $G_L, G_L^2$ we refer to \cite{LSW63}. From now on we will evaluate $G_L$ pointwise, always referring to the unique pointwise representative in the sense of \eqref{eq:pointwiseRepresentatiove}.
We recall some basic facts for $G_L$ and $G_{L^2}$ in the following 
\begin{lemma}\label{lemmaBasicGreen}
For all $q \in [1,2)$ there holds $G_L(x,\cdot) \in W^{1,q}_0(\Omega)$ and $G_{L^2}(x,\cdot) \in W^{3,q}(\Omega) \cap W^{1,q}_0(\Omega)$. %and $LG_{L^2} \in W_0^{1,q}(\Omega)$. 
\begin{itemize}
    \item[$\mathrm{(i)}$] For all $x,y \in \Omega, x \neq y $ one has $G_L(x,y) =G_L(y,x)$.
    \item[$\mathrm{(ii)}$]  One has $LG_{L^2}(x,\cdot) =G_L(x,\cdot)$. In particular $LG_{L^2}(x,\cdot) \in W_0^{1,q}(\Omega)$ and for a.e. $(x,y) \in \Omega \times \Omega$
    \begin{equation}\label{eq:GL^2pointwise}
       G_{L^2}(x,y)= \int_\Omega G_L(x,z)G_L(z,y) \; \mathrm{d}z.
    \end{equation}
    We will from now on also evaluate $G_{L^2}$ pointwise referring to \eqref{eq:GL^2pointwise}. 
    \item[$\mathrm{(iii)}$] For all $x,y \in \Omega, x \neq y $ one has $G_{L^2}(x,y) =G_{L^2}(y,x).$
\item[$\mathrm{(iv)}$] For all $x,y \in \Omega$ one has $G_L(x,y) \geq 0$.
\end{itemize}

\end{lemma}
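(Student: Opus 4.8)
The plan is to reduce every assertion about $G_{L^2}$ to the corresponding statement for $G_L$, the bridge being the identity $LG_{L^2}(x,\cdot)=G_L(x,\cdot)$ of item (ii). First I establish existence and regularity of $G_L(x,\cdot)$. Since $\delta_x$ is a finite Radon measure, the argument sketched after the regularity lemma for weak supersolutions shows $\delta_x\in W^{-1,q}(\Omega)$ for every $q\in[1,2)$ (because $W^{1,q'}_0(\Omega)\hookrightarrow C(\overline\Omega)$ for $q'>2$ in dimension two). The $L^q$-theory for divergence-form elliptic operators with smooth coefficients on the smooth domain $\Omega$ (see \cite{LSW63}, also Agmon--Douglis--Nirenberg) then provides a unique $G_L(x,\cdot)\in W^{1,q}_0(\Omega)$ with $LG_L(x,\cdot)=\delta_x$. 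Letting $q\uparrow 2$ in the Sobolev embedding $W^{1,q}(\Omega)\hookrightarrow L^{2q/(2-q)}(\Omega)$ gives $G_L(x,\cdot)\in L^s(\Omega)$ for every $s<\infty$, which I use repeatedly; moreover $G_L(x,\cdot)$ is $L$-harmonic, hence smooth, away from $x$.

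For (iv), approximate $\delta_x$ by nonnegative mollifiers $\rho_\varepsilon$ and let $u_\varepsilon\in W^{1,2}_0(\Omega)$ be the energy solution of $Lu_\varepsilon=\rho_\varepsilon$; testing with $u_\varepsilon^-\in W^{1,2}_0(\Omega)$ and using uniform ellipticity gives $u_\varepsilon\ge 0$. The $W^{1,q}$-bounds are uniform in $\varepsilon$, so $u_\varepsilon\rightharpoonup G_L(x,\cdot)$ in $W^{1,q}_0(\Omega)$ (the limit being identified by uniqueness), whence $G_L(x,\cdot)\ge 0$ a.e.; alternatively one invokes the minimum principle of \cite{LSW63} or Corollary \ref{cor.strangmaxpr}. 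For (i), the formal computation is
\[
G_L(x_2,x_1)=\int_\Omega G_L(x_2,y)\,\mathrm{d}\delta_{x_1}(y)=\int_\Omega G_L(x_1,y)\,\mathrm{d}\delta_{x_2}(y)=G_L(x_1,x_2),
\]
the middle step being the formal self-adjointness $\int_\Omega G_L(x_2,\cdot)\,LG_L(x_1,\cdot)=\int_\Omega G_L(x_1,\cdot)\,LG_L(x_2,\cdot)$ via integration by parts. Since $G_L(x_i,\cdot)\notin W^{1,2}(\Omega)$ this must be made rigorous by running the same mollification scheme: integrate by parts (now legitimately) for the smooth energy solutions $u_\varepsilon^i$ of $Lu_\varepsilon^i=\rho_\varepsilon^i$ and pass to the limit using the uniform $W^{1,q}$ estimates, $u_\varepsilon^i\to G_L(x_i,\cdot)$ in $L^s$, and the fact that $G_L(x_2,\cdot)$ is continuous near $x_1$; this is classical and contained in \cite{LSW63}.

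For (ii), write $v:=G_{L^2}(x,\cdot)$ and $w:=Lv$. By definition of the Navier problem \eqref{eq:FOLLANDSFRIEND} one has $\int_\Omega Lv\,L\varphi\,\mathrm{d}x=\varphi(x)$ for all $\varphi\in C_0^\infty(\Omega)$, i.e. $\int_\Omega w\,L\varphi\,\mathrm{d}x=\varphi(x)$, so $Lw=\delta_x$ distributionally; together with $w=0$ on $\partial\Omega$ and uniqueness of the Dirichlet Green's function (\cite{LSW63}) this forces $w=G_L(x,\cdot)$, proving $LG_{L^2}(x,\cdot)=G_L(x,\cdot)\in W^{1,q}_0(\Omega)$. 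The representation \eqref{eq:GL^2pointwise} is then the Fubini identity for the composition of the Green operator of $L$ with itself, applied to a smooth right-hand side; the kernel $(z,y)\mapsto G_L(x,z)G_L(z,y)$ is integrable because each factor lies in every $L^s$, $s<\infty$, which identifies it as the kernel of $G_{L^2}$. Finally, since $Lv=G_L(x,\cdot)\in W^{1,q}(\Omega)$ and $v=0$ on $\partial\Omega$, elliptic $L^q$-regularity for the smooth operator $L$ on the smooth domain $\Omega$ upgrades $v=G_{L^2}(x,\cdot)$ to $W^{3,q}(\Omega)\cap W^{1,q}_0(\Omega)$. Item (iii) is then immediate from \eqref{eq:GL^2pointwise} and (i): $G_{L^2}(x,y)=\int_\Omega G_L(x,z)G_L(z,y)\,\mathrm{d}z=\int_\Omega G_L(z,x)G_L(y,z)\,\mathrm{d}z=G_{L^2}(y,x)$.

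The main obstacle is the sub-energy regularity $q<2$: because $G_L(x,\cdot)\notin W^{1,2}(\Omega)$ one cannot test the equations directly against the Green's functions, so each symmetry and identification step has to be routed through a mollification of the Dirac mass, with the uniform $W^{1,q}$ a priori bounds supplying the compactness needed to pass to the limit. Everything else is bookkeeping and standard elliptic theory, and all of it is documented in \cite{LSW63}.
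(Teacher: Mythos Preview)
Your proposal is correct and follows essentially the same route as the paper: you reduce all statements about $G_{L^2}$ to $G_L$ via the identification $LG_{L^2}(x,\cdot)=G_L(x,\cdot)$ obtained from uniqueness for the Dirichlet problem, then read off the integral representation and (iii) from (i), exactly as the paper does. The only cosmetic differences are that you spell out mollification arguments for (i) and (iv) where the paper simply cites \cite{LSW63} and \cite{GrueterWidman}, and that the paper derives \eqref{eq:GL^2pointwise} slightly more directly by writing $G_{L^2}(x,y)=\int_\Omega G_L(y,z)\,LG_{L^2}(x,z)\,\mathrm{d}z$ (Green representation of the solution to $Lv=G_L(x,\cdot)$, $v|_{\partial\Omega}=0$) rather than invoking operator composition.
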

\begin{proof}
    The claimed regularity of $G_L$ is a consequence of \cite[Chapter 8]{LSW63}. Assertion (i) was already observed in \cite[p.62]{LSW63}. %More generally it can be  seen as a special case of \cite[Theorem 1.3]{GrueterWidman} with $a^{ij}=a^{ji}$.
   Assertion (ii) follows from the fact that $v := L G_{L^2}(x,\cdot)$ must be a solution of 
    \begin{equation}
    \begin{cases}
         L v  = \delta_x & \textrm{in $\Omega$} \\
         v  = 0 & \textrm{on $\partial \Omega$}
    \end{cases}
    \end{equation}
    which implies by uniqueness that 
   $
        v = G_L(x,\cdot).
 $
    In particular, we have
    \begin{equation}
        G_{L^2}(x,y) = \int_\Omega G_L(y,z) L G_{L^2}(x,z)  \; \mathrm{d}z  = \int_\Omega G_L(y,z) v(x,z) \; \mathrm{d}z = \int_\Omega G_L(y,z) G_L(x,z) \; \mathrm{d}z.
    \end{equation}
    We infer also that $L G_{L^2}(x,\cdot) = G_L(x,\cdot) \in W_0^{1,q}(\Omega)$ for each $q \in [1,2)$. Elliptic regularity yields $G_{L^2}(x,\cdot) \in W^{3,q}(\Omega)$ for any $q \in (1,2)$ (and as a consequence also for $q= 1$).  
    (iii) follows from (i) and (ii). (iv) is a consequence of \cite[Theorem 1.1]{GrueterWidman}.
\end{proof}

An important observation is that both fundamental solutions can (up to smoother perturbations) be represented in terms of the function 
\begin{equation}\label{eq:PSIIX}
    \psi_x  : \mathbb{R}^2 \rightarrow \mathbb{R}, \quad y \mapsto A(y)^{-1}(y- x) \cdot (y-x)  .
\end{equation}
This representation is described by the following lemma. 

\begin{lemma}[Connection between $\psi_x$ and Green's function] \label{lem:2.4} Let $L,G_L, G_{L^2}$ be as above. 
%There exists   a Green's function $F_{L}$ for $L$ and a Green's function $F_{L^2}$ for $L^2$. Moreover,
    \begin{itemize}
        \item[$\mathrm{(i)}$] There exists $c_1 \in C^{\infty}(\overline{\Omega}; (0,\infty) )$ such that $$G_L(x,y) =- c_1(x) \log(\psi_x(y)) + f_1(x,y),$$ for some $f_1: \Omega \times \Omega \rightarrow \mathbb{R}$ such that $f_1(x,\cdot) \in W^{1,p}_{loc}(\Omega)$ for any $p \in (1,\infty)$. Moreover, for any $\Omega' \subset \subset \Omega$  one has $\sup_{x \in \Omega'} ||f_1(x,\cdot)||_{W^{1,p}(\Omega')} < \infty$.
        \item[$\mathrm{(ii)}$] For $c_1$ as in point $\mathrm{(i)}$ one has $$G_{L^2}(x,y) =  c_1(x) \psi_x(y)\log \psi_x(y) + f_2(x,y),$$ for some $f_2: \Omega \times \Omega \rightarrow \mathbb{R}$ such that $f_2(x,\cdot) \in W^{3,p}_{loc}(\Omega)$ for any $p \in (1, \infty)$. Moreover, for any $\Omega' \subset \subset \Omega$  one has $\sup_{x \in \Omega'} ||f_2(x,\cdot)||_{W^{3,p}(\Omega)} < \infty$.
    
    \end{itemize}
\end{lemma}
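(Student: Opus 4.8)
The plan is to first establish (i) for the second-order operator $L$ and then bootstrap to (ii) via the identity $G_{L^2}(x,\cdot) = L^{-1} G_L(x,\cdot)$ from Lemma \ref{lemmaBasicGreen}(ii), i.e.\ by solving $L G_{L^2}(x,\cdot) = G_L(x,\cdot)$ with zero Dirichlet data. For (i) the natural candidate for the singular part is dictated by a parametrix construction: freezing coefficients at the pole $y = x$, the operator $L$ has leading symbol $\xi \mapsto A(x)\xi\cdot\xi$, whose fundamental solution is $-\tfrac{1}{4\pi\sqrt{\det A(x)}}\log\big(A(x)^{-1}(y-x)\cdot(y-x)\big)$ (up to an additive constant). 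This motivates setting $c_1(x) := \tfrac{1}{4\pi\sqrt{\det A(x)}} \in C^\infty(\overline\Omega;(0,\infty))$ and $f_1(x,y) := G_L(x,y) + c_1(x)\log\psi_x(y)$. I would then compute $L_y\big(-c_1(x)\log\psi_x(y)\big)$ explicitly: since $\psi_x(y) = A(y)^{-1}(y-x)\cdot(y-x)$ vanishes to second order at $y=x$ and $A$ is smooth, one checks that $L_y\big(c_1(x)\log\psi_x(y)\big) = \delta_x + g_1(x,\cdot)$ where $g_1(x,\cdot) \in L^p_{loc}(\Omega)$ for every $p < \infty$ with the claimed uniformity in $x \in \Omega'$. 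The key point here is that the $|y-x|^{-2}$-type singularities produced by differentiating $\log\psi_x$ twice cancel at leading order precisely because $c_1(x)$ was chosen to match the frozen-coefficient fundamental solution, and the next-order terms are only $O(|y-x|^{-1})$, hence in every $L^p_{loc}$; this is the computation I expect to be the most delicate bookkeeping. Given this, $f_1(x,\cdot)$ solves $L_y f_1(x,\cdot) = -g_1(x,\cdot) \in L^p_{loc}(\Omega)$, and since $G_L(x,\cdot), \log\psi_x \in W^{1,q}_0$-type spaces near $\partial\Omega$ with $f_1(x,\cdot)$ having controlled boundary behavior, interior elliptic regularity (e.g.\ Calderón–Zygmund estimates for $L$, using $A \in C^\infty$) upgrades this to $f_1(x,\cdot) \in W^{2,p}_{loc}(\Omega) \hookrightarrow W^{1,p}_{loc}(\Omega)$, with the stated uniform bound on $\Omega' \subset\subset \Omega$ coming from the uniform $L^p(\Omega')$-bound on $g_1(x,\cdot)$ together with the fact that $\|G_L(x,\cdot)\|_{W^{1,q}(\Omega)}$ and $\|\log\psi_x\|_{W^{1,p}(\Omega')}$ are bounded uniformly in $x \in \Omega'$.

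For (ii), I would define $f_2(x,y) := G_{L^2}(x,y) - c_1(x)\psi_x(y)\log\psi_x(y)$ and show $L_y\big(c_1(x)\psi_x(y)\log\psi_x(y)\big) = -c_1(x)\log\psi_x(y) + g_2(x,\cdot)$ with $g_2(x,\cdot)$ in a good space. Heuristically this is the statement that applying $L_y$ to the "antiderivative" $\psi\log\psi$ of $\log\psi$ returns $\log\psi$ plus lower-order terms; concretely, $L_y(\psi_x\log\psi_x)$ involves $\nabla\psi_x \cdot \nabla\psi_x / \psi_x$ (a bounded term, since $|\nabla\psi_x|^2 \sim |y-x|^2 \sim \psi_x$), $\log\psi_x \cdot L\psi_x$ (bounded times $\log$), and $\log\psi_x \cdot \tfrac{|\nabla\psi_x|^2}{\psi_x}$-type cancellations — careful tracking shows the $-c_1(x)\log\psi_x(y)$ term emerges with the remainder $g_2(x,\cdot)$ bounded (indeed $C^{0,\alpha}_{loc}$ or at least $L^\infty_{loc}$), uniformly on $\Omega'$. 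Consequently $L_y f_2(x,\cdot) = L_y G_{L^2}(x,\cdot) - L_y(c_1(x)\psi_x\log\psi_x) = G_L(x,\cdot) + c_1(x)\log\psi_x(\cdot) - g_2(x,\cdot) = f_1(x,\cdot) - g_2(x,\cdot)$ by the definition of $f_1$ in (i). Since $f_1(x,\cdot) \in W^{1,p}_{loc}$ and $g_2(x,\cdot) \in L^p_{loc}$ (even $L^\infty_{loc}$), elliptic regularity for $L$ gives $f_2(x,\cdot) \in W^{3,p}_{loc}(\Omega)$, and the uniform bounds propagate through the same regularity estimates from the uniform control of $f_1(x,\cdot)$ in (i) and of $g_2(x,\cdot)$. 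The boundary condition $G_{L^2}(x,\cdot) = 0$ on $\partial\Omega$ together with the fact that $\psi_x\log\psi_x$ and its first derivatives are bounded near $\partial\Omega$ (for $x \in \Omega' \subset\subset \Omega$) gives the global $W^{3,p}(\Omega)$ statement in (ii).

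The main obstacle is the explicit-but-intricate computation of $L_y(\log\psi_x)$ and $L_y(\psi_x\log\psi_x)$ and the verification that the genuinely singular contributions cancel with exactly the coefficient $c_1(x)$, leaving remainders in $L^p_{loc}$ respectively $L^\infty_{loc}$ with bounds uniform in the pole over compact subsets. This is where the smoothness (or $C^{4,\alpha}$) hypothesis on $A$ is used: one needs enough derivatives of $A$ to control the error terms coming from the difference between the frozen-coefficient operator $-\divi(A(x)\nabla\,\cdot\,)$ and the true operator $-\divi(A(y)\nabla\,\cdot\,)$. A clean way to organize this is to work in geodesic-type coordinates adapted to the metric $\psi_x$ (equivalently, to note $\psi_x(y) = |T_x(y-x)|^2 + O(|y-x|^3)$ for a suitable linear map $T_x$ with $T_x^\top T_x = A(x)^{-1}$), reducing the leading computation to the known constant-coefficient case $V(x) = \tfrac{1}{8\pi}|x|^2\log|x|$ for the bilaplacian and absorbing everything else into remainders; I would carry this out in detail in the body of the proof.
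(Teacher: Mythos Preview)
Your overall strategy---parametrix construction via frozen coefficients, explicit computation of $L_y(\log\psi_x)$ and $L_y(\psi_x\log\psi_x)$, then elliptic regularity for the remainders---is exactly the paper's approach, and the identification $c_1(x)=\tfrac{1}{4\pi\sqrt{\det A(x)}}$ is correct (the paper writes it as $1/d_1(x)$ with $d_1(x)=4\pi\fint_{\partial B_1}(A(x)^{-1}z\cdot z)^{-1}\,d\mathcal H^1$, which evaluates to $4\pi\sqrt{\det A(x)}$).

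There is, however, a concrete error in your treatment of the remainder in part (i). You assert that after the leading $|y-x|^{-2}$ cancellation, ``the next-order terms are only $O(|y-x|^{-1})$, hence in every $L^p_{loc}$.'' In dimension $n=2$ this is false: $|y-x|^{-1}\in L^p_{loc}(\mathbb R^2)$ only for $p<2$, since $\int_0^1 r^{-p}\cdot r\,dr$ diverges for $p\ge 2$. Consequently your claim that $f_1(x,\cdot)\in W^{2,p}_{loc}$ for all $p$ does not follow from $Lf_1=-g_1$ with $g_1\in L^p$. The lemma only asserts $f_1(x,\cdot)\in W^{1,p}_{loc}$, which is weaker---and the paper obtains exactly this weaker conclusion by a different route: it does \emph{not} try to place the remainder in $L^p$ at all, but instead shows that distributionally
\[
-\mathrm{div}(A\nabla(\log\psi_x+d_1(x)G_L(x,\cdot)))=\mathrm{div}(h_x+2r(x,\cdot))
\]
with $h_x+2r(x,\cdot)\in L^\infty(\Omega)$, i.e.\ the right-hand side lies in $W^{-1,\infty}$. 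Regularity theory for $Lu=\mathrm{div}(F)$ with $F\in L^\infty$ (the paper cites \cite{AQ02}) then gives $u\in W^{1,p}_{loc}$ for every $p<\infty$ directly. Your route can be repaired: from $g_1\in L^q_{loc}$ for $q<2$ you get $f_1\in W^{2,q}_{loc}$, and then Sobolev embedding $W^{2,q}(\mathbb R^2)\hookrightarrow W^{1,q^*}$ with $q^*\to\infty$ as $q\to 2^-$ recovers $W^{1,p}_{loc}$ for all $p$. But the divergence-form trick is cleaner and is what the paper does.

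For part (ii) your scheme is the paper's scheme. One caution: the identity you write, $L_y\big(c_1(x)\psi_x\log\psi_x\big)=-c_1(x)\log\psi_x+g_2$, is off by a constant---the paper's explicit computation gives $\mathrm{div}(A\nabla(\psi_x\log\psi_x))=4\log\psi_x+(\text{terms in }W^{1,p}_{loc})$, so the coefficient relating $G_{L^2}$ and $\psi_x\log\psi_x$ comes out as $\tfrac{1}{4d_1(x)}$ rather than $c_1(x)=\tfrac{1}{d_1(x)}$. This affects only the precise constant, not the regularity of $f_2$, so your bootstrapping argument (which is identical to the paper's) still yields $f_2(x,\cdot)\in W^{3,p}_{loc}$.
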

\begin{proof}
We define only for this proof $F = G_{L}$ and $G = G_{L^2}$.  First we prove point (i).
One readily checks that for $y \in \Omega$ one has
%\begin{equation}
%    \partial_{y_i} \psi_x(x,y)  = 2 A(y)^{-1} (y-x) \cdot  e_i + \partial_iA^{-1}(y) (y-x) \cdot  (y-x) . 
%\end{equation}
%    Thus one obtains
    $$\nabla_y \log \psi_x (y)= \frac{2}{\psi_x(y)} A(y)^{-1} (y-x)+ h_x(y),$$
where $h_x= (h_{1,x},h_{2,x})^T$ is given by  
   % Notice that 
   % \begin{equation}
   %     \partial_{y_i} \log(\psi_x(y)) = \frac{1}{\psi_x(y)} [2 (A(y)^{-1} (y-x), e_i) + (\partial_iA^{-1}(y) (y-x), y-x)].
   % \end{equation}
%Define for $i = 1,2$ the function 
$
    h_{i,x}(y) := \frac{1}{\psi_x(y)} \partial_iA^{-1}(y) (y-x) \cdot (y-x) 
$ for $i = 1,2$. 
Since $A^{-1}$ is also uniformly elliptic, we have $\psi_x(y) \geq \theta |x-y|^2$ for some $\theta >0$. As a consequence, $h_{x} \in L^\infty(\mathbb{R}^2)$ and $\log (\psi_x) \in W^{1,3/2}_{loc}(\mathbb{R}^2)$. 
For the rest of the proof we denote by $\mathrm{div}(h_x)$ the distributional divergence of $h_x$. We next compute $-\mathrm{div}_y(A \nabla_y \log\psi_x )$ in the sense of distributions. For simplicity of notation we use the convention that unless stated otherwise all differential operators fall on the $y$-variable. Let $\varphi \in C_0^\infty(\mathbb{R}^2)$ be arbitrary. %Then, using that $\mathrm{div}(\frac{1}{\psi_x(y)}(y-x)) = \frac{1}{\psi_x(y)}(h_{1,x}(y) + h_{2,x}(y)) $ we find
\begin{align}
   & \int_{\mathbb{R}^2} \nabla \varphi \cdot A \nabla \log \psi_x  \; \mathrm{d}y  = \int_{\mathbb{R}^2}  \nabla \varphi(y) \cdot  A(y) \tfrac{2}{\psi_x(y)} A(y)^{-1} (y-x)  \; \mathrm{d}y + \int_{\mathbb{R}^2}  \nabla \varphi \cdot  h_x \; \mathrm{d}y
    \\ & = \lim_{\varepsilon \rightarrow 0+ } \int_{\mathbb{R}^2 \setminus B_\varepsilon(x)}   \nabla \varphi(y)  \cdot \tfrac{1}{\psi_x(y)} (y-x)   \; \mathrm{d}y  + \mathrm{div}(h_x)(\varphi). \label{hoffentlichendlcchfertig}
   % \\ & =  \lim_{\varepsilon \rightarrow 0+ } - \frac{1}{\varepsilon}\int_{\partial B_\varepsilon(x)} \varphi(y) \frac{(y-x,y-x)}{(A(y)^{-1} (y-x), (y-x)) }   \; \mathrm{d}y + \int_{\mathbb{R}^2} \frac{h_{1,x}(y)+ h_{2,x}(y)}{} \mathrm{div}(h_x)(\varphi) 
    %\\ & = -   2\pi   \lim_{\varepsilon \rightarrow 0+ }\fint_{\partial B_1(0)} \varphi(x + \varepsilon y) \frac{1}{(A(x + \varepsilon y)^{-1} y, y)} \; \mathrm{d}y + \mathrm{div}(h_x)(\varphi).
   % \\ & = - d_1(x) \varphi(x) + \mathrm{div}(h_x) (\varphi) ,
\end{align}
Now we write 
\begin{equation}
    \frac{1}{\psi_x(y)} (y-x)=  \frac{1}{\psi_y(x)}(y-x)  + \frac{\psi_x(y)-\psi_y(x)}{\psi_x(y)\psi_y(x)} (y-x). 
\end{equation}
Using that that $|\psi_x(y)-\psi_y(x)| \leq C |y-x|^3$ and $\psi_x(y),\psi_y(x) \geq c |y-x|^2$ for some $c,C > 0$ we write 
\begin{equation}
    \frac{1}{\psi_x(y)} (y-x)=  \frac{1}{\psi_y(x)}(y-x)  + r(x,y)
\end{equation}
for some uniformly bounded Borel measurable function $r : (\mathbb{R}^2 \times \mathbb{R}^2) \setminus \{(x,x): x \in \mathbb{R}^2 \} \rightarrow \mathbb{R}$. Applying this in \eqref{hoffentlichendlcchfertig} we obtain
\begin{equation}
     \int_{\mathbb{R}^2}  \nabla \varphi \cdot  A \nabla \log \psi_x  \; \mathrm{d}y  =  \lim_{\varepsilon \rightarrow 0+} \int_{\mathbb{R}^2 \setminus B_\varepsilon(x)} \frac{2}{ A(x)^{-1}(y-x) \cdot (y-x)}\nabla \varphi(y)  \cdot (y-x) \; \mathrm{d}y + \mathrm{div}(h_x + 2r(x,\cdot))(\varphi). 
\end{equation}
With $\mathrm{div}_y( \frac{1}{A(x)^{-1}(y-x) \cdot (y-x)} (y-x) ) = 0$ we infer
\begin{align}
     &\int_{\mathbb{R}^2} \nabla \varphi \cdot  A \nabla \log \psi_x  \; \mathrm{d}y  \\  & =  \lim_{\varepsilon \rightarrow 0+} - \frac{1}{\varepsilon} \int_{\partial B_\varepsilon(x)} \frac{2\varphi(y)}{ A(x)^{-1}(y-x) \cdot (y-x)}(y-x) \cdot (y-x) \; \mathrm{d}\mathcal{H}^1(y) + \mathrm{div}(h_x + 2r(x,\cdot))(\varphi). 
    \\ & = - 4\pi \lim_{\varepsilon \rightarrow 0+} \fint_{\partial B_1(0)} \frac{\varphi(x+\varepsilon z)}{ A(x)^{-1} z \cdot  z} \; \mathrm{d}\mathcal{H}^1(z)  + \mathrm{div}(h_x + r(x,\cdot))(\varphi) = -d_1(x) \varphi(x) + \mathrm{div}(h_x + 2r(x,\cdot))(\varphi), 
\end{align}
where $d_1(x) := 4\pi \fint_{\partial B_1(0)} \frac{1}{ A(x)^{-1}z \cdot z} \; \mathrm{d} \mathcal{H}^1(z)$ (which clearly lies in $C^{\infty}(\overline{\Omega}; (0,\infty))$). Hence $\log \psi_x$ is a distributional solution of 
\begin{equation}
    -\mathrm{div}(A (\log \psi_x) ) = -d_1(x) \delta_x + \mathrm{div}(h_x + 2 r(x,\cdot) ).
\end{equation}
This implies that in the sense of distibutions on $\Omega$ one has
\begin{equation}
    -\mathrm{div}(A (\log \psi_x+ d_1(x) F(x,\cdot) ) )= \mathrm{div}(h_x + 2 r(x,\cdot) ). 
\end{equation}
    By regularity theory (cf. \cite{AQ02}) and the fact that $h_x+2 r(x,\cdot) \in L^\infty(\Omega)$ we obtain that 
    \begin{equation}
        \log \psi_x +d_1(x) F(x,\cdot) \in W^{1,p}_{loc}( \Omega)  \quad \textrm{for all } p \in [1,\infty),
    \end{equation}
    which implies point (i), with $c_1 = \frac{1}{d_1}$ and $f_1(x,y) := F(x,y) +  c_1(x) \log \psi_x(y)$. For $\Omega' \subset \subset \Omega$ one can bound the boundary values of $\log \psi_x$ uniformly in $x\in \Omega'$ and this yields the uniform estimate of the $W^{1,p}(\Omega')$-norm of $f_1(x,\cdot)$.  
    %The claim follows (defining $c_1 := -\frac{1}{d_1}<0$). 
    Now we prove point (ii). Let $p \in (1,\infty).$ We compute, defining $$w_i(y) :=  \partial_i A^{-1}(y) (y-x) \cdot  (y-x), \qquad i=1,2 \qquad \textrm{and $w= (w_1,w_2)^T$},$$
    \begin{align}
    & \mathrm{div} (A \nabla (\psi_x \log \psi_x ))(y)   = \mathrm{div} \{A(y) (\log \psi_x(y) + 1) [2 A(y)^{-1} (y-x)  + w(y) ] \} \\ & =  \mathrm{div} \{ (\log \psi_x(y)  + 1) (2 (y-x) + A(y) w(y) ) \}  \\ & =  ( \log \psi_x(y) + 1) [4+ \mathrm{div}(A(y) w(y))]  + \tfrac{1}{\psi_x(y)} [2 A(y)^{-1} (y-x) + w(y)] \cdot [2 (y-x) + A(y) w(y) ] 
     %\\ & \qquad + \frac{1}{\psi_x(y)} (A(y)^{-1} (y-x), 2 (y-x) + A(y) w(y) ) + \frac{1}{\psi_x(y)} (w(y), 2(y-x) + A(y) w(y))  
       % \\  & = 4 \log \psi_x(y) + 4 + ( \log \psi_x(y) + 1) \mathrm{div}(A(y) w(y)) + \frac{1}{\psi_x(y)} (2 \psi_x(y) + (y-x,w(y))) 
       % \\ & + \frac{2}{\psi_x(y) } (y-x, w(y)) + \frac{(w(y), A(y)w(y))}{\psi_x(y)}
        \\ & = 4 \log \psi_x(y) + 8 +  ( \log \psi_x(y) + 1) \mathrm{div}(A(y) w(y)) + \tfrac{4}{\psi_x(y)}(y-x) \cdot  w(y)   + \tfrac{1}{\psi_x(y)} w(y) \cdot A(y)w(y). \label{eq:divAnablazlogz}
    \end{align}
    Using that by point (i) $\log \psi_x =- d_1(x)F(x,\cdot) + f_1(x,\cdot)$ for some $d_1 \in C^{\infty}(\overline{\Omega})$ and $f_1(x,\cdot) \in W^{1,p}_{loc}(\Omega)$ we infer 
    \begin{align}
        \mathrm{div}(A \nabla (\psi_x \log \psi_x))(y)  & = -  4 d_1(x) F(x,y) + 4 f_1(x,y) +  8   \\  & \quad +  ( \log \psi_x(y) + 1) \mathrm{div}(A(y) w(y))  + \tfrac{4}{\psi_x(y)}(y-x) \cdot  w(y)  + \tfrac{1}{\psi_x(y)} w(y)\cdot A(y)w(y).
    \end{align}
One readily checks that $y \mapsto  8+  ( \log \psi_x(y) + 1) \mathrm{div}(A(y) w(y))  + \tfrac{4}{\psi_x(y)}(y-x) \cdot  w(y)  + \tfrac{1}{\psi_x(y)} w(y)\cdot A(y)w(y)$ lies in $W^{1,p}_{loc}(\Omega)$ for all $p \in [1,\infty)$. In particular, we have shown 
$
    - L (\psi_x \log \psi_x) = -4 d_1(x)  F(x, \cdot) + h(x,y)
$
for some $h(x,\cdot) \in W^{1,p}_{loc}(\Omega)$, any $p \in [1,\infty)$. Notice further that by Lemma \ref{lemmaBasicGreen}  $F(x,\cdot) = L G_{L^2}(x,\cdot)$. Therefore we have 
\begin{equation}
    L ( \psi_x \log \psi_x - 4 d_1(x) G_{L^2}(x,\cdot) ) = - h(x,\cdot) \in W^{1,p}_{loc}(\Omega)
\end{equation}
for any $p \in [1,\infty)$. Using elliptic regularity we infer that 
    $
       \psi_x \log \psi_x - 4 d_1(x) G_{L^2}(x,\cdot) \in W^{3,p}_{loc}(\Omega)
$ for any $p \in (1,\infty)$ (and as a consequence also for $p= 1$). 
    The claim follows. 
\end{proof}

\begin{lemma}[Second derivative of $G_{L^2}$]\label{lem:ReguFS2D}
  Let  $G_L,G_{L^2}$ be as above. Then $G_L,G_{L^2} \in C^\infty(\Omega \times \Omega \setminus \{(x,x) : x \in \Omega \} )$. Moreover, for each $\Omega' \subset \subset \Omega$ there exists some $C = C(\Omega') > 0$ such that for all $(x,y) \in \Omega' \times \Omega'$ such that $x \neq y$ in  one has
     $
          |D_x^2 G_{L^2}(x,y)| + |D_y^2 G_{L^2}(x,y)|  \leq C (|\log(y-x)| + 1).
   $
 % \begin{itemize}
    %  \item[(i)] $F_L(x, \cdot), F_L(\cdot,x) \in W_{loc}^{1,p}( \mathbb{R}^2)$ for all $p \in [1,2)$. 
      %\item[(ii)] $F_{L^2}(x,\cdot), F_{L^2}(\cdot,x) \in W_{loc}^{3,p}( \mathbb{R}^2)$ for all $p \in [1,2)$.
    %  \item[(iii)] 
  %\end{itemize}
\end{lemma}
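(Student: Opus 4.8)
The plan is to derive everything from the structural decomposition already established in Lemma \ref{lem:2.4}, together with standard interior elliptic estimates. First I would establish the smoothness claim $G_L, G_{L^2} \in C^\infty(\Omega \times \Omega \setminus \{(x,x)\})$. For fixed $x$, away from $y = x$ the function $G_L(x,\cdot)$ solves $LG_L(x,\cdot) = 0$ in a neighbourhood of any $y_0 \neq x$, so interior elliptic regularity (with $A \in C^\infty$) gives smoothness in $y$; smoothness in $x$ follows by the symmetry $G_L(x,y) = G_L(y,x)$ from Lemma \ref{lemmaBasicGreen}(i), and joint smoothness in $(x,y)$ then follows because $G_L$ solves an elliptic system in each variable with the other as a smooth parameter (one can also invoke hypoellipticity of the product operator $L_x L_y$, or simply differentiate the representation $G_{L^2}(x,y) = \int_\Omega G_L(x,z) G_L(z,y) \, \mathrm{d}z$ together with local uniform bounds on $G_L$ away from the diagonal). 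The same argument applied to $G_{L^2}(x,\cdot)$, which solves $L^2 G_{L^2}(x,\cdot) = 0$ away from $y = x$, upgrades it to $C^\infty$ off the diagonal as well.

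For the quantitative bound I would use Lemma \ref{lem:2.4}(ii): on a fixed $\Omega' \subset\subset \Omega$ one writes $G_{L^2}(x,y) = c_1(x)\, \psi_x(y)\log\psi_x(y) + f_2(x,y)$, where $c_1 \in C^\infty(\overline{\Omega})$ and $f_2(x,\cdot) \in W^{3,p}_{loc}(\Omega)$ with $\sup_{x \in \Omega'} \|f_2(x,\cdot)\|_{W^{3,p}(\Omega'')} < \infty$ for $\Omega' \subset\subset \Omega'' \subset\subset \Omega$. Choosing $p > 2$, Morrey's embedding gives $f_2(x,\cdot) \in C^{2,\alpha}_{loc}$ with a bound on $\|D^2_y f_2(x,\cdot)\|_{L^\infty(\Omega')}$ uniform in $x \in \Omega'$; in particular $D^2_y f_2$ is bounded on $\Omega' \times \Omega'$, which is far better than the claimed logarithmic bound. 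It then remains to estimate $D^2_y(\psi_x(y)\log\psi_x(y))$. Here $\psi_x(y) = A(y)^{-1}(y-x)\cdot(y-x)$ is smooth in $y$, vanishes to second order at $y = x$, and satisfies $\theta|y-x|^2 \le \psi_x(y) \le \Theta|y-x|^2$ with $|\nabla_y\psi_x| \le C|y-x|$, $|D^2_y\psi_x| \le C$ uniformly for $x,y \in \Omega'$ (using uniform ellipticity and $A \in C^\infty$). Writing $g(t) = t\log t$, one has $D^2_y(g\circ\psi_x) = g''(\psi_x)\, \nabla\psi_x \otimes \nabla\psi_x + g'(\psi_x)\, D^2\psi_x$; since $g'(t) = \log t + 1$ and $g''(t) = 1/t$, the first term is $O(|\nabla\psi_x|^2/\psi_x) = O(1)$ and the second is $O(|\log\psi_x| + 1) = O(|\log|y-x|| + 1)$. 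This yields $|D^2_y G_{L^2}(x,y)| \le C(|\log(y-x)| + 1)$ on $\Omega' \times \Omega'$. The bound for $D^2_x G_{L^2}$ follows either by the symmetry $G_{L^2}(x,y) = G_{L^2}(y,x)$ (Lemma \ref{lemmaBasicGreen}(iii)) or by running the identical computation with the roles of the variables exchanged, noting $\psi_x(y)$ is also smooth in $x$ with the same type of bounds (and $\nabla_x\psi_x$, $D^2_x\psi_x$ obey the analogous estimates).

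The main technical point — and the only place one must be a little careful — is the interchange of differentiation and the uniform-in-$x$ control: one must ensure that the $x$-dependence in $c_1(x)$ and, more importantly, in the estimate $\sup_{x\in\Omega'}\|f_2(x,\cdot)\|_{W^{3,p}}$ is genuinely uniform, which is exactly what the last sentence of Lemma \ref{lem:2.4}(ii) provides, and that the elementary bounds on $\psi_x$ and its derivatives are uniform on the compact set $\overline{\Omega'} \times \overline{\Omega'}$, which follows from continuity of $A$, $A^{-1}$ and their derivatives on $\overline{\Omega}$ together with uniform ellipticity. No genuinely hard estimate is involved: the lemma is essentially a corollary of Lemma \ref{lem:2.4}, the logarithmic loss coming entirely from the $g'(\psi_x) = \log\psi_x + 1$ factor multiplying the bounded Hessian $D^2\psi_x$.
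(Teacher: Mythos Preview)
Your proposal is correct and follows essentially the same approach as the paper: reduce via Lemma~\ref{lem:2.4}(ii) and the Sobolev embedding $W^{3,p}_{loc}\hookrightarrow C^2$ (for $p>2$) to estimating $D^2_y(\psi_x\log\psi_x)$, then compute this Hessian directly using $|\nabla_y\psi_x|\le C|x-y|$, $|D^2_y\psi_x|\le C$, and $\psi_x(y)\ge\theta|x-y|^2$, and finally obtain the $D^2_x$ bound by the symmetry of $G_{L^2}$ from Lemma~\ref{lemmaBasicGreen}(iii). Your chain-rule formulation with $g(t)=t\log t$ is just a repackaging of the paper's explicit computation in \eqref{vorcomputaation}.
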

\begin{proof} The claimed smoothness of $G_L$ (i.e. $G_L(x,\cdot) \in C^\infty(\Omega \setminus \{x \})$ for any $x \in \Omega$) is easily checked since for each $x \in \Omega$ one has $L G_{L}(x,\cdot) = 0$ distributionally on $\Omega \setminus \{ x \}$, see \eqref{eq:FOLLANDSFRIEND}. Here we used \cite[Theorem 6.33]{Folland} to obtain smoothness of such distributional solutions. The smoothness of $G_{L^2}(x,\cdot)$ on $\Omega \setminus \{x \}$ follows then from the fact that $L G_{L^2}(x, \cdot) = G_L(x,\cdot) \in C^\infty(\Omega \setminus \{ x \})$, cf. Lemma  \ref{lemmaBasicGreen}.
    %First we show (i). Notice that it suffices by symmetry to prove the assertion for $F_L(x,\cdot)$. 
Choose now $\Omega'$ as in the statement.    By Lemma \ref{lem:2.4}, symmetry (cf. Lemma \ref{lemmaBasicGreen}) and the fact that $W^{3,p}_{loc}(\Omega) \hookrightarrow C^2(\Omega)$ for any $p > 2$ it suffices to show the claimed estimates for $(x,y) \mapsto \psi_x(y)\log(\psi_x(y))$. 
   % To this end notice that for all $y \in \Omega \setminus \{ x \}$ one has
   % \begin{equation}
   %     \partial_{y_i} \log \psi_x(y) = \frac{1}{\psi_x(y)} [-(A(y)^{-1}(x-y),e_i) + (\partial_{y_i} A(y)^{-1}(x-y),x-y)]
   % \end{equation}
   % The fact that $\psi_x(y) \geq \theta |x-y|^2$ for some $\theta> 0$ implies that 
   % \begin{equation}
   %     |D_{y} \log \psi_x(y)| \leq \frac{C}{|x-y|}
   % \end{equation}
   % This readily implies the claimed integrability. For (ii) one can use similar calculations with $\psi_x \log \psi_x$. We leave this to the reader and instead prove (iii) rigorously. 
   Furthermore, by symmetry it suffices to show the estimate for $|D_y^2 [\psi_x(y) \log(\psi_x(y))]| $ for fixed $x \in\Omega'$ and $y \in \Omega' \setminus \{x \}$ considered as a variable. %Notice that $|D_y^2f_2(x,y)|$ can be disregarded as $D_y^2 f_2(x,\cdot) \in W^{1,3}(\Omega) \subset C(\overline{\Omega})$ 
    To this end observe that 
    %\begin{align}
    %    \partial^2_{y_iy_j} [\psi_x(y) \log \psi_x(y)]  & = \partial^2_{y_iy_j} \psi_x(y)  \log \psi_x(y) + \psi_x(y) \partial^2_{y_iy_j} \log \psi_x(y) \\ &  + \partial_{y_i} \psi_x(y) \partial_{y_j} \log(\psi_x(y)) + \partial_{y_j} \psi_x(y) \partial_{y_i} \log(\psi_x(y))
    %\end{align}
    %The last two summands evaluate to 
    %\begin{equation}
    % \partial_{y_i} \psi_x(y) \partial_{y_j} \log(\psi_x(y)) + \partial_{y_j} \psi_x(y) \partial_{y_i} \log(\psi_x(y)) =    \frac{2 \partial_{y_j} \psi_x(y) \partial_{y_i} \psi_x(y) }{\psi_x(y)}
    %\end{equation}
    %and the second summand yields 
    %\begin{equation}
  %\psi_x(y) \partial^2_{y_iy_j} \log \psi_x(y) = \partial^2_{y_iy_j} \psi_x(y) - \frac{\partial_{y_i}\psi_x(y)  \partial_{y_j} \psi_x(y) }{\psi_x(y) }.
   % \end{equation}
    %All in all we have 
    \begin{equation}\label{vorcomputaation}
          \partial^2_{y_iy_j} [\psi_x(y) \log \psi_x(y)]  = \partial^2_{y_iy_j} \psi_x(y)  (\log \psi_x(y) +  1) + \frac{\partial_{y_j} \psi_x(y) \partial_{y_i} \psi_x(y) }{ \psi_x(y)}.
    \end{equation}
    Using that by \eqref{eq:PSIIX} $\psi_x(y) \geq \theta |x-y|^2$  for some $ \theta > 0$ and for some $C',C''> 0$ one has
    %(in $\Omega'$)
 $  |\partial_{y_i} \psi_x(y) | \leq C' |x-y| $ and  $|\partial^2_{y_iy_j} \psi_x(y)| \leq C''$ we find
 $
        |D_y^2 [\psi_x(y) \log \psi_x(y)]| \leq C'' (|\log ( |\psi_x(y))|   + 1) + \frac{C'^2}{\theta} \leq C  ( \log|x-y| + 1), 
$
    where $C>0$ is chosen suitably large. 
\end{proof}
\section{An anisotropic version of Frehse's observation }\label{sec:FrehsesObservation}
%We call a locally bounded measurable function $\varphi : \Omega \rightarrow \mathbb{R}$ \emph{fully Lebesgue} if \emph {every} point $x \in \Omega$ is a Lebesgue point of $\varphi$.  
%Frehse's observation reads precisely as follows

We recall from the introduction Frehse's observation in two dimensions.
\begin{lemma}[Frehse's observation -- I]\label{lem_FrehseI}
    Let $V= V_{(-\Delta)^2}$ be the fundamental solution for $(-\Delta)^2$ in $\mathbb{R}^2$. Then $\Delta V$ has a logarithmic singularity at $x= 0$ but $\partial_{11}^2V- \partial_{22}^2 V$ and $\partial^2_{12} V$ are bounded functions on $\mathbb{R}^2 \setminus \{ 0 \}$. 
\end{lemma}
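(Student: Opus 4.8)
The plan is to argue by a direct computation based on the explicit formula for the fundamental solution recorded in \eqref{eq:Vdirect}, namely $V(x) = \tfrac{1}{8\pi}|x|^2\log|x|$ for $x \in \mathbb{R}^2 \setminus \{0\}$. Since $V$ is smooth away from the origin, all the derivatives below are classical on $\mathbb{R}^2 \setminus \{0\}$, and the only point is to track the precise form of the singular part near $x = 0$.

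First I would compute the Laplacian. Viewing $V$ as a radial function $f(r) = \tfrac{1}{8\pi} r^2 \log r$ and using $\Delta f = f'' + \tfrac1r f'$ in two dimensions, one obtains
\begin{equation}
  \Delta V(x) = \tfrac{1}{2\pi}\bigl(\log|x| + 1\bigr),
\end{equation}
which indeed has a logarithmic singularity at $x = 0$ (and is consistent with $\Delta^2 V = \delta_0$). Next I would compute the full Hessian: writing $u := |x|^2$ so that $V = \tfrac{1}{16\pi}\, u \log u$, one finds $\partial_i V = \tfrac{x_i}{8\pi}(\log u + 1)$ and hence
\begin{equation}
  \partial^2_{ij} V(x) = \frac{1}{8\pi}\left[ \delta_{ij}\bigl(2\log|x| + 1\bigr) + \frac{2 x_i x_j}{|x|^2}\right], \qquad i,j \in \{1,2\}.
\end{equation}

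The conclusion is then immediate from the structure of this formula: the logarithmic (and constant) part of $\partial^2_{ij} V$ is carried entirely by the term $\delta_{ij}(2\log|x|+1)$, i.e.\ it is a scalar multiple of the identity matrix, whereas the remainder $x \mapsto x_i x_j / |x|^2$ is homogeneous of degree zero and hence bounded on $\mathbb{R}^2 \setminus \{0\}$. Therefore the diagonal singular parts cancel in the trace-free combinations, and one is left with
\begin{equation}
  \partial^2_{11} V - \partial^2_{22} V = \frac{x_1^2 - x_2^2}{4\pi|x|^2}, \qquad \partial^2_{12} V = \frac{x_1 x_2}{4\pi|x|^2},
\end{equation}
both bounded since $|x_1^2 - x_2^2| \le |x|^2$ and $2|x_1 x_2| \le |x|^2$.

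I do not expect a genuine obstacle here; the whole content of the lemma is the structural observation, visible in the Hessian formula above, that the singular part of $D^2 V$ is \emph{isotropic} --- in fact it equals $\tfrac12 \Delta V \cdot I_2$ --- so it is annihilated by any trace-free linear combination of the entries of $D^2 V$. The only care needed is the bookkeeping of the normalization constant in \eqref{eq:Vdirect}; as a byproduct one reads off that $\partial^2_{ij} V - \tfrac12 \Delta V\, \delta_{ij} = \tfrac{1}{8\pi}\bigl(2 x_i x_j/|x|^2 - 1\bigr)$ is precisely the bounded remainder $M$ appearing in \eqref{eq:linkanosotropic}, which is the form in which this observation is actually used in the sequel.
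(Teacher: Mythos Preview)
Your proposal is correct and follows exactly the approach indicated in the paper, which simply states that the proof is via direct computation using the explicit formula $V(x) = \tfrac{1}{8\pi}|x|^2\log|x|$ from \eqref{eq:Vdirect}. Your computations are accurate and in fact flesh out the details the paper leaves implicit, including the structural observation that the singular part of $D^2V$ is isotropic.
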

The proof is via direct computation using the explicit formula for $V$, i.e. $V(x) = \frac{1}{8\pi} |x|^2 \log|x|$, cf.\ \eqref{eq:Vdirect}.
Another way to look at this to write 
\begin{equation}\label{eq:D^2F}
    D^2 V(x) = \frac{1}{2}\Delta V(x) \begin{pmatrix}
        1 & 0 \\ 0 & 1 
    \end{pmatrix} + \frac{1}{2} (\partial^2_{11} V- \partial^2_{22} V) \begin{pmatrix}
         1 & 0 \\ 0 & -1
    \end{pmatrix} + \partial^2_{12} V \begin{pmatrix}
        0 & 1 \\ 1 & 0 
    \end{pmatrix}
\end{equation}
and observe that the last two summands are bounded by  Lemma \ref{lem_FrehseI}. One can interpret Frehse's observation now in the following way: Up to a bounded perturbation one has that $D^2V$ is determined by $ \frac{1}{2}\Delta V I_{2 \times 2}$. With this observation one can reformulate Frehse's observation. 
\begin{lemma}[Frehse's observation -- II]
Let $V = V_{(-\Delta)^2}$ be the fundamental solution for $(-\Delta)^2$ in $\mathbb{R}^2$. Then 
\begin{equation}
    D^2 V(x)  = \frac{1}{2}\Delta V(x) I_{2}+ N(x)  \quad \textrm{for all } x \in \mathbb{R}^2\setminus \{0\},
\end{equation}
where $N : \mathbb{R}^2  \rightarrow \mathbb{R}^{2\times 2}_{sym}$ is a bounded Borel measurable function that is smooth on $\mathbb{R}^2 \setminus \{0\}$. 
\end{lemma}

Frehse's observation has consequences for Green's function $G_{(-\Delta)^2}$ in $\Omega$. Indeed, one readily checks 
\begin{equation}
    G_{(-\Delta)^2}(x,y) = V(x-y) + \varphi_x(y), \quad \textrm{where $\varphi_x$ solves} \qquad \begin{cases}
        (-\Delta)^2 \varphi_x = 0  & \textrm{in $\Omega$} \\ \varphi_x =- V(x-\cdot)   & \textrm{on $\partial \Omega$}
    \end{cases}.
\end{equation}
For any subdomain $\Omega' \subset\subset \Omega$ one has smoothness of $(x,y) \mapsto \varphi_x(y)$ on $\Omega' \times \Omega'$. Due to this fact the behavior of $D_y^2G_{(-\Delta)^2}$ is up to a smooth perturbation determined by $D^2 V$, yielding that 
\begin{equation}\label{eq:reprgreedn}
    D_y^2 G_{(-\Delta)^2}(x,y) = \frac{1}{2}\Delta_y G_{(-\Delta)^2}(x,y) I_{2} + N(x,y) \quad \textrm{for all } (x,y) \in \Omega \times \Omega : x \neq y,
\end{equation}
where $N: \Omega \times \Omega \rightarrow \mathbb{R}^{2 \times 2}_{sym}$ is locally(!) bounded, Borel measurable and smooth on $\Omega \times \Omega \setminus \{ (x,x) : x \in \Omega \}$.

%Now we want an a
For this formula one can indeed formulate an anisotropic generalization. One would like to replace $-\Delta$ by $L=-\mathrm{div}(A \nabla)$ and must replace $I_{2}$ by a suitable matrix such that the remainder term indeed defines a bounded function.  Pursuing this we obtain %anisotropic Frehse-lemma reads as follows

\begin{prop}[Anisotropic Green's function]\label{thm:Frehse}
%Let $A \in C^{4,\alpha}( \overline{\Omega}; \mathbb{R}^{2\times 2})$ be symmetric, bounded and uniformly elliptic.
%there exists $M \in C^\infty(\mathbb{R}^2, \mathbb{R}^{2\times 2})$
There exists a function $N : \Omega \times \Omega  \rightarrow \mathbb{R}^{2\times 2}_{sym}$ which is Borel measurable, locally bounded and smooth on $\Omega \times \Omega \setminus \{(x,x): x \in \Omega \}$ such that $G_{L^2}$ satisfies 
\begin{equation}\label{eq:A(x)invrplbyA(yinv}
    D_y^2 G_{L^2}(x,y) = \frac{1}{2}\mathrm{div}_y(A(y)\nabla_y G_{L^2}(x,y)) A(x)^{-1} + N(x,y) \quad \textrm{for all } (x,y) \in \Omega \times \Omega : x \neq y.
\end{equation}
 % In particular there exists a neighborhood of $U \subset \mathbb{R}^2$ of $0$  such that $M(x)$ is positive definite on $U$.
\end{prop}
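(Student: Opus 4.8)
The plan is to reduce everything to a local statement about the model function $\psi_x(y)\log\psi_x(y)$ and then exploit a clever choice of basis for $\mathbb{R}^{2\times 2}_{\mathrm{sym}}$ adapted to the metric $g_A$. By Lemma~\ref{lem:2.4}(ii) and Lemma~\ref{lem:ReguFS2D} (using $W^{3,p}_{loc}\hookrightarrow C^2$ for $p>2$), on any $\Omega'\subset\subset\Omega$ one has $G_{L^2}(x,y)=c_1(x)\,\psi_x(y)\log\psi_x(y)+f_2(x,y)$ with $f_2(x,\cdot)\in C^2$ locally uniformly in $x$; since $f_2$ contributes only a locally bounded smooth term to both sides of \eqref{eq:A(x)invrplbyA(yinv}, it suffices to prove the identity with $G_{L^2}(x,\cdot)$ replaced by $\Phi_x(y):=\psi_x(y)\log\psi_x(y)$, i.e. to show that $D_y^2\Phi_x(y)-\tfrac12\,\mathrm{div}_y(A(y)\nabla_y\Phi_x(y))\,A(x)^{-1}$ is locally bounded (and smooth off the diagonal, which is clear).

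Next I would set up the algebraic heart of the matter. Fix $y\in\overline{\Omega}$ and work in the inner product $g_A(y)(M_1,M_2)=\mathrm{tr}(A(y)^{-1}M_1M_2A(y)^{-1})$ on $\mathbb{R}^{2\times 2}_{\mathrm{sym}}$. As announced in the introduction, choose a $g_A$-orthogonal frame $\{A_1(y)=A(y),A_2(y),A_3(y)\}$ depending smoothly on $y$; this is possible by Gram–Schmidt starting from $A(y)$ since $A\in C^\infty(\overline{\Omega})$ and $g_A$ is smooth and positive definite. The claim I would isolate as the key lemma is: for $i=2,3$ the scalar function $y\mapsto A_i(y):D_y^2\Phi_x(y)$ is locally bounded, equivalently $\mathrm{div}_y(A_i(y)\nabla_y\Phi_x(y))$ differs from $A_i:D_y^2\Phi_x$ by a locally bounded term (the difference involves only first derivatives of $\Phi_x$ and of $A_i$, hence is $O(|\log|x-y||)$, in fact bounded after a further inspection), and moreover $\mathrm{div}_y(A_i(y)\nabla_y\Phi_x(y))$ is bounded for $i=2,3$. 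This is the anisotropic analogue of the statement that $\partial_{11}^2V-\partial_{22}^2 V$ and $\partial_{12}^2V$ are bounded: $A_2,A_3$ play the role of $\mathrm{diag}(1,-1)$ and $\binom{0\ 1}{1\ 0}$, and $A_1=A$ plays the role of $I_2$. I expect this to follow from the explicit formula \eqref{vorcomputaation}, $\partial^2_{y_iy_j}\Phi_x=\partial^2_{y_iy_j}\psi_x\,(\log\psi_x+1)+\psi_x^{-1}\partial_{y_i}\psi_x\,\partial_{y_j}\psi_x$: the first term is $O(|\log|x-y||)$ times a bounded matrix and the singular second term is the rank-one matrix $\psi_x^{-1}\,\nabla\psi_x\otimes\nabla\psi_x$; using $\nabla_y\psi_x(y)=2A(y)^{-1}(y-x)+O(|y-x|)$ one computes $A_i:(\nabla\psi_x\otimes\nabla\psi_x)=4\big(A(y)^{-1}(y-x)\cdot A_i(y)A(y)^{-1}(y-x)\big)+O(|y-x|^2)$, and the leading factor $\psi_x(y)^{-1}=\big(A(y)^{-1}(y-x)\cdot(y-x)\big)^{-1}$ cancels against it precisely when $A_i$ is $g_A(y)$-orthogonal to $A(y)=A_1(y)$; for $i=1$ this pairing gives $2+O(|y-x|)$, producing the genuine logarithmic singularity in the $A_1$-direction.

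Granting the key lemma, I would finish by expanding $D_y^2\Phi_x(y)$ in the frame: writing $D_y^2\Phi_x(y)=\sum_{i=1}^3 \frac{g_A(y)(D_y^2\Phi_x(y),A_i(y))}{g_A(y)(A_i(y),A_i(y))}A_i(y)$, the $i=2,3$ coefficients are locally bounded, so $D_y^2\Phi_x(y)=\alpha(x,y)A(y)+N_0(x,y)$ with $N_0$ locally bounded (smooth off-diagonal) and $\alpha(x,y)=g_A(y)(D_y^2\Phi_x,A)/g_A(y)(A,A)$. It remains to identify $\alpha(x,y)A(y)$ up to locally bounded terms with $\tfrac12\,\mathrm{div}_y(A(y)\nabla_y\Phi_x(y))\,A(x)^{-1}$. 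For this I would note that $\mathrm{div}_y(A(y)\nabla_y\Phi_x(y))=A(y):D_y^2\Phi_x(y)+\mathbf{div}(A(y))\cdot\nabla_y\Phi_x(y)$, and the last term is $O(|\log|x-y||)$ but, more carefully, bounded (its singular part cancels as in Lemma~\ref{lem:2.4}); thus $\mathrm{div}_y(A\nabla_y\Phi_x)=g_A(y)(D_y^2\Phi_x,A(y)^{\otimes})+\text{bdd}$ up to the factors of $A(y)^{-1}$ built into $g_A$ — here one must be careful that $A(y):M=\mathrm{tr}(A(y)M)$ while $g_A(y)(M,A(y))=\mathrm{tr}(A(y)^{-1}M)$, so $A(y):D^2\Phi_x$ and $g_A(y)(D^2\Phi_x,A)$ differ, and matching them forces the appearance of $A(x)^{-1}$ rather than $A(y)^{-1}$: since $\alpha$ is only needed modulo bounded functions and $\psi_x(y)\sim A(x)^{-1}(y-x)\cdot(y-x)$ has its leading singularity governed by $A(x)$ (not $A(y)$), replacing $A(y)^{-1}$ by $A(x)^{-1}$ in the coefficient costs only a bounded error. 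Carrying out this matching — showing the singular parts of $\tfrac12 \mathrm{div}_y(A\nabla_y\Phi_x)A(x)^{-1}$ and of the $A_1$-component of $D_y^2\Phi_x$ agree exactly — is the step I expect to be the main obstacle, since it is where the geometry of $g_A$, the asymmetry between the base point $x$ and the integration variable $y$, and the need to track which singularities are truly logarithmic versus merely bounded all interact; the estimates $|\psi_x(y)-\psi_y(x)|\le C|x-y|^3$ and $\psi_x(y),\psi_y(x)\ge c|x-y|^2$ from the proof of Lemma~\ref{lem:2.4} will be the main quantitative tools for controlling the resulting error terms.
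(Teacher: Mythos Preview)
Your reduction to the model function $\Phi_x=\psi_x\log\psi_x$ via Lemma~\ref{lem:2.4}(ii) is exactly right, and your ``key lemma'' that $A_i:D_y^2\Phi_x$ (equivalently $\mathrm{div}(A_i\nabla\Phi_x)$) is locally bounded for $i=2,3$ is the crucial observation---this is precisely Lemma~\ref{lem:Riegeo}. The argument you sketch for it is essentially correct: the logarithmic singularity lives in the term $(\log\psi_x+1)\,D^2\psi_x$, and since $D_y^2\psi_x(y)=2A(y)^{-1}+O(|y-x|)$, the pairing $A_i(y):D_y^2\psi_x(y)=2\,\mathrm{tr}(A_i(y)A(y)^{-1})+O(|y-x|)$ vanishes to first order precisely because $g_A$-orthogonality to $A_1=A$ means $\mathrm{tr}(A_iA^{-1})=0$.

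The gap is in your expansion step. You write $D^2\Phi_x=\sum_i g_A(D^2\Phi_x,A_i)A_i$ using the $g_A$-inner product, and then claim the $i=2,3$ coefficients are bounded. But your key lemma controls the \emph{Hilbert--Schmidt} pairings $A_i:D^2\Phi_x$, not the $g_A$-pairings $g_A(D^2\Phi_x,A_i)=\mathrm{tr}(A^{-1}D^2\Phi_x\,A_iA^{-1})$. These are different, and the $g_A$-coefficients are in general \emph{not} bounded: the singular part of $D^2\Phi_x$ is $2\log\psi_x\cdot A(y)^{-1}$, and $g_A(A(y)^{-1},A_i(y))=\mathrm{tr}(A(y)^{-2}A_i(y)A(y)^{-1})$ need not vanish for $i=2,3$ (take $A=\mathrm{diag}(a,b)$ with $a\ne b$ and $A_2=\mathrm{diag}(a,-b)$; then $g_A(A^{-1},A_2)=a^{-2}-b^{-2}\ne0$). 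So your decomposition $D^2\Phi_x=\alpha(x,y)A(y)+N_0$ with $N_0$ bounded fails in general.

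The paper's fix (Lemma~\ref{lem:3.5}) is to expand $D^2\Phi_x=\sum_i\theta_iA_i$ and compute the coefficients $\theta_i$ by pairing with the \emph{Hilbert--Schmidt} product: setting $h_{ij}=\mathrm{tr}(A_iA_j)$ one gets $\sum_i\theta_ih_{ij}=A_j:D^2\Phi_x+\text{(bounded)}$, so $\theta_k=\sum_j h^{kj}(A_j:D^2\Phi_x)+\text{(bounded)}$. Since only the $j=1$ term is singular, the singular part of $D^2\Phi_x$ is $\tfrac{1}{\sqrt{2}}\mathrm{div}(A\nabla\Phi_x)\cdot M_0(y)$ with $M_0(y)=\sum_ih^{i1}(y)A_i(y)$. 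The identification $M_0(x)=\tfrac{1}{\sqrt{2}}A(x)^{-1}$ then comes for free from the explicit asymptotics $D^2\Phi_x(y)=2(\log\psi_x+1)A(y)^{-1}+\text{bdd}$ and $\mathrm{div}(A\nabla\Phi_x)(y)=4\log\psi_x+\text{bdd}$ by dividing and letting $y\to x$; the passage from $A(x)^{-1}$ back to $M_0(y)$ (or to $A(y)^{-1}$) costs only $|M_0(x)-M_0(y)|\le C|x-y|$, which kills the logarithm. This last observation also resolves your uncertainty about why $A(x)^{-1}$ rather than $A(y)^{-1}$ appears: either works (cf.\ Remark~\ref{rem:36}), and the replacement is harmless.
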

%\begin{remark}
%Since the fundamental solution is smooth on $\mathbb{R}^2 \setminus \{0 \}$ one may as well assume that $M$ is positive definite on the whole of $\mathbb{R}^2$. Indeed, defining $\tilde{M}(x) := M(x) \psi(x)$ for a cutoff function $\psi$ with support contained in $U$ and $0 \leq \psi \leq 1$ as well as $\psi \equiv 1$ in a neighborhood $W \subset \subset U$ we find 
%\begin{equation}
%    D^2 F(x) =  \mathrm{div}(A \nabla F) \tilde{M}(x) + \tilde{N}(x)
%\end{equation}
%with $\tilde{N}(x) := N(x) + \mathrm{div}(A\nabla F)(1-\psi(x)) M(x)$, which is the sum of  functions that are smooth on $\mathbb{R}^2 \setminus \{0 \}$. 
%\end{remark}

We will prove this proposition later on after some preparations. Note first that the matrices appearing in \eqref{eq:D^2F}, more precisely  $\frac{1}{\sqrt{2}} \begin{pmatrix}
    1 & 0 \\ 0 & 1
\end{pmatrix},  \frac{1}{\sqrt{2}} \begin{pmatrix}
    1 & 0 \\ 0 & -1
\end{pmatrix}$ and $ \begin{pmatrix}
    0 & 1 \\ 1 & 0
\end{pmatrix}$, form an orthonormal basis of $(\mathbb{R}^{2\times2}_{sym}, : )$ (where  ``$:$'' denotes the Hilbert-Schmidt inner product). An important step in the proof of the above proposition is to replace the Hilbert-Schmidt product by a suitable Riemannian metric on $\mathbb{R}^{2 \times 2}_{sym}$ which fits to the anisotropy. This metric already appears in the following lemma which again deals with properties of the comparison function $\psi_x \log \psi_x$ examined in Lemma \ref{lem:2.4}.  

\begin{lemma}\label{lem:Riegeo} Let $\psi_x$ be as in \eqref{eq:PSIIX}. Define on $\mathbb{R}^{2\times 2}_{sym}$ the smooth Riemannian metric $$g(y)(M_1,M_2) := \mathrm{tr}(A^{-1}(y) M_1 M_2 A^{-1}(y)) \qquad (y \in \overline{\Omega}).$$ Then there exists a smooth orthonormal frame $y \mapsto \{ A_1(y),A_2(y),A_3(y) \}$ on $\overline{\Omega}$ of $\mathbb{R}^{2\times 2}_{sym}$ with $A_1(y) = \tfrac{1}{\sqrt{2}} A(y)$ for all $y \in \overline{\Omega}$. Moreover, for $ i =2,3$ the function $\mathrm{div}(A_i \nabla (\psi_x \log \psi_x))$ is bounded, Borel measurable and smooth on $\overline{\Omega} \setminus \{x \}$.   
\end{lemma}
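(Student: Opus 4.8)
The plan is to split the statement into two essentially independent parts: (a) the existence of a smooth orthonormal frame $\{A_1,A_2,A_3\}$ for the metric $g$ with $A_1 = \tfrac{1}{\sqrt 2}A$, and (b) the boundedness of $\mathrm{div}(A_i\nabla(\psi_x\log\psi_x))$ for $i=2,3$. For part (a), I would first check that $A_1 = \tfrac{1}{\sqrt 2}A$ is a unit vector in the metric $g$: indeed $g(y)(A_1,A_1) = \tfrac12\mathrm{tr}(A^{-1}AA^{-1}A) = \tfrac12\mathrm{tr}(I_2) = 1$. Since $g(y)$ is a smooth family of inner products on the three–dimensional space $\mathbb{R}^{2\times2}_{sym}$ and $\overline{\Omega}$ is (for our purposes) simply connected — or, failing that, since one only ever needs the frame locally near a given $x$ and the final statement is local in $y$ — one can extend $A_1(y)$ to a smooth $g$-orthonormal frame by applying the Gram–Schmidt process in the metric $g$ to $A_1$ together with any fixed basis of constant matrices, the smoothness of $g$ and the fact that $A_1$ never vanishes guaranteeing smoothness of the output. (If one wants a global frame on all of $\overline\Omega$ without topological hypotheses, one notes that the orthogonal complement bundle $A_1^{\perp_g}$ is an orientable rank-$2$ bundle over the surface-with-boundary $\overline\Omega$, hence trivial; but for the application a local frame suffices and I would phrase it that way.)

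For part (b), the key computational input is the expansion \eqref{eq:divAnablazlogz} obtained in the proof of Lemma \ref{lem:2.4}, which I would redo with a general symmetric coefficient matrix $B$ in place of $A$ in the outer divergence, i.e. compute $\mathrm{div}_y(B(y)\nabla_y(\psi_x\log\psi_x))$. Carrying out the same Leibniz computation as in \eqref{eq:divAnablazlogz}, using $\nabla_y(\psi_x\log\psi_x) = (\log\psi_x+1)(2A^{-1}(y-x)+w(y))$ with $w$ as there, the only term that fails to lie in $W^{1,p}_{loc}$ is the genuinely singular piece coming from differentiating $\log\psi_x + 1$, namely
\begin{equation}
  \frac{1}{\psi_x(y)}\,\bigl(2A^{-1}(y)(y-x)+w(y)\bigr)\cdot\bigl(B(y)\,[\,2A^{-1}(y)(y-x)+w(y)\,]\bigr),
\end{equation}
together with the logarithmic term $(\log\psi_x+1)\,\mathrm{div}(B(y)[2A^{-1}(y-x)+w(y)])$, which is only $L^p$, not better. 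The logarithmic term is bounded iff its coefficient vanishes, and since $\mathrm{div}(B(y)\cdot)$ applied to the vector field near $x$ produces, to leading order, $2\,\mathrm{tr}(B(y)A^{-1}(y))$ plus bounded terms, the log term is bounded precisely when $\mathrm{tr}(B A^{-1})$ is controlled — and more importantly the $\tfrac{1}{\psi_x}$ term is dangerous. Writing $z = A(x)^{-1/2}(y-x)/|A(x)^{-1/2}(y-x)|$ (a unit vector up to the frozen-coefficient error $|\psi_x(y)-\psi_y(x)|\le C|y-x|^3$ already exploited in Lemma \ref{lem:2.4}), the singular term is, modulo bounded remainders,
\begin{equation}
  \frac{4}{\psi_x(y)}\,(y-x)\cdot B(y)A^{-1}(y)(y-x)
  = \frac{4\,(y-x)^T A^{-1}BA^{-1}(y-x)}{(y-x)^T A^{-1}(y-x)} + O(1),
\end{equation}
and here is the point: this is bounded for every $y$ near $x$ (it is a ratio of quadratic forms, hence bounded by ellipticity). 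So in fact the $\tfrac1{\psi_x}$ term is always bounded; the genuine obstruction is only the logarithmic term, and THAT is killed exactly when the coefficient $4 + \mathrm{div}(B(y)w(y))\!\mid_{\text{leading}}$ — which equals $2\,\mathrm{tr}(B(y)A^{-1}(y))$ plus bounded stuff — is absorbed, i.e. when $\mathrm{tr}(B(y)A^{-1}(y)) = 0$. Now $\mathrm{tr}(B A^{-1}) = \mathrm{tr}(A^{-1/2}BA^{-1/2})$, and one checks directly that $g(y)(A_1(y),B) = \tfrac{1}{\sqrt2}\mathrm{tr}(A^{-1}AB A^{-1}) = \tfrac{1}{\sqrt2}\mathrm{tr}(BA^{-1})$; hence $\mathrm{tr}(B A^{-1}) = 0$ is exactly the condition $B \perp_g A_1$. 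Since $A_2, A_3$ were constructed to be $g$-orthogonal to $A_1$, the log term vanishes for $B = A_i$, $i = 2,3$, and we conclude $\mathrm{div}(A_i\nabla(\psi_x\log\psi_x)) \in L^\infty_{loc}$; its smoothness on $\overline\Omega\setminus\{x\}$ is clear since $\psi_x$ is smooth and nonvanishing there.

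The main obstacle I anticipate is the bookkeeping in part (b): one must track which terms in the full Leibniz expansion of $\mathrm{div}(B\nabla(\psi_x\log\psi_x))$ are (i) bounded outright, (ii) bounded thanks to the quadratic-form-ratio cancellation, and (iii) logarithmically divergent with a coefficient proportional to $\mathrm{tr}(BA^{-1})$, and to verify carefully — using the frozen-coefficient estimates $|\psi_x(y)-\psi_y(x)|\le C|y-x|^3$, $\psi_x(y)\ge\theta|y-x|^2$, and the identity $\mathrm{div}_y(\psi_y(x)^{-1}(y-x)) = 0$ from Lemma \ref{lem:2.4} — that no other unbounded contribution hides among the lower-order pieces. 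Once the computation is organized around the single scalar invariant $\mathrm{tr}(BA^{-1}) = \sqrt2\,g(A_1,B)$, the rest is a matter of collecting bounded terms, and the identification "log term bounded $\iff$ $g$-orthogonal to $A_1$" makes the choice of $A_2,A_3$ in part (a) exactly the right one.
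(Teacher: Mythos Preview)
Your approach is essentially the paper's: Gram--Schmidt for part (a), and for part (b) the observation that $g$-orthogonality to $A_1$ is exactly $\mathrm{tr}(BA^{-1})=0$, which kills the only genuinely unbounded contribution in $\mathrm{div}(B\nabla(\psi_x\log\psi_x))$, while the $\tfrac{1}{\psi_x}$ term is a bounded ratio of quadratic forms.

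Two small simplifications relative to your plan. First, the topological worry in (a) is unnecessary: the paper applies Gram--Schmidt to the explicit global basis $A(y),\ \bigl(\begin{smallmatrix}1&0\\0&-1\end{smallmatrix}\bigr),\ \bigl(\begin{smallmatrix}0&1\\1&0\end{smallmatrix}\bigr)$, after checking these are everywhere linearly independent (a positive-definite matrix cannot be a linear combination of the latter two, whose span consists of matrices with nonpositive determinant). Second, the frozen-coefficient estimates and the identity $\mathrm{div}_y(\psi_y(x)^{-1}(y-x))=0$ from Lemma~\ref{lem:2.4} are not needed here: a direct Leibniz expansion using $\mathrm{div}(B(y)(y-x)) = \mathrm{tr}(B(y)) + \mathbf{div}(B)(y)\cdot(y-x)$ with $B=A_iA^{-1}$ already isolates the constant $2\,\mathrm{tr}(A_iA^{-1})$ as the sole obstruction, and the remaining log terms carry coefficients that are $O(|y-x|)$ (not identically zero, as your phrase ``iff its coefficient vanishes'' suggests), hence $|y-x|\log|y-x|$ is bounded.
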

\begin{proof}We claim first that for $y \in \overline{\Omega}$ the matrices $\tilde{A}_1(y) := A(y), \tilde{A}_2(y) := \begin{pmatrix}
    1 & 0 \\ 0 & -1 
\end{pmatrix},\tilde{A}_3(y) :=\begin{pmatrix}
    0 & 1 \\ 1  & 0
\end{pmatrix} $ are linearly independent in $\mathbb{R}^{2 \times 2}_{sym}$. Indeed, if e.g. $\tilde{A}_1(y)$ were a linear combination of $\tilde{A}_2(y), \tilde{A}_3(y)$ then there would exist $\lambda_2,\lambda_3$ such that 
\begin{equation}
    A(x) = \begin{pmatrix}
        \lambda_2 & \lambda_3 \\ \lambda_3 & - \lambda_2
    \end{pmatrix}.
\end{equation}
This is impossible, since $A(x)$ is positive definite but the determinant of the matrix on the right hand side is nonpositive. The existence of the claimed orthonormal frame is now a consequence of the Gram-Schmidt process. 
%applied to $\tilde{A}_1(y) := \frac{1}{2}A(y), \tilde{A}_2(y) := \begin{pmatrix}
%    1 & 0 \\ 0 & -1 
%\end{pmatrix},\tilde{A}_3(y) :=\begin{pmatrix}
 %   0 & 1 \\ 1  & 0
%\end{pmatrix} $.
    Notice that $A_2,A_3 \perp A$ with respect to $g$ implies $\mathrm{tr}(A_i(y)A(y)^{-1}) = 0$ for all $y \in \Omega$.
    With the definition $w_j(y) := \partial_j A(y)^{-1} (y-x) \cdot (y-x) \; (j= 1,2)$ and $w= (w_1,w_2)^T$ we can compute for $i = 2,3$ (again with the convention that all differential operators are applied to the $y$-variable)
    \begin{align}
        \mathrm{div}(A_i \nabla (\psi_x \log \psi_x ))(y)&  = \mathrm{div} \{ ( \log (\psi_x (y))+ 1 ) [ 2 A_i(y) A(y)^{-1} (y-x)  + A_i(y) w(y)] \} \\ & = (\log \psi_x(y) + 1) [  2\mathrm{div}(A_i(y)A(y)^{-1} (y-x)) + \mathrm{div}(A_i(y) w(y)) ]  \\ &  + \tfrac{1}{\psi_x(y)} [2 A(y)^{-1} (y-x) + w(y)] \cdot  [2A_i(y)A(y)^{-1} (y-x) + A_i(y)w(y)].  
    \end{align}
    Using that for a matrix-valued function $B\in C^1$ one has $\mathrm{div}(B(y)(y-x)) = \mathrm{tr}(B(y)) + \mathbf{div}(B(y))  \cdot (y-x)$ we can simplify the first term and obtain 
    \begin{align}
          & \mathrm{div}(A_i \nabla ( \psi_x \log \psi_x ))(y)  \\ & = (\log \psi_x(y) + 1) 2 \mathbf{div}(A_i(y)A(y)^{-1}) \cdot (y-x)  + (\log \psi_x(y) + 1) \mathrm{div}(A_i(y)w(y))
          \\ &   \qquad + \tfrac{1}{\psi_x(y)} 2A(y)^{-1} (y-x)  \cdot 2A_i(y)A(y)^{-1} (y-x)) + \tfrac{1}{\psi_x(y)} A(y)^{-1} (y-x) \cdot A_i(y)w(y) .
    \end{align}
    Notice that the first, second and fourth summand are continuous on $\mathbb{R}^2$ (also in $y = x$!) and therefore certainly bounded in $\overline{\Omega}$. For the third summand we observe that  $$g(y) := 2 \frac{A(y)^{-1}(y -x) \cdot  A_i(y) A(y)^{-1} (y-x) }{\psi_x(y)} = 2 \frac{A(y)^{-1}(y-x) \cdot  A_i(y) A(y)^{-1} (y-x)}{A(y)^{-1}(y-x) \cdot (y-x)}$$ satisfies 
    $
      \sup_{y \in \overline{\Omega}} |g(y)| \leq  \frac{2}{\theta} ||A^{-1} A_i A^{-1}||_{L^\infty(\Omega)} <\infty,
    $
    where $\theta > 0$ is such that $\psi_x(y) \geq \theta |x-y|^2$. All in all, we infer that $g$ is bounded and the claim follows. 
\end{proof}

\begin{lemma}\label{lem:3.5} 
There exists $M_0 \in C^\infty(\overline{\Omega}, \mathbb{R}^{2\times 2})$ and $N_0 : \Omega \times \Omega \rightarrow \mathbb{R}^{2\times 2}$ Borel measurable, locally bounded and smooth on $\Omega \times \Omega \setminus \{(x,x) : x \in \Omega \}$ such that 
\begin{equation}
    D_y^2 (\psi_x \log \psi_x) (y) = \frac{1}{\sqrt{2}}\mathrm{div}_y(A(y)\nabla_y (\psi_x \log \psi_x)(y)) M_0(y) + N_0(x,y) \quad \textrm{for all } (x,y) \in \Omega \times \Omega : x \neq y.
\end{equation}
  Moreover, $M_0(x)= \frac{1}{\sqrt{2}}A(x)^{-1}$. %and there exists an open neighborhood of $U \subset \Omega$ of $x$  such that $M_0(y)$ is positive definite for all $y \in U$.
\end{lemma}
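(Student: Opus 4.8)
The plan is to expand the pointwise Hessian $D_y^2(\psi_x\log\psi_x)(y)$ (a classical symmetric matrix for $y\neq x$, since $\psi_x(y)>0$ there) in the frame $\{A_1(y),A_2(y),A_3(y)\}$ from Lemma~\ref{lem:Riegeo} and to convert the resulting scalar coefficients into the divergence‑form quantities that lemma controls. Since $\{A_1(y),A_2(y),A_3(y)\}$ is, for each $y\in\overline{\Omega}$, a basis of $\mathbb{R}^{2\times 2}_{sym}$ depending smoothly on $y$, the Gram matrix $G(y):=\big(A_i(y):A_j(y)\big)_{i,j=1}^3$ with respect to the Hilbert--Schmidt product ``$:$'' is smooth, symmetric and positive definite, and the dual frame $A^i(y):=\sum_{j=1}^3(G(y)^{-1})_{ij}A_j(y)\in C^\infty(\overline{\Omega};\mathbb{R}^{2\times 2}_{sym})$ satisfies $A^i(y):A_k(y)=\delta_{ik}$. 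Consequently
\[
  D_y^2(\psi_x\log\psi_x)(y)=\sum_{i=1}^3\big(A_i(y):D_y^2(\psi_x\log\psi_x)(y)\big)A^i(y),\qquad y\in\Omega\setminus\{x\}.
\]

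Next I would rewrite each coefficient via the elementary identity $\mathrm{div}(B\nabla f)=B:D^2f+\mathbf{div}(B)\cdot\nabla f$, valid for $C^1$ symmetric matrix fields $B$ and $C^2$ scalars $f$. For $f=\psi_x\log\psi_x$ one has $\nabla_y f(y)=(\log\psi_x(y)+1)\nabla_y\psi_x(y)$ with $\nabla_y\psi_x(y)=2A(y)^{-1}(y-x)+w(y)$ and $w_j(y)=\partial_j A^{-1}(y)(y-x)\cdot(y-x)=O(|y-x|^2)$; hence $\nabla_y f(y)=O(|y-x|\,|{\log|y-x|}|)$, it extends continuously by $0$ across the diagonal, and each $\mathbf{div}(A_i)\cdot\nabla_y f$ is Borel measurable, locally bounded on $\Omega\times\Omega$ and smooth off the diagonal. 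Therefore, for $i=2,3$, Lemma~\ref{lem:Riegeo} (whose bound is locally uniform in $x$, as its proof runs through the ellipticity constant $\theta$ of $A^{-1}$ and $\|A^{-1}A_iA^{-1}\|_{L^\infty}$) shows that $A_i(y):D_y^2(\psi_x\log\psi_x)(y)=\mathrm{div}_y(A_i\nabla_y f)(y)-\mathbf{div}(A_i)\cdot\nabla_y f$ is locally bounded, Borel and smooth off the diagonal, while for $i=1$, using $A_1(y)=\tfrac1{\sqrt2}A(y)$,
\[
  A_1(y):D_y^2(\psi_x\log\psi_x)(y)=\tfrac1{\sqrt2}\mathrm{div}_y\big(A(y)\nabla_y(\psi_x\log\psi_x)(y)\big)-\tfrac1{\sqrt2}\mathbf{div}(A)\cdot\nabla_y(\psi_x\log\psi_x).
\]
Substituting the three expressions into the frame expansion yields the asserted identity with $M_0(y):=A^1(y)\in C^\infty(\overline{\Omega};\mathbb{R}^{2\times 2})$ and
\[
  N_0(x,y):=-\tfrac1{\sqrt2}\big(\mathbf{div}(A)\cdot\nabla_y(\psi_x\log\psi_x)\big)A^1(y)+\sum_{i=2}^3\big(A_i(y):D_y^2(\psi_x\log\psi_x)(y)\big)A^i(y),
\]
which by the above is Borel measurable, locally bounded on $\Omega\times\Omega$ and smooth on $\Omega\times\Omega\setminus\{(x,x):x\in\Omega\}$.

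It remains to identify $M_0$, and I would in fact show $M_0(y)=\tfrac1{\sqrt2}A(y)^{-1}$ for all $y\in\overline{\Omega}$ by checking that $\tfrac1{\sqrt2}A(y)^{-1}$ is the dual‑frame element labelled $1$, i.e.\ that $\tfrac1{\sqrt2}A(y)^{-1}:A_k(y)=\delta_{1k}$. For $k=1$ this reads $\tfrac12\mathrm{tr}(A(y)^{-1}A(y))=1$. For $k=2,3$, $g$‑orthonormality of the frame gives $0=g(y)(A_k(y),A_1(y))=\tfrac1{\sqrt2}\mathrm{tr}(A(y)^{-1}A_k(y)A(y)A(y)^{-1})=\tfrac1{\sqrt2}\mathrm{tr}(A(y)^{-1}A_k(y))=\tfrac1{\sqrt2}\big(A(y)^{-1}:A_k(y)\big)$ — precisely the relation $\mathrm{tr}(A_kA^{-1})=0$ already noted in the proof of Lemma~\ref{lem:Riegeo} — so $\tfrac1{\sqrt2}A(y)^{-1}:A_k(y)=0$. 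By uniqueness of the dual frame, $M_0(y)=A^1(y)=\tfrac1{\sqrt2}A(y)^{-1}$, and in particular $M_0(x)=\tfrac1{\sqrt2}A(x)^{-1}$.

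I expect the main difficulty to be bookkeeping rather than any individual estimate: at each step ``bounded'' must be upgraded to ``locally bounded in $(x,y)$ and smooth away from the diagonal'', which forces one to revisit the estimates in Lemma~\ref{lem:Riegeo} and confirm their local uniformity in $x$; and one must keep in mind that $D_y^2(\psi_x\log\psi_x)$ carries a genuine logarithmic singularity on the diagonal, entirely in the $A_1$‑direction. A useful cross‑check (and an alternative, more computational proof) is to note directly from $\partial^2_{y_iy_j}\psi_x(y)=2(A(y)^{-1})_{ij}+O(|y-x|)$ and from the boundedness of $\partial_{y_i}\psi_x(y)\,\partial_{y_j}\psi_x(y)/\psi_x(y)$ that $D_y^2(\psi_x\log\psi_x)(y)=2A(y)^{-1}\log\psi_x(y)+(\text{loc.\ bounded})$, whereas \eqref{eq:divAnablazlogz} gives $\mathrm{div}_y(A\nabla_y(\psi_x\log\psi_x))(y)=4\log\psi_x(y)+(\text{loc.\ bounded})$; subtracting $\tfrac12\mathrm{div}_y(A\nabla_y(\psi_x\log\psi_x))(y)\,A(y)^{-1}$ cancels the singular parts and again yields $M_0(y)=\tfrac1{\sqrt2}A(y)^{-1}$.
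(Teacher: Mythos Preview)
Your proof is correct and follows essentially the same route as the paper: expand $D_y^2(\psi_x\log\psi_x)$ in the frame $\{A_1,A_2,A_3\}$ of Lemma~\ref{lem:Riegeo}, convert the coefficients via $B:D^2f=\mathrm{div}(B\nabla f)-\mathbf{div}(B)\cdot\nabla f$, and absorb the $i=2,3$ contributions and the $\mathbf{div}(A_i)\cdot\nabla f$ terms into $N_0$; your $M_0(y)=A^1(y)=\sum_i h^{i1}(y)A_i(y)$ coincides with the paper's definition. The one genuine difference is the identification of $M_0$: the paper pins down $M_0(x)=\tfrac{1}{\sqrt2}A(x)^{-1}$ only at the singular point, by the asymptotic argument you sketch as a ``cross-check'' (matching the $2A^{-1}\log\psi_x$ and $4\log\psi_x$ leading terms and letting $y\to x$), whereas your direct verification that $\tfrac{1}{\sqrt2}A(y)^{-1}:A_k(y)=\delta_{1k}$ identifies $M_0(y)=\tfrac{1}{\sqrt2}A(y)^{-1}$ for \emph{every} $y$; this is a cleaner and slightly stronger conclusion, and in fact renders the subsequent Remark~\ref{rem:36} (which upgrades $A(x)^{-1}$ to $A(y)^{-1}$ by a separate Lipschitz argument) unnecessary.
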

\begin{proof}
    Let $y \mapsto \{A_1(y)=  \tfrac{1}{\sqrt{2}} A(y), A_2(y) ,A_3(y) \}$ be the orthonormal frame constructed in Lemma \ref{lem:Riegeo}. For this proof we again use the convention that all differential operators fall on the $y$-variable. Since for any $y \in \Omega \setminus \{ x \}$ one has $D^2( \psi_x \log \psi_x)(y) \in \mathbb{R}^{2\times 2}_{sym}$ there exist unique real numbers $\theta_1(y),\theta_2(y), \theta_3(y)$ such that 
    \begin{equation}\label{eq:linkomb}
        D^2( \psi_x \log \psi_x)(y) = \sum_{i = 1}^3 \theta_i(y) A_i(y) .
    \end{equation}
    Next we compute an explicit formula for $\theta_i(y)$ ($i=1,2,3$). Defining $h_{ij}(y) := \mathrm{tr}(A_i(y)A_j(y))$  we obtain
    \begin{equation}
        \sum_{i = 1}^3 \theta_i(y) h_{ij}(y) = \mathrm{tr}(D^2( \psi_x \log \psi_x)(y) A_j(y)) = \mathrm{div}(A_j \nabla (\psi_x \log \psi_x))(y) + \mathbf{div}(A_j)(y) \cdot \nabla ( \psi_x \log \psi_x)(y).
    \end{equation}
    Notice that the matrix $(h_{ij}(y))_{ij= 1}^3$ must be invertible, since it is the matrix representing the Hilbert-Schmidt inner product with respect to the basis $\{ A_1(y),A_2(y), A_3(y) \}$ of $\mathbb{R}^{2\times 2}_{sym}$. Let $(h^{kl}(y))_{k,l =1}^3$ be its inverse.  We infer that 
    \begin{equation}
        \theta_k(y) =  \sum_{l = 1}^3 h^{kl}(y) [\mathrm{div}(A_l \nabla (\psi_x \log \psi_x))(y)  + \mathbf{div}(A_l)(y) \cdot \nabla ( \psi_x \log \psi_x)(y) ]. 
    \end{equation}
    As a consequence of \eqref{eq:linkomb} we infer 
    \begin{align}
        D^2( \psi_x \log \psi_x)(y) & = \sum_{i = 1}^3 \left( \sum_{l = 1}^3 h^{il}(y) [\mathrm{div}(A_l \nabla (\psi_x \log \psi_x))(y)  + \mathbf{div}(A_l)(y) \cdot \nabla ( \psi_x \log \psi_x)(y) ] \right) A_i(y) 
   \\ &  = \sum_{l = 1}^3 \left(\sum_{i = 1}^3 h^{il}(y) A_i(y)  \right) [\mathrm{div}(A_l \nabla (\psi_x \log \psi_x))(y)  + \mathbf{div}(A_l)(y) \cdot \nabla ( \psi_x \log \psi_x)(y) ]  .
    \end{align}
    %For summands with index $l \neq 1$ we 
    Observe with Lemma \ref{lem:Riegeo} and the local boundedness of
    \begin{equation}\label{eq:GRADIENTFORMULA}
        y \mapsto \nabla (\psi_x \log \psi_x)(y) = 2 (\log \psi_x(y) + 1) A(y)^{-1}(y-x)
    \end{equation}
     that all summands with summation index $l \neq 1$ are locally bounded and smooth in $\Omega \times \Omega \setminus \{(x,x): x \in \Omega \}$. Furthermore, 
    \begin{equation}
       y \mapsto  \left(\sum_{i = 1}^3 h^{i1}(y) A_i(y)  \right) \mathbf{div}(A_1)(y) \cdot \nabla (\psi_x \log \psi_x )(y)  
    \end{equation}
    is locally bounded, also because of \eqref{eq:GRADIENTFORMULA}. With these observations and $A_1= \frac{1}{\sqrt{2}}A$ we infer that 
    \begin{equation}\label{eq:WIEISTMOzuBEREHCNEN}
      D^2 ( \psi_x \log \psi_x )(y) =  \frac{1}{\sqrt{2}} \mathrm{div}(A\nabla(\psi_x \log \psi_x ))(y) \left( \sum_{i = 1}^3 h^{i1}(y) A_i(y) \right) + N_0(x,y), 
    \end{equation}
    where $N_0 : \Omega \times \Omega \rightarrow \mathbb{R}^{2\times 2}$ is Borel measurable, (locally) bounded and smooth on $\Omega \times \Omega \setminus \{ (x,x): x \in \Omega \}$. We can now define $M_0: \overline{\Omega} \rightarrow \mathbb{R}^{2\times 2},M_0(y) := \sum_{i = 1}^3 h^{i1}(y) A_i(y)$ and observe that $M_0$ is smooth on $\overline{\Omega}$ to obtain the structure in the statement. 
    %Notice that 
   % \begin{equation}
    %    M_0(x) 
    %\end{equation}
It remains to show that $M_0(x) = \frac{1}{\sqrt{2}}A(x)^{-1} $. %and $M_0$ is positive definite in some neighborhood of $x$. We remark that the second part of the claim follows from the first part and smoothness of $M_0$ (since $A(x)^{-1}$ is positive definite).
Notice that by \eqref{eq:divAnablazlogz}
    \begin{equation}\label{eq:divAasymp}
        \mathrm{div}(A \nabla (\psi_x \log \psi_x) )(y) =  4 \log \psi_x(y) + g(y)
    \end{equation}
    for some locally bounded function $g : \Omega \rightarrow \mathbb{R}$.
    %One can also compute that 
    %\begin{equation}
    %    \partial_{y_i} ( \psi_x(y) \log \psi_x(y)) = (\log \psi_x(y) + 1) [ 2 (A(y)^{-1}(y-x) , e_i ) + ( \partial_i A(y)^{-1} (y-x), y-x) ]
    %\end{equation}
    With the notation that $A(y)^{-1}= (a^{ij}(y))_{i,j=1}^2$ one computes (cf. \eqref{vorcomputaation})
    \begin{align}
        & \partial^2_{ij} (\psi_x(y) \log \psi_x(y) )  \\ & = \tfrac{1}{\psi_x(y)} [ 2 A(y)^{-1}(y-x)  \cdot e_i  +  \partial_i A(y)^{-1} (y-x) \cdot (y-x) ][ 2 A(y)^{-1}(y-x)  \cdot e_j  +  \partial_j A(y)^{-1} (y-x) \cdot (y-x) ] 
        \\ & \quad + ( \log \psi_x(y) + 1) [ 2 a^{ij}(y) + 2 \partial_i A(y)^{-1} (y-x) \cdot e_j + 2  \partial_j A(y)^{-1} (y-x) \cdot e_i + \partial^2_{ij}A^{-1}(y) (y-x) \cdot (y-x)].
    \end{align}
    This implies that 
    \begin{equation}\label{eq:D2Aasym}
         D^2 (\psi_x \log \psi_x)(y) = 2(\log \psi_x(y) + 1)A(y)^{-1} + f(x,y)
    \end{equation}
    for some locally bounded $f: \Omega \times \Omega \rightarrow \mathbb{R}$. From \eqref{eq:WIEISTMOzuBEREHCNEN} we infer that 
    \begin{equation}
        M_0(x) = \sqrt{2}\frac{D^2(\psi_x \log \psi_x)(y)}{\mathrm{div}(A\nabla (\psi_x\log \psi_x))(y)} - \sqrt{2}\frac{N_0(x,y)}{\mathrm{div}(A\nabla(\psi_x \log \psi_x))(y)}.
    \end{equation}
    Using \eqref{eq:divAasymp} and \eqref{eq:D2Aasym} and $\log \psi_x \rightarrow - \infty$ as $y \rightarrow x$ we obtain $  M_0(x) = \frac{1}{\sqrt{2}}A(x)^{-1}$.
   The claim follows.
\end{proof}

%Now we are ready to prove Proposition \ref{thm:Frehse}. 

\begin{proof}[Proof of Proposition \ref{thm:Frehse}] Let $G := G_{L^2}$. Then one computes with the notation of Lemma \ref{lem:2.4} and Lemma \ref{lem:3.5} (and again the convention that all differential operators fall on the $y$-variable)
\begin{align}
   D^2 G(x,y)  & =  c_1(x) D^2(\psi_x \log \psi_x ) (y) + D^2 f_2(x,y) \\ &  =  \tfrac{1}{\sqrt{2}}c_1(x) \mathrm{div}(A \nabla (\psi_x \log \psi_x ) )(y)  M_0(y) + N_0(x,y) +  D^2 f_2(x,y)
    \\ & = \tfrac{1}{\sqrt{2}} \mathrm{div}(A  \nabla ( c_1(x) \psi_x \log \psi_x ))(y) M_0(y) + N_0(x,y) + D^2 f_2(x,y) 
    \\ & = \tfrac{1}{\sqrt{2}} \mathrm{div}(A \nabla G(x,\cdot) )(y)  M_0(y) - \tfrac{1}{\sqrt{2}} \mathrm{div}(A \nabla f_2(x,\cdot))(y) M_0(y) + N_0(x,y) + D^2 f_2(x,y) 
    \\ & = \tfrac{1}{2} \mathrm{div}(A \nabla G(x,\cdot) )(y) A(x) ^{-1} +   \tfrac{1}{\sqrt{2}}\mathrm{div}(A \nabla G(x,\cdot) )(y) (\tfrac{1}{\sqrt{2}}A(x)^{-1}- M_0(y)) \\ & \qquad  - \tfrac{1}{2}\mathrm{div}(A \nabla f_2(x,\cdot))(y) M_0(y)   + N_0(x,y) + D^2 f_2(x,y).
\end{align}
Now we define 
\begin{equation}\label{eq:Nrestterm}
    N(x,y) :=N_0(x,y)  +  \tfrac{1}{\sqrt{2}}\mathrm{div}(A \nabla G(x,\cdot) )(y) (\tfrac{1}{\sqrt{2}}A(x)^{-1}- M_0(y))  -\tfrac{1}{\sqrt{2}} \mathrm{div}(A \nabla f_2(x,\cdot))(y) M_0(y) + D^2 f_2(x,y) 
\end{equation}
%(noticing that $N$ is fully Lebesgue as $N_0$ is fully Lebesgue
and notice that due to the Sobolev embedding $W^{3,p}_{loc}(\Omega) \hookrightarrow C^{2,\alpha}_{loc}(\Omega)$ (any $p> 2$, $\alpha = 1-\frac{2}{p}$) one has $-\mathrm{div}(A f_2(x, \cdot)) , D^2 f_2(x,\cdot) \in C^{0, \alpha}_{loc}(\Omega)$ for any $\alpha \in (0,1)$. This and the fact that $M_0$ is smooth yield that the last three summands in \eqref{eq:Nrestterm} lie in $C^{0,\alpha}_{loc}(\Omega)$. For the second summand notice that by Lemma  \ref{lem:3.5} 
\begin{equation}
    \tfrac{1}{\sqrt{2}}A(x)^{-1} - M_0(y) = M_0(x)- M_0(y)  \leq C |x-y|,
\end{equation}
where $C$ is the Lipschitz constant of $M_0$.  Now let $\Omega' \subset \subset \Omega$. By Lemma \ref{lem:ReguFS2D} and the fact that $W^{3,q}_{loc}(\Omega) \hookrightarrow C^1(\Omega)$ (any $q> 1$) we can estimate  
\begin{equation}
    |\mathrm{div}(A \nabla G(x,\cdot))(y)| \leq C (|DG(x,y)| + |D^2G(x,y)|) \leq C ( 1+ | \log(x-y)|)   \quad \textrm{for all $x,y \in \Omega', x \neq y$.}
\end{equation}
Altogether we obtain that for $(x,y) \in \Omega'$ such that $x \neq y$ 
\begin{equation}
    \mathrm{div}(A \nabla G(x,\cdot))(y) (\tfrac{1}{\sqrt{2}}A(x)^{-1} - M_0(y)) \leq C |x-y|(1+\log|x-y|), 
\end{equation}
which is bounded on $\Omega'$. Moreover, note that $(x,y) \mapsto   \mathrm{div}(A(y) \nabla G(x,\cdot))(y) (\frac{1}{\sqrt{2}}A(x)^{-1} - M_0(y))$ lies in $C^\infty(\Omega \times \Omega \setminus \{(x,x): x \in \Omega \})$. Going back to \eqref{eq:Nrestterm} we obtain that $N$ is locally bounded on $\Omega \times \Omega$ and smooth on $\Omega \times \Omega \setminus \{(x,x): x \in \Omega \}$ as claimed. It remains to show that $N(x,y) \in \mathbb{R}^{2\times 2}_{sym}$ for all $x,y \in \Omega$. This due to the fact that the other matrices appearing in \eqref{eq:A(x)invrplbyA(yinv}, namely $D^2G_{L^2}(x,\cdot)$ and $\frac{1}{2}\mathrm{div}(A \nabla G_{L^2}(x,\cdot))A^{-1}$, are symmetric. 
\end{proof}

\begin{remark}\label{rem:36}
    In \eqref{eq:A(x)invrplbyA(yinv} $A(x)^{-1}$ can be replaced by $A(y)^{-1}$. Indeed,  for $N$ as in Proposition \ref{thm:Frehse} one has
    \begin{align}
        D_y^2 G_{L^2}(x,y)  & = \frac{1}{2}\mathrm{div}_y(A(y) \nabla_y G_{L^2}(x,y)) A(x)^{-1} + N(x,y)
        \\ & = \frac{1}{2}\mathrm{div}_y(A(y) \nabla_y G_{L^2}(x,y)) A(y)^{-1} + \frac{1}{2}\mathrm{div}_y(A(y) \nabla_y G_{L^2}(x,y)) (A(x)^{-1}- A(y)^{-1}) + N(x,y).
    \end{align}
Since $|A(x)^{-1} - A(y)^{-1}| \leq C |x-y|$ for some $C> 0$ and by Lemma  \ref{lem:ReguFS2D} we conclude that for any $\Omega' \subset\subset \Omega$ there exists $C' > 0$ such that
\begin{equation}
    \mathrm{div}_y(A(y) \nabla_y G_{L^2}(x,y)) (A(x)^{-1}- A(y)^{-1}) \leq C'|x-y| (1+ |\log(x-y)|) \quad \textrm{for all $(x,y) \in \Omega' \times \Omega': x \neq y$,}
\end{equation}
yielding local boundedness of this term in $\Omega \times \Omega$. Hence the term can be absorbed into $N$. 
\end{remark}

\begin{remark}
    In Remark \ref{rem:36} we have obtained that for all $(x,y) \in \Omega \times \Omega$ such that $x \neq y$
    \begin{equation}
        D_y^2 G_{L^2}(x,y) = \frac{1}{2}\mathrm{div}_y(A(y)\nabla_y G_{L^2}(x,y)) A(y)^{-1} + N(x,y),
    \end{equation}
    where $N: \Omega \times \Omega \rightarrow \mathbb{R}^{2\times 2}$ is locally bounded, Borel measurable and smooth in $\Omega \times \Omega \setminus \{ (x,x) : x \in \Omega \}.$ %as in Proposition \ref{thm:Frehse}.
    Using this and that by Lemma \ref{lemmaBasicGreen} $G_{L^2}(x,y) =G_{L^2}(y,x)$ we find
    \begin{equation}
        D_x^2 G_{L^2}(x,y) = \frac{1}{2} \mathrm{div}_x(A(x)\nabla_x G_{L^2}(x,y)) A(x)^{-1} + N(y,x).
    \end{equation}
    The choice $\tilde{N}(x,y) := N(y,x)$ gives therefore rise to a locally bounded Borel measurable function $\tilde{N}: \Omega \times \Omega \rightarrow \mathbb{R}^{2 \times 2 }_{sym}$ which is smooth on $\Omega \times \Omega \setminus \{ (x,x): x \in \Omega \}$ and satisfies
    \begin{equation}\label{eq:GreenFORMULAAAAA}
         D_x^2 G_{L^2}(x,y) = \frac{1}{2} \mathrm{div}_x(A(x)\nabla_x G_{L^2}(x,y)) A(x)^{-1} + \tilde{N}(x,y) \qquad \textrm{for all $(x,y) \in \Omega \times \Omega: x \neq y$.}
    \end{equation}
\end{remark}

%\subsection{Consequences for Green's function}

\subsection{Consequences for higher order measure-valued equations}

The previous insights about the fundamental solution can be applied to describe distributional solutions of the measure valued equation $L^2 u = \mu$, for some Radon measure $\mu$ on $\Omega$. This is due to the fact that such measure valued solutions can be represented with the aid of Green's function. %the fundamental solution.

%In the following we require for simplicity that $L$ has smooth coefficients, i.e. $A \in C^\infty(\overline{\Omega};\mathbb{R}^{2\times 2}).$

\begin{lemma}\label{lem:RepresentationFormula}
 Suppose that $\mu$ is a Radon measure on $\Omega$ and $u \in W^{2,2}_{loc}(\Omega)$ solves $L^2u = \mu$ on $\Omega$ weakly, i.e. % for each $\varphi \in C_0^\infty(\Omega)$ one has 
    \begin{equation}
        \int_\Omega Lu L\varphi \; \mathrm{d}x = \int \varphi \; \mathrm{d}\mu \qquad \textrm{for all $\varphi \in C_0^\infty(\Omega)$}.
    \end{equation}
    Then there exists $w \in C^\infty(\Omega)$  such that 
    \begin{equation}
        u(x) = \int_\Omega G_{L^2}(x,y) \; \mathrm{d}\mu(y) + w(x).
    \end{equation}
\end{lemma}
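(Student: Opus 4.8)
The plan is to define the ``candidate error term'' $w(x) := u(x) - \int_\Omega G_{L^2}(x,y)\,\mathrm{d}\mu(y)$ and to show that $L^2 w = 0$ distributionally, after which interior regularity for the (square of the) elliptic operator $L$ forces $w \in C^\infty(\Omega)$. First I would check that the potential $N\mu(x) := \int_\Omega G_{L^2}(x,y)\,\mathrm{d}\mu(y)$ is well defined and lies in $W^{2,2}_{loc}(\Omega)$, so that the subtraction makes sense in the same space as $u$. For this I would use the pointwise bound $|D_y^2 G_{L^2}(x,y)| \le C(|\log(x-y)|+1)$ from Lemma~\ref{lem:ReguFS2D} (together with analogous, easier bounds on $G_{L^2}$ and $\nabla_y G_{L^2}$, which follow from Lemma~\ref{lem:2.4} and the local integrability of $\psi_x\log\psi_x$ and its first two derivatives), and Fubini/Minkowski's integral inequality: since $\mu$ is a finite Radon measure and $y\mapsto |\log(x-y)|$ is in $L^p_{loc}$ for every $p<\infty$ uniformly in $x$ on compact sets, $N\mu \in W^{2,p}_{loc}(\Omega)$ for every $p<\infty$, in particular in $W^{2,2}_{loc}$.

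The core step is the distributional identity $L^2 (N\mu) = \mu$. Fix $\varphi \in C_0^\infty(\Omega)$. By definition of $G_{L^2}$ as the Green's function for the Navier problem \eqref{eq:FOLLANDSFRIEND}, for each fixed $y$ the function $G_{L^2}(\cdot,y)$ solves $L^2 G_{L^2}(\cdot,y) = \delta_y$, i.e.\ $\int_\Omega L_x G_{L^2}(x,y)\, L\varphi(x)\,\mathrm{d}x = \varphi(y)$; here one uses the symmetry $G_{L^2}(x,y)=G_{L^2}(y,x)$ from Lemma~\ref{lemmaBasicGreen}(iii) so that testing in the $x$-variable is legitimate, and the fact that $L G_{L^2}(\cdot,y)=G_L(\cdot,y)\in W^{1,q}_0(\Omega)$ so $L\varphi$ is a legitimate test object. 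Then
\begin{align}
    \int_\Omega L(N\mu)(x)\, L\varphi(x)\,\mathrm{d}x
    &= \int_\Omega \Big( \int_\Omega L_x G_{L^2}(x,y)\,\mathrm{d}\mu(y)\Big) L\varphi(x)\,\mathrm{d}x \\
    &= \int_\Omega \Big( \int_\Omega L_x G_{L^2}(x,y)\, L\varphi(x)\,\mathrm{d}x \Big)\mathrm{d}\mu(y)
    = \int_\Omega \varphi(y)\,\mathrm{d}\mu(y),
\end{align}
where the interchange of integrals is justified by the $L^p_{loc}$-bounds on $L_x G_{L^2}(x,y)=G_L(x,y)$ uniform over $\mathrm{supp}\,\varphi$ together with finiteness of $\mu$. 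Combining this with the hypothesis $\int_\Omega Lu\,L\varphi\,\mathrm{d}x = \int_\Omega\varphi\,\mathrm{d}\mu$ gives $\int_\Omega L w\, L\varphi\,\mathrm{d}x = 0$ for all $\varphi\in C_0^\infty(\Omega)$, i.e.\ $L^2 w = 0$ weakly.

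Finally, since $w\in W^{2,2}_{loc}(\Omega)$ and $L^2 w = 0$ distributionally with $L = -\mathrm{div}(A\nabla)$ having smooth coefficients, elliptic regularity (bootstrap: $Lw$ is weakly $L$-harmonic, hence smooth by e.g.\ \cite[Theorem~6.33]{Folland}, and then $Lw\in C^\infty$ forces $w\in C^\infty$ by the same interior regularity for the second-order equation $Lw=(\text{smooth})$) yields $w\in C^\infty(\Omega)$. The main obstacle I anticipate is purely the justification of the Fubini interchange and the $W^{2,2}_{loc}$-membership of the potential $N\mu$: one must handle the logarithmic singularity of $D_y^2 G_{L^2}$ against a general (possibly non-absolutely-continuous) finite measure $\mu$, which is exactly where the uniform-in-$x$ local estimates from Lemma~\ref{lem:2.4} and Lemma~\ref{lem:ReguFS2D} are used; everything else is standard Green's-function manipulation plus interior elliptic regularity.
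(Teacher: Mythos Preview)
Your proposal is correct and follows essentially the same route as the paper: define the potential $\bar u(x)=\int_\Omega G_{L^2}(x,y)\,\mathrm{d}\mu(y)$, show $L^2\bar u=\mu$ so that $w:=u-\bar u$ satisfies $L^2 w=0$ distributionally, and then invoke interior hypoellipticity (\cite[Theorem~6.33]{Folland}) to obtain $w\in C^\infty(\Omega)$. The paper's own proof is much terser---it cites \cite[Theorem~6.1]{LSW63} for the fact that $w$ is a distributional solution of $L^2w=0$, whereas you spell out the Fubini argument and the $W^{2,p}_{loc}$ membership of the potential explicitly using the logarithmic bounds from Lemma~\ref{lem:2.4} and Lemma~\ref{lem:ReguFS2D}; your version is more self-contained but not genuinely different in strategy.
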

\begin{proof}
     Define 
    \begin{equation}\label{eq:Defibaru}
        \bar{u}(x) = \int_\Omega G_{L^2}(x,y) \; \mathrm{d}\mu(y). 
    \end{equation}
%    Indeed, the equation $L^2 u = \mu$ is also solved in a distributional sense by $\bar{u}$ given by
%\begin{equation}\label{eq:defubar}
%    \bar{u}(x) := \int_\Omega F_{L^2}(x-y) \; \mathrm{d}\mu(y). 
%\end{equation}
We conclude from \cite[Theorem 6.1]{LSW63} that $w := u - \bar{u}$ must be a distributional solution of $L^2 w = 0$. Standard regularity theory (cf.\ \cite[Theorem 6.33]{Folland}) implies $w \in C^\infty(\Omega)$. The claim follows.
\end{proof}

%\begin{lemma}\label{eq:lem:Frehseconseq}
%    Suppose that $\mu$ is a Radon measure on $\Omega$ and $u \in W_{loc}^{2,2}(\Omega)$ is as in Lemma \ref{lem:RepresentationFormula}.
%    Then there exists some $K : \Omega \times \Omega \rightarrow \mathbb{R}^{2\times 2}_{sym}$ Borel measurable, locally bounded and smooth on   $\Omega \times \Omega \setminus \{(x,x) : x \in \Omega \}$  
%    as well as some $\tilde{H} \in C^\infty(\Omega;\mathbb{R}^{2\times 2}_{sym})$ such that for  almost every $x \in \Omega$ one has
%     \begin{equation}
%         D^2u(x) =- \frac{1}{2} (Lu(x))  A(x)^{-1} + \int_\Omega K(x,y) \; \mathrm{d}\mu(y)  + \tilde{H}(x).
%     \end{equation}
%\end{lemma}
\begin{proof}[Proof of Theorem \ref{thm:1.1}]
    We have observed in Lemma \ref{lem:RepresentationFormula} that for all $x \in \Omega$ 
\begin{equation}
    u(x) = \bar{u}(x) + w(x) = \int_\Omega G_{L^2}(x,y) \; \mathrm{d}\mu(y) + w(x) \qquad \textrm{for some $w \in C^\infty(\Omega)$}. 
\end{equation}
Define $N := \{ x \in \Omega : \mu(\{x \}) > 0 \}$. Since the set of atoms of $\mu$ is at most countable we have that $N$ is a Lebesgue null set. 
%Now fix some $x \in \Omega$ that is not an atom of $\mu$, i.e. $\mu(\{x \})= 0$. Notice that the set of atoms is at most countable and thus a Lebesgue null set.  
One readily checks that with $H := D^2 w\in C^\infty(\Omega)$ one has
\begin{equation}\label{eq:D^2uprecise}
    D^2 u (x) = \int_\Omega  D_x^2 G_{L^2}(x,y) \; \mathrm{d}\mu (y) + H(x) \qquad \textrm{for almost every $x \in \Omega$}.
\end{equation}
Fix  $x \in \Omega \setminus N$ such that \eqref{eq:D^2uprecise} holds. With \eqref{eq:GreenFORMULAAAAA} we conclude that for all $y \in \mathbb{R}^2 \setminus \{ x \}$
\begin{equation}\label{eq:Greenrep}
    D_x^2 G_{L^2}(x,y) =- \frac{1}{2} L_xG_{L^2}(x,y) A(x)^{-1} + \tilde{N}(x,y), 
\end{equation}
for some Borel measurable and bounded $\tilde{N} : \Omega \times \Omega \rightarrow \mathbb{R}^{2\times 2}_{sym}$ which is smooth on $\Omega \times \Omega \setminus \{ (x,x) : x \in \Omega \}.$
%where $M \in C^\infty( \mathbb{R}^2 , \mathbb{R}^{2\times 2})$ is such that $M(0) = A^{-1}$ and $N$ is locally bounded measurable and smooth on $\mathbb{R}^2 \setminus \{0 \}$.
%Using this we also have 
%Due to the fact that for $(x,y) \in \mathbb{R}^2 \times \mathbb{R}^2$ one has $D^2F_{L^2}(x,y) \in \mathbb{R}^{2\times 2}_{sym}$ and $A(x)^{-1} \in \mathbb{R}^{2\times 2}_{sym}$ we also have that $N$ 
Since $\mu(\{ x \})= 0$ we have that \eqref{eq:Greenrep} holds for $\mu$-almost every $y \in \Omega$. We obtain for $x$ as in \eqref{eq:D^2uprecise}
\begin{equation}
    D^2 u(x) =  - \frac{1}{2} \left( \int_\Omega L_x G_{L^2}(x,y) \; \mathrm{d}\mu(y) \right)  A(x)^{-1} + \int_\Omega \tilde{N}(x,y) \; \mathrm{d}\mu(y) + H(x). 
\end{equation}
We conclude with the notation of \eqref{eq:Defibaru} that for almost every $x \in \Omega$
\begin{equation}
    D^2 u (x) = - \frac{1}{2} (L \bar{u}(x)) A(x)^{-1} + \int_\Omega \tilde{N}(x,y) \; \mathrm{d}\mu(y) + H(x).
\end{equation}
Using that $L\bar{u} = L (u-w) = Lu - Lw$ we obtain that for almost every $x \in \Omega$
\begin{equation}
    D^2 u(x) = - \frac{1}{2}(L u(x)) A(x)^{-1}  + \int_\Omega \tilde{N}(x,y) \; \mathrm{d}\mu(y) + H(x) + \frac{1}{2} Lw(x) A(x)^{-1}.
\end{equation}
Using that $H, Lw, A^{-1} \in C^\infty$, defining $K:= \tilde{N}$ and replacing $H$ by $H + \frac{1}{2} Lw A^{-1}$ the claim follows.
\end{proof}

\section{Variational methods}\label{sec:VariationalMethods}
In this section we derive some variational properties for minimizers of  $\mathcal{E}$, following \cite[Section 2,3]{MuellerAMPA} and generalizing the results to the operator $L$ if necessary. Some generalizations will be immediate, some will use Theorem \ref{thm:1.1} and some will use the mean value properties in \cite{BH15}.  Recall that $A \in C^\infty(\overline{\Omega}; \mathbb{R}^{2\times 2}_{sym})$ is symmetric and uniformly elliptic and $u_0 \in C^\infty(\overline{\Omega})$ is such that $u_0>0$ on $\overline{\Omega}$. 
\subsection{Existence of minimizers}
In this section we prove existence of minimizers. Before doing so, we state a preparatory lemma from standard regularity theory whose proof is safely omitted. 
%as it follows from standard regularity theory
\begin{lemma}\label{eq:lemma:LisoBanach} Let $L := - \mathrm{div}(A\nabla): W^{2,2}(\Omega) \cap W_0^{1,2}(\Omega) \rightarrow L^2(\Omega)$. Then $L$ is a bijective linear operator and there exists some constant $D_{\Omega,A} > 0$ such that 
    $
     \lVert v \rVert_{W^{2,2}}    \leq  D_{\Omega,A}\lVert L v \rVert_{L^2}
 $ for all $v \in W^{2,2}(\Omega) \cap W_0^{1,2}(\Omega).$
\end{lemma}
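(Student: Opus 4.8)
The plan is to establish the three claimed properties of the operator $L = -\mathrm{div}(A\nabla)$ on $W^{2,2}(\Omega)\cap W_0^{1,2}(\Omega)$ — linearity, bijectivity, and the a priori estimate — by appealing to classical elliptic theory. Linearity is immediate from the definition of $L$ as a differential operator with fixed coefficients. For the rest, the key input is $L^2$-regularity for the Dirichlet problem: for $A \in C^\infty(\overline{\Omega};\mathbb{R}^{2\times 2}_{sym})$ uniformly elliptic and $\Omega$ smooth and bounded, the problem $Lv = f$ in $\Omega$, $v = 0$ on $\partial\Omega$ has, for every $f \in L^2(\Omega)$, a unique solution $v \in W^{2,2}(\Omega)\cap W_0^{1,2}(\Omega)$, and this solution obeys $\lVert v \rVert_{W^{2,2}} \leq D_{\Omega,A}\lVert f \rVert_{L^2}$ with $D_{\Omega,A}$ depending only on $\Omega$ and $A$; this is a standard consequence of interior plus boundary $L^p$-estimates (e.g. Agmon–Douglis–Nirenberg, or the textbook treatment in Gilbarg–Trudinger Chapter 9, combined with the Lax–Milgram lemma for solvability in $W_0^{1,2}$).

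First I would argue injectivity: if $Lv = 0$ weakly with $v \in W_0^{1,2}(\Omega)$, then testing with $v$ itself and using the uniform ellipticity $\lambda|\xi|^2 \leq (A(x)\xi,\xi)$ gives $\lambda\int_\Omega |\nabla v|^2 \dx \leq \int_\Omega (A\nabla v)\cdot\nabla v \dx = 0$, so $\nabla v = 0$ a.e., hence $v = 0$ by the Poincaré inequality. Next, surjectivity: given $f \in L^2(\Omega)$, the bilinear form $a(v,\varphi) := \int_\Omega (A\nabla v)\cdot\nabla\varphi \dx$ is bounded and coercive on $W_0^{1,2}(\Omega)$ (coercivity again from uniform ellipticity and Poincaré), so by Lax–Milgram there is a unique weak solution $v \in W_0^{1,2}(\Omega)$ of $Lv = f$. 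Elliptic regularity up to the boundary (using $A \in C^\infty$ and $\partial\Omega$ smooth) upgrades this to $v \in W^{2,2}(\Omega)$, so $v \in W^{2,2}(\Omega)\cap W_0^{1,2}(\Omega)$ and $L$ is onto $L^2(\Omega)$.

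Finally, the estimate: combining the global $W^{2,2}$-estimate from elliptic regularity — which bounds $\lVert v \rVert_{W^{2,2}}$ by $C(\lVert Lv \rVert_{L^2} + \lVert v \rVert_{L^2})$ — with the already established injectivity, one removes the lower-order term by a standard compactness–contradiction argument: if no uniform constant worked, one would extract a sequence $v_k$ with $\lVert v_k \rVert_{W^{2,2}} = 1$ and $\lVert Lv_k \rVert_{L^2}\to 0$; by Rellich compactness a subsequence converges strongly in $W^{1,2}$ and weakly in $W^{2,2}$ to some $v$, which solves $Lv = 0$, hence $v = 0$ by injectivity, yet the $W^{2,2}$-estimate forces $\lVert v_k \rVert_{W^{2,2}}\to 0$, contradicting $\lVert v_k \rVert_{W^{2,2}} = 1$. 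This yields the desired $\lVert v \rVert_{W^{2,2}} \leq D_{\Omega,A}\lVert Lv \rVert_{L^2}$.

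Since every ingredient here is entirely standard, there is no genuine obstacle; the only mild subtlety is making sure the regularity theorem is invoked in a form valid for $C^\infty$ (or merely $C^{0,1}$) coefficients on a smooth domain, which is exactly why the paper states that "the proof is safely omitted." I would therefore keep the write-up to a one-line pointer to \cite{LSW63} or a standard elliptic PDE reference rather than reproducing these arguments.
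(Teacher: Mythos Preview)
Your proposal is correct and matches the paper's approach: the paper itself omits the proof as ``standard regularity theory,'' and you have accurately reconstructed the classical argument (Lax--Milgram solvability, uniqueness via ellipticity, $W^{2,2}$-regularity up to the boundary, and absorption of the lower-order term by compactness--contradiction). Your own final recommendation to replace the full sketch with a one-line reference is exactly what the paper does.
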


We next obtain an upper bound for the energy infinum.  

\begin{lemma}\label{lem:51} $
    \inf_{w \in \mathcal{A}(u_0)} \mathcal{E}(w) \leq |\Omega|.
$   
\end{lemma}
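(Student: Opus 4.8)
The plan is to exhibit an explicit admissible competitor whose energy is at most $|\Omega|$. The natural candidate is the solution $v$ of the biharmonic-type boundary value problem $L^2 v = 0$ in $\Omega$ with $v = u_0$ and $Lv = 0$ on $\partial\Omega$ — equivalently, first solve $Lh = 0$ in $\Omega$, $h = u_0$ on $\partial\Omega$, and then solve $Lv = h$ in $\Omega$, $v = u_0$ on $\partial\Omega$; actually the cleanest choice is to take $v$ with $Lv = 0$, i.e. solve $Lv = 0$ in $\Omega$, $v = u_0$ on $\partial\Omega$. Then $v \in W^{2,2}(\Omega)$ by Lemma \ref{eq:lemma:LisoBanach} (applied to $v - u_0$ after subtracting a fixed $W^{2,2}$-extension of $u_0$, or directly from elliptic regularity), and $v - u_0 \in W^{1,2}_0(\Omega)$, so $v \in \mathcal{A}(u_0)$.

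For this competitor the first term of $\mathcal{E}$ vanishes identically: $\int_\Omega [\mathrm{div}(A\nabla v)]^2\,\mathrm{d}x = \int_\Omega (Lv)^2\,\mathrm{d}x = 0$. Hence
\begin{equation}
    \mathcal{E}(v) = |\{v > 0\}| \leq |\Omega|,
\end{equation}
since $\{v > 0\} \subseteq \Omega$. Therefore $\inf_{w \in \mathcal{A}(u_0)} \mathcal{E}(w) \leq \mathcal{E}(v) \leq |\Omega|$, which is the claim.

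There is essentially no obstacle here beyond checking admissibility of $v$. The only point requiring a word of care is that $v = Lv = 0$ on $\partial\Omega$ is not actually needed — only $v = u_0$ on $\partial\Omega$ in the trace sense and $v \in W^{2,2}(\Omega)$ matter for membership in $\mathcal{A}(u_0)$, and both follow from standard solvability of the Dirichlet problem for $L$ together with elliptic regularity ($u_0 \in C^\infty(\overline\Omega)$ makes $h = u_0$ already smooth, so one may even take $v$ solving $Lv = 0$, which gives $v = u_0$ if $u_0$ is $L$-harmonic — in general just take $v$ to solve $Lv = g$ for any convenient right-hand side, e.g. $g \equiv 0$ forces $v$ to be the $L$-harmonic extension of $u_0$, which lies in $C^\infty(\overline\Omega) \cap \mathcal{A}(u_0)$). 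In all cases the first energy term is zero and the measure term is bounded by $|\Omega|$, completing the proof.
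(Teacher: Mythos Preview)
Your proof is correct and uses essentially the same approach as the paper: take the $L$-harmonic extension $v$ of $u_0$ (i.e., $Lv=0$ in $\Omega$, $v=u_0$ on $\partial\Omega$), which lies in $\mathcal{A}(u_0)$ and satisfies $\mathcal{E}(v)=|\{v>0\}|\leq|\Omega|$. The presentation could be streamlined considerably---the initial digression about the Navier problem $L^2v=0$ and the closing remarks about solving $Lv=g$ for general $g$ are unnecessary and somewhat confusing, since only the choice $Lv=0$ makes the first energy term vanish.
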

\begin{proof}
   %First infer from \cite[Lemma A.1]{MuellerEllipticSurface} (with $p = n = 2$) that there exists some $u_* \in \mathcal{A}(u_0)$ such that $\mathrm{div}(A \nabla u_*) = 0$ pointwise a.e. in $\Omega$. 
   Standard regularity theory implies the existence of some $u_* \in \mathcal{A}(u_0)$ such that $-\mathrm{div}(A \nabla u_*)= 0$ almost everywhere in $\Omega$. Then
   \begin{equation}
       \inf_{w \in \mathcal{A}(u_0)} \mathcal{E}(w) \leq \mathcal{E}(u_*) = \int_\Omega [\mathrm{div}(A\nabla u_*)]^2 \; \mathrm{d}x + |\{u_* > 0 \}| =  |\{u_* > 0 \}| \leq |\Omega|. 
   \end{equation}
   The claim follows.
\end{proof}

\begin{prop}[Existence of minimizers]\label{prop:Convenience}
   The infimum $\inf_{w \in \mathcal{A}(u_0)}  \mathcal{E}(w)$ is attained by some $u \in \mathcal{A}(u_0)$.  
\end{prop}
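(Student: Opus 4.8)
The plan is to use the direct method of the calculus of variations. First I would take a minimizing sequence $(u_k)_{k \in \N} \subset \A(u_0)$, i.e. $\mathcal{E}(u_k) \to \inf_{w \in \A(u_0)} \mathcal{E}(w) =: m$, which by Lemma \ref{lem:51} satisfies $m \leq |\Omega| < \infty$. Since the Lebesgue measure term $|\{u_k > 0\}| \geq 0$, we get a uniform bound $\int_\Omega (Lu_k)^2 \dx \leq C$ along the sequence. Writing $u_k = (u_k - u_0) + u_0$ with $u_k - u_0 \in W^{2,2}(\Omega) \cap W_0^{1,2}(\Omega)$, Lemma \ref{eq:lemma:LisoBanach} gives $\lVert u_k - u_0 \rVert_{W^{2,2}} \leq D_{\Omega,A} \lVert L(u_k - u_0) \rVert_{L^2} \leq D_{\Omega,A}(\lVert Lu_k \rVert_{L^2} + \lVert Lu_0 \rVert_{L^2}) \leq C'$, so $(u_k)$ is bounded in $W^{2,2}(\Omega)$.

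Next I would extract (for a subsequence, not relabeled) a weak limit $u_k \wto u$ in $W^{2,2}(\Omega)$. Since $\A(u_0)$ is a closed affine subspace of $W^{2,2}(\Omega)$ (the constraint $u - u_0 \in W_0^{1,2}(\Omega)$ is preserved under weak $W^{2,2}$-limits, as $W_0^{1,2}(\Omega)$ is weakly closed), we have $u \in \A(u_0)$. For the first summand, $v \mapsto \int_\Omega (Lv)^2 \dx$ is convex and strongly continuous on $W^{2,2}(\Omega)$, hence weakly lower semicontinuous, so $\int_\Omega (Lu)^2 \dx \leq \liminf_k \int_\Omega (Lu_k)^2 \dx$. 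Alternatively one can use that $Lu_k \wto Lu$ in $L^2(\Omega)$ and lower semicontinuity of the $L^2$-norm under weak convergence.

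The main obstacle is the lower semicontinuity of the measure term $w \mapsto |\{w > 0\}|$, which is \emph{not} continuous under weak convergence. The standard fix: by Rellich--Kondrachov, $u_k \to u$ strongly in $W^{1,2}(\Omega)$ and hence (for a further subsequence) $u_k \to u$ pointwise almost everywhere in $\Omega$. Then for a.e.\ $x$ with $u(x) > 0$ we have $u_k(x) > 0$ for all large $k$, so $\chi_{\{u > 0\}} \leq \liminf_k \chi_{\{u_k > 0\}}$ pointwise a.e., and Fatou's lemma yields $|\{u > 0\}| \leq \liminf_k |\{u_k > 0\}|$. Combining the two lower semicontinuity statements,
\begin{equation}
 \mathcal{E}(u) = \int_\Omega (Lu)^2 \dx + |\{u > 0\}| \leq \liminf_{k \to \infty} \left( \int_\Omega (Lu_k)^2 \dx + |\{u_k > 0\}| \right) = \liminf_{k \to \infty} \mathcal{E}(u_k) = m,
\end{equation}
and since $u \in \A(u_0)$ we have $\mathcal{E}(u) \geq m$, so $\mathcal{E}(u) = m$ and $u$ is a minimizer. (One should be slightly careful to take successive subsequences for the weak $W^{2,2}$-limit and then the a.e.\ convergence, but this is routine.)
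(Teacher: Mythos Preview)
Your proof is correct and follows essentially the same direct-method argument as the paper: bound the minimizing sequence in $W^{2,2}$ via Lemma~\ref{eq:lemma:LisoBanach}, extract a weak limit in the weakly closed admissible set, use weak lower semicontinuity of the $L^2$-norm for the bending term, and Fatou's lemma after passing to a pointwise a.e.\ convergent subsequence for the measure term. If anything, your application of Lemma~\ref{eq:lemma:LisoBanach} to $u_k-u_0\in W^{2,2}(\Omega)\cap W_0^{1,2}(\Omega)$ is slightly more explicit than the paper's phrasing.
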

\begin{proof}
    Let $(u_n)_{n \in \mathbb{N}} \subset \mathcal{A}(u_0)$ be such that 
$
        \mathcal{E}(u_n) \rightarrow \inf_{w \in \mathcal{A}(u_0)} \mathcal{E}(w).
$
    Due to Lemma \ref{lem:51} we may assume that $\mathcal{E}(u_n) \leq |\Omega|+1$ for each $n \in \mathbb{N}$. This also implies that 
$
        \lVert L u_n \rVert_{L^2}^2 \leq \mathcal{E}(u_n) \leq |\Omega| + 1 $ for all $n \in \mathbb{N}$. 
    This and Lemma \ref{eq:lemma:LisoBanach} imply that $(u_n)_{n \in \mathbb{N}}$ is bounded in $W^{2,2}(\Omega)$. In particular, there exists some $u \in W^{2,2}(\Omega)$ and such that after passing to a subsequence we may suppose that $u_n \rightharpoonup u$ in $W^{2,2}(\Omega)$. As a consequence, $Lu_n \rightharpoonup Lu$ in $L^2(\Omega)$ and thus 
    \begin{equation}\label{eq:divAgraduhstg}
        \int_{\Omega} (\mathrm{div}(A \nabla u))^2 \; \mathrm{d}x = \lVert L u \rVert_{L^2}^2 \leq \liminf_{n \rightarrow \infty} \lVert L u_n \rVert_{L^2}^2 = \liminf_{n \rightarrow \infty} \int_\Omega (\mathrm{div}(A \nabla u_n))^2 \; \mathrm{d}x. 
    \end{equation}
    Due to the fact that $\mathcal{A}(u_0) \subset W^{2,2}(\Omega)$ is convex and closed (and thus weakly closed) we find that $u \in \mathcal{A}(u_0)$.  Moreover, due to the compact embedding $W^{2,2}(\Omega) \hookrightarrow L^2(\Omega)$ one has $u_n \rightarrow u$ in $L^2(\Omega)$ and thus (possibly passing to a further subsequence) we may assume that $u_n(x) \rightarrow u(x)$ for all $x \in \Omega \setminus N$, where $N$ is a Lebesgue null set. If $x \in \Omega \setminus N$ is such that $u(x) > 0$ then there must exist some $n_0 \in \mathbb{N}$ such that $u_n(x)> 0$ for all $n \geq n_0$. Hence we have 
$
        \chi_{\{ u > 0 \} }(x) \leq \liminf_{n \rightarrow \infty} \chi_{\{ u_n > 0 \}}(x)
   $ for all $x \in \Omega \setminus N$.
   %,
    %where $\chi_A$ denotes the characteristic function of a set $A \subset \Omega$. 
    Fatou's lemma implies
    \begin{equation}
        |\{ u > 0 \}| = \int_\Omega \chi_{ \{ u> 0 \} } \; \mathrm{d}x \leq \int_\Omega \liminf_{n \rightarrow \infty} \chi_{ \{ u_n> 0 \} } \; \mathrm{d}x \leq \liminf_{n \rightarrow \infty} \int_\Omega  \chi_{ \{ u_n> 0 \} } \; \mathrm{d}x = \liminf_{n \rightarrow \infty}  |\{ u_n > 0 \}| .
    \end{equation}
    This estimate and \eqref{eq:divAgraduhstg} together yield
        $\mathcal{E}(u) \leq \liminf_{n \rightarrow \infty} \mathcal{E}(u_n) = \inf_{w \in \mathcal{A}(u_0)} \mathcal{E}(w).$
Due to the fact that $u \in \mathcal{A}(u_0)$ we infer that $u$ is a minimizer of $\mathcal{E}$ in $\mathcal{A}(u_0)$ as claimed.
\end{proof}

%\begin{remark}
%    Minimizers are not necessarily unique, as observed for $A = I_{2}$ in \cite[Theorem 1.5]{MuellerAMPA}.
%\end{remark}

%\begin{remark}
%    Notice that due to the fact that $W^{2,2}(\Omega) \hookrightarrow C^0(\overline{\Omega})$ each minimizer has a  continuous representative on $\overline{\Omega}$. Whenever we evaluate minimizers pointwise in the coming sections, we mean the evaluation of the continuous representative. 
%\end{remark}

\subsection{Estimates for minimizers}

\begin{lemma}\label{lem:5.2}
%Suppose that $A \in W^{1,q}(\Omega)$ for some $q > 2$. Then 
There exists some constant $C_{\Omega,A} > 0$ such that each minimizer $u \in \mathcal{A}(u_0)$ satisfies
\begin{equation}
    \lVert u- u_0 \rVert_{W^{2,2}} \leq C_{\Omega,A} (1 + \lVert \mathrm{div}(A \nabla u_0)\rVert_{L^2}). 
\end{equation}
\end{lemma}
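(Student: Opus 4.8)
The plan is to exploit the fact that $u$ is a minimizer and compare it with the fixed competitor $u_*$ (or equivalently with $u_0$ itself, which is admissible since $u_0 > 0$) to bound the bending energy of $u$, and then to invoke the isomorphism estimate of Lemma \ref{eq:lemma:LisoBanach} to convert the $L^2$-bound on $Lu$ into a $W^{2,2}$-bound on $u - u_0$. Concretely, since $u_0 \in \mathcal{A}(u_0)$ we have $\mathcal{E}(u) \leq \mathcal{E}(u_0) = \int_\Omega [\mathrm{div}(A\nabla u_0)]^2 \, \mathrm{d}x + |\{u_0 > 0\}| \leq \lVert \mathrm{div}(A\nabla u_0)\rVert_{L^2}^2 + |\Omega|$. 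Since the Lebesgue-measure term in $\mathcal{E}$ is nonnegative, this yields $\lVert Lu \rVert_{L^2}^2 = \int_\Omega [\mathrm{div}(A\nabla u)]^2 \, \mathrm{d}x \leq \mathcal{E}(u) \leq \lVert \mathrm{div}(A\nabla u_0)\rVert_{L^2}^2 + |\Omega|$.

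Next I would apply $L$ to the difference. Since $u - u_0 \in W^{2,2}(\Omega) \cap W_0^{1,2}(\Omega)$, Lemma \ref{eq:lemma:LisoBanach} gives
\begin{equation}
    \lVert u - u_0 \rVert_{W^{2,2}} \leq D_{\Omega,A} \lVert L(u - u_0) \rVert_{L^2} \leq D_{\Omega,A} \left( \lVert Lu \rVert_{L^2} + \lVert L u_0 \rVert_{L^2} \right).
\end{equation}
Combining this with the bending-energy bound from the previous step and using $\sqrt{a+b} \leq \sqrt{a} + \sqrt{b}$ for $a,b \geq 0$, we obtain
\begin{equation}
    \lVert u - u_0 \rVert_{W^{2,2}} \leq D_{\Omega,A} \left( \sqrt{\lVert \mathrm{div}(A\nabla u_0)\rVert_{L^2}^2 + |\Omega|} + \lVert \mathrm{div}(A\nabla u_0)\rVert_{L^2} \right) \leq D_{\Omega,A} \left( 2\lVert \mathrm{div}(A\nabla u_0)\rVert_{L^2} + \sqrt{|\Omega|} \right).
\end{equation}
Setting $C_{\Omega,A} := D_{\Omega,A} \max\{2, \sqrt{|\Omega|}\}$ (or absorbing constants appropriately) yields the claimed estimate $\lVert u - u_0 \rVert_{W^{2,2}} \leq C_{\Omega,A}(1 + \lVert \mathrm{div}(A\nabla u_0)\rVert_{L^2})$.

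There is no serious obstacle here; the argument is entirely routine once Lemma \ref{eq:lemma:LisoBanach} is available. The only point that requires a moment's care is that the estimate must be uniform over \emph{all} minimizers $u$, but this is automatic since the only property of $u$ used is $\mathcal{E}(u) \leq \mathcal{E}(u_0)$, whose right-hand side depends only on $\Omega$, $A$ and $u_0$. One could equally well start from Lemma \ref{lem:51}, which gives the marginally sharper bound $\mathcal{E}(u) \leq |\Omega|$, but comparing directly with $u_0$ avoids even invoking the existence of the auxiliary solution $u_*$.
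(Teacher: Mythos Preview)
Your proof is correct and follows essentially the same approach as the paper: apply the isomorphism estimate of Lemma~\ref{eq:lemma:LisoBanach} to $u-u_0$ and bound $\lVert Lu\rVert_{L^2}$ via minimality. The only cosmetic difference is that the paper invokes Lemma~\ref{lem:51} (comparing with the $L$-harmonic extension $u_*$) to obtain the slightly cleaner bound $\lVert Lu\rVert_{L^2}^2\leq |\Omega|$, whereas you compare directly with $u_0$; you yourself note this alternative at the end, and either route gives the claim.
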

\begin{proof} Let $L$ and $D_{\Omega,A}$ be as in Lemma \ref{eq:lemma:LisoBanach}.
     %Since $L^2(\Omega)$ and $W^{2,2}(\Omega) \cap W_0^{1,2}(\Omega)$ are Banach spaces, $L$ has a continuous inverse and therefore there exists $C_{\Omega,A}$ such that 
    In particular one has $\lVert L^{-1} w \rVert_{W^{2,2}} \leq D_{\Omega,A}\lVert w \rVert_{L^2}$ for all $w \in L^2(\Omega)$. Now let $u \in \mathcal{A}(u_0)$ be a minimizer. Noting that $(u-u_0) \in W^{2,2}(\Omega) \cap W_0^{1,2}(\Omega)$ and writing $u-u_0 = L^{-1}(L(u-u_0))$ we find 
     $
         \lVert u - u_0 \rVert_{W^{2,2}} \leq D_{\Omega,A} \lVert L u- L u_0 \rVert_{L^2}  \leq D_{\Omega,A} (\lVert L u \rVert_{L^2}+ \lVert Lu_0 \rVert_{L^2}). 
     $
      Lemma \ref{lem:51} implies that 
    $  \lVert L u \rVert_{L^2}^2 = \int_\Omega |\mathrm{div}(A \nabla u)|^2 \; \mathrm{d}x \leq \mathcal{E}(u) \leq |\Omega|$. Thereupon we infer 
$$
          \lVert u - u_0 \rVert_{W^{2,2}} \leq D_{\Omega,A} ( \sqrt{|\Omega|} + \lVert Lu_0 \rVert_{L^2} ). 
 $$
     Replacing $D_{\Omega,A}$ by $C_{\Omega,A} := D_{\Omega,A}(1+ \sqrt{|\Omega|})$ the claim follows. 
     %we infer by Hölder's inequality that $|\mathrm{div}(A) \cdot \nabla u|^2 \in L^1(\Omega)$ which implies that  $ \lVert \mathbf{div}(A) \nabla u \rVert_{L^2} < \infty$.  
\end{proof}

As a consequence we obtain that $u > 0$ in an explicit tubular neighborhood of $\partial \Omega$. 

\begin{lemma}\label{lem:53}
    There exists a constant $C_{\Omega,A}$ such that for each minimizer $u \in \mathcal{A}(u_0)$ of $\mathcal{E}$ and $x \in \overline{\Omega}$
    \begin{equation}
        u(x) \geq \inf_{\overline{\Omega}} u_0  - C_{\Omega,A} (1 + \lVert L u_0 \rVert_{L^2}) \sqrt{\mathrm{dist}(x,\partial\Omega)}.
    \end{equation}
\end{lemma}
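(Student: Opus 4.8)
The idea is that $u$ and $u_0$ share the same boundary trace, so the gap $u(x)-u_0(x)$ near $\partial\Omega$ is controlled by the modulus of continuity of the difference $v:=u-u_0\in W^{2,2}(\Omega)\cap W_0^{1,2}(\Omega)$ together with the distance from $x$ to the nearest boundary point, at which $v$ vanishes. The quantitative input is the $W^{2,2}$-bound from Lemma \ref{lem:5.2}, and the qualitative input is a Sobolev embedding into a Hölder space with exponent $\tfrac12$, which is exactly what produces the square-root rate.

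\textbf{Main steps.} First I would dispose of the trivial case $x\in\partial\Omega$: then $\mathrm{dist}(x,\partial\Omega)=0$ and $u(x)=u_0(x)\ge \inf_{\overline{\Omega}}u_0$, so the inequality holds. Assume therefore $x\in\Omega$, set $v:=u-u_0$, and observe that $v\in W^{2,2}(\Omega)\cap W_0^{1,2}(\Omega)$. Since $n=2$ and $\Omega$ is a smooth bounded domain, the chain $W^{2,2}(\Omega)\hookrightarrow W^{1,4}(\Omega)\hookrightarrow C^{0,1/2}(\overline{\Omega})$ yields a constant $\tilde C_\Omega>0$ with $[v]_{C^{0,1/2}(\overline{\Omega})}\le \tilde C_\Omega\lVert v\rVert_{W^{2,2}(\Omega)}$; moreover, as $v\in W_0^{1,2}(\Omega)$ is continuous up to the boundary, its continuous representative vanishes on $\partial\Omega$. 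Next I would choose a nearest boundary point $x_0\in\partial\Omega$ with $|x-x_0|=\mathrm{dist}(x,\partial\Omega)$ and write $u(x)-u_0(x)=v(x)=v(x)-v(x_0)$, so that $|u(x)-u_0(x)|\le \tilde C_\Omega\lVert v\rVert_{W^{2,2}}\,\sqrt{\mathrm{dist}(x,\partial\Omega)}$. Finally, Lemma \ref{lem:5.2} gives $\lVert v\rVert_{W^{2,2}}=\lVert u-u_0\rVert_{W^{2,2}}\le C_{\Omega,A}(1+\lVert\mathrm{div}(A\nabla u_0)\rVert_{L^2})=C_{\Omega,A}(1+\lVert Lu_0\rVert_{L^2})$, and combining this with $u(x)\ge u_0(x)-|u(x)-u_0(x)|\ge \inf_{\overline{\Omega}}u_0-|u(x)-u_0(x)|$ yields the claim after absorbing $\tilde C_\Omega$ into the constant.

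\textbf{Expected obstacle.} There is no serious difficulty; the two points requiring a little care are (i) invoking the correct embedding $W^{2,2}(\Omega)\hookrightarrow C^{0,1/2}(\overline{\Omega})$ — this is where $n=2$ enters and where $W^{2,2}$ is just barely enough — and (ii) checking that the trace-zero property of $W_0^{1,2}$ passes to the continuous representative, so that $v(x_0)=0$ genuinely holds at the chosen boundary point. If one prefers to sidestep the global Hölder embedding, an equivalent route is to apply the scale-invariant Morrey inequality on the ball $B_{\mathrm{dist}(x,\partial\Omega)}(x)\subset\Omega$ (whose closure contains $x_0$), bounding $\lVert\nabla v\rVert_{L^4}$ via $W^{1,2}(\Omega)\hookrightarrow L^4(\Omega)$; both arguments give the stated $\sqrt{\mathrm{dist}(x,\partial\Omega)}$ rate.
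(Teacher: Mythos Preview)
Your proposal is correct and follows essentially the same approach as the paper: pick a nearest boundary point $x_0$, use that $(u-u_0)(x_0)=0$ together with the Sobolev embedding $W^{2,2}(\Omega)\hookrightarrow C^{0,1/2}(\overline{\Omega})$ to bound $|u(x)-u_0(x)|$ by $\sqrt{\mathrm{dist}(x,\partial\Omega)}$ times $\lVert u-u_0\rVert_{W^{2,2}}$, and then invoke Lemma~\ref{lem:5.2}. Your write-up is in fact slightly more explicit than the paper's about the embedding chain and about why the continuous representative of $v$ vanishes at $x_0$.
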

\begin{proof}
    Let $x_0 \in \partial \Omega$ be such that $|x-x_0| = \mathrm{dist}(x,\partial\Omega)$. Moreover, let $S_\Omega$ be the Sobolev constant of the embedding $W^{2,2}(\Omega) \hookrightarrow C^{0,\frac{1}{2}}(\overline{\Omega})  $. Then for $x \in \Omega$ one has 
    \begin{align}
        u(x) & = u(x) - u_0(x)  + u_0(x)  = (u-u_0)(x)- (u-u_0)(x_0) + u_0(x)
        \\ & \geq -  \lVert u-u_0\rVert_{C^{0,\frac{1}{2}}} \sqrt{|x-x_0|} + u_0(x) \geq  - S_\Omega\lVert u-u_0\rVert_{W^{2,2}} \sqrt{\mathrm{dist}(x,\partial \Omega)} +\inf_{\overline{\Omega}} u_0.
    \end{align}
    Using that by Lemma \ref{lem:5.2}  $\lVert u-u_0\rVert_{W^{2,2}} \leq C_{\Omega,A} ( 1+ \lVert Lu_0 \rVert_{L^2})$ and replacing $C_{\Omega,A}$ by $S_\Omega C_{\Omega,A}$ the claim follows. 
\end{proof}

With the above results we obtain examples of minimizers with empty and nonempty free boundary. 
\begin{example}[Minimizers with empty free boundary]
   Let $C_{\Omega,A}$ be as in Lemma \ref{lem:53}. Suppose that $u_0 \equiv c$ for some constant $c > C_{\Omega,A} \sqrt{\mathrm{diam}(\overline{\Omega})}$. We claim that then the unique minimizer $u$ of $\mathcal{E}$ in $\mathcal{A}(u_0)$ is given by $u \equiv c$. Indeed, let $u \in \mathcal{A}(u_0)$ be a minimizer. By Lemma \ref{lem:53} we have for any $x \in \Omega$
    \begin{equation}
      u(x) \geq c - C_{\Omega,A} \sqrt{\mathrm{dist}(x,\partial \Omega)} \geq c -  C_{\Omega,A} \sqrt{\mathrm{diam}(\overline{\Omega})}  > 0.
   \end{equation}
   In particular, we have $ |\{u > 0 \}| = |\Omega|$. We conclude from Lemma \ref{lem:51} that
   \begin{equation}
       |\Omega| \geq \mathcal{E}(u) = \int_\Omega (Lu)^2 \; \mathrm{d}x + |\{ u > 0 \}| \geq |\{ u > 0 \}|\geq |\Omega|.
   \end{equation}
   This implies that all inequalities must be equalities and hence $Lu = 0$ almost everywhere on $\Omega$. The unique solution $u \in \mathcal{A}(u_0)$ to 
   \begin{equation}
       \begin{cases}
           Lu = 0 & \textrm{in $\Omega$} \\ u = u_0 \equiv c & \textrm{on $\partial \Omega$}
       \end{cases}
   \end{equation}
   is given by $u \equiv c$, so that this is the only possibilty for a minimizer. 
\end{example}

\begin{example}[Minimizers with nonempty free boundary]\label{ex:4.8} %In contrast to the previous example, we construct here minimizers with nonempty free boundary. 
   %For the minimizer $u$ found in the previous example, the free boundary $\{ u = 0 \}$ is empty. It shall be noted that this is not necessarily the case. Indeed, 
   Suppose that $\Omega = B_1(0)$ and $u_0 \equiv c$ for some constant $c < \sqrt{\frac{\pi}{32}} \left( \int_{B_1(0)}\mathrm{div}(A(x)x)^2 \; \mathrm{d}x \right)^{-1/2}$. Define $v: B_1(0) \rightarrow \mathbb{R}, v(x) := 2c |x|^2 - c$. One readily checks that  $v \in \mathcal{A}(u_0)$ and 
   \begin{equation}\label{eq:infklomega}
       \mathcal{E}(v) = 16 c^2 \int_{B_1(0)} (\mathrm{div}(A(x)x)^2 \; \mathrm{d}x + \frac{\pi}{2} < \pi = |\Omega|.
   \end{equation}
   In particular one has $\inf_{w \in \mathcal{A}(u_0)} \mathcal{E}(w)< |\Omega|$. We claim that in this case the free boundary $\{u = 0 \}$ must be nonempty for each minimizer $u \in \mathcal{A}(u_0)$. Indeed, if $\{ u = 0 \}$ were empty then by continuity $u > 0$ in $\Omega$, which leads to 
   $
      \inf_{w \in \mathcal{A}(u_0)} \mathcal{E}(w) =  \mathcal{E}(u) \geq |\{ u > 0 \}| = |\Omega|,
$
 contradicting \eqref{eq:infklomega}.
\end{example}

%\subsection{$C^1$-Regularity via $L$-harmonic replacement}
%\begin{lemma}
%    Suppose that $u \in \mathcal{A}(u_0)$ is a minimizer. Then $ Lu \in BMO_{loc}(\Omega)$. In particular $Lu \in L^q_{loc}(\Omega)$ for all $q \in (1,\infty)$
%\end{lemma}

%\begin{cor}
%    Suppose that $u \in \mathcal{A}(u_0)$ is a minimizer. Then $u \in W^{2,q}_{loc}(\Omega)$ for all $q \in (1,\infty)$. In particular $u \in C^1(\Omega).$
%\end{cor}

\subsection{Regularity away from the free boundary}

\begin{lemma}\label{eq:Lemma510}
    Let $u \in \mathcal{A}(u_0)$ be a minimizer. Then $u \in C^\infty(\overline{\Omega} \setminus \{ u = 0 \})$. Moreover $L^2 u = 0$ on $\Omega \setminus \{u = 0 \}$ and $Lu = 0$ on $\partial \Omega$.   
\end{lemma}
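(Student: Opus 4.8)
The plan is to exploit that the volume term $|\{u>0\}|$ in $\mathcal{E}$ is insensitive to perturbations whose support avoids the nodal set $\{u=0\}$, so that away from $\{u=0\}$ the minimizer is a critical point of the pure bending energy $v\mapsto\int_\Omega(Lv)^2\dx$; the interior equation, the smoothness and the boundary condition then come from the first variation together with elliptic regularity. First I would record that $W^{2,2}(\Omega)\hookrightarrow C^0(\overline{\Omega})$ in dimension two, so that $u$ is continuous, $\{u=0\}$ is closed and $\overline{\Omega}\setminus\{u=0\}$ is relatively open; by Lemma~\ref{lem:53} there exist a tubular neighbourhood $U$ of $\partial\Omega$ and $\delta>0$ with $u\ge\delta$ on $U$, so $\partial\Omega\subset\overline{\Omega}\setminus\{u=0\}$. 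The main step is the first variation identity
\begin{equation}\label{eq:fvproposal}
  \int_\Omega Lu\,L\varphi\dx=0\qquad\textrm{for all }\varphi\in W^{2,2}(\Omega)\cap W_0^{1,2}(\Omega)\textrm{ with }\supp\varphi\cap\{u=0\}=\emptyset .
\end{equation}
Indeed, for such $\varphi$ the compact set $\supp\varphi$ has positive distance from the closed set $\{u=0\}$, hence $|u|\ge\delta_\varphi>0$ on it; since $\varphi\in W^{2,2}(\Omega)\subset L^\infty(\Omega)$, for $|t|<\delta_\varphi/\lVert\varphi\rVert_{L^\infty}$ one has $u+t\varphi\in\mathcal{A}(u_0)$, and $u+t\varphi$ has the same sign as $u$ on $\supp\varphi$ while equalling $u$ elsewhere, so $|\{u+t\varphi>0\}|=|\{u>0\}|$. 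Minimality then gives $0\le\mathcal{E}(u+t\varphi)-\mathcal{E}(u)=2t\int_\Omega Lu\,L\varphi\dx+t^2\int_\Omega(L\varphi)^2\dx$ for all small $t$, which forces \eqref{eq:fvproposal}.

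Testing \eqref{eq:fvproposal} with $\varphi\in C_0^\infty(\Omega\setminus\{u=0\})$ shows that $L^2u=0$ distributionally on $\Omega\setminus\{u=0\}$. Setting $v:=Lu=-\mathrm{div}(A\nabla u)\in L^2_{loc}(\Omega\setminus\{u=0\})$, identity \eqref{eq:fvproposal} states that $Lv=0$ weakly on $\Omega\setminus\{u=0\}$, so hypoellipticity of $L$ with smooth coefficients (cf.\ \cite[Theorem~6.33]{Folland}) yields $v\in C^\infty(\Omega\setminus\{u=0\})$; then $-\mathrm{div}(A\nabla u)=v$ with $v$ smooth, and the same result gives $u\in C^\infty(\Omega\setminus\{u=0\})$.

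For the behaviour at $\partial\Omega$ I would use that on $U$ the constraint $u>0$ is inactive, so \eqref{eq:fvproposal} holds for \emph{every} $\varphi\in W^{2,2}(\Omega)\cap W_0^{1,2}(\Omega)$ supported in $\overline{U}$, including those with $\partial_\nu\varphi\not\equiv0$ on $\partial\Omega$. Hence, in a neighbourhood of $\partial\Omega$, $u$ is a weak solution of the Navier problem $L^2u=0$, $u=u_0$, $Lu=0$ on $\partial\Omega$, and standard up-to-the-boundary elliptic regularity for this fourth order problem (with $A$ and $\partial\Omega$ smooth) gives $u\in C^\infty$ in a one-sided neighbourhood of $\partial\Omega$; together with the interior result this proves $u\in C^\infty(\overline{\Omega}\setminus\{u=0\})$. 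Finally, with $u$ smooth near $\partial\Omega$ and $L^2u=0$ there, integrating \eqref{eq:fvproposal} by parts twice for $\varphi\in C^\infty(\overline{\Omega})$ that vanish on $\partial\Omega$ and are supported near $\partial\Omega$ removes all interior contributions and leaves $\int_{\partial\Omega}(Lu)\,(A\nu\cdot\nu)\,\partial_\nu\varphi\,\mathrm{d}\mathcal{H}^1=0$; since $(A\nu,\nu)\ge\lambda>0$ by ellipticity and $\partial_\nu\varphi|_{\partial\Omega}$ is arbitrary, this forces $Lu=0$ on $\partial\Omega$.

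The step I expect to be the main obstacle is this passage at $\partial\Omega$: one must be sure the positivity constraint is genuinely inactive there --- which is precisely the content of Lemma~\ref{lem:53} --- and then invoke and correctly set up the regularity theory together with the identification of $Lu=0$ as the \emph{natural} boundary condition for the Navier problem with nonconstant smooth coefficients. By contrast, the interior regularity and the identity \eqref{eq:fvproposal} are routine once one observes that the volume term is locally constant along admissible perturbations supported off $\{u=0\}$.
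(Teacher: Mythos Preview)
Your interior argument coincides with the paper's Step~1: both test with $\varphi\in C_0^\infty(\Omega\setminus\{u=0\})$, obtain $\int Lu\,L\varphi=0$, and conclude smoothness via hypoellipticity of $L$ and elliptic regularity.

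At the boundary the routes diverge. You push the first-variation identity \eqref{eq:fvproposal} all the way to test functions $\varphi\in W^{2,2}(\Omega)\cap W_0^{1,2}(\Omega)$ supported in the collar $\overline{U}$ and with $\partial_\nu\varphi\not\equiv 0$ on $\partial\Omega$, then read off the weak Navier problem and appeal to local boundary regularity for it; afterwards an integration by parts extracts $Lu=0$ on $\partial\Omega$. The paper instead uses a \emph{replacement} argument: on a collar $\Omega_\delta$ it introduces the minimizer $w$ of $\int_{\Omega_\delta}(Lf)^2$ subject to $f=u_0$ on $\partial\Omega$ and $f=u,\ \nabla f=\nabla u$ on the inner boundary $\Gamma$, glues $w$ to $u$ outside $\Omega_\delta$, and uses $u>0$ on $\Omega_\delta$ together with minimality of $u$ to force $u|_{\Omega_\delta}=w$ by strict convexity; regularity and $Lu|_{\partial\Omega}=0$ then come from the Euler--Lagrange system of this well-posed mixed Dirichlet--Navier problem on the smooth annular domain $\Omega_\delta$. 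Your approach is more direct and arguably conceptually cleaner, but the sentence ``standard up-to-the-boundary elliptic regularity for this fourth order problem'' hides the step of turning the \emph{local} weak identity near $\partial\Omega$ into a well-posed boundary value problem to which a regularity theorem applies; the paper's comparison argument makes that step concrete by manufacturing a global problem on $\Omega_\delta$ with explicit boundary data on both components of $\partial\Omega_\delta$.
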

\begin{proof}We divide the proof into two steps. \textbf{Step 1.} Interior regularity on $\Omega \setminus \{u = 0 \}$. Define $\Omega' := \Omega \setminus \{u = 0 \}$ and let $\varphi \in C_0^\infty(\Omega')$. By continuity of $u$ there exists some $\delta > 0$ such that $|u| \geq \delta$ on $\mathrm{spt}(\varphi)$. In particular, one has for $t \in (- \frac{\delta}{\lVert \varphi \rVert_\infty}, \frac{\delta}{\lVert \varphi \rVert_\infty})$ that 
$
        \{ u + t \varphi  > 0 \} = \{ u > 0 \}.
 $
    Hence we conclude that for $t \in (- \frac{\delta}{\lVert \varphi \rVert_\infty}, \frac{\delta}{\lVert \varphi \rVert_\infty})$
    \begin{equation}
        \frac{\mathcal{E}(u+t \varphi) - \mathcal{E}(u)}{t } = \frac{1}{t} \int_\Omega [L(u+t\varphi)]^2 - [Lu]^2 \; \mathrm{d}x = 2\int_\Omega  Lu L\varphi + t \int_\Omega (L\varphi)^2.
    \end{equation}
Since the right hand side has a limit as $t \rightarrow 0$ we conclude that $t \mapsto \mathcal{E}(u+t \varphi)$ is differentiable in $t= 0$. Due to the fact that $u$ is a minimizer, $t \mapsto \mathcal{E}(u+t \varphi)$ must attain a minimium at $t= 0$. Therefore,
\begin{equation}\label{eq:Intbyparts}
    0 = \frac{d}{dt} \mathcal{E}(u+t \varphi) \Big\vert_{t = 0}=  \lim_{t \rightarrow 0} \left( 2 \int_\Omega  Lu L\varphi \; \mathrm{d}x + t \int_\Omega (L\varphi)^2 \; \mathrm{d}x \right)  = 2\int_\Omega Lu L \varphi \; \mathrm{d}x =  2 \int_{\Omega'} Lu L\varphi \; \mathrm{d}x. 
\end{equation}
In particular, $v := Lu$ satisfies $Lv=0$ on $\Omega'$ in the sense of distributions. This implies by \cite[Theorem 6.33]{Folland} that $v \in C^\infty(\Omega').$ We have obtained $Lu \in C^\infty(\Omega')$ and thus elliptic regularity yields  $u \in C^\infty(\Omega')$.
%\cite[Corollary 2.8]{Veron} that $v = Lu \in W_{loc}^{1,q}(\Omega')$ for any $q \in [1,2)$. Next fix some $q \in [1,2)$. Recall that $u \in W^{2,2}(\Omega)$ is such that $Lu \in W_{loc}^{1,q}(\Omega')$. By \cite[Theorem 9.19]{GilTru} we infer that $u \in W^{3,q}_{loc}(\Omega')$. Moreover, we may perform an integration by parts in   \eqref{eq:Intbyparts} to conclude
%\begin{equation}
  %  0 = \int_{\Omega'} A \nabla Lu \cdot \nabla \varphi \; \mathrm{d}x \qquad \forall \varphi \in C_0^\infty(\Omega')
 %\end{equation}
    \textbf{Step 2.} Boundary regularity. 
For $\delta > 0$ define $\Omega_\delta := \{ x \in \Omega:  \mathrm{dist}(x, \Omega^C) < \delta \}$. It is well-known that for $\delta > 0$ small enough, $\Omega_\delta$ is a bounded domain with smooth boundary. Moreover, $\partial \Omega_\delta = \partial \Omega \cup \Gamma$, where $\Gamma := \{ x \in \Omega : \mathrm{dist}(x,\Omega^C) = \delta \}$ is a smooth submanifold of $\Omega$. Possibly shrinking $\delta$ we may assume by Lemma  \ref{lem:53} that $u > 0$ on $\Omega_{2\delta}$. In particular, Step 1 implies that $u \in C^\infty(\Omega_{2\delta})$ and thus $v := u\vert_\Gamma \in C^\infty(\Gamma)$. Now let $w \in W^{2,2}(\Omega_\delta)$ be the unique function such that $w = u_0$ on $\partial \Omega$, $w = u,\nabla w = \nabla u $ on $\Gamma$ and 
\begin{equation}\label{eq:MinimiseLomega}
    \int_\Omega (Lw)^2 \; \mathrm{d}x = \min \left\lbrace \int_\Omega (Lf)^2 \; \mathrm{d}x :  f \in W^{2,2}(\Omega_\delta) : f = u_0 \; \textrm{on $\partial \Omega$}, \;  f = u, \nabla f = \nabla u \; \textrm{on $\Gamma$}   \right\rbrace.
\end{equation}
We claim that $w= u \vert_{\Omega_\delta}$. To this end define $v \in W^{2,2}(\Omega)$ via
\begin{equation}
    v(x) = \begin{cases}
        w(x) & x \in \Omega_\delta \\ u(x) & x \not \in \Omega_\delta
    \end{cases} .
\end{equation}
One readily checks that $v \in \mathcal{A}(u_0)$ and hence $\mathcal{E}(u) \leq \mathcal{E}(v)$. Since $u > 0$ on $\Omega_\delta$ we have however $\{ v > 0 \} \leq \{u > 0 \}$ and therefore
\begin{equation}
    \int_\Omega (Lu)^2  \; \mathrm{d}x \leq \int_\Omega (Lv)^2 \; \mathrm{d}x.
\end{equation}
Using that $u= v$ on $\Omega_\delta^C$ and $v= w$ on $\Omega_\delta$ we find 
\begin{equation}\label{eq:energievergleich}
    \int_{\Omega_\delta} (Lu)^2 \; \mathrm{d}x \leq \int_{\Omega_\delta} (Lw)^2 \; \mathrm{d}x.
\end{equation}
Since $u\vert_{\Omega_\delta}$ is admissible for the minimization problem in \eqref{eq:MinimiseLomega} and $w$ is a minimizer of this problem we conclude from \eqref{eq:energievergleich} that $u\vert_{\Omega_\delta}$ must be another minimizer of the problem in \eqref{eq:MinimiseLomega}. Due to strict convexity of the minimization problem, its minimizer is unique and therefore $w = u \vert_{\Omega_\delta}$. It is now  standard to deduce from  \eqref{eq:MinimiseLomega} that $w = u \vert_{\Omega_\delta}$ satisfies 
\begin{equation}
    \begin{cases}
        L^2 w = 0 \qquad \textrm{in $\Omega_\delta$} \\ w\vert_{\partial \Omega_\delta} = u\vert_{\partial \Omega_\delta}  \in C^\infty(\partial \Omega_\delta) \\ \nabla w\vert_\Gamma  = \nabla u \vert_\Gamma \in C^\infty(\Gamma) \\ L w \vert_{\partial \Omega} = 0 \in C^\infty(\partial \Omega) 
    \end{cases},
\end{equation}
i.e. $w = u\vert_{\Omega_\delta}$ solves a mixed Dirichlet-Navier problem for the operator $L^2$. Standard regularity theory and smoothness of $\partial \Omega_\delta$ implies that $w \in C^\infty(\overline{\Omega}_\delta)$ and $Lw = 0$ pointwise on $\partial \Omega$. 
%Given that $w = u\vert_{\Omega_\delta}$ and $u \in C^\infty(\Omega \setminus \{ u = 0 \})$ by Step 1 we infer $u \in C^\infty(\overline{\Omega} \setminus \{u = 0 \})$ and $Lu= 0$ on $\partial \Omega$.
The claimed regularity for $u$ follows from Step 1 and 2.
\end{proof}

\subsection{Variational inequalities}
In this section, we use certian perturbations to obtain differential inequalities that describe minimizers. 
\begin{lemma}\label{lem5:11}
    Let $u \in \mathcal{A}(u_0)$ be a minimizer. Then for each $\varphi \in W^{2,2}(\Omega) \cap W_0^{1,2}(\Omega)$ with $\varphi \geq 0$ one has 
    \begin{equation}
        \int_{\Omega}  Lu L\varphi \; \mathrm{d}x \leq 0. 
    \end{equation}
\end{lemma}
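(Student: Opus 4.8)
The plan is to exploit minimality against the one-sided perturbation $u_t := u - t\varphi$ with $t > 0$ small, following the strategy of \cite[Section 3]{MuellerAMPA}. First I would check admissibility: since $\varphi \in W^{2,2}(\Omega) \cap W_0^{1,2}(\Omega)$ we have $u_t \in W^{2,2}(\Omega)$ and $u_t - u_0 = (u - u_0) - t\varphi \in W_0^{1,2}(\Omega)$, so $u_t \in \mathcal{A}(u_0)$ for every $t \in \mathbb{R}$.

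The key point is the behavior of the Lebesgue measure term under this perturbation. Because $\varphi \geq 0$, for $t > 0$ we have $\{u_t > 0\} = \{u > t\varphi\} \subseteq \{u > 0\}$, and hence $|\{u_t > 0\}| \leq |\{u > 0\}|$. Expanding the bending term gives
\[
\mathcal{E}(u_t) = \int_\Omega (Lu)^2 \dx - 2t \int_\Omega Lu\, L\varphi \dx + t^2 \int_\Omega (L\varphi)^2 \dx + |\{u_t > 0\}| .
\]
Since $u$ is a minimizer, $\mathcal{E}(u) \leq \mathcal{E}(u_t)$; cancelling $\int_\Omega (Lu)^2 \dx$ and using $|\{u > 0\}| - |\{u_t > 0\}| \geq 0$ yields
\[
0 \leq -2t \int_\Omega Lu\, L\varphi \dx + t^2 \int_\Omega (L\varphi)^2 \dx .
\]
Dividing by $t > 0$ and letting $t \to 0^+$ (the last term is finite because $\varphi \in W^{2,2}(\Omega)$) gives $\int_\Omega Lu\, L\varphi \dx \leq 0$, which is the claim.

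There is no serious obstacle here; the only step that genuinely uses the hypothesis $\varphi \geq 0$ is the inclusion of superlevel sets, and the rest is the elementary first-variation computation already carried out for interior variations in the proof of Lemma \ref{eq:Lemma510}. The sole subtlety worth spelling out is that we only obtain an \emph{inequality} rather than an identity, precisely because the perturbation $u - t\varphi$ decreases the measure term only in the direction $t > 0$.
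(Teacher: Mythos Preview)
Your proof is correct and is essentially identical to the paper's argument: the paper perturbs by $u + t\psi$ with $\psi \leq 0$ and then sets $\psi = -\varphi$, which is exactly your $u_t = u - t\varphi$ with $\varphi \geq 0$. The expansion, the use of the superlevel-set inclusion, and the limit $t \to 0^+$ all match.
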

\begin{proof}
    Let $\psi \in W^{2,2}(\Omega) \cap W_0^{1,2}(\Omega)$ be such that $\psi \leq 0$. Then for all $t \geq 0$ one has 
    \begin{equation}
        |\{ u + t\psi > 0 \}| \leq |\{ u > 0 \}| \qquad \textrm{and} \qquad \mathcal{E}(u) \leq \mathcal{E}(u + t \psi). 
    \end{equation}
    In particular for $t > 0$ one has 
    \begin{align}
        0 & \leq \frac{\mathcal{E}(u+t \psi) - \mathcal{E}(u)}{t} = \frac{1}{t} \int_\Omega [L(u+t\psi)]^2 - [Lu]^2 \; \mathrm{d}x + \frac{|\{ u + t \psi > 0 \}| - |\{ u > 0 \}| }{t} 
        \\ & \leq  \frac{1}{t} \int_\Omega [Lu+tL\psi]^2 - [Lu]^2 \; \mathrm{d}x  =  2\int_\Omega Lu L \psi  \; \mathrm{d}x + t \int_\Omega [L \psi]^2 \; \mathrm{d}x.
    \end{align}
    Letting $t \rightarrow 0$ we find 
    \begin{equation}\label{eq:vonpsinachphi}
        0 \leq 2 \int_\Omega L u L \psi \; \mathrm{d}x.
    \end{equation}
    Now let $\varphi \in W^{2,2}(\Omega) \cap W_0^{1,2}(\Omega)$ be such that $\varphi \geq 0$. Considering $\psi := -\varphi$ in \eqref{eq:vonpsinachphi} the claim follows. 
\end{proof}

\begin{lemma}\label{lem:4.3}
    Let $u \in \mathcal{A}(u_0)$ be a minimizer. Then there exists a finite Radon measure supported on $\{ u = 0 \}$ such that 
    \begin{equation}\label{eq:LULphimeasure}
        \int_\Omega Lu L\varphi \; \mathrm{d}x = -\int \varphi \; \mathrm{d}\mu  \qquad \textrm{for all } \varphi \in W^{2,2}(\Omega) \cap W_0^{1,2}(\Omega).
    \end{equation}
\end{lemma}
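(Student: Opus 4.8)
The plan is to realize the linear map $T(\varphi):=\int_\Omega Lu\,L\varphi\,\mathrm{d}x$ as (minus) a nonnegative Radon measure via the Riesz--Markov theorem; the point is that although $T$ is a priori only continuous for the $W^{2,2}$-norm, one can bound it by the \emph{supremum} norm on test functions, which is what makes the representing functional a measure. Set $T(\varphi):=\int_\Omega Lu\,L\varphi\,\mathrm{d}x$ for $\varphi\in W^{2,2}(\Omega)\cap W_0^{1,2}(\Omega)$. Then $T$ is linear with $|T(\varphi)|\le\|Lu\|_{L^2}\|L\varphi\|_{L^2}\le C\|\varphi\|_{W^{2,2}}$; by Lemma \ref{lem5:11} one has $T(\varphi)\le 0$ whenever $\varphi\ge 0$; by \eqref{eq:Intbyparts} in the proof of Lemma \ref{eq:Lemma510} one has $T(\varphi)=0$ for every $\varphi\in C_0^\infty(\Omega\setminus\{u=0\})$; and Lemma \ref{lem:53} together with $\inf_{\overline{\Omega}}u_0>0$ shows that $\{u=0\}$ is a compact subset of $\Omega$.

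First I would prove a sup-norm bound for $T$ on $C_0^\infty(\Omega)$. Fix $\zeta\in C_0^\infty(\Omega)$ with $0\le\zeta\le 1$ and $\zeta\equiv 1$ on an open neighbourhood $U$ of $\{u=0\}$. Given $\varphi\in C_0^\infty(\Omega)$ with $\|\varphi\|_\infty\le 1$, write $\varphi=(1-\zeta)\varphi+\zeta\varphi$. The first summand lies in $C_0^\infty(\Omega\setminus\{u=0\})$, so $T((1-\zeta)\varphi)=0$; for the second, $\zeta(1\pm\varphi)\in C_0^\infty(\Omega)$ is nonnegative, so Lemma \ref{lem5:11} gives $T(\zeta)\pm T(\zeta\varphi)\le 0$, whence $|T(\varphi)|=|T(\zeta\varphi)|\le -T(\zeta)$. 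By homogeneity $|T(\varphi)|\le(-T(\zeta))\,\|\varphi\|_\infty$ for every $\varphi\in C_0^\infty(\Omega)$, with a constant independent of $\operatorname{supp}\varphi$. Hence $T$ extends uniquely to a bounded functional on $C_0(\Omega)$, and by the Riesz--Markov representation theorem there is a finite signed Radon measure $\nu$ with $T(\varphi)=\int_\Omega\varphi\,\mathrm{d}\nu$ for all $\varphi\in C_0^\infty(\Omega)$; since $T\le 0$ on nonnegative functions (by uniform approximation from $C_0^\infty$), $\nu\le 0$, so $\mu:=-\nu$ is a finite nonnegative Radon measure with $T(\varphi)=-\int_\Omega\varphi\,\mathrm{d}\mu$ on $C_0^\infty(\Omega)$. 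Testing with $\varphi\in C_0^\infty(\Omega\setminus\{u=0\})$, on which $T$ vanishes, gives $\mu(\Omega\setminus\{u=0\})=0$, i.e. $\operatorname{supp}\mu\subset\{u=0\}$.

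It remains to upgrade \eqref{eq:LULphimeasure} from $C_0^\infty(\Omega)$ to all $\varphi\in W^{2,2}(\Omega)\cap W_0^{1,2}(\Omega)$, and this is where the boundary behaviour of $u$ enters; I expect it to be the main obstacle, since $C_0^\infty(\Omega)$ is \emph{not} dense in $W^{2,2}(\Omega)\cap W_0^{1,2}(\Omega)$ (the normal derivative on $\partial\Omega$ need not vanish). Because $\{u=0\}\subset\subset\Omega$, fix $\eta\in C^\infty(\overline{\Omega})$ with $0\le\eta\le 1$, $\eta\equiv 1$ near $\partial\Omega$ and $\eta\equiv 0$ near $\{u=0\}$, and split $\varphi=(1-\eta)\varphi+\eta\varphi$. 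The function $(1-\eta)\varphi$ has compact support in $\Omega$, hence lies in $W_0^{2,2}(\Omega)=\overline{C_0^\infty(\Omega)}^{\,W^{2,2}}$; both sides of \eqref{eq:LULphimeasure} are continuous for $\|\cdot\|_{W^{2,2}}$ — the right-hand side because $\mu$ is finite and $W^{2,2}(\Omega)\hookrightarrow C^0(\overline{\Omega})$ in two dimensions — so the identity passes from $C_0^\infty(\Omega)$ to $(1-\eta)\varphi$.

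For the remaining piece $\eta\varphi$, which is supported in $\overline{\Omega}\setminus\{u=0\}$ (where $u$ is smooth by Lemma \ref{eq:Lemma510}) and vanishes on $\partial\Omega$, I would approximate $\eta\varphi$ in $W^{2,2}(\Omega)$ by functions $\psi_k\in C^\infty(\overline{\Omega})$ that vanish on $\partial\Omega$ and in a fixed neighbourhood of $\{u=0\}$ (multiply a smooth approximation of $\eta\varphi$ by a cutoff that is $1$ on $\operatorname{supp}(\eta\varphi)$ and $0$ near $\{u=0\}$), then integrate $\int_\Omega Lu\,L\psi_k\,\mathrm{d}x$ by parts twice: the interior term vanishes since $L^2u=0$ on $\Omega\setminus\{u=0\}$ (Lemma \ref{eq:Lemma510}); the boundary terms over $\partial\Omega$ vanish since $\psi_k|_{\partial\Omega}=0$ and $Lu|_{\partial\Omega}=0$ (Lemma \ref{eq:Lemma510}); and the boundary terms over the inner boundary vanish because $\psi_k$ and its derivatives are zero there. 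Letting $k\to\infty$ gives $T(\eta\varphi)=0$, while $\int_\Omega\eta\varphi\,\mathrm{d}\mu=0$ because $\eta\equiv 0$ on $\operatorname{supp}\mu\subset\{u=0\}$. Adding the two contributions yields $T(\varphi)=-\int_\Omega(1-\eta)\varphi\,\mathrm{d}\mu=-\int_\Omega\varphi\,\mathrm{d}\mu$, which is \eqref{eq:LULphimeasure}; and $\mu$ is the finite Radon measure supported on $\{u=0\}$ claimed in the statement.
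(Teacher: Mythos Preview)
Your proof is correct and follows essentially the same strategy as the paper's: Riesz--Markov on $C_0^\infty(\Omega)$ using the sign condition from Lemma~\ref{lem5:11}, identification of the support via \eqref{eq:Intbyparts}, and extension to $W^{2,2}\cap W_0^{1,2}$ by a cutoff splitting into a compactly supported piece (handled by density of $C_0^\infty$ in $W_0^{2,2}$) and a boundary piece supported away from $\{u=0\}$. The paper obtains finiteness of $\mu$ from compactness of its support rather than from your explicit sup-norm bound, and treats the boundary piece by integrating by parts directly (using that $Lu$ is smooth on its support and $Lu|_{\partial\Omega}=0$) instead of via an approximation; these are minor variations of the same argument.
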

\begin{proof}
We show that \eqref{eq:LULphimeasure} in several steps. 
\begin{flushleft}
    \textbf{Step 1.} We construct a Radon measure $\mu$ such that  \eqref{eq:LULphimeasure} holds true for all $\varphi \in C_0^\infty(\Omega)$. 
\end{flushleft}
Define $T: C_0^\infty(\Omega) \rightarrow \mathbb{R}$ by $Tf := - \int_\Omega L u L f \; \mathrm{d}x$. Lemma \ref{lem5:11} implies that $T f \geq 0$ for all $f \in C_0^\infty(\Omega)$ such that $f \geq 0$. By the Riesz-Markow-Kakutani Theorem (cf. \cite[Theorem 1.39]{EvansGariepy}) there exists a Radon measure $\mu$ on $\Omega$ such that 
$
    Tf = \int_\Omega f \; \mathrm{d}\mu 
$ for all $f \in C_0^\infty(\Omega)$.
This shows \eqref{eq:LULphimeasure} for all $\varphi \in C_0^\infty(\Omega).$
\begin{flushleft}
   \textbf{Step 2.} We show that $\mathrm{spt}(\mu)$ is a compact subset of $\Omega$. As a consequence of this and the Radon measure property $\mu$ is finite.
\end{flushleft}
To this end notice that by Lemma \ref{lem:53} there exists some $\delta > 0$ such that $u > 0$ on $\Omega_\delta := \{ x \in \Omega: \mathrm{dist}(x,\Omega)< \delta\}$. We show that $\mathrm{spt}(\mu) \subset \Omega \setminus \Omega_\delta$. Indeed, for any $\varphi \in C_0^\infty(\Omega_\delta)$ one has by Lemma \ref{eq:Lemma510}
\begin{equation}
    \int_\Omega \varphi \; \mathrm{d}\mu = -  \int_\Omega Lu L \varphi \; \mathrm{d}x = - \int_\Omega L^2u  \; \varphi \; \mathrm{d}x = 0 .  
\end{equation}
This implies that $\mathrm{spt}(\mu) \cap \Omega_\delta = \emptyset$ and thus $\mathrm{spt}(\mu) \subset \Omega \setminus \Omega_\delta$. Due to the fact that $\Omega \setminus \Omega_\delta := \{ x \in \Omega: \mathrm{dist}(x,\Omega^C) \geq \delta \}$ is compact we infer the claim. The same argument can be repeated with $\Omega \setminus \{ u = 0 \}$ in place of $\Omega_\delta$ to deduce that $\mathrm{spt}(\mu)\subset \{ u = 0 \}$.
%Observe also that the above claim implies that $\mu(\Omega_\delta) =0$. 
\begin{flushleft}
    \textbf{Step 3.} We show that \eqref{eq:LULphimeasure} holds true for any $\varphi \in W^{2,2}(\Omega) \cap W_0^{1,2}(\Omega)$.
\end{flushleft}
Let $\delta > 0$ be as in Step 2. Further choose a function $\rho \in C_0^\infty(\Omega)$ such that $\rho \equiv 1$ on $\Omega \setminus \Omega_\delta$. For arbitrary given $\varphi \in W^{2,2}(\Omega) \cap W_0^{1,2}(\Omega)$ we can write $\varphi = \varphi \rho + \varphi(1-\rho)$. Now $\varphi \rho$ is compactly supported in $\Omega$ and hence lies in $W^{2,2}_0(\Omega)$. In particular, there exists $(\psi_n)_{n \in \mathbb{N}} \subset C_0^\infty(\Omega)$ such that $\psi_n \rightarrow \varphi \rho$ in $W^{2,2}(\Omega)$. Therefore, $L \psi_n \rightarrow L(\varphi \rho)$ in $L^2(\Omega)$. Since $W^{2,2}(\Omega) \hookrightarrow C^0(\overline{\Omega})$ we also have uniform convergence of $(\psi_n)_{n \in \mathbb{N}}$ on $\overline{\Omega}$. This and the fact that $\mu$ is a finite measure implies 
\begin{equation}\label{eq:AddLinear}
    \int_\Omega Lu L(\varphi \rho) \; \mathrm{d}x = \lim_{n \rightarrow \infty} \int_\Omega Lu L\psi_n  \; \mathrm{d}x  = - \lim_{n \rightarrow \infty} \int_\Omega \psi_n \; \mathrm{d}\mu  = - \int_\Omega \varphi \rho \; \mathrm{d}\mu.
\end{equation}
Now using that $u \in C^\infty(\overline{\Omega} \setminus \{u = 0 \})$ and $\varphi (1-\rho) $ is supported on $\{u> 0 \}$ we find that $Lu$  is smooth in an open neighborhood of $\mathrm{spt}(\varphi ( 1- \rho))$. %, which allows for the integrations by parts we perform below. 
In view of this and 
 $Lu = 0$ on $\partial \Omega$ by Lemma \ref{eq:Lemma510} one readily computes with some integrations by parts
%may perform integrations by parts 
\begin{align}
    \int_\Omega Lu L (\varphi (1-\rho)) \; \mathrm{d}x = 0.
    %&  = \int_\Omega Lu \mathrm{div}(A \nabla (\varphi (1-\rho))) \; \mathrm{d}x \\ &  = \int_{\partial \Omega} L u  A \nabla (\varphi (1-\rho)) \; \mathrm{d}\mathcal{H}^1 - \int_{\Omega} \nabla (Lu) \cdot  A \nabla (\varphi (1-\rho)) \; \mathrm{d}x
    %\\ &  = 0 - \int_{\Omega}  A \nabla (Lu) \cdot \nabla \varphi ( 1-\rho) \; \mathrm{d}x = - \int_{\partial \Omega} A \nabla Lu \varphi (1-\rho) \; \mathrm{d} \mathcal{H}^1  + \int_\Omega \mathrm{div}(A \nabla Lu) \varphi (1-\rho) \; \mathrm{d}x 
    %\\ & =  0  + \int_\Omega L^2 u \varphi (1-\rho) = 0,
 \end{align}
 %where we used in the penultimate step that $\varphi \equiv 0$ on $\partial \Omega$ (in the sense of Sobolev traces) and in the last step that $L^2u = 0$ on $\{ u > 0 \}$ (which is an open neighborhood of $\mathrm{spt}(\varphi (1-\rho))$. 
 Since $\varphi (1-\rho) = 0$ on $\Omega \setminus \Omega_\delta$ and $\mu(\Omega_\delta)= 0$ by Step 2 we also have 
$
    \int_\Omega \varphi (1-\rho) \; \mathrm{d}\mu = 0.
$
Thus we conclude 
\begin{equation}
    \int_\Omega Lu L (\varphi (1-\rho)) \; \mathrm{d}x =  \int_\Omega \varphi (1-\rho) \; \mathrm{d}\mu.
\end{equation}
Adding this and  \eqref{eq:AddLinear} the claim follows. 
\end{proof} 

\begin{lemma}\label{lem:5.13B}
    Let $ u \in \mathcal{A}(u_0)$ be a minimizer. Then $u \in W^{3,q}(\Omega)$ for all $q \in [1,2)$. In particular, $u \in W^{2,p}(\Omega)$ for any $p \in [1,\infty)$ and $u \in C^{1, \alpha}(\overline{\Omega})$ for all $\alpha \in (0,1)$. Moreover, \eqref{eq:LULphimeasure} is valid for each $\varphi \in W^{2,s}(\Omega)\cap W_0^{1,s}(\Omega)$ %such that $\varphi\vert_{\partial \Omega}= 0$, 
   for any $s \in (1,\infty)$. 
\end{lemma}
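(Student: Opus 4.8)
The plan is to combine the measure representation from Lemma~\ref{lem:4.3} with elliptic regularity for the operator $L$. Set $v:=Lu\in L^2(\Omega)$. By Lemma~\ref{lem:4.3} one has $\int_\Omega v\,L\varphi\,\mathrm{d}x=-\int\varphi\,\mathrm{d}\mu$ for all $\varphi\in W^{2,2}(\Omega)\cap W_0^{1,2}(\Omega)$, i.e.\ $Lv=-\mu$ weakly, and by Lemma~\ref{eq:Lemma510} one has $v=0$ on $\partial\Omega$ while $\mathrm{spt}(\mu)\subset\{u=0\}$ is a compact subset of $\Omega$. Since $\mu$ is a \emph{finite} Radon measure, standard regularity theory for such measure-valued Dirichlet problems (in the spirit of the regularity lemma for weak supersolutions; cf.\ \cite{AQ02,LSW63}) yields $v\in W^{1,q}_0(\Omega)$ for every $q\in[1,2)$. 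Equivalently, one may represent $v(x)=-\int_\Omega G_L(x,y)\,\mathrm{d}\mu(y)$ (uniqueness of the Dirichlet problem identifies this with $Lu$) and read off the claim from Minkowski's integral inequality together with the uniform $W^{1,q}$-bounds for $G_L(\cdot,y)$ as $y$ ranges over the compact set $\mathrm{spt}(\mu)$, which follow from Lemma~\ref{lemmaBasicGreen}, Lemma~\ref{lem:2.4} and the symmetry $G_L(x,y)=G_L(y,x)$.

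Once $Lu=v\in W^{1,q}(\Omega)$ for all $q<2$ is established, the Sobolev regularity of $u$ follows from elliptic regularity. Writing $u=u_0+w$ with $w\in W_0^{1,2}(\Omega)$, one has $Lw=v-Lu_0$, and since $Lu_0\in C^\infty(\overline\Omega)$ the right-hand side lies in $W^{1,q}(\Omega)$; the global $W^{k,q}$-estimates for $L$ on the smooth domain $\Omega$ with smooth coefficients then give $w\in W^{3,q}(\Omega)\cap W_0^{1,q}(\Omega)$, hence $u\in W^{3,q}(\Omega)$ for every $q\in[1,2)$ (the case $q=1$ being subsumed). The remaining assertions are Sobolev embeddings in dimension two: from $W^{3,q}(\Omega)\hookrightarrow W^{2,q^{*}}(\Omega)$ with $\tfrac1{q^{*}}=\tfrac1q-\tfrac12$, letting $q\uparrow2$ gives $u\in W^{2,p}(\Omega)$ for all $p\in[1,\infty)$, and $W^{2,p}(\Omega)\hookrightarrow C^{1,1-2/p}(\overline\Omega)$ for $p>2$ then gives $u\in C^{1,\alpha}(\overline\Omega)$ for all $\alpha\in(0,1)$.

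For the last assertion I would argue by density. For $s\ge2$ the identity \eqref{eq:LULphimeasure} is already contained in Lemma~\ref{lem:4.3}, since $W^{2,s}(\Omega)\cap W_0^{1,s}(\Omega)\subset W^{2,2}(\Omega)\cap W_0^{1,2}(\Omega)$ on the bounded domain $\Omega$. Fix $s\in(1,2)$ and $\varphi\in W^{2,s}(\Omega)\cap W_0^{1,s}(\Omega)$. Both sides of \eqref{eq:LULphimeasure} are meaningful: by the first step $Lu=v\in L^p(\Omega)$ for every $p<\infty$, so $Lu\,L\varphi\in L^1(\Omega)$ by H\"older, and $\varphi\in W^{2,s}(\Omega)\hookrightarrow C^0(\overline\Omega)$ (as $s>1$ in dimension two), so $\int\varphi\,\mathrm{d}\mu$ is finite. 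Set $g:=L\varphi\in L^s(\Omega)$; bijectivity of $L$ on $W^{2,s}(\Omega)\cap W_0^{1,s}(\Omega)$ gives $\varphi=L^{-1}g$, where $L^{-1}$ is the solution operator with homogeneous Dirichlet data. Choose $g_n\in C^\infty(\overline\Omega)$ with $g_n\to g$ in $L^s(\Omega)$ and put $\varphi_n:=L^{-1}g_n$; then $\varphi_n\in W^{2,2}(\Omega)\cap W_0^{1,2}(\Omega)$ and, by the elliptic $L^s$-estimates, $\varphi_n\to\varphi$ in $W^{2,s}(\Omega)$, hence uniformly on $\overline\Omega$. Applying \eqref{eq:LULphimeasure} to $\varphi_n$ and passing to the limit — the left-hand side converges because $L\varphi_n=g_n\to g$ in $L^s$ and $Lu\in L^{s'}$, the right-hand side because $\varphi_n\to\varphi$ uniformly and $\mu$ is finite — yields \eqref{eq:LULphimeasure} for $\varphi$.

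The main obstacle is the first step, namely upgrading the a priori merely $L^2$ function $Lu$ to $W^{1,q}$ for all $q<2$ from the measure-valued equation $Lv=-\mu$; this is where the finiteness of $\mu$, its compact support inside $\Omega$, the smoothness of the coefficients and the dimension $n=2$ are genuinely used. The subsequent gain of two derivatives for $u$, the two-dimensional Sobolev embeddings, and the density argument for \eqref{eq:LULphimeasure} are then routine.
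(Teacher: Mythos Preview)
Your proof is correct and follows the same three-step skeleton as the paper (regularity of $v=Lu$ from the measure equation, then $W^{3,q}$-regularity of $u$ by elliptic estimates, then extension of \eqref{eq:LULphimeasure} by density), but two of the technical implementations differ in ways worth recording.

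For the first step, the paper does not cite a ready-made regularity result for measure data; it instead approximates $\mu$ by $L^1$-functions $g_n$ (Lemma~\ref{eq:lemmaA1}), invokes Br\'ezis--Strauss to produce $v_n\in W_0^{1,q}$ with $\|v_n\|_{W^{1,q}}\le C\|g_n\|_{L^1}\le C\mu(\Omega)$, passes to a weak limit, and then compares with $u$ at the level of $L^2$ rather than $L$ (showing $L(u+\overline u)$ is $L$-harmonic, hence smooth). Your direct identification $v(x)=-\int G_L(x,y)\,\mathrm{d}\mu(y)$ via the boundary condition $Lu|_{\partial\Omega}=0$ from Lemma~\ref{eq:Lemma510} is a legitimate shortcut, but note that it relies on a uniqueness statement for very weak solutions of $Lw=-\mu$, $w|_{\partial\Omega}=0$ which you should spell out (smoothness of the difference via \cite[Theorem~6.33]{Folland}, combined with the fact that both $v$ and the Green potential are smooth up to $\partial\Omega$ because $\mathrm{spt}(\mu)$ is compactly contained).

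For the density step your argument is more elementary than the paper's: you approximate $\varphi$ by $\varphi_n=L^{-1}g_n$ with $g_n\to L\varphi$ in $L^s$, whereas the paper mollifies with the Dirichlet heat semigroup $\varphi_j=e^{\frac{1}{j}\Delta_0}\varphi$ and appeals to analytic semigroup theory \cite{Lunardi}. Both produce sequences in $W^{2,2}\cap W_0^{1,2}$ converging in $W^{2,s}$, hence uniformly; your route avoids the semigroup machinery at the cost of invoking $L^s$-bijectivity of $L$ (which is equally standard).
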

\begin{proof}
    For the regularity assertions we only need to show that $u \in W^{3,q}_{loc}(\Omega)$ for any $q \in [1,2)$. The rest of the claimed regularity follows from the boundary regularity in Lemma \ref{eq:Lemma510} and Sobolev embeddings in dimension two. % the fact that $W^{3,q}(\Omega)  \hookrightarrow W^{2,\frac{2q}{2-q}}(\Omega), q<2,$ as well as $W^{2,p}(\Omega) \hookrightarrow C^{1,1-\frac{2}{p}}(\overline{\Omega}), p > 2$
     We therefore fix $q \in [1,2)$ and show $W^{3,q}_{loc}(\Omega)$-regularity. We may without loss of generality assume that $q \in (1,2)$ as the case $q = 1$ follows immediately from the cases with higher integrability. Fix therefore $q \in (1,2)$. 
    By Lemma \ref{eq:lemmaA1} we may choose a sequence of functions $(g_n)_{n \in \mathbb{N}} \subset L^1(\Omega)$ such that 
     $\lVert g_n \rVert_{L^1} \leq \mu(\Omega)$ for all $n \in \mathbb{N}$ and for every $\varphi \in C_0^\infty(\Omega)$  
    \begin{equation}
        \int_\Omega g_n \varphi \; \mathrm{d}x \rightarrow \int_\Omega \varphi \; \mathrm{d}\mu \qquad (n \rightarrow \infty). 
    \end{equation}
    By \cite[Theorem 8]{BrezisStrauss} there exist $(v_n)_{n \in \mathbb{N}} \subset W_0^{1,q}(\Omega)$ such that 
    \begin{equation}\label{eq:vnsateq}
        \int_\Omega v_n L \varphi \; \mathrm{d}x = \int_\Omega g_n \varphi \; \mathrm{d}x \qquad \textrm{for all } \varphi \in C_0^\infty(\Omega)
    \end{equation}
    and some $C> 0$ independent of $n$ such that $\lVert v_n \rVert_{W^{1,q}} \leq C \lVert g_n \rVert_{L^1}$. In particular $\lVert v_n \rVert_{W^{1,q}} \leq C \mu(\Omega)$ for all $n \in \mathbb{N}$. Up to a subsequence $(v_n)_{n \in \mathbb{N}}$ converges weakly in $W_0^{1,q}(\Omega)$ to some $v \in W^{1,q}_0(\Omega)$. From \eqref{eq:vnsateq} we deduce that 
    \begin{equation}\label{eq:intmeasureval}
        \int_\Omega v L \varphi \; \mathrm{d}x = \int_\Omega \varphi \; \mathrm{d}\mu \qquad  \textrm{for all } \varphi \in C_0^\infty(\Omega).
    \end{equation}
    By Lemma \ref{eq:lemma:LisoBanach} there exists a unique $\tilde{u} \in W^{2,2}(\Omega) \cap W_0^{1,2}(\Omega)$ such that $L \tilde{u} = v - Lu_0$ pointwise in $\Omega.$ Since $v, Lu_0 \in W^{1,q}(\Omega)$ we find that $\tilde{u} \in W^{3,q}_{loc}(\Omega)$. Now we define $\overline{u} := \tilde{u} + u_0$. Then, clearly $\overline{u} \in W^{3,q}(\Omega)$ and $\overline{u}$ is a solution of 
    \begin{equation}
       \begin{cases}
           L\overline{u} = v & \textrm{in $\Omega$} \\ \overline{u} = u_0 & \textrm{on $\partial \Omega$}
       \end{cases}.
    \end{equation}
   In particular, for each $\varphi \in C_0^\infty(\Omega)$ one has by \eqref{eq:intmeasureval}
   \begin{equation}
       \int_\Omega L\overline{u} L \varphi \; \mathrm{d}x =  \int_\Omega v L \varphi \; \mathrm{d}x = \int_\Omega \varphi \; \mathrm{d}\mu. 
   \end{equation}
Using  Lemma \ref{lem:4.3} we find that 
\begin{equation}
    \int_\Omega L(u+ \overline{u}) L \varphi \; \mathrm{d}x = 0 \qquad \textrm{for all } \varphi \in C_0^\infty(\Omega). 
\end{equation}
   This implies by \cite[Theorem 6.33]{Folland} that $L(u+\overline{u}) \in C^\infty(\Omega)$, whereupon elliptic regularity yields $u + \overline{u} \in C^\infty(\Omega)$. This and the fact that $\tilde{u} \in W^{3,q}_{loc}(\Omega)$ show that $u \in W^{3,q}_{loc}(\Omega)$.
   The claim is shown. It remains to show the validity of \eqref{eq:LULphimeasure} for any $\varphi \in W^{2,s}(\Omega) \cap W_0^{1,s}(\Omega)$ (any $s> 1$). 
   %such that $\varphi \vert_{\partial \Omega} = 0$ (any $s> 1$). % To this end let $\varphi \in W^{2,s}(\Omega)$ such that $\varphi \vert_{\partial \Omega} = 0$ (notice that this makes sense due to the fact that $W^{2,s}(\Omega) \hookrightarrow C^0(\overline{\Omega})$). 
  To this end let $\varphi \in W^{2,s}(\Omega) \cap W_0^{1,s}(\Omega)$, in particular $\varphi\vert_{\partial \Omega}= 0$. \\
   \textbf{Intermediate claim.} We claim that $\varphi$ is a $W^{2,s}$-limit of a sequence $(\varphi_j)_{j \in \mathbb{N}} \subset C^2(\overline{\Omega}) \cap W_0^{1,2}(\Omega)$. To this end notice that the \emph{Dirichlet Laplacian} $\Delta_0$ on $L^s(\Omega)$ is a densely defined and sectorial operator with domain $W^{2,s}(\Omega) \cap W_0^{1,s}(\Omega)$. Hence $\Delta_0$ generates an analytic semigroup  $(e^{t \Delta_0})_{t \geq 0 }$ in $L^s(\Omega)$. Next set $\varphi_j := e^{\frac{1}{j}\Delta_0}$ for all $j \in \mathbb{N}.$ By \cite[Proposition 2.1.1]{Lunardi} it follows that $\varphi_j$ lies in the domain of $\Delta_0^k$ for all $k \in \mathbb{N}$ and hence in $C^\infty(\overline{\Omega})$. Since $\varphi\vert_{\partial \Omega} = 0$ we deduce $\varphi \in C^\infty(\overline{\Omega}) \cap W_0^{1,2}(\Omega)$. Moreover, since $\varphi$ lies in the domain of $\Delta_0$ we have by \cite[Proposition 2.1.4(iv)]{Lunardi}
   \begin{equation}
       \lVert \Delta_0 \varphi - \Delta_0 \varphi_j\rVert_{L^s}  = \lVert \Delta_0 (e^{\frac{1}{j}\Delta_0} \varphi - \varphi)\rVert_{L^s}  \rightarrow 0 \qquad (j \rightarrow \infty). 
   \end{equation}
   Since $\Delta_0 : W^{2,s}(\Omega) \cap W_0^{1,s}(\Omega) \rightarrow L^s(\Omega)$ is an isomorphism we conclude that $\lVert v- v_j \rVert_{W^{2,s}} \rightarrow 0$ ($j \rightarrow \infty$). The intermediate claim follows. Now Lemma \ref{lem:4.3} yields that 
   \begin{equation}\label{eq:lujohij}
       \int_\Omega Lu L\varphi_j \; \mathrm{d}x = - \int_\Omega \varphi_j \; \mathrm{d}\mu \qquad \textrm{for all } j \in \mathbb{N}.
   \end{equation}
   Using that $\varphi_j \rightarrow \varphi$ in $W^{2,s}(\Omega)$ and $u \in W^{2,\frac{s}{s-1}}(\Omega)$ we find that 
   $
       \int_\Omega Lu L\varphi_j \; \mathrm{d}x \rightarrow \int_\Omega Lu L \varphi \; \mathrm{d}x
   $ as $j \rightarrow \infty$.
   Moreover, using that $W^{2,s}(\Omega) \hookrightarrow C^0(\overline{\Omega})$ we find that $\varphi_j \rightarrow \varphi$ uniformly on $\overline{\Omega}$ and hence 
$
       \int_\Omega \varphi_j \; \mathrm{d}\mu \rightarrow \int_\Omega \varphi \; \mathrm{d}\mu
$ as $j \rightarrow \infty.$
   The previous two equations and \eqref{eq:lujohij} imply
$
       \int_\Omega Lu L \varphi \; \mathrm{d}x = - \int_\Omega \varphi \; \mathrm{d}\mu. 
  $ 
   Since $\varphi \in W^{2,s}(\Omega) \cap W^{1,s}_0(\Omega)$ was arbitrary, the claim follows. 
    \end{proof}
 %Moreover, $u \in W^{2,p}(\Omega)$ for any $p \in [1,\infty)$ and \eqref{eq:LULphimeasure} is valid for each $\varphi \in W^{2,s}(\Omega) \cap W_0^{1,s}(\Omega)$ for any $s> 1$.

\begin{lemma}\label{lem:5.14}
    Let $u \in \mathcal{A}(u_0)$ be a minimizer. Then $v:= Lu$ satisfies $v \leq 0$ almost everywhere.
\end{lemma}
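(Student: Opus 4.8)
The plan is to combine the one-sided variational inequality of Lemma~\ref{lem5:11} with the invertibility of $L$ established in Lemma~\ref{eq:lemma:LisoBanach}, feeding in a nonnegative test function tailored to the positive part of $Lu$.

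Concretely, I would set $v := Lu \in L^2(\Omega)$ and $g := v^+ = \max(v,0) \in L^2(\Omega)$. By Lemma~\ref{eq:lemma:LisoBanach} there is a unique $\varphi \in W^{2,2}(\Omega) \cap W_0^{1,2}(\Omega)$ with $L\varphi = g$ a.e.\ in $\Omega$. Next I would check that $\varphi \ge 0$ a.e.: since $-\mathrm{div}(A\nabla\varphi) = g \ge 0$ weakly with $\varphi|_{\partial\Omega} = 0$, testing the weak formulation with $\varphi^- := \max(-\varphi,0) \in W_0^{1,2}(\Omega)$ and invoking uniform ellipticity of $A$ forces $\int_{\{\varphi<0\}} |\nabla\varphi|^2 \,\mathrm{d}x = 0$, hence $\varphi^- \equiv 0$ by the Poincaré inequality. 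This is nothing but the weak maximum principle for $L$ with vanishing boundary data.

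With $\varphi \ge 0$ in hand, $\varphi$ is an admissible competitor in Lemma~\ref{lem5:11}, so that
\[
0 \;\ge\; \int_\Omega Lu\, L\varphi \,\mathrm{d}x \;=\; \int_\Omega v\, v^+ \,\mathrm{d}x \;=\; \int_\Omega (v^+)^2 \,\mathrm{d}x,
\]
where I used the pointwise identity $v\,v^+ = (v^+)^2$ and that $L\varphi = g = v^+ \in L^2(\Omega)$ makes the integrand integrable. Therefore $v^+ = 0$ a.e., i.e.\ $Lu \le 0$ a.e., which is the assertion.

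The argument is essentially immediate once the preceding lemmas are available; the only step requiring a modicum of care is confirming that the function $\varphi$ produced by the abstract inversion of $L$ is genuinely nonnegative, so that it is legitimately a test function in the variational inequality. Beyond this bookkeeping there is no serious obstacle.
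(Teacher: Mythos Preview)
Your proof is correct and follows essentially the same strategy as the paper: invert $L$ on a nonnegative right-hand side, use the maximum principle to get a sign on the resulting test function, and feed it into Lemma~\ref{lem5:11}. The only cosmetic difference is that the paper works with an arbitrary $\psi\ge 0$ on the right-hand side and then specializes to $\psi=\chi_{\{v>0\}}$, whereas you go straight to $\psi=v^+$, which makes the final integral $\int (v^+)^2$ appear in one step.
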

\begin{proof}
    Let $\psi \in L^2(\Omega)$ be arbitrary such that $\psi \geq 0$ almost everywhere. By Lemma \ref{eq:lemma:LisoBanach} we can find $\varphi \in W^{2,2}(\Omega) \cap W_0^{1,2}(\Omega)$ such that $L\varphi = -\psi$, in particular $L\varphi \leq 0$ pointwise. By the maximum principle (cf. \cite[Chapter II Theorem 5.2]{KindStam}) one has that $\varphi \leq 0$. Applying Lemma \ref{lem5:11} we find
    \begin{equation}
        \int_\Omega v \psi \; \mathrm{d}x =   \int_\Omega Lu (- L\varphi) \; \mathrm{d}x= \int_\Omega Lu  L(-\varphi) \; \mathrm{d}x \leq 0. 
    \end{equation}
    Since $\psi \in L^2(\Omega)$ was arbitrary such that $\psi \geq 0$ almost everywhere, we may consider the case of $\psi = \chi_{\{v > 0 \}}$ and infer that $v \leq 0$ almost everywhere. 
\end{proof}

%\subsection{Consequences for the nodal set}

\begin{lemma}\label{lem:5.15}
    Let $u \in \mathcal{A}(u_0)$ be a minimizer. Then, 
    $
        \limsup_{\varepsilon \rightarrow 0} \frac{|\{ 0 < u < \varepsilon \}|}{\varepsilon} < \infty.  
    $
\end{lemma}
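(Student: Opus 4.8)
The plan is to test minimality of $u$ against the competitor
\[
v_\varepsilon := u - \varepsilon \rho \in \mathcal{A}(u_0),
\]
where $\rho \in C_0^\infty(\Omega)$, $0 \le \rho \le 1$, is a \emph{fixed} cutoff function (independent of $\varepsilon$) that equals $1$ on a compact set containing the sublevel set $\{u \le \varepsilon\}$ for all sufficiently small $\varepsilon$. Subtracting the constant $\varepsilon$ on $\{u \le \varepsilon\}$ removes exactly the thin layer $\{0 < u \le \varepsilon\}$ from the positivity set $\{u > 0\}$, while it costs only $O(\varepsilon)$ of bending energy, the linear term being $2\mu(\Omega)\,\varepsilon$ with $\mu$ the finite Radon measure from Lemma \ref{lem:4.3}; balancing the loss against the gain then yields the assertion.

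First I fix $\rho$. Set $I := \inf_{\overline{\Omega}} u_0 > 0$. By Lemma \ref{lem:53} there is $\delta_1 > 0$ such that $u \ge I/2$ on $\{x \in \Omega : \mathrm{dist}(x,\partial\Omega) \le \delta_1\}$; hence for every $\varepsilon \in (0, I/2)$ the set $\{u \le \varepsilon\}$ is contained in the compact set $K := \{x \in \Omega : \mathrm{dist}(x,\partial\Omega) \ge \delta_1\} \subset\subset \Omega$. Choose once and for all $\rho \in C_0^\infty(\Omega)$ with $0 \le \rho \le 1$ and $\rho \equiv 1$ on $K$. For $\varepsilon \in (0, I/2)$ put $v_\varepsilon := u - \varepsilon\rho$; then $v_\varepsilon \in W^{2,2}(\Omega)$ and $v_\varepsilon - u_0 = (u - u_0) - \varepsilon\rho \in W_0^{1,2}(\Omega)$, so $v_\varepsilon \in \mathcal{A}(u_0)$. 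Since $0 \le \rho \le 1$ we have $u - \varepsilon \le v_\varepsilon \le u$ pointwise, so on $\{u > \varepsilon\}$ we get $v_\varepsilon \ge u - \varepsilon > 0$, whereas on $\{u \le \varepsilon\} \subset K$ we have $\rho \equiv 1$ and hence $v_\varepsilon = u - \varepsilon \le 0$. Consequently $\{v_\varepsilon > 0\} = \{u > \varepsilon\}$ and
\[
|\{v_\varepsilon > 0\}| = |\{u > 0\}| - |\{0 < u \le \varepsilon\}|.
\]

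Next I expand the first term of $\mathcal{E}$. By linearity $L v_\varepsilon = Lu - \varepsilon L\rho$, so
\[
\int_\Omega (L v_\varepsilon)^2 \, \mathrm{d}x - \int_\Omega (Lu)^2 \, \mathrm{d}x = -2\varepsilon \int_\Omega Lu \, L\rho \, \mathrm{d}x + \varepsilon^2 \int_\Omega (L\rho)^2 \, \mathrm{d}x.
\]
Applying Lemma \ref{lem:4.3} to the test function $\rho \in C_0^\infty(\Omega) \subset W^{2,2}(\Omega) \cap W_0^{1,2}(\Omega)$ and using that $\mu$ is finite and supported on $\{u = 0\} \subset K$, where $\rho \equiv 1$, gives $\int_\Omega Lu \, L\rho \, \mathrm{d}x = - \int_\Omega \rho \, \mathrm{d}\mu = -\mu(\Omega)$. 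Therefore
\[
\mathcal{E}(v_\varepsilon) - \mathcal{E}(u) = 2\mu(\Omega)\,\varepsilon + \varepsilon^2 \lVert L\rho \rVert_{L^2}^2 - |\{0 < u \le \varepsilon\}|,
\]
and minimality $\mathcal{E}(u) \le \mathcal{E}(v_\varepsilon)$ forces $|\{0 < u < \varepsilon\}| \le |\{0 < u \le \varepsilon\}| \le 2\mu(\Omega)\,\varepsilon + \varepsilon^2 \lVert L\rho \rVert_{L^2}^2$ for all $\varepsilon \in (0, I/2)$, whence $\limsup_{\varepsilon \to 0} \varepsilon^{-1} |\{0 < u < \varepsilon\}| \le 2\mu(\Omega) < \infty$.

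I do not expect a real obstacle: the argument is elementary once Lemma \ref{lem:53} and Lemma \ref{lem:4.3} are in hand. The only point requiring care is the bookkeeping around $\rho$ — it must be chosen \emph{before} sending $\varepsilon \to 0$, it must equal $1$ on the whole of $\{u \le \varepsilon\}$ (not merely on $\{u = 0\}$) so that the positivity set genuinely drops by $|\{0 < u \le \varepsilon\}|$, and it must equal $1$ on $\mathrm{supp}\,\mu$ so that the cross term collapses to $\mu(\Omega)$; the quantitative lower bound of Lemma \ref{lem:53} arranges all three. It is essential that $\rho$ not depend on $\varepsilon$: a competitor obtained by truncating $u$ as a nonlinear function of itself would produce a Hessian term of order $\varepsilon^{-1}$ and destroy the estimate.
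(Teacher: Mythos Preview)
Your proof is correct and follows essentially the same approach as the paper: both compare $\mathcal{E}(u)$ with $\mathcal{E}(u-\varepsilon\rho)$ for a fixed cutoff $\rho\in C_0^\infty(\Omega)$ that equals $1$ on a compact set containing $\{u\le\varepsilon\}$ for all small $\varepsilon$, obtained via Lemma~\ref{lem:53}. The only (cosmetic) difference is that you identify the cross term exactly as $2\mu(\Omega)\varepsilon$ via Lemma~\ref{lem:4.3}, whereas the paper bounds $-2\int_\Omega Lu\,L\varphi^*\,\mathrm{d}x$ directly by Cauchy--Schwarz together with $\lVert Lu\rVert_{L^2}^2\le\mathcal{E}(u)\le|\Omega|$; your version yields the sharper (and more transparent) constant $2\mu(\Omega)$.
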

\begin{proof}
Let $\theta := \inf_{\overline{\Omega}}u_0>0$. Lemma \ref{lem:53} yields that there exists some $\delta > 0$ such that $u > \tfrac{\theta}{2}$ on $\Omega_\delta:= \{ x \in \Omega: \mathrm{dist}(x,\Omega^C) < \delta \}$. Choose some function $\varphi^* \in C_0^\infty(\Omega)$ such that $\varphi^* \equiv 1$ on $\Omega_\delta^C$. This is possible as $\Omega_\delta^C$ is compactly contained in $\Omega.$ In particular, for $\varepsilon < \frac{\theta}{2}$ one has 
\begin{equation}\label{eq:epsionlepsilonphistern}
    |\{ 0 < u < \varepsilon \}| = |\{0 <  u < \varepsilon\} \cap \Omega_\delta^C| \leq |\{0 < u < \varepsilon \varphi^*\} |. 
\end{equation}
Moreover, 
\begin{align}
     |\{0 < u < \varepsilon \varphi^*\} | & = |\{ u > 0 \}| - |\{ u - \varepsilon \varphi^* > 0 \}|  \\ & = \mathcal{E}(u) - \int_\Omega [Lu]^2 \; \mathrm{d}x - \left( \mathcal{E}(u- \varepsilon \varphi^*) - \int_\Omega [L(u- \varepsilon \varphi^*)]^2 \; \mathrm{d}x \right) 
     \\ & = \mathcal{E}(u) - \mathcal{E}(u- \varepsilon \varphi^*) + \int_\Omega [L(u-\varepsilon\varphi^*)]^2 - [Lu]^2 \; \mathrm{d}x
     \\ & \leq \int_\Omega [L(u-\varepsilon\varphi^*)]^2 - [Lu]^2 \; \mathrm{d}x
     =-  2\varepsilon \int_\Omega Lu L\varphi^* \; \mathrm{d}x + \varepsilon^2 \int_\Omega [L\varphi^*]^2 \; \mathrm{d}x.
\end{align}
Using \eqref{eq:epsionlepsilonphistern} and the previous computation we find that for $\varepsilon < \tfrac{\theta}{4}$ one has
\begin{equation}
    \frac{|\{ 0 < u < \varepsilon \}|}{\varepsilon} \leq  \frac{|\{0 < u < \varepsilon \varphi^*\} |}{\varepsilon} \leq - 2 \int_\Omega Lu L\varphi^* \; \mathrm{d}x + \varepsilon \int_\Omega [L\varphi^*]^2 \; \mathrm{d}x.
\end{equation}
Using Lemma \ref{lem:51} wa can now derive %an estimate that is independent of $u$, namely
\begin{equation}
    \limsup_{ \varepsilon \rightarrow 0}  \frac{|\{ 0 < u < \varepsilon \}|}{\varepsilon} \leq - 2\int_\Omega Lu L\varphi^* \; \mathrm{d}x \leq  2 \lVert Lu \rVert_{L^2} \lVert L\varphi^* \rVert_{L^2} \leq  2\sqrt{\mathcal{E}(u)} \lVert L\varphi^* \rVert_{L^2}  \leq  2 \sqrt{|\Omega|}\lVert L\varphi^* \rVert_{L^2}.
\end{equation}
The claim follows. 
\end{proof}

\subsection{Semiconvexity}

%Let $u \in \mathcal{A}(u_0)$ be a minimizer. By Lemma \ref{lem:4.3} $u$ is a distributional solution of $L^2 u = \mu$. This same equation is also solved by $\bar{u}$
%\begin{equation}
%    \bar{u}(x) := \int_\Omega F_{L^2}(x-y) \; \mathrm{d}\mu(y). 
%\end{equation}
%We conclude that $w := u - \bar{u}$ must be a distributional solution of $L^2 w = 0$. Standard regularity theory implies that  $w \in C^\infty(\Omega)$. We have observed that for all $x \in \Omega$ there holds 
%\begin{equation}
%    u(x) = \bar{u}(x) + w(x) = \int_\Omega F_{L^2}(x-y) \; \mathrm{d}\mu(y) + w(x). 
%\end{equation}

%The above representation and Frehse's Lemma allows us to represent the full second derivative $D^2 u$ of a minimizer

The formula in Theorem \ref{thm:1.1} gives information about the full second derivative of a distributional solution of $L^2u = \mu$. Since each minimizer $u \in \mathcal{A}(u_0)$ solves an equation of the type $L^2u =- \mu$ (see Lemma \ref{lem:4.3}) we can find a representation of the full second derivative of $u$.

\begin{lemma}\label{lem:5.16}
    Let $u \in \mathcal{A}(u_0)$ be a minimizer and $v := Lu$. Then there exists some locally bounded and Borel measurable $K: \Omega \times \Omega \rightarrow \mathbb{R}^{2 \times 2}_{sym}$  which is smooth on $\Omega \times \Omega \setminus \{(x,x) : x \in \Omega \}$
   % L^\infty(\mathbb{R}^2 \times \mathbb{R}^2) \cap C^2( (\mathbb{R}^2 \times \mathbb{R}^2) \setminus \{(x,x): x \in \mathbb{R}^2 \})$
    and some $H \in C^\infty(\Omega;\mathbb{R}_{sym}^{2\times 2})$ such that for almost every $x \in \Omega$ 
     \begin{equation}
         D^2u(x) =- \frac{1}{2}  v(x)  A(x)^{-1} + \int_\Omega K(x,y) \; \mathrm{d}\mu(y)  + H(x).
     \end{equation}
     %Moreover each point that is not an atom of $\mu$ is a Lebesgue point of $D^2u$. 
\end{lemma}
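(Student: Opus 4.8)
The plan is to obtain this as an essentially immediate consequence of Theorem~\ref{thm:1.1} and Lemma~\ref{lem:4.3}. First, recall from Lemma~\ref{lem:4.3} that there is a finite Radon measure $\mu$ on $\Omega$, supported on $\{u=0\}$, such that
\begin{equation}
    \int_\Omega Lu \, L\varphi \; \mathrm{d}x = - \int_\Omega \varphi \; \mathrm{d}\mu \qquad \textrm{for all } \varphi \in W^{2,2}(\Omega) \cap W_0^{1,2}(\Omega).
\end{equation}
In particular this identity holds for every $\varphi \in C_0^\infty(\Omega)$, so that $u$ --- which lies in $W^{2,2}(\Omega) \subset W^{2,2}_{loc}(\Omega)$ because $u \in \mathcal{A}(u_0)$ --- solves $L^2 u = - \mu$ in the sense of distributions.

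Next I would apply Theorem~\ref{thm:1.1} with the finite signed Radon measure $-\mu$ playing the role of the measure there. The proofs of Theorem~\ref{thm:1.1} and of the underlying Lemma~\ref{lem:RepresentationFormula} only use the Green representation $\bar u(x) = \int_\Omega G_{L^2}(x,y)\,\mathrm{d}\mu(y)$, the pointwise formula~\eqref{eq:GreenFORMULAAAAA}, and the at-most-countability of the atoms of $\mu$, all of which are linear in $\mu$ and hence insensitive to its sign; so this application is legitimate. It produces a Borel measurable, locally bounded $\tilde K : \Omega \times \Omega \to \mathbb{R}^{2\times 2}_{sym}$, smooth on $(\Omega \times \Omega) \setminus \{(x,x) : x \in \Omega\}$, together with $H \in C^\infty(\Omega;\mathbb{R}^{2\times 2}_{sym})$, such that
\begin{equation}
    D^2 u(x) = \tfrac{1}{2}\bigl(-Lu(x)\bigr) A(x)^{-1} - \int_\Omega \tilde K(x,y) \; \mathrm{d}\mu(y) + H(x) \qquad \textrm{for almost every } x \in \Omega.
\end{equation}

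Finally I would set $K(x,y) := -\tilde K(x,y)$, which inherits Borel measurability, local boundedness, symmetry and smoothness off the diagonal, and use $v = Lu$ to rewrite $\tfrac{1}{2}(-Lu(x))A(x)^{-1} = -\tfrac{1}{2} v(x) A(x)^{-1}$ and $-\int_\Omega \tilde K(x,y)\,\mathrm{d}\mu(y) = \int_\Omega K(x,y)\,\mathrm{d}\mu(y)$; substituting gives precisely the asserted formula. There is no genuine obstacle here beyond this sign bookkeeping --- all the analytic substance is already in Theorem~\ref{thm:1.1} (hence in the Green's function analysis of Section~\ref{sec:FrehsesObservation}) and in the construction of $\mu$ in Lemma~\ref{lem:4.3}.
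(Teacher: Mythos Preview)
Your proposal is correct and essentially identical to the paper's proof: both reduce the lemma to Theorem~\ref{thm:1.1} combined with Lemma~\ref{lem:4.3}, with only trivial sign bookkeeping. The sole cosmetic difference is that the paper flips the function (applying Theorem~\ref{thm:1.1} to $w:=-u$, which satisfies $L^2 w = \mu$ with the original nonnegative $\mu$) rather than flipping the measure as you do; this avoids your brief remark about Theorem~\ref{thm:1.1} extending to signed measures, but your justification for that extension is correct anyway.
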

\begin{proof}
    By Lemma \ref{lem:4.3} $w := -u$ solves $L^2 w = \mu$ distributionally. By Theorem \ref{thm:1.1} we find that there exists some locally bounded and Borel measurable $\tilde{K}: \Omega \times \Omega \rightarrow \mathbb{R}^{2 \times 2 }_{sym}$ which is smooth on $\Omega \times \Omega \setminus \{(x,x):x \in \Omega \}$ and $\tilde{H} \in C^\infty(\Omega;\mathbb{R}^{2\times 2}_{sym})$ such that 
    \begin{equation}
        D^2w = - (Lw(x)) A(x)^{-1} + \int_\Omega \tilde{K}(x,y) \; \mathrm{d}\mu(y) + \tilde{H}(x).
    \end{equation}
Setting $K := -\tilde{K}, H := - \tilde{H}$ and noticing that $D^2w = -D^2u, Lw = - Lu$ the claim follows. 
\end{proof}

As a consequence we obtain the local semiconvexity of $u$.

\begin{lemma}[Local Semiconvexity]\label{lem:517}
    Let $u \in \mathcal{A}(u_0)$ be a minimizer and $\Omega' \subset \subset \Omega$ a convex subdomain.  Then there exists a constant $M = M(\Omega')> 0$ such that $x \mapsto u(x) + \frac{1}{2} M|x-x_0|^2$ defines a convex function on $\Omega'$. 
\end{lemma}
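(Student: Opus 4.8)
The plan is to combine the Hessian representation from Lemma~\ref{lem:5.16} with the sign information $Lu\le 0$ from Lemma~\ref{lem:5.14} to show that $D^2u+M I_2\ge 0$ almost everywhere on $\Omega'$ for a suitable constant $M=M(\Omega')$, and then to upgrade this a.e.\ inequality to genuine convexity of $w(x):=u(x)+\tfrac12 M|x-x_0|^2$ by mollification on the convex domain $\Omega'$. (Since $|x-x_0|^2$ has constant Hessian $2I_2$, the choice of $x_0$ is irrelevant and $D^2w=D^2u+M I_2$.)

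First I would fix $\Omega'\subset\subset\Omega$ convex and note that, by Lemma~\ref{lem:4.3} and Lemma~\ref{lem:53}, the measure $\mu$ appearing in Lemma~\ref{lem:5.16} is finite with support a \emph{compact} subset of $\Omega$. Choosing an open set $\Omega''$ with $\overline{\Omega'}\cup\mathrm{spt}(\mu)\subset\Omega''\subset\subset\Omega$, the local boundedness of $K$ on $\Omega\times\Omega$ gives $K_0:=\sup_{(x,y)\in\overline{\Omega''}\times\overline{\Omega''}}|K(x,y)|<\infty$, hence
\[
  \left|\int_\Omega K(x,y)\,\mathrm{d}\mu(y)\right|=\left|\int_{\mathrm{spt}(\mu)}K(x,y)\,\mathrm{d}\mu(y)\right|\le K_0\,\mu(\Omega)\qquad\text{for all }x\in\Omega'.
\]
Since moreover $H\in C^\infty(\Omega;\mathbb R^{2\times2}_{sym})$ is bounded on the compact set $\overline{\Omega'}$, Lemma~\ref{lem:5.16} provides a constant $C_0=C_0(\Omega')>0$ and a Borel measurable $B$ with $D^2u(x)=-\tfrac12\,v(x)\,A(x)^{-1}+B(x)$ and $|B(x)|\le C_0$ for a.e.\ $x\in\Omega'$, where $v=Lu$. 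Now $v\le 0$ a.e.\ by Lemma~\ref{lem:5.14}, and $A(x)^{-1}$ is symmetric positive definite for every $x$ (being the inverse of a symmetric, uniformly elliptic matrix), so $-\tfrac12 v(x)A(x)^{-1}$ is positive semidefinite. Hence for a.e.\ $x\in\Omega'$ and all $\xi\in\mathbb R^2$,
\[
  \xi^{T}D^2u(x)\,\xi\ \ge\ \xi^{T}B(x)\,\xi\ \ge\ -C_0|\xi|^2,
\]
i.e.\ $D^2u(x)\ge -C_0 I_2$ almost everywhere on $\Omega'$. Set $M:=C_0$, so that $D^2w\ge 0$ a.e.\ on $\Omega'$.

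It remains to deduce convexity of $w$ on $\Omega'$. By Lemma~\ref{lem:5.13B} one has $w\in W^{2,p}(\Omega)\subset C^1(\overline\Omega)$, so I would mollify: for $\varepsilon>0$ set $w_\varepsilon:=w*\rho_\varepsilon$ on the inner parallel set $\Omega'_\varepsilon:=\{x\in\Omega':\mathrm{dist}(x,\partial\Omega')>\varepsilon\}$, which is again convex (an intersection of translates of $\Omega'$). For $x\in\Omega'_\varepsilon$ one has $D^2w_\varepsilon(x)=\int\rho_\varepsilon(y)\,D^2w(x-y)\,\mathrm{d}y\ge 0$, since $x-y\in\Omega'$ throughout; thus the smooth function $w_\varepsilon$ is convex on the convex set $\Omega'_\varepsilon$. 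Letting $\varepsilon\to 0$, $w_\varepsilon\to w$ locally uniformly on $\Omega'$, and a locally uniform limit of convex functions is convex; since $\bigcup_{\varepsilon>0}\Omega'_\varepsilon=\Omega'$, it follows that $w$ is convex on $\Omega'$, which is the claim.

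The argument is essentially bookkeeping; the only step needing genuine care is the boundedness of the measure term $\int_\Omega K(x,y)\,\mathrm{d}\mu(y)$ for $x\in\Omega'$, for which it is essential to use that $\mathrm{spt}(\mu)$ is compactly contained in $\Omega$ (so that the local bound on $K$ applies on a single fixed compact set) rather than merely that $K$ is locally bounded on $\Omega\times\Omega$.
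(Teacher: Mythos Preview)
Your proof is correct and follows essentially the same approach as the paper: both use Lemma~\ref{lem:5.16} together with $v=Lu\le 0$ (Lemma~\ref{lem:5.14}) to bound the Hessian below by $-C_0 I_2$ on $\Omega'$, exploiting that $K$ is bounded on the compact set $\overline{\Omega'}\times\mathrm{spt}(\mu)\subset\Omega\times\Omega$ and that $H$ is smooth. The only difference is that you supply the mollification step to pass from ``$D^2w\ge 0$ a.e.'' to genuine convexity, which the paper simply asserts; this is a standard fact and your argument for it is correct.
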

\begin{proof} Fix $\Omega' \subset \subset \Omega$ a convex subdomain. Let $K(x,y) = (k_{ij}(x,y))_{i,j= 1}^2$ and $H(x)= (h_{ij}(x))_{i,j = 1}^2$ be as in Lemma \ref{lem:5.16}.
    We claim first that there exists some $M >0$ such that for each $x \in \Omega'$ the matrix $D^2u(x) + M I_{2}$ is positive definite. Since $v \leq 0$ by Lemma \ref{lem:5.14} we have that $x \mapsto -v(x)A(x)^{-1}$ is positive definite. 
    Hence it suffices to show that there exists $M'> 0$ such that  for each $x \in \Omega'$
    \begin{equation}
        \int_\Omega K(x,y) \; \mathrm{d}\mu(y)  + H(x) + M' I_{2\times 2}
    \end{equation}
    is positive definite. To this end it suffices to show that each entry of $x \mapsto \int_\Omega K(x,y) \; \mathrm{d}\mu(y)  + H(x)$ is bounded below on $\Omega'$ (since then $M'$ can be chosen in such a way that the matrix is strictly diagonally dominant).
    %Recall that a strictly diagonally dominant symmetric matrix is always positive definite). 
    Now notice that there exists $C_1 > 0$ such that $|K(x,y)| \leq C_1$ for all $(x,y) \in \Omega' \times \mathrm{spt}(\mu)$ and $C_2 > 0$ such that $|H(x)| \leq C_2$ for all $x \in \Omega'$. This implies that for all $i,j \in \{ 1, 2 \}$ there holds 
    \begin{equation}
         \int_\Omega k_{ij}(x,y) \; \mathrm{d}\mu(y)  + h_{ij}(x) \geq \int_\Omega (-C_1) \; \mathrm{d}\mu(y) - C_2 = -C_1 \mu(\Omega) - C_2.
    \end{equation}
    The required boundedness from below and hence the claim follows. Having now some $M > 0$ such that $D^2u(x) +M I_{2}$ is positive definite we obtain that $\hat{u} : \Omega' \rightarrow \mathbb{R}, \hat{u}(x) = u(x) + \frac{1}{2} M|x-x_0|^2$ satisfies 
    \begin{equation}
        D^2\hat{u}(x) = D^2u(x) + M I_{2},
    \end{equation}
    which is positive definite on $\Omega'$. In particular, $\hat{u}$ is convex on $\Omega'$. 
\end{proof}

The derived semiconvexity will be used often in the sequel to find blow-up profiles for minimizers $u \in \mathcal{A}(u_0)$. This is done using a special version of Aleksandrov's theorem, derived in \cite[Lemma 4.1]{MuellerAMPA}.

\begin{lemma}[{Aleksandov's theorem,  cf. \cite[Lemma 4.1]{MuellerAMPA}}] \label{lem:alekasndrov}
  Let $\Omega' \subset \mathbb{R}^n$ a bounded and convex domain and $f \in W^{2,2}(\Omega') \cap C^1(\Omega')$ be such that $x \mapsto f(x) + \frac{1}{2}M|x-x_0|^2$ is convex for some $M \in \mathbb{R}$. If $x_0 \in \Omega'$ is a Lebesgue point of $D^2f$ then 
  \begin{equation}
      f(x) - f(x_0) - \nabla f(x_0) \cdot (x-x_0) - \tfrac{1}{2}(x-x_0)^T (D^2f)^*(x_0) (x-x_0) = o(|x-x_0|^2),
  \end{equation}
  where $(D^2f)^*(x_0) := \lim_{r \rightarrow 0} \frac{1}{|B_r(x_0)|} \int_{B_r(x_0)} D^2f(y) \; \mathrm{d}y$. % is the precise representative of $D^2f$ at $x_0$. 
\end{lemma}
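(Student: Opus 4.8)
The plan is to follow the classical proof of Aleksandrov's theorem, but to exploit the $W^{2,2}$-regularity of $f$ to upgrade pointwise differentiability at the Lebesgue point into the stated quadratic expansion. First I would set $g(x) := f(x) + \tfrac12 M|x-x_0|^2$, which is convex on $\Omega'$ by hypothesis, and observe that $D^2 g = D^2 f + M I_n$ in the sense of distributions, so $x_0$ is a Lebesgue point of $D^2 g$ precisely when it is a Lebesgue point of $D^2 f$, and $(D^2 g)^*(x_0) = (D^2 f)^*(x_0) + M I_n$. Since a proof for $g$ immediately gives one for $f$ (subtract the explicit smooth quadratic $\tfrac12 M|x-x_0|^2$), it suffices to prove the statement for a convex $g \in W^{2,2}(\Omega') \cap C^1(\Omega')$.

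The key steps for the convex case are as follows. Convexity gives that for each $x \in \Omega'$ the affine function $\ell_x(z) := g(x) + \nabla g(x)\cdot(z-x)$ is a supporting hyperplane, i.e.\ $g(z) \geq \ell_x(z)$ everywhere; in particular $g(z) \geq g(x_0) + \nabla g(x_0)\cdot(z-x_0)$, which handles the lower bound once we know the quadratic term is the leading correction. For the upper bound I would use the monotonicity of the subdifferential: for $x,y \in \Omega'$, $(\nabla g(x) - \nabla g(y))\cdot(x-y) \geq 0$, together with the fundamental theorem of calculus along segments, $\nabla g(x) - \nabla g(x_0) = \int_0^1 D^2 g(x_0 + t(x-x_0))\,(x-x_0)\,dt$ (valid for a.e.\ line, and extended via an averaging/Fubini argument since $g \in W^{2,2}$). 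The central estimate is then to control $\fint_{B_r(x_0)} |D^2 g(y) - (D^2 g)^*(x_0)|\,dy \to 0$, which is exactly the Lebesgue point property, and to translate this integral smallness into pointwise control on $\nabla g(x) - \nabla g(x_0) - (D^2 g)^*(x_0)(x-x_0)$ by integrating the difference quotient over a family of shifted segments filling a ball and using convexity to pass from a.e.\ statements to the genuine gradient $\nabla g$ (which is continuous, as $g \in C^1$). Once $\nabla g(x) = \nabla g(x_0) + (D^2 g)^*(x_0)(x-x_0) + o(|x-x_0|)$ is established, a further integration along the segment from $x_0$ to $x$ yields the desired $g(x) - g(x_0) - \nabla g(x_0)\cdot(x-x_0) - \tfrac12 (x-x_0)^T (D^2 g)^*(x_0)(x-x_0) = o(|x-x_0|^2)$.

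The main obstacle I expect is the step passing from ``$D^2 g$ is close to $(D^2 g)^*(x_0)$ in the $L^1$-average sense near $x_0$'' to ``$\nabla g$ has the claimed first-order Taylor expansion at $x_0$ in the pointwise sense''. The subtlety is that a Lebesgue point of $D^2 g$ does not a priori mean $\nabla g$ is differentiable at $x_0$; one genuinely needs convexity here. The right tool is to combine the monotonicity inequality $(\nabla g(x)-\nabla g(x_0))\cdot(x-x_0)\geq 0$ with a two-sided bound obtained by testing against the affine supporting functions at nearby points, or equivalently to use that for convex functions the distributional Hessian equals the Aleksandrov (pointwise second-order) Hessian a.e.\ and that the difference quotients of $\nabla g$ are monotone; the Lebesgue point hypothesis then pins down the limit. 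Since \cite[Lemma 4.1]{MuellerAMPA} is cited as the source, I would in practice simply invoke that reference for the detailed argument, remarking only that the convexity reduction above shows the present semiconvex formulation is equivalent to the convex one treated there, and that the $W^{2,2}$-assumption guarantees $\nabla g$ is absolutely continuous on a.e.\ line so that the fundamental-theorem-of-calculus manipulations are justified.
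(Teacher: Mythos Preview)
The paper does not give its own proof of this lemma; it is stated with a direct citation to \cite[Lemma 4.1]{MuellerAMPA} and no argument is supplied. Your proposal ultimately does the same thing---after the reduction to the convex case and the sketch, you explicitly defer to that reference---so your approach agrees with the paper's, and the extra outline you provide (reduction via $g = f + \tfrac12 M|x-x_0|^2$, then differentiability of $\nabla g$ at Lebesgue points of $D^2g$ followed by integration) is a reasonable summary of how such a proof goes.
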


\section{Analysis of the nodal set}\label{sec:AnalysisNodalSet}
In this section we have a closer look at the nodal set $\{u = 0\}$, using methods of \cite[Section 4]{MuellerAMPA}. This set a priori consists of 
\begin{itemize}
    \item Regular nodal points $x_0 \in \Omega$, where $\nabla u(x_0) \neq 0.$
    \item Singular nodal points $x_0 \in \Omega$, where $\nabla u(x_0) = 0.$
\end{itemize}
We intend so show that there exist no singular nodal points. To this end, we study the singular nodal set $\{ x_0 \in \Omega : u(x_0) = 0,  \nabla u(x_0) = 0 \}$. We  divide the singular nodal set into \emph{good points and bad points}.

\begin{definition}\label{def:goodptbadpt}
    Let $x_0 \in \Omega$ be such that $u(x_0) = 0$ and $\nabla u(x_0) = 0$. Define $v= Lu$. Notice that $-v$ satisfies $-v \geq 0$ and $L(-v) \geq 0$ and let $(-v)^*$ be given as in \eqref{eq:pointwiseRepresentatiove}.
    We say that $x_0$ is a \emph{good point} if $(-v)^*(x_0) < \infty$. Moreover $x_0$ is a \emph{bad point} if $(-v)^*(x_0) = \infty$. 
\end{definition}

%A first important observation is that the singular nodal set can 

\subsection{Good points}

\begin{lemma}\label{eq:GoodpointLebesguepoint}
    Let $u \in \mathcal{A}(u_0)$ be a minimizer and $x_0 \in \Omega$ be such that $(-v)^*(x_0) < \infty$. Then 
    \begin{itemize}
        \item[$\mathrm{(i)}$] $x_0$ is not an atom of $\mu$.
        \item[$\mathrm{(ii)}$] $x_0$ is a Lebesgue point of $D^2 u$.
    \end{itemize}
\end{lemma}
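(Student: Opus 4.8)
The plan is to combine the representation formula from Lemma \ref{lem:5.16} with the mean value properties of $L$-superharmonic functions from Lemma \ref{lem:lebpt}, together with the local semiconvexity from Lemma \ref{lem:517}. Recall that $-v = -Lu$ satisfies $L(-v) = \mu \geq 0$ weakly (Lemma \ref{lem:4.3}), so the pointwise representative $(-v)^*$ and all statements of Lemma \ref{lem:lebpt} are available. The hypothesis is precisely that $(-v)^*(x_0) < \infty$.

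For part (i): I would argue by contradiction. Suppose $x_0$ is an atom of $\mu$, i.e.\ $\mu(\{x_0\}) =: a > 0$. Since $-v$ solves $L(-v) = \mu$, I want to show that an atom of $\mu$ forces $(-v)^*(x_0) = \infty$, contradicting the hypothesis. The cleanest route is via the representation $-v(x) = \int_\Omega G_L(x,y)\,\mathrm{d}\mu(y) + (\text{smooth term})$, valid up to an $L$-harmonic (hence smooth) correction by the analogue of Lemma \ref{lem:RepresentationFormula} for the operator $L$ (or directly by \cite[Theorem 6.1]{LSW63}). Then $-v(x) \geq a\, G_L(x,x_0) + (\text{bounded near }x_0)$, and by Lemma \ref{lem:2.4}(i) $G_L(x,x_0) = -c_1(x_0)\log(\psi_{x_0}(x)) + (\text{term bounded near }x_0)$ blows up like $-\log|x-x_0|$ as $x \to x_0$. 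Averaging over $D_r(x_0)$ and using monotonicity (Lemma \ref{lem:meanvalueprop}(ii)), the averages $(-v)_r(x_0)$ diverge, so $(-v)^*(x_0) = \sup_r (-v)_r(x_0) = \infty$; contradiction. Hence $\mu(\{x_0\}) = 0$.

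For part (ii): once (i) is known, $x_0$ is not an atom of $\mu$, and by Lemma \ref{lem:lebpt}(iii) the finiteness $(-v)^*(x_0) < \infty$ implies that $x_0$ is a Lebesgue point of $(-v)^* = -v = -Lu$. Now apply Lemma \ref{lem:5.16}: for a.e.\ $x$,
\begin{equation*}
  D^2 u(x) = -\tfrac{1}{2} v(x) A(x)^{-1} + \int_\Omega K(x,y)\,\mathrm{d}\mu(y) + H(x).
\end{equation*}
The term $-\tfrac{1}{2} v(x) A(x)^{-1}$ has $x_0$ as a Lebesgue point, since $v$ does and $A^{-1} \in C^\infty$. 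The term $H$ is smooth, hence continuous at $x_0$. The remaining task is to show $x \mapsto \int_\Omega K(x,y)\,\mathrm{d}\mu(y)$ has $x_0$ as a Lebesgue point. Here I would use that $K$ is locally bounded on $\Omega \times \Omega$ and smooth off the diagonal, and crucially that $\mu(\{x_0\}) = 0$: split $\mu = \mu\mres B_\rho(x_0) + \mu\mres(\Omega \setminus B_\rho(x_0))$; the far part gives a function that is smooth (hence continuous) near $x_0$, while the near part is bounded in absolute value by $\|K\|_{L^\infty(\Omega'\times\Omega')}\,\mu(B_\rho(x_0))$, which tends to $0$ as $\rho \to 0$ since $x_0$ is not an atom. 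A standard $\varepsilon$-$\rho$ argument then shows the oscillation of $\int K(\cdot,y)\,\mathrm{d}\mu(y)$ around its value at $x_0$ tends to zero, giving the Lebesgue point property. Summing the three pieces, $x_0$ is a Lebesgue point of $D^2 u$.

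The main obstacle I anticipate is part (i): making rigorous that an atom of $\mu$ forces $(-v)^*(x_0) = \infty$ requires care with the representation formula for $-v$ (the $L$-Green's function decomposition) and with interchanging the average integral $\fint_{D_r(x_0)}$ with the integration against $\mu$, plus the fact that $G_L \geq 0$ (Lemma \ref{lemmaBasicGreen}(iv)) so that no cancellation occurs. The remaining estimates in (ii) are routine given the local boundedness of $K$ and the non-atomicity established in (i).
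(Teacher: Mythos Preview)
Your proposal is correct and follows essentially the same approach as the paper. For (i) you go directly through the Green representation of $-v$ via $G_L$, whereas the paper first represents $-u$ via $G_{L^2}$ and then applies $L$ (using $LG_{L^2}=G_L$); both reach the same lower bound $-v(x)\geq a\,G_L(x,x_0)+\text{(bounded)}$ and conclude via the logarithmic singularity of $G_L$. For (ii) the paper handles the term $\int_\Omega K(x,y)\,\mathrm{d}\mu(y)$ by dominated convergence (pointwise $\mu$-a.e.\ convergence of $\fint_{B_r(x_0)}|K(x,y)-K(x_0,y)|\,\mathrm{d}x$ using smoothness of $K(\cdot,y)$ for $y\neq x_0$ and $\mu(\{x_0\})=0$), while your near/far splitting achieves the same thing and in fact yields continuity of this term at $x_0$; the semiconvexity you mention in your plan is not actually needed here (and the paper does not invoke it in this lemma either).
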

\begin{proof}
    First we show (i). Suppose that $x_0$ is an atom of $\mu$, i.e. $\mu(\{x_0 \}) > 0$. Define $c := \mu(\{x_0 \})$. One readily checks that $\tilde{\mu} = \mu - c \delta_{x_0}$ (where $\delta_{x_0}$ denotes the Dirac measure with point mass at $x_0$) is also a Radon measure. With this, Lemma \ref{lem:RepresentationFormula} and the fact that $L^2(-u) =  \mu$ by Lemma \ref{lem:4.3} one obtains that there exists $w \in C^\infty(\Omega)$ such that 
    \begin{equation}
        -u(x) = \int_\Omega G_{L^2}(x,y) \; \mathrm{d}\mu(y) + w(x) = c G_{L^2}(x,x_0) +  \int_\Omega G_{L^2}(x,y) \; \mathrm{d}\tilde{\mu}(y) + w(x).
    \end{equation}
    One infers from \cite[Corollary 3.5]{MuellerAMPA} that for almost every $x \in \Omega$ there holds
    \begin{equation}
     -v(x) =   - Lu(x) =c LG_{L^2}(x,x_0)  + \int_\Omega LG_{L^2}(x,y) \; \mathrm{d}\tilde{\mu}(y) + Lw(x).
    \end{equation}
    Due to the fact that by Lemma \ref{lemmaBasicGreen} we have $LG_{L^2}(x,x_0) = G_L(x,x_0) \geq 0$ we conclude that for almost every $x \in \Omega$ one has
  $
       - v(x) \geq c G_L(x,x_0) + Lw(x). 
$    In particular, for all $r> 0$ we have 
    \begin{equation}\label{eq:integralmittel-v}
        \fint_{D_r(x_0)} (-v)(x) \; \mathrm{d}x \geq c \fint_{D_r(x_0)} G_{L}(x,x_0) \; \mathrm{d}x + \fint_{D_r(x_0)} Lw(x) \; \mathrm{d}x.
    \end{equation}
    The last integral tends to the bounded quantity $Lw(x_0)$ as $r \rightarrow 0$ (which can readily be checked using $B_{cr}(x_0) \subset D_r(x_0) \subset B_{Cr}(x_0)$, cf. Lemma \ref{lem:meanvalueprop}). We next claim that 
    \begin{equation}\label{eq:fintgehtgegeninfty}
       \lim_{r \rightarrow 0} \fint_{D_r(x_0)} G_{L}(x,x_0) \; \mathrm{d}x = \infty.
    \end{equation}
    To this end observe that by Lemma \ref{lem:2.4} there exists a continuous positive function $c_1$ and a continuous function $f_1$ such that $G_L(x,x_0) = -c_1(x) \log (A(x_0)^{-1} (x-x_0) \cdot (x-x_0)) + f_1$.
    Using that for some $D> 0$ one has $A(x_0)^{-1} (x-x_0) \cdot (x-x_0) \leq D|x-x_0|^2$  we compute for any $x \in D_r(x_0)$  
    \begin{equation}
        G_L(x,x_0) \geq - c_1(x) \log (D |x-x_0|^2 ) +f_1(x) \geq -c_1(x) \log(DC^2 r^2) +f_1(x).
    \end{equation}
    Here we have used once again that $D_r(x_0) \subset B_{Cr}(x_0)$ (cf. Lemma \ref{lem:meanvalueprop}). We obtain for $r > 0$ suitably small 
    \begin{equation}
        \fint_{D_r(x_0)} G_{L}(x,x_0) \; \mathrm{d}x \geq -  \log(DC^2 r^2) \fint_{D_r(x_0)} c_1(x) \; \mathrm{d}x + \fint_{D_r(x_0)} f_1(x) \; \mathrm{d}x.
    \end{equation}
    Now $\fint_{D_r(x_0)} c_1(x) \; \mathrm{d}x \rightarrow c_1(x_0) >0$ and $\fint_{D_r(x_0)} f_1(x) \; \mathrm{d}x \rightarrow f_1(x_0) \in \mathbb{R}$ as $r \rightarrow 0$. This and the fact that
  $
        \lim_{r \rightarrow 0} ( -  \log(DC^2 r^2)) = \infty
$
yield \eqref{eq:fintgehtgegeninfty}. Using this finding in \eqref{eq:integralmittel-v} yields 
\begin{equation}
    (-v)^*(x_0) = \lim_{r \rightarrow 0} \fint_{D_r(x_0)} (-v)(x) \, \mathrm{d}x = \infty,
\end{equation}
contradicting the assumption $(-v)^*(x_0)< \infty.$ Hence (i) is shown. We proceed with (ii). First observe that by Lemma \ref{lem:5.16} there exists some locally bounded and Borel measurable $K: \Omega \times \Omega \rightarrow \mathbb{R}^{2 \times 2}_{sym}$  which lies is smooth on $\Omega \times \Omega \setminus \{(x,x):x \in \Omega \}$
   % L^\infty(\mathbb{R}^2 \times \mathbb{R}^2) \cap C^2( (\mathbb{R}^2 \times \mathbb{R}^2) \setminus \{(x,x): x \in \mathbb{R}^2 \})$
    and some $H \in C^\infty(\Omega;\mathbb{R}_{sym}^{2\times 2})$ such that for a.e. $x \in \Omega$ 
\begin{equation}\label{eq:AllesauchLebesgue}
    D^2u(x) = -v(x) A(x)^{-1} + \int_\Omega K(x,y) \; \mathrm{d}\mu(y) + H(x).
\end{equation}
We show that $x_0$ is a Lebesgue point of each of the summands appearing in the above equation. Clearly, $x_0$ is a Lebesgue point of $H$ as $H$ is smooth. By Lemma \ref{lem:lebpt}, $x_0$ is also a Lebesgue point of $(-v)$ with $\lim_{r \rightarrow 0} \fint_{B_r(x_0)} (-v)(x) = (-v)^*(x_0)$. Using this and $(-v)^*(x_0) \in [0, \infty)$ it is a direct consequence of Lemma \ref{lem:fgLeb} (with $f= v$ and $g =a_{ij}$ where $a_{ij}$ is any entry of $A$) that $x_0$ is a Lebesgue point of the first summand of \eqref{eq:AllesauchLebesgue}. 
%. To this end we compute
%\begin{align}
%  &  \fint_{B_r(x_0)} |(-v)(x)A(x)^{-1} - (-v)^*(x_0)A(x_0)^{-1}| \; \mathrm{d}x \\ & = \fint_{B_r(x_0)} |[(-v)(x) - (-v)^*(x_0)]A(x)^{-1} - (-v)^*(x_0)(A(x)^{-1} - A(x_0)^{-1})| \; \mathrm{d}x 
 %   \\ & \leq C \fint_{B_r(x_0)} |(-v)(x)- (-v)^*(x_0)| \; \mathrm{d}x + (-v)^*(x_0) \fint_{B_r(x_0)} |A(x)^{-1} - A(x_0)^{-1}| \; \mathrm{d}x.
%\end{align}
%The first summand tends to zero due to the fact that $x_0$ is a Lebesgue point of $(-v)$. The second summand tends to zero due to the smoothness of $A$. 
Next we investigate the Lebesgue point property of the second summand. Observe
\begin{align}
    \fint_{B_r(x_0)} \left\vert \int_\Omega K(x,y) \; \mathrm{d}\mu(y) - \int_\Omega K(x_0,y) \; \mathrm{d}\mu(y) \right\vert \; \mathrm{d}x  &=  \fint_{B_r(x_0)} \left\vert \int_\Omega K(x,y)- K(x_0,y) \; \mathrm{d}\mu(y) \right\vert \; \mathrm{d}x
    \\ & \leq \fint_{B_r(x_0)}  \int_\Omega |K(x,y)- K(x_0,y)| \; \mathrm{d}\mu(y)  \; \mathrm{d}x \\ & = \int_\Omega \fint_{B_r(x_0)} |K(x,y)- K(x_0,y)| \; \mathrm{d}x \; \mathrm{d}\mu(y), \label{eq:intfintbrx0}
\end{align}
where we used Tonelli's theorem in the last step. Due to the fact that $K$ is locally bounded in $\Omega\times \Omega$ (say, $|K| \leq C$ on $\overline{B_{r_0}(x_0)} \times \mathrm{spt}(\mu)$ for some $r_0> 0$ suitably small) we find that 
\begin{equation}\label{eq:KKL1bound}
    \fint_{B_r(x_0)} |K(x,y)- K(x_0,y)| \; \mathrm{d}x \leq 2C \qquad \textrm{for all } r > 0 \; \textrm{sufficiently small}.
\end{equation}
Notice that for any $y \in \Omega \setminus \{x_0 \}$ the map $x \mapsto K(x,y)- K(x_0,y)$ is smooth in a neighborhood of $x_0$. Therefore,
\begin{equation}
   \lim_{r \rightarrow 0} \fint_{B_r(x_0)} |K(x,y)- K(x_0,y)| \; \mathrm{d}x  = 0 \qquad \textrm{for all } y \in \Omega \setminus \{x_0 \}.
\end{equation}
Since $x_0$ is not an atom of $\mu$ (by (i)) we infer 
\begin{equation}
    \lim_{r \rightarrow 0} \fint_{B_r(x_0)} |K(x,y)- K(x_0,y)| \; \mathrm{d}x  = 0 \qquad \textrm{for $\mu$-a.e. $y \in \Omega$.}
\end{equation}
This and \eqref{eq:KKL1bound} allow us to apply the dominated convergence theorem in \eqref{eq:intfintbrx0} and infer
\begin{equation}
    \lim_{r \rightarrow 0 }  \fint_{B_r(x_0)} \left\vert \int_\Omega K(x,y) \; \mathrm{d}\mu(y) - \int_\Omega K(x_0,y) \; \mathrm{d}\mu(y) \right\vert \; \mathrm{d}x  = 0. 
\end{equation}
We obtain that $x_0$ is also a Lebesgue point of the second summand of \eqref{eq:AllesauchLebesgue}. It is shown that every summand on the right hand side of \eqref{eq:AllesauchLebesgue} has a Lebesgue point at $x_0$. We infer that $x_0$ is a Lebesgue point of $D^2u$ and hence the claim. 
\end{proof}

%\subsection{Homogeoneous Blow-ups at good points}

\begin{lemma}[Blow-up Analysis]\label{lem:BlowUp}
    Let $u \in \mathcal{A}(u_0)$ be a minimizer and $x_0$ be a good point, i.e. $u(x_0)=0, \nabla u (x_0) = 0$ and  $(-v)^*(x_0)< \infty$. Let $B := \lim_{r \rightarrow 0} \fint_{B_r(x_0)} D^2u(x) \; \mathrm{d}x$ (which exists due to Lemma \ref{eq:GoodpointLebesguepoint}). Then  
    \begin{itemize}
        \item[$\mathrm{(i)}$] for all $z \in \mathbb{R}^2$ one has  $
        \lim_{r \rightarrow 0} \frac{u(x_0 + r z)}{r^2} = \tfrac{1}{2} z^T B z. $ 
        The convergence is actually uniform on compact subsets of $\mathbb{R}^2$ with respect to $z$. 
        \item[$\mathrm{(ii)}$]
    Either $B= 0$ or $B$ is positive definite. 
    \end{itemize}
\end{lemma}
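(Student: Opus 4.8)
The plan is to obtain (i) directly from the Aleksandrov-type expansion in Lemma \ref{lem:alekasndrov}, then derive (ii) by testing the minimality of $u$ against the explicit $2$-homogeneous blow-up profile $q(z) := \tfrac12 z^T B z$.

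First I would prove (i). Fix a convex subdomain $\Omega' \subset\subset \Omega$ containing $x_0$. By Lemma \ref{lem:517}, $x \mapsto u(x) + \tfrac12 M|x-x_0|^2$ is convex on $\Omega'$ for some $M>0$, and by Lemma \ref{lem:5.13B} we have $u \in W^{2,2}(\Omega')\cap C^1(\overline{\Omega'})$. Since $x_0$ is a good point, Lemma \ref{eq:GoodpointLebesguepoint}(ii) tells us $x_0$ is a Lebesgue point of $D^2 u$, so $(D^2 u)^*(x_0) = B$ exists. Lemma \ref{lem:alekasndrov} then gives, using $u(x_0)=0$, $\nabla u(x_0)=0$,
\begin{equation}
    u(x) - \tfrac12 (x-x_0)^T B\, (x-x_0) = o(|x-x_0|^2) \qquad (x \to x_0).
\end{equation}
Substituting $x = x_0 + rz$ yields $u(x_0+rz)/r^2 - \tfrac12 z^T B z = o(|z|^2)$ as $r\to 0$ for each fixed $z$; the quantitative form of the $o$-term (it is $|x-x_0|^2$ times a modulus of continuity independent of direction) gives uniformity on compact subsets of $\mathbb{R}^2$. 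Convexity of the rescaled functions $u_r(z):=u(x_0+rz)/r^2$ (they satisfy $D^2 u_r(z) = D^2 u(x_0+rz)$, bounded below by $-M I_2$) also helps promote pointwise convergence to locally uniform convergence, but the direction-uniform Aleksandrov estimate already suffices.

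Next, (ii). Since $v = Lu \leq 0$ a.e. by Lemma \ref{lem:5.14} and $(-v)^*(x_0)<\infty$, taking $r\to 0$ averages in Lemma \ref{lem:5.16} shows $B = -\tfrac12 (-v)^*(x_0)^{-1}\!\cdot$\,---\,more precisely, $B = \tfrac12\, v^*(x_0)\, A(x_0)^{-1}\cdot(-1) + (\text{symmetric limit})$; the key structural point is that $B + \tfrac12 v^*(x_0) A(x_0)^{-1} = \big(\int K(x_0,y)d\mu(y) + H(x_0)\big)$ is symmetric and that $-\tfrac12 v^*(x_0) A(x_0)^{-1}$ is positive semidefinite. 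I would instead argue more robustly: $B$ is a symmetric $2\times 2$ matrix, so either $B=0$, or $B$ has a nonzero eigenvalue. Suppose for contradiction $B$ has eigenvalues of opposite sign, or one zero and one negative eigenvalue; in either case there is a unit vector $e$ with $e^T B e < 0$. Then by (i), $u(x_0 + re) = \tfrac12 r^2 e^T B e + o(r^2) < 0$ for small $r$, so $u$ takes negative values near $x_0$ along the line $x_0 + \mathbb{R}e$, i.e. $x_0 \in \partial\{u<0\}$. On the other hand, the blow-up profile $q(z)=\tfrac12 z^T B z$ has $|\{q<0\}|$ a full-measure "double cone" (two opposite open sectors of positive opening) when $B$ is indefinite, or a half-plane when $B$ has signature $(0,-)$; by the locally uniform convergence in (i), for small $r$ the set $\{u<0\}\cap B_r(x_0)$ occupies a definite fraction $\kappa|B_r(x_0)|$ with $\kappa>0$. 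I would then run a competitor argument: perturb $u$ near $x_0$ to push it up off the negative set, decreasing the measure term $|\{u>0\}|$\,---\,wait, $\{u<0\}$ does not contribute to $|\{u>0\}|$. The correct use of minimality is the opposite direction: a $2$-homogeneous profile that is strictly negative on a set of positive density forces, via the energy comparison in Lemma \ref{lem:5.15} together with $|\{0<u<\varepsilon\}|/\varepsilon$ bounded, a contradiction with the bending-energy cost of the blow-up. Concretely, I would compare $u$ with $u_\varepsilon := u + \varepsilon\varphi$ for $\varphi\in C_0^\infty$ a suitable bump at $x_0$ with $\varphi\geq 0$: this can only increase $|\{u>0\}|$, but by the semiconvexity and the explicit negative profile the bending-energy gain $-2\varepsilon\int Lu\, L\varphi$ is of a strictly larger order than the measure loss, contradicting minimality\,---\,the scaling is exactly the one exploited in \cite[Section 4]{MuellerAMPA}. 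I expect the main obstacle to be precisely this last step: making the competitor construction and the two-sided measure estimate rigorous so as to rule out an indefinite (or negative semidefinite, $B\neq 0$) Hessian, since it requires combining the blow-up convergence, the density of $\{u<0\}$, the bound from Lemma \ref{lem:5.15}, and a carefully scaled perturbation; the "either $B=0$ or $B$ positive definite" dichotomy is then the statement that the only admissible non-degenerate second-order behavior is the convex one.
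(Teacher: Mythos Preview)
Your argument for (i) is correct and matches the paper's proof essentially verbatim.

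Your argument for (ii) has a genuine gap. You correctly sense that Lemma \ref{lem:5.15} is the key input, but you never identify the mechanism by which it constrains $B$, and the competitor $u+\varepsilon\varphi$ you propose does not work: for $\varphi\geq 0$ the variational inequality in Lemma \ref{lem5:11} gives $\int Lu\,L\varphi\leq 0$, so there is no ``bending-energy gain'' to trade against a measure loss, and in any case pushing $u$ up only enlarges $\{u>0\}$. Focusing on the density of $\{u<0\}$ is a red herring --- that set does not enter the energy.

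The paper's route is the following direct computation. From (i) and a rescaling one obtains, for every $\tau>0$,
\[
\bigl|\{z\in B_\tau(0): \tfrac14 < \tfrac12 z^TBz < \tfrac34\}\bigr| \;\leq\; \limsup_{\varepsilon\to 0}\frac{|\{0<u<\varepsilon\}|}{\varepsilon} \;=:\; C < \infty,
\]
and letting $\tau\to\infty$ gives $|\{z\in\mathbb{R}^2:\tfrac12 < z^TBz < \tfrac32\}|\leq C$. Diagonalizing $B$ with eigenvalues $\lambda_1,\lambda_2$, one then checks case by case: if $\lambda_1<0<\lambda_2$ the set is a hyperbolic strip of infinite area; if $\lambda_1=0\neq\lambda_2$ it is an infinite parallel strip; the case $\lambda_1,\lambda_2\leq 0$ (not both zero) is ruled out separately by $Lu\leq 0$, which after averaging gives $\mathrm{tr}(A(x_0)B)\geq 0$ and hence $\theta_1\lambda_1+\theta_2\lambda_2\geq 0$ for some $\theta_1,\theta_2>0$. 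Only $B=0$ or $\lambda_1,\lambda_2>0$ survive. Your proposal contains none of these three ingredients (the scaling that converts Lemma \ref{lem:5.15} into a bound on a sublevel set of the quadratic form, the explicit area computations, and the use of $Lu\leq 0$ to exclude negative definite $B$), and the semidefinite-rank-one case is not addressed at all.
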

\begin{proof}
    Lemma \ref{eq:GoodpointLebesguepoint} and Lemma \ref{lem:5.13B} imply that $u \in W^{2,2}(\Omega) \cap C^1(\Omega)$ and $D^2u$ has a Lebesgue point at $x_0$. Moreover, Lemma \ref{lem:517} implies that there exists an open neighborhood $\Omega' \subset \subset \Omega$ of $x_0$ and $M> 0$ such that $x \mapsto u(x) + \tfrac{1}{2}M|x-x_0|^2$ is convex. Then, Lemma \ref{lem:alekasndrov} yields that 
    \begin{equation}
        u(x) - u(x_0) - \nabla u(x_0) \cdot (x-x_0) - \tfrac{1}{2}(x-x_0)^T B (x-x_0) = o(|x-x_0|^2). 
    \end{equation}
    Since $u(x_0)= 0, \nabla u(x_0) = 0$ we have 
   $
        u(x) = \tfrac{1}{2}(x-x_0)^T B (x-x_0) + o(|x-x_0|^2).
  $
    Looking at $x= x_0 + rz$ for some $z \in \mathbb{R}^2$ one readily infers 
    \begin{equation}
        u(x_0 + r z) = \tfrac{1}{2}r^2 z^T B z + o(r^2|z|^2). 
    \end{equation}
   Since $|z|^2$ is bounded on a compact subsets of $\mathbb{R}^2$, (i) follows. 
     Now we turn to (ii). By Lemma \ref{lem:5.15} there exists $C> 0$ such that for all $\varepsilon > 0$ small enough
     \begin{align}\label{eq:Cgroesserwegenlimsup}
         C & \geq   \frac{| \{ x \in \Omega : 0 < u(x) < \varepsilon \} | }{\varepsilon}.
     \end{align}
    We aim to estimate the term on the right hand side from below using the asymptotics shown in (i). To this end let $\tau > 0$ be arbitrary. For $\varepsilon > 0$ sufficiently small such that $B_{\tau \sqrt{\varepsilon}}(x_0) \subset \Omega$ we may compute 
     \begin{align}
         & | \{ x \in \Omega : 0 < u(x) < \varepsilon \} |  \geq | \{ x \in B_{\tau \sqrt{\varepsilon} }(x_0) : 0 < u(x) < \varepsilon \} | 
         \\ & = |x_0 + \sqrt{\varepsilon}\{z \in B_{\tau  }(0) : 0 < u(x_0 +\sqrt{\varepsilon}z ) < \varepsilon \}| 
          = \varepsilon |\{z \in B_{\tau  }(0) : 0 < u(x_0 +\sqrt{\varepsilon}z ) < \varepsilon \}| 
         \\ & = \varepsilon \left\vert \left\lbrace z \in B_{\tau  }(0) : 0 < \tfrac{u(x_0 +\sqrt{\varepsilon}z )}{\varepsilon} < 1 \right\rbrace\right\vert.
     \end{align}
     This and \eqref{eq:Cgroesserwegenlimsup} yield that for $\varepsilon > 0$ small enough
     \begin{equation}\label{eq.Cgroessergleich}
         C \geq \left\vert \left\lbrace z \in B_{\tau  }(0) : 0 < \frac{u(x_0 +\sqrt{\varepsilon}z )}{\varepsilon} < 1 \right\rbrace\right\vert.
     \end{equation}
    Due to the fact that $\frac{u(x_0 +\sqrt{\varepsilon}z )}{\varepsilon} \rightarrow \frac{1}{2} z^T B z $ uniformly in $z \in \overline{B_{\tau}(0)}$ we obtain that for $\varepsilon > 0$ small enough 
     \begin{equation}
        \left\vert  \left\lbrace z \in B_{\tau  }(0) : 0 < \tfrac{u(x_0 +\sqrt{\varepsilon}z )}{\varepsilon} < 1 \right\rbrace\right\vert  \geq  \left\vert  \left\lbrace z \in B_{\tau  }(0) : \tfrac{1}{4} < \tfrac{1}{2}z^T B z < \tfrac{3}{4} \right\rbrace\right\vert. 
     \end{equation}
     This and \eqref{eq.Cgroessergleich} yield that for $\tau > 0$ arbitrary
    $
         C \geq \left\vert  \left\lbrace z \in B_{\tau  }(0) : \tfrac{1}{4} < \tfrac{1}{2}z^T B z < \tfrac{3}{4} \right\rbrace\right\vert.
     $
     Since $C$ is independent of $\tau$ we obtain
     %with the aid of the sigma-continuity of the Lebesgue measure from below
     \begin{equation}
         C \geq \lim_{n \rightarrow \infty}\left\vert \left\lbrace z \in B_{n }(0) : \tfrac{1}{4} < \tfrac{1}{2}z^T B z < \tfrac{3}{4} \right\rbrace\right\vert = \left\vert \left\lbrace  z \in \mathbb{R}^2 : \tfrac{1}{4} < \tfrac{1}{2}z^T B z < \tfrac{3}{4} \right\rbrace\right\vert. \label{eq:R^2QuadrikkleinerC}
     \end{equation}
     It remains to show that $B= 0$ or $B$ is positive definite. First note that $B$ is by definition symmetric. Therefore, there exists an orthogonal matrix $S \in \mathbb{R}^{2\times 2}$  and eigenvalues $\lambda_1,\lambda_2 \in \mathbb{R}$ such that 
     \begin{equation}
         S^T B S = \begin{pmatrix}
             \lambda_1 & 0 \\ 0 & \lambda_2
         \end{pmatrix} =: \mathrm{diag}(\lambda_1,\lambda_2).
     \end{equation}
     Recall that $B = 0$ iff $\lambda_1 = \lambda_2 = 0$ and $B$ is positive definite iff $\lambda_1, \lambda_2 > 0$.  Hence we investigate the properties of $\lambda_1, \lambda_2$.  Before we start doing so, note that the substitution $w = Sz$ on the right hand side of \eqref{eq:R^2QuadrikkleinerC} yields 
     \begin{align}
         C &  \geq \left\vert \left\lbrace  w \in \mathbb{R}^2 : \tfrac{1}{4} < \tfrac{1}{2}(\lambda_1 w_1^2 + \lambda_2 w_2^2 ) < \tfrac{3}{4} \right\rbrace\right\vert
           =  \left\vert \left\lbrace  w  \in \mathbb{R}^2 : \tfrac{1}{2} <\lambda_1 w_1^2 + \lambda_2 w_2^2  < \tfrac{3}{2} \right\rbrace\right\vert. \label{eq:Cgeqtoto}
     \end{align}
     \textbf{Intermediate claim 1.} \emph{One eigenvalue must be nonnegative and if there is a strictly negative eigenvalue, the other one must be strictly positive.} 
     To this end note that $Lu
     \leq 0$ almost everywhere, i.e.
     \begin{equation}
         0 \leq  \mathrm{div}(A(x) \nabla u(x)) = A(x): D^2u(x) + \mathbf{div}(A)(x) \cdot \nabla u(x) \qquad \textrm{for a.e. $x \in \Omega$}. 
     \end{equation}
     Since $x_0$ is a Lebesgue point of $D^2u, A , \mathbf{div}(A),\nabla u$ and furthermore $B = \lim_{r \rightarrow 0} \fint_{B_r(x_0)} D^2u(x) \; \mathrm{d}x$ one readily checks with the aid of Lemma \ref{lem:fgLeb} that 
     \begin{equation}
         0 \leq A(x_0) : B + \mathbf{div}(A)(x_0) \cdot \nabla u(x_0) = A(x_0): B = \mathrm{trace}(A(x_0)^T B) = \mathrm{trace}(A(x_0) B). \label{eq:trace1}
     \end{equation}
     Now compute with $e_1 = (1,0)^T, e_2 = (0,1)^T$
     \begin{align}
         \mathrm{trace}(A(x_0) B) & = \mathrm{trace}(A(x_0) S \mathrm{diag}(\lambda_1,\lambda_2) S^T) = \mathrm{trace} ((S^T A(x_0) S)  \mathrm{diag}(\lambda_1,\lambda_2) )
         \\ & = e_1^T (S^T A(x_0) S) \mathrm{diag}(\lambda_1,\lambda_2)  e_1 + e_2^T (S^T A(x_0) S) \mathrm{diag}(\lambda_1,\lambda_2)  e_2
         \\ & =  \lambda_1 e_1^T (S^T A(x_0) S)  e_1 + \lambda_2 e_2^T (S^T A(x_0) S)  e_2 = \lambda_1 (Se_1)^T A(x_0) (S e_1) + \lambda_2 (S e_2)^T A(x_0) (S  e_2). \label{eq:trace2}
     \end{align}
     Define $\theta_i := (Se_i)^T A(x_0) (S e_i)$ for $i = 1,2.$ Then $\theta_i > 0$ for all $i$ as $A(x_0)$ is positive definite. Moreover, \eqref{eq:trace1} and \eqref{eq:trace2} imply
     $
         0 \leq \theta_1 \lambda_1 + \theta_2 \lambda_2.
     $
     This and $\theta_1,\theta_2 > 0$ imply the intermediate claim. \\
     \textbf{Intermediate claim 2.} $\lambda_1,\lambda_2 \geq 0$. Suppose for a contradiction that $\lambda_1 < 0 $. Intermediate claim 1 yields that then $\lambda_2 > 0$. Now \eqref{eq:Cgeqtoto} yields
     \begin{align}
         C  & \geq  \left\vert \left\lbrace  w\in \mathbb{R}^2 : \tfrac{1}{2} <\lambda_1 w_1^2 + \lambda_2 w_2^2  < \tfrac{3}{2} \right\rbrace\right\vert   =\left\vert \left\lbrace  w \in \mathbb{R}^2 : \tfrac{1}{2} + (-\lambda_1) w_1^2  <\lambda_2 w_2^2  < \tfrac{3}{2}  + (-\lambda_1) w_1^2 \right\rbrace\right\vert
        % \\ & =  \left\vert \left\lbrace  w  \in \mathbb{R}^2 : \frac{1}{\lambda_2} \left( \frac{1}{2} + (-\lambda_1) w_1^2 \right)   < w_2^2  < \frac{1}{\lambda_2} \left(\frac{3}{2}  + (-\lambda_1) w_1^2 \right)  \right\rbrace\right\vert
         \\ & \geq  \left\vert \left\lbrace  w \in \mathbb{R}^2 : \sqrt{\tfrac{1}{\lambda_2} \left( \tfrac{1}{2} + (-\lambda_1) w_1^2 \right)}   < w_2  < \sqrt{\tfrac{1}{\lambda_2} \left(\tfrac{3}{2}  + (-\lambda_1) w_1^2 \right)}  \right\rbrace\right\vert. \label{eq.hyperbolemeasklglc}
     \end{align}
     Using Tonelli's Theorem the above Lebesgue measure can be computed with the following expression 
     \begin{align}
         \int_{\mathbb{R}} \left(\sqrt{\tfrac{1}{\lambda_2} \left(\tfrac{3}{2}  + (-\lambda_1) w_1^2 \right)} - \sqrt{\tfrac{1}{\lambda_2} \left(\tfrac{1}{2}  + (-\lambda_1) w_1^2 \right)} \right) \; \mathrm{d}w_1
         & = \frac{1}{\sqrt{\lambda_2}} \int_\mathbb{R} \frac{1}{\sqrt{ \tfrac{3}{2}  + (-\lambda_1) w_1^2 } + \sqrt{ \tfrac{1}{2}  + (-\lambda_1) w_1^2 }} \; \mathrm{d}
w_1         \\ & \geq \frac{1}{2\sqrt{\lambda_2}} \int_\mathbb{R}\frac{1}{\sqrt{ \tfrac{3}{2}  + (-\lambda_1) w_1^2 } } \; \mathrm{d}w_1 = \infty,
     \end{align}
     leading to the contradiction $C \geq \infty$ in \eqref{eq.hyperbolemeasklglc}. Intermediate claim 2 is shown. \\
     \textbf{Intermediate claim 3.} If $\lambda_1 = 0$ then $\lambda_2 = 0$. Indeed, suppose for a contradiction that $\lambda_1 = 0$ and $\lambda_2 \neq 0$. Due to intermediate claim $2$ one infers that $\lambda_2 > 0$. From  \eqref{eq:Cgeqtoto} we infer that 
     \begin{equation}
         C \geq  \left\vert \left\lbrace  w  \in \mathbb{R}^2 : \tfrac{1}{2} < \lambda_2 w_2^2  < \tfrac{3}{2} \right\rbrace\right\vert  \geq \left\vert \mathbb{R} \times \left(\sqrt{\tfrac{1}{2\lambda_2}}, \sqrt{\tfrac{3}{2\lambda_2}}\right) \right\vert = \infty.
     \end{equation}
     Intermediate claim 3 is shown. Intermediate claim 2 and 3 lead to the conclusion that either $\lambda_1= \lambda_2 = 0$ (i.e. $B=0$) or $\lambda_1, \lambda_2 > 0$ (i.e. $B$ positive definite). 
\end{proof}

\subsubsection{Analysis of the remaining blow-up profiles}

The argument in Lemma \ref{lem:BlowUp} exposes two possible blow-up profiles for $u$ around a good point of the singular nodal set in the sense of Definition \ref{def:goodptbadpt}. The first one is a strictly convex quadratic function $z \mapsto z^T B z$ for some positive definite matrix $B$ and the second one is a flat blow-up $z \mapsto 0.$ We will rule out both blow-up profiles in the following. 

\begin{lemma}\label{lem:goodruleout1}
   Let $u,x_0,B$ be as in Lemma  \ref{lem:BlowUp}. Then $B$ is not positive definite. 
\end{lemma}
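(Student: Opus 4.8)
The plan is to argue by contradiction: assume $B$ is positive definite, with smallest eigenvalue $\lambda>0$. By the blow-up in Lemma \ref{lem:BlowUp}(i), $u(x_0+rz)/r^2 \to \tfrac12 z^TBz$ uniformly on compact sets, so near $x_0$ and at scale $r$ the minimizer looks like the \emph{strictly convex} quadratic $\tfrac12 z^TBz$; in particular a (punctured) neighbourhood of $x_0$ lies in $\{u>0\}$. The idea is then to construct a competitor that pushes $u$ slightly below zero on a positive-measure portion of a small ball around $x_0$, thereby \emph{shrinking} $|\{u>0\}|$ at negligible cost in the bending term, contradicting minimality.

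Concretely, I would fix once and for all a bump $\varphi_0\in C_0^\infty(B_1(0))$ with $\varphi_0\ge 0$ and $\varphi_0\equiv 1$ on $B_{1/2}(0)$, rescale $\varphi_r(x):=\varphi_0\big(\tfrac{x-x_0}{r}\big)$, and test with the competitor $w:=u-\delta r^2\varphi_r\in\mathcal A(u_0)$ for small parameters $\delta,r>0$. Since $\varphi_r\ge 0$ one has $w\le u$, hence $\{w>0\}\subseteq\{u>0\}$. For the bending part, $\int_\Omega (Lw)^2-(Lu)^2\,\mathrm dx = -2\delta r^2\int_\Omega Lu\,L\varphi_r\,\mathrm dx+\delta^2 r^4\int_\Omega (L\varphi_r)^2\,\mathrm dx$, and by Lemma \ref{lem:4.3} (equation \eqref{eq:LULphimeasure}) the first integral equals $2\delta r^2\int_\Omega\varphi_r\,\mathrm d\mu$. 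Because $A\in C^\infty(\overline\Omega)$, the scaling $|L\varphi_r|\lesssim r^{-2}$ gives $\int_\Omega(L\varphi_r)^2\,\mathrm dx\lesssim r^{-2}$, while — crucially — since $x_0$ is a good point it is \emph{not an atom} of $\mu$ (Lemma \ref{eq:GoodpointLebesguepoint}(i)), so $\int_\Omega\varphi_r\,\mathrm d\mu\le\mu(\overline{B_r(x_0)})=:\omega(r)\to 0$ as $r\to 0$ by finiteness of $\mu$. Altogether the bending cost is at most $2\delta r^2\omega(r)+C\delta^2 r^2$ for a constant $C$ depending only on $\varphi_0$ and $\lVert A\rVert_{C^1(\overline\Omega)}$.

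For the measure gain I would use Lemma \ref{lem:BlowUp}(i) on the annular set $\mathcal Z:=\{z\in\mathbb R^2:\tfrac\delta4<\tfrac12 z^TBz<\tfrac\delta2\}$. Since $z^TBz\ge\lambda|z|^2$, for $\delta<\lambda/4$ one has $\mathcal Z\subset B_{1/2}(0)$, and the scaling $|\{z^TBz<t\}|=t\,|\{z^TBz<1\}|$ gives $|\mathcal Z|=c_1\delta$ with $c_1>0$. By uniform convergence, for $r$ small and $z\in\mathcal Z$ one gets both the lower bound $u(x_0+rz)>r^2(\tfrac\delta4-\tfrac\delta8)>0$ and the upper bound $u(x_0+rz)<r^2(\tfrac\delta2+\tfrac\delta8)<\delta r^2=\delta r^2\varphi_r(x_0+rz)$ (using $\varphi_r\equiv 1$ on $B_{1/2}(0)$), i.e. $w(x_0+rz)<0$. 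Hence $x_0+r\mathcal Z\subseteq\{u>0,\,w\le 0\}$, so $|\{w>0\}|-|\{u>0\}|\le -r^2|\mathcal Z|=-c_1\delta r^2$. Combining the two estimates, $\mathcal E(w)-\mathcal E(u)\le\delta r^2\big(2\omega(r)+C\delta-c_1\big)$; choosing first $\delta$ small so that $C\delta<c_1/4$, then $r$ small so that $2\omega(r)<c_1/8$, makes the right-hand side strictly negative, contradicting the minimality of $u$.

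The heart of the argument is the competition between the measure gain, which is \emph{linear} in $\delta$, and the bending cost, which is \emph{quadratic} in $\delta$ together with the cross term of size $\delta\,\omega(r)$; the latter is precisely why one localises the bump (so that $\int\varphi_r\,\mathrm d\mu$ is small) and uses that $x_0$ is not an atom of $\mu$. The point requiring the most care is that the blow-up comparison must be made \emph{two-sided} on the correct scale: one needs the lower bound $u(x_0+rz)>0$ to ensure the gained set really lies in $\{u>0\}$ (not merely in $\{w\le 0\}$), which is what forces the use of the annulus $\mathcal Z$ rather than a full ellipsoid around $z=0$. Everything else is routine rescaling of the bump and elementary ellipsoid-volume bookkeeping.
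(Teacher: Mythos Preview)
Your argument is correct and follows the same strategy as the paper: push $u$ below zero near $x_0$ with a nonnegative bump, so that the measure gain is linear in the amplitude while the bending cost is quadratic plus a cross term of size (amplitude)$\times\mu(B_r(x_0))$, the latter being killed by Lemma~\ref{eq:GoodpointLebesguepoint}(i). The paper's bookkeeping is slightly more streamlined in two places: (a) rather than coupling the amplitude to $r$ via $\delta r^2$, it keeps a fixed-support bump $\psi$ on $B_r(x_0)$ with free amplitude $\varepsilon$, divides by $\varepsilon$ and sends $\varepsilon\to 0$ \emph{first} to remove the quadratic term, and only then sends $r\to 0$; (b) instead of your annulus $\mathcal{Z}$, the paper first records the qualitative consequence $0<u(x)<\beta|x-x_0|^2$ on a full punctured ball $B_{r_0}(x_0)\setminus\{x_0\}$ (immediate from the Taylor expansion and positive definiteness of $B$), which gives the measure gain $|\{0<u<\varepsilon\}\cap B_{r/2}(x_0)|\ge \pi\varepsilon/\beta$ directly without the two-sided blow-up comparison. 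Your route works, but the decoupled limit avoids the need to track how the smallness of $r$ depends on $\delta$ through the uniform convergence.
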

\begin{proof}
    Suppose for a contradiction that $B$ is positive definite. 
    %$u(x_0)= \nabla u(x_0) = 0$ and 
   Recall that Lemma \ref{lem:517} yields an open convex neighborhood $\Omega' \subset \subset \Omega$ of $x_0$ and $M> 0$ such that $x \mapsto u(x) + \tfrac{1}{2}M|x-x_0|^2$ is convex. Thereupon, Lemma \ref{lem:alekasndrov} and the fact that $u(x_0)= 0, \nabla u(x_0)=0$ yield the following Taylor-type expansion in a neighborhood of $x_0$
    \begin{equation}
        u(x) = \tfrac{1}{2} (x-x_0)^T B (x-x_0) + o(|x-x_0|^2).
    \end{equation}
    From this and the fact that $B$ is positive definite we conclude that $x_0$ is a strict local minimum of $u$ and $u$ grows quadratically away from $x_0$. In particular, there exists $r_0> 0$ and $\beta > 0$ such that
    \begin{equation}\label{eq:condugr0anyway}
        0 < u(x) < \beta |x-x_0|^2 \qquad \textrm{for all $x \in B_{r_0}(x_0) \setminus \{ x_0 \}$.}
    \end{equation}
    Fix $r \in (0,r_0)$. Choose some $\psi \in C_0^\infty(B_r(x_0))$ such that $-1 \leq \psi \leq 0$ and $\psi \equiv -1$ on $B_{\frac{r}{2}}(x_0)$. Since $u \in \mathcal{A}(u_0)$ is a minimizer we find that for each $\varepsilon > 0$ one has $\mathcal{E}(u) \leq \mathcal{E}(u+ \varepsilon \psi)$ and thus we obtain%, also using \eqref{eq:condugr0anyway},
    \begin{align}
        \mathcal{E}(u) & \leq \mathcal{E}(u+ \varepsilon \psi) = \mathcal{E}(u) + 2 \varepsilon \int_\Omega Lu L \psi \; \mathrm{d}x + \varepsilon^2 \int_\Omega [L \psi]^2 \; \mathrm{d}x + |\{u + \varepsilon \psi > 0 \}| - |\{u>0 \}| 
        \\ & = \mathcal{E}(u) + 2 \varepsilon \int_\Omega (-\psi) \; \mathrm{d}\mu + \varepsilon^2 \int_\Omega [L \psi]^2 \; \mathrm{d}x - |\{x \in B_r(x_0): 0 \leq u(x)< \varepsilon(-\psi(x)) \}|.  \label{eq:Suestimate}
    \end{align}
    %Here we have used \eqref{eq:condugr0anyway} in the Lebesgexpression. 
    Since  $\chi_{B_{\frac{r}{2}}(x_0)} \leq (-\psi) \leq \chi_{B_{r}(x_0)} $ the second summand can be estimated by
    \begin{equation}\label{eq:TodomuB_r}
        2 \varepsilon \int_\Omega (-\psi) \; \mathrm{d}\mu \leq 2 \varepsilon \mu(B_r(x_0)).
    \end{equation}
    Moreover, 
    \begin{align}
        |\{x \in B_r(x_0): 0< u(x)< \varepsilon(-\psi(x)) \}| &\geq  |\{x \in B_{\frac{r}{2}}(x_0): 0< u(x)< \varepsilon(-\psi(x)) \}|  \label{eq:lowerestmeasterm}, \\ & = |\{x \in B_{\frac{r}{2}}(x_0): 0< u(x)< \varepsilon \}|
        = |\{x \in B_{\frac{r}{2}}(x_0): u(x)< \varepsilon \}|, 
    \end{align}
    where we used in the last step that by \eqref{eq:condugr0anyway} $u>0$ on $B_r(x_0) \setminus \{ x_0 \}$ (and $\{x_0 \}$ is a Lebesgue null set). Using the upper estimate of \eqref{eq:condugr0anyway} in \eqref{eq:lowerestmeasterm} we find
    \begin{align}
        |\{x \in B_r(x_0): 0< u(x)< \varepsilon(-\psi(x)) \}| & \geq |\{x \in B_{\frac{r}{2}}(x_0):  \beta |x-x_0|^2 < \varepsilon \}| \\ &  = |B_{\frac{r}{2}}(x_0) \cap B_{\sqrt{\frac{\varepsilon}{\beta}}}(x_0)| = \pi \min\{ (\tfrac{r}{2})^2, \tfrac{\varepsilon}{\beta} \}. 
    \end{align}
    Plugging this and \eqref{eq:TodomuB_r} in \eqref{eq:Suestimate} we obtain 
    \begin{equation}
        \mathcal{E}(u) \leq \mathcal{E}(u) + 2\varepsilon \mu(B_r(x_0))+ \varepsilon^2 \int_\Omega [L\psi]^2 \; \mathrm{d}x - \pi \min\{ (\tfrac{r}{2})^2, \tfrac{\varepsilon}{\beta} \} 
    \end{equation}
    Choosing $\varepsilon> 0$ so small that $\tfrac{\varepsilon}{\beta} <(\tfrac{r}{2})^2$ we can rearrange and find
    \begin{equation}
        \frac{\pi\varepsilon}{\beta} - 2\varepsilon \mu(B_r(x_0)) \leq \varepsilon^2 \int_{\Omega} [L\psi]^2 \; \mathrm{d}x.
    \end{equation}
    Dividing by $\varepsilon$  and letting $\varepsilon \rightarrow 0$ we find
   $
        \frac{\pi}{\beta} - 2 \mu(B_r(x_0)) \leq 0.
$
    Letting now  $r \downarrow 0$  we obtain %and using the Sigma-continuity of $\mu$ from below we obtain 
$
        \frac{\pi}{\beta} - 2 \mu(\{x_0 \}) \leq 0.
$
    Since by Lemma \ref{eq:GoodpointLebesguepoint} $x_0$ is not an atom of $\mu$  we have $\mu(\{x_0 \}) = 0$ and hence the inequality above produces a contradiction. 
\end{proof}

The remaining possibility of a flat blow-up profile can be excluded by means of a maximum principle which applies for $v = Lu$. 
\begin{lemma}\label{lem:goodruleout2}
     Let $u,x_0,B$ be as in Lemma \ref{lem:BlowUp}. Then $B \neq 0$. 
\end{lemma}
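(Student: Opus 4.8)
The plan is to argue by contradiction: I will assume $B=0$ and contradict $u(x_0)=0$. Since, by Lemma~\ref{lem:BlowUp}(ii), $B$ is either positive definite or zero, and Lemma~\ref{lem:goodruleout1} has excluded the positive definite case, this also completes the analysis of good points of the singular nodal set.

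The key step will be the pointwise identity $(-v)^*(x_0)=\mathrm{trace}(A(x_0)B)$. Since $x_0$ is a good point, $(-v)^*(x_0)<\infty$, so by Lemma~\ref{eq:GoodpointLebesguepoint} and Lemma~\ref{lem:lebpt}(iii) the point $x_0$ is simultaneously a Lebesgue point of $D^2u$, with $\fint_{B_r(x_0)}D^2u\,\mathrm{d}x\to B$, and of $v=Lu$, with $\fint_{B_r(x_0)}(-v)\,\mathrm{d}x\to(-v)^*(x_0)$. By Lemma~\ref{lem:5.14} one has $-v=\mathrm{div}(A\nabla u)=A:D^2u+\mathbf{div}(A)\cdot\nabla u$ almost everywhere. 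Averaging this identity over $B_r(x_0)$ and letting $r\to0$ — precisely as in the proof of Lemma~\ref{lem:BlowUp} (Intermediate Claim~1), invoking Lemma~\ref{lem:fgLeb} for the products $a_{ij}\,\partial^2_{ij}u$, the smoothness of $\mathbf{div}(A)$, and $\nabla u(x_0)=0$ for the lower-order term — yields $(-v)^*(x_0)=A(x_0):B=\mathrm{trace}(A(x_0)B)$ (using that $A$ is symmetric).

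Assuming now $B=0$, this identity forces $(-v)^*(x_0)=0$. Recall that $-v\ge0$ almost everywhere (Lemma~\ref{lem:5.14}) and $L(-v)\ge0$ weakly (Lemma~\ref{lem:4.3}, cf.\ Definition~\ref{def:goodptbadpt}), so the strong maximum principle of Corollary~\ref{cor.strangmaxpr}, applied with $m=0$, gives $-v\equiv0$, i.e.\ $Lu=0$ almost everywhere in $\Omega$. Then $u$ is the weak solution of the Dirichlet problem $Lu=0$ in $\Omega$ with $u=u_0$ on $\partial\Omega$, and since $u_0>0$ on $\overline{\Omega}$ the maximum principle gives $u\ge\min_{\partial\Omega}u_0>0$ throughout $\Omega$, contradicting $u(x_0)=0$. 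Hence $B\neq0$.

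The only delicate point is the passage from the almost-everywhere identity $-v=A:D^2u+\mathbf{div}(A)\cdot\nabla u$ to the pointwise value $(-v)^*(x_0)=\mathrm{trace}(A(x_0)B)$, but this is the same Lebesgue-point bookkeeping already carried out in Lemma~\ref{lem:BlowUp}; the rest of the argument is a direct application of the strong maximum principle and the classical maximum principle collected in Section~\ref{sec:Prelim}.
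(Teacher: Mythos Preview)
Your proof is correct and follows essentially the same route as the paper's own argument: derive $(-v)^*(x_0)=A(x_0):B$ via the Lebesgue-point computation, conclude $(-v)^*(x_0)=0$ from $B=0$, apply the strong maximum principle (Corollary~\ref{cor.strangmaxpr}) to get $Lu\equiv 0$, and then contradict $u(x_0)=0$ using the classical maximum principle.
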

\begin{proof}
    Suppose for a contradiction that $B= 0$. Recalling the definition of $B$ in Lemma \ref{lem:BlowUp} we obtain that 
    \begin{equation}
        \lim_{r \rightarrow 0} \fint_{B_r(x_0)} D^2 u(x) \; \mathrm{d}x = 0. 
    \end{equation}
    Now define $v := Lu$ and compute for almost every $x \in \Omega$
    \begin{equation}
        -v(x) = \mathrm{div}(A(x) \nabla u(x)) = A(x) : D^2u(x) + \mathbf{div}(A)(x) \cdot \nabla u(x).
    \end{equation}
    Using that $A$ is smooth and $D^2u \in L^2(\Omega)$ has a Lebesgue point at $x_0$ (cf. Lemma \ref{eq:GoodpointLebesguepoint} (ii)) we obtain with Lemma \ref{lem:fgLeb} that 
    \begin{equation}
        (-v)^*(x_0) = \lim_{r \rightarrow 0} \fint_{B_r(x_0)} (-v)(x) \; \mathrm{d}x  =A(x_0) : 0 + \mathbf{div}(A)(x_0)  \cdot \nabla u(x_0)  =0.  
    \end{equation}
    In particular, $(-v)^*(x_0) = 0$. Notice that by Lemma \ref{lem:5.14} $(-v)^*(x) \geq 0$ for all $x \in \Omega.$ Therefore $(-v)^*$ attains a minimum at $x_0$. Moreover, notice that distibutionally there holds $L(-v) = -L^2u = \mu$, i.e. $L(-v) \geq 0$ weakly. Hence we can apply the strong maximum principle (Corollary \ref{cor.strangmaxpr}) to conclude that $(-v)^* \equiv 0$ in $\Omega$. By definition of $v$ we infer that that $Lu= 0$ a.e. in $\Omega$. Thus, $u \in W^{2,2}(\Omega)$ solves 
    \begin{equation}
         \begin{cases}
        Lu = 0 & \textrm{in $\Omega$} \\ u = u_0 & \textrm{on $\partial \Omega$} 
    \end{cases}. 
    \end{equation}
    The maximum principle for $L$ yields    
$  
        u(x) \geq \inf_{\overline{\Omega}} u_0 > 0 
$ for all $x \in \Omega$.
    A contradiction, as $u(x_0)=0.$
\end{proof}

\begin{cor}\label{eq:cornogoodpoints}
    The singular nodal set does not contain any good points in the sense of Definition \ref{def:goodptbadpt}. 
\end{cor}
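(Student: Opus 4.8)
The plan is to combine the three preceding lemmas about good points into a single clean contradiction argument. Suppose, for contradiction, that the singular nodal set contains a good point $x_0$ in the sense of Definition \ref{def:goodptbadpt}; that is, $u(x_0) = 0$, $\nabla u(x_0) = 0$, and $(-v)^*(x_0) < \infty$ where $v = Lu$. The first step is to invoke Lemma \ref{eq:GoodpointLebesguepoint}, which guarantees that $x_0$ is a Lebesgue point of $D^2 u$, so that the limit $B := \lim_{r \to 0} \fint_{B_r(x_0)} D^2 u(x) \, \mathrm{d}x$ is well-defined. This puts us in the setting of Lemma \ref{lem:BlowUp}.

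Next I would apply Lemma \ref{lem:BlowUp}(ii), which asserts the dichotomy: either $B = 0$ or $B$ is positive definite. Both alternatives have already been ruled out in the preceding two lemmas: Lemma \ref{lem:goodruleout1} shows that $B$ cannot be positive definite (using the strictly convex quadratic blow-up profile together with Lemma \ref{lem:5.15} and the fact that $x_0$ is not an atom of $\mu$), and Lemma \ref{lem:goodruleout2} shows that $B \neq 0$ (using that a flat blow-up forces $(-v)^*(x_0) = 0$, whence the strong maximum principle of Corollary \ref{cor.strangmaxpr} gives $Lu \equiv 0$, contradicting $u(x_0) = 0 < \inf_{\overline{\Omega}} u_0$). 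Since the dichotomy of Lemma \ref{lem:BlowUp}(ii) is exhaustive, no such $x_0$ can exist.

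There is no genuine obstacle remaining at this stage; the corollary is purely a bookkeeping consequence of Lemmas \ref{lem:BlowUp}, \ref{lem:goodruleout1}, and \ref{lem:goodruleout2}. The real work — producing the explicit $2$-homogeneous blow-up profile via Aleksandrov's theorem and the semiconvexity estimate, and extracting a quantitative contradiction from the $|\{0 < u < \varepsilon\}|/\varepsilon$ bound — has already been carried out in those lemmas, so the proof of the corollary is a one-line deduction.

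\begin{proof}
    Let $x_0 \in \Omega$ be a good point, i.e. $u(x_0) = 0$, $\nabla u(x_0) = 0$ and $(-v)^*(x_0) < \infty$. By Lemma \ref{eq:GoodpointLebesguepoint}, $x_0$ is a Lebesgue point of $D^2u$, so that $B := \lim_{r \to 0} \fint_{B_r(x_0)} D^2 u(x) \, \mathrm{d}x$ exists. Lemma \ref{lem:BlowUp}(ii) then yields that either $B = 0$ or $B$ is positive definite. The first case is excluded by Lemma \ref{lem:goodruleout2} and the second by Lemma \ref{lem:goodruleout1}. This contradiction shows that no good point exists.
\end{proof}
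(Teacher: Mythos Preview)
Your proof is correct and follows essentially the same approach as the paper: assume a good point exists, invoke the dichotomy from Lemma \ref{lem:BlowUp}(ii), and eliminate both alternatives via Lemmas \ref{lem:goodruleout1} and \ref{lem:goodruleout2}. Your explicit mention of Lemma \ref{eq:GoodpointLebesguepoint} to justify the existence of $B$ is a slight elaboration, but this is already built into the statement of Lemma \ref{lem:BlowUp}.
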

\begin{proof}
    Assume that $x_0 \in \Omega$ is a good point of the singular nodal set. Lemma \ref{lem:BlowUp} implies that for all $z \in \mathbb{R}^2$
    \begin{equation}
        \lim_{r \rightarrow 0} \frac{u(x_0 + rz)}{r^2} = \tfrac{1}{2}z^T B z,
    \end{equation}
    for some $B \in \mathbb{R}^{2\times 2}_{sym}$, which is either positive definite or $B=0$. The first case is ruled out by Lemma \ref{lem:goodruleout1} and the second case is ruled out by Lemma \ref{lem:goodruleout2}.
\end{proof}

%\subsection{Bad points}

\subsection{Bad points}

Next we will also rule out the existence of bad points in the singular nodal set, i.e. points $x_0$ where $u(x_0)= 0,\nabla u(x_0) = 0$ and $(-v)^*(x_0) = \infty$. To this end we first show a regularity result for isolated points on $\{ u \leq 0 \}$, which we will use later on. 

\begin{lemma}\label{lem:6.7}
    Let $u \in \mathcal{A}(u_0)$ be a minimizer and $x_0$ be an isolated point of $\{ u \leq 0 \}$. Then $u$ is smooth in a neighborhood of $x_0$. 
\end{lemma}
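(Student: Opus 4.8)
The plan is a purely variational comparison that exploits the fact that $u$ is already strictly positive on a punctured neighbourhood of $x_0$: any competitor supported near $x_0$ can only decrease the Lebesgue‑measure term, so $u$ must coincide locally with the solution of a clamped problem for $L^2$, which is smooth by interior regularity.

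Concretely, first I would fix $r_0\in(0,\mathrm{dist}(x_0,\partial\Omega))$ so small that $\{u\le 0\}\cap B_{r_0}(x_0)=\{x_0\}$; this is possible since $\{u\le 0\}$ is closed (by continuity of $u$, cf.\ Lemma \ref{lem:5.13B}) and $x_0$ is an isolated point of it. In particular $u>0$ on $B_{r_0}(x_0)\setminus\{x_0\}$, so $|\{u>0\}\cap B_\rho(x_0)|=|B_\rho(x_0)|$ for every $\rho\in(0,r_0)$. Fixing such a $\rho$, let $\tilde u\in W^{2,2}(\Omega)$ agree with $u$ on $\Omega\setminus B_\rho(x_0)$ and be, on $B_\rho(x_0)$, the unique minimiser of $f\mapsto\int_{B_\rho(x_0)}(Lf)^2\,\mathrm{d}x$ among all $f\in W^{2,2}(B_\rho(x_0))$ with $f=u$ and $\nabla f=\nabla u$ on $\partial B_\rho(x_0)$; this is a strictly convex, coercive clamped problem for $L^2$, of the same type as the one in the proof of Lemma \ref{eq:Lemma510} (cf.\ \eqref{eq:MinimiseLomega}), and its minimiser solves $L^2\tilde u=0$ in $B_\rho(x_0)$. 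Since $\tilde u$ matches $u$ together with its gradient across $\partial B_\rho(x_0)$, the glued function belongs to $W^{2,2}(\Omega)$ and agrees with $u$ near $\partial\Omega$, hence $\tilde u\in\mathcal A(u_0)$.

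Next I would compare energies. Since $u=\tilde u$ outside $B_\rho(x_0)$, minimality $\mathcal E(u)\le\mathcal E(\tilde u)$ reduces to an inequality on $B_\rho(x_0)$, and using $|\{u>0\}\cap B_\rho(x_0)|=|B_\rho(x_0)|\ge|\{\tilde u>0\}\cap B_\rho(x_0)|$ one obtains
\begin{equation}
  \int_{B_\rho(x_0)}(Lu)^2\,\mathrm{d}x\le\int_{B_\rho(x_0)}(L\tilde u)^2\,\mathrm{d}x.
\end{equation}
On the other hand $u|_{B_\rho(x_0)}$ is an admissible competitor for the clamped problem, so the opposite inequality also holds; hence there is equality, and strict convexity (uniqueness of the minimiser of the clamped problem) forces $u=\tilde u$ on $B_\rho(x_0)$. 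Therefore $L^2u=0$ in $B_\rho(x_0)$ in the distributional sense, and interior elliptic regularity (\cite[Theorem 6.33]{Folland}) gives $u\in C^\infty(B_\rho(x_0))$, which is the claim.

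The only genuinely nontrivial point is the sign bookkeeping $|\{\tilde u>0\}\cap B_\rho(x_0)|\le|B_\rho(x_0)|=|\{u>0\}\cap B_\rho(x_0)|$: it is the isolated‑point hypothesis that makes $u$ already maximally positive on $B_\rho(x_0)$ and hence renders the measure term harmless in the comparison. The remaining ingredients — well‑posedness and strict convexity of the clamped $L^2$‑problem, the $W^{2,2}$‑gluing across $\partial B_\rho(x_0)$, and smoothness of distributional solutions of $L^2u=0$ — are standard and have already been used above. Note that the argument invokes neither Theorem \ref{thm:1.1} nor the good/bad point dichotomy, so it may be used freely when excluding bad points.
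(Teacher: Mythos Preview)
Your proposal is correct and follows essentially the same argument as the paper: replace $u$ on a small ball by the minimiser of the clamped $\int(Lf)^2$ problem, use $|\{u>0\}\cap B_\rho(x_0)|=|B_\rho(x_0)|$ to control the measure term, and conclude from equality of energies that $u$ itself solves $L^2u=0$ distributionally on $B_\rho(x_0)$, hence is smooth there. The only cosmetic difference is that you invoke strict convexity/uniqueness to identify $u$ with the clamped minimiser, whereas the paper directly reads off the Euler--Lagrange equation from the fact that $u|_{B_\rho(x_0)}$ is a minimiser of the clamped problem; both routes are valid.
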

\begin{proof}
Suppose that $x_0$ is an isolated point of $\{u \leq 0 \}$, i.e. $u(x_0) \leq 0$ and there exists some $r > 0$ such that $u(x)> 0$ for all $x \in B_r(x_0) \setminus \{ x_0 \}$. Notice that by possibly shrinking $r$ we can ensure that $B_{2r}(x_0) \subset \Omega$. Observe that by continuity one has $u(x_0) = 0$. Notice that then 
\begin{equation}\label{eq:ugr0fullmeas}
    |\{ u > 0 \} \cap B_r(x_0) | = |B_r(x_0) \setminus \{ x_0 \} |  = |B_r(x_0)| - |\{ x_0 \}| = |B_r(x_0)|.
\end{equation}
Next let $w \in W^{2,2}(B_r(x_0))$ be such that 
\begin{equation}\label{eq:choiceofw}
    \int_{B_r(x_0)} (Lw)^2 \; \mathrm{d}x = \inf \left\lbrace  \int_{B_r(x_0)} (Lv)^2 \; \mathrm{d}x \;  \Big\vert \;  v \in W^{2,2}(B_r(x_0)) \; \textrm{s.t.} \; v_{\mid_{\partial B_r(x_0)}} = u_{\mid_{\partial B_r(x_0)}} , \nabla v_{\mid_{\partial B_r(x_0)}} = \nabla u_{\mid_{\partial B_r(x_0)}} \right\rbrace .
\end{equation}
Now define $\tilde{u} : \Omega \rightarrow \mathbb{R}$ via
\begin{equation}
    \tilde{u}(x) := \begin{cases}
        u(x) & x \in \Omega \setminus B_r(x_0) \\ w(x) & x \in B_r(x_0)
    \end{cases}.
\end{equation}
One readily checks that $\tilde{u} \in \mathcal{A}(u_0)$. Moreover, 
\begin{align}
0 & \leq    \mathcal{E}(\tilde{u})  - \mathcal{E}(u) = \int_{B_r(x_0)} (Lw)^2 \; \mathrm{d}x + |\{ w> 0 \} \cap B_r(x_0)| - \int_{B_r(x_0)} (Lu)^2 \; \mathrm{d}x - |\{ u> 0 \} \cap B_r(x_0)|. \label{eq:energycomp}
\end{align}
By \eqref{eq:choiceofw} one has 
\begin{equation}\label{eq:Lenergyineq}
    \int_{B_r(x_0)} (Lw)^2 \; \mathrm{d}x \leq \int_{B_r(x_0)} (Lu)^2 \; \mathrm{d}x. 
\end{equation}
Moreover, one concludes from \eqref{eq:ugr0fullmeas} that 
$
   | \{ w > 0 \} \cap B_r(x_0) |  \leq |B_r(x_0)| =   | \{ u > 0 \} \cap B_r(x_0) |.
$
Using this and \eqref{eq:Lenergyineq} in  \eqref{eq:energycomp} we find $0 \leq \mathcal{E}(\tilde{u}) - \mathcal{E}(u) \leq 0$.  This implies that all the inequalities we have used, in particular \eqref{eq:Lenergyineq}, must hold with equality. Thus,
\begin{equation}
    \int_{B_r(x_0)} (Lu)^2 \; \mathrm{d}x =  \int_{B_r(x_0)} (Lw)^2 \; \mathrm{d}x .
\end{equation}
We infer that $u\vert_{B_r(x_0)}$ is the minimizer of the variational problem in \eqref{eq:choiceofw}. One readily checks by standard Euler-Lagrange methods that minimizers of this problem satisfy
\begin{equation}
    \int_{B_r(x_0)} Lu L\varphi \; \mathrm{d}x = 0 \qquad \textrm{for all } \varphi \in C_0^\infty(B_r(x_0)).
\end{equation}
By \cite[Theorem 6.33]{Folland} we conclude $u \in C^\infty(B_r(x_0))$ and the claim follows. 
\end{proof}

%\subsection{Convexity around bad points}
The subsequent lemma shows that all bad points are isolated. 

\begin{lemma}\label{lem:Lebptmult}
   Let $u \in \mathcal{A}(u_0)$ be a minimizer and $x_0 \in \Omega$ be such that $u(x_0) = 0,\nabla u(x_0) = 0$ and $(-v)^*(x_0)= \infty$. Then $x_0$ is an isolated point of $\{ u \leq 0 \}.$
   %Then there exists some $r> 0$ such that $u$ is strictly convex on $B_r(x_0)$. 
    \end{lemma}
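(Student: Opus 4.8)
The plan is to show that the hypothesis $(-v)^*(x_0)=\infty$ forces $u$ to be \emph{uniformly} convex, with a strictly positive quadratic lower bound, on a small ball around $x_0$; this at once makes $x_0$ an isolated point of $\{u\le 0\}$.

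First I would extract a pointwise lower bound for $-v$ near $x_0$. Since $-v\ge 0$ almost everywhere (Lemma \ref{lem:5.14}) and $L(-v)=-L^2u=\mu\ge 0$ weakly (Lemma \ref{lem:4.3}), Lemma \ref{lem:lebpt}(i) applies to $-v$ and shows that $(-v)^*$ is lower semicontinuous. Because $(-v)^*(x_0)=\infty$, this gives $\liminf_{x\to x_0}(-v)^*(x)=\infty$, so for any prescribed $N>0$ there is $r=r(N)>0$ with $B_r(x_0)\subset\subset\Omega$ and $(-v)^*\ge N$ on all of $B_r(x_0)$. As $(-v)^*$ agrees with $-v$ almost everywhere (Lemma \ref{lem:meanvalueprop}(iii)), this yields $-v(x)\ge N$ for a.e.\ $x\in B_r(x_0)$.

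Next I would feed this into the representation of Lemma \ref{lem:5.16}: there are a locally bounded Borel measurable $K:\Omega\times\Omega\to\mathbb{R}^{2\times 2}_{sym}$ and $H\in C^\infty(\Omega;\mathbb{R}^{2\times 2}_{sym})$ with
\[
D^2u(x)=\tfrac12(-v(x))\,A(x)^{-1}+\int_\Omega K(x,y)\,\mathrm{d}\mu(y)+H(x)\qquad\text{for a.e.\ }x\in\Omega .
\]
Fixing $r_0>0$ with $\overline{B_{r_0}(x_0)}\subset\Omega$, the function $K$ is bounded on $\overline{B_{r_0}(x_0)}\times\mathrm{spt}(\mu)$ and $H$ is bounded on $\overline{B_{r_0}(x_0)}$, so, since $\mu$ is a finite measure, the last two terms are bounded in operator norm by some constant $C_3>0$ on $B_{r_0}(x_0)$; moreover $A^{-1}$ is uniformly elliptic (as used in the proof of Lemma \ref{lem:2.4}), say $A(x)^{-1}\ge\lambda' I_2$ for all $x\in\overline\Omega$ and some $\lambda'>0$. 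Choosing $N$ so large that $\tfrac12\lambda' N-C_3\ge 1$ and taking the associated $r\le r_0$, the identity above gives $D^2u(x)\ge I_2$ in the sense of quadratic forms for a.e.\ $x\in B_r(x_0)$.

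Finally, since $u\in W^{2,p}(\Omega)\cap C^1(\overline\Omega)$ for all $p<\infty$ by Lemma \ref{lem:5.13B} and $B_r(x_0)$ is convex, the a.e.\ bound $D^2\big(u-\tfrac12|\cdot-x_0|^2\big)\ge 0$ on $B_r(x_0)$ shows that $x\mapsto u(x)-\tfrac12|x-x_0|^2$ is convex there; evaluating at $x_0$ with $u(x_0)=0$ and $\nabla u(x_0)=0$ yields $u(x)\ge\tfrac12|x-x_0|^2>0$ for every $x\in B_r(x_0)\setminus\{x_0\}$, hence $\{u\le 0\}\cap B_r(x_0)=\{x_0\}$, i.e.\ $x_0$ is isolated in $\{u\le 0\}$. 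The only slightly delicate point will be the first step, where I pass from the everywhere-defined lower semicontinuous representative $(-v)^*$ to an almost-everywhere lower bound for $-v$ that can be inserted into the a.e.-valid identity of Lemma \ref{lem:5.16}; once that is in place the rest is a routine convexity argument.
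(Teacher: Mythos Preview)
Your proof is correct and follows essentially the same line as the paper's own argument: invoke the representation of Lemma~\ref{lem:5.16}, use the largeness of $(-v)^*$ near $x_0$ to force $D^2u$ to be uniformly positive definite on a small ball, and conclude via convexity and $u(x_0)=0,\ \nabla u(x_0)=0$ that $u$ has a strictly positive quadratic lower bound there. The only cosmetic difference is that you obtain the lower bound $(-v)^*\ge N$ on $B_r(x_0)$ from lower semicontinuity (Lemma~\ref{lem:lebpt}(i)), whereas the paper invokes Lemma~\ref{lem:lebpt}(ii) and the inclusion $B_{cr}(x_0)\subset D_r(x_0)$ to get $\lim_{r\to 0}\inf_{B_r(x_0)}(-v)^*=\infty$; these are equivalent routes to the same conclusion.
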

\begin{proof}
Recall from Lemma \ref{lem:5.16} that for almost every $x \in \Omega$ there holds 
\begin{equation}
    D^2 u(x) = (-v)(x) A(x)^{-1} + \int_\Omega K(x,y) \; \mathrm{d}\mu(y) + H(x)
\end{equation}
for some locally bounded and Borel measurable $K: \Omega \times \Omega \rightarrow \mathbb{R}^{2\times 2}_{sym}$ which is smooth on $\Omega \times \Omega \setminus \{(x,x): x \in \Omega \}$ and $G \in C^\infty(\Omega; \mathbb{R}^{2\times 2}_{sym})$. Let $r_0 > 0$ be such that  $\overline{B_{r_0}(x_0)} \subset \Omega$. 
Due to the fact that $|K|$ and $|H|$ are bounded on $\overline{B_{r_0}(x_0)}$ we conclude that there exists $C> 0$ and $\theta > 0$ such that for any $y \in \mathbb{R}^2$ and $x \in \overline{B_{r_0}(x_0)}$ 
\begin{equation}\label{eq:ytraspd2uy}
    y^T D^2 u(x) y \geq (-v)(x)  y^T A(x)^{-1} y - C |y|^2 = (-v)(x) \theta |y|^2 - C |y|^2 = \left( - \theta v(x) - C \right)|y|^2.
 \end{equation}
By Lemma \ref{lem:lebpt} $(-v)^*(x_0)=\infty$ yields 
$
   \lim_{r \rightarrow 0} \inf_{B_{r}(x_0)} v^*  =  \lim_{r \rightarrow 0} \inf_{B_{cr}(x_0)} v^* \geq \lim_{r \rightarrow 0} \inf_{D_r(x_0)} v^* = \infty.
$
Hence there exists $r_1 \in (0,r_0)$ such that $\inf_{B_{r_1}(x_0)} v^* > \frac{2C}{\theta} $. This and \eqref{eq:ytraspd2uy} yield that for a.e. $x \in B_{r_1}(x_0)$ 
\begin{equation}
    y^T D^2 u(x) y \geq C|y|^2.
\end{equation}
In particular, $D^2 u(x)- \frac{C}{2}I$ is positive definite for a.e. $x \in B_{r_1}(x_0).$ %By Lemma \ref{lem:A3} t
This implies convexity of $x \mapsto u(x)- \frac{C}{4} |x-x_0|^2$. Since $\nabla \left( u(x) -\frac{C}{4} |x-x_0|^2 \right) \big\vert_{x= x_0} =  \nabla u(x_0)= 0$, $x_0$ is a critical point of $x \mapsto u(x) - \frac{C}{4}|x-x_0|^2$. Since critical points of convex functions are always global minima we find 
\begin{equation}
    u(x) - \frac{C}{4}|x-x_0|^2 \geq u(x_0) - \frac{C}{4}|x_0-x_0|^2 = 0 \qquad \textrm{for all $x \in B_{r_1}(x_0)$.}
\end{equation}
We conclude that $u(x) \geq \frac{C}{4}|x-x_0|^2$ for all $x \in B_{r_1}(x_0)$. This implies that $u> 0$ on $B_{r_1}(x_0) \setminus \{ x_0 \}$ and hence $x_0$ is an isolated point of $\{u \leq 0 \}.$ 
\end{proof}

As a corollary we obtain the nonexistence of bad points in the singular nodal set

\begin{cor}\label{eq:cornobadpoints}
    The singular nodal set does not contain any bad points in the sense of Definition \ref{def:goodptbadpt}.
\end{cor}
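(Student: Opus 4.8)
The plan is to obtain a contradiction by showing that, near a hypothetical bad point, the minimizer is smooth, which is incompatible with the defining property $(-v)^*(x_0)=\infty$. All the substantial work has already been done in Lemmas \ref{lem:Lebptmult} and \ref{lem:6.7}, so the argument here is just a short combination of these two facts together with the definition of the pointwise representative.

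Concretely, I would argue by contradiction. Suppose $x_0\in\Omega$ is a bad point of the singular nodal set in the sense of Definition \ref{def:goodptbadpt}, i.e.\ $u(x_0)=0$, $\nabla u(x_0)=0$ and $(-v)^*(x_0)=\infty$, where $v=Lu$. First I would invoke Lemma \ref{lem:Lebptmult}, which tells us that such a point is an isolated point of the set $\{u\leq 0\}$. Then Lemma \ref{lem:6.7} applies and yields that $u$ is smooth in a neighborhood $U$ of $x_0$; in particular $v=Lu\in C^\infty(U)$.

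The last step is to observe that smoothness of $v$ near $x_0$ forces $(-v)^*(x_0)<\infty$. Indeed, using $B_{cR}(x_0)\subset D_R(x_0)\subset B_{CR}(x_0)$ from Lemma \ref{lem:meanvalueprop} together with the continuity of $v$ on $U$, the averages $\fint_{D_R(x_0)}(-v)\,\mathrm{d}x$ converge to $-v(x_0)\in\mathbb{R}$ as $R\to 0$, so the pointwise representative defined in \eqref{eq:pointwiseRepresentatiove} satisfies $(-v)^*(x_0)=-v(x_0)\in\mathbb{R}$. This contradicts $(-v)^*(x_0)=\infty$, and hence no bad point can exist. I do not expect any genuine obstacle here beyond correctly chaining the two preceding lemmas; the only minor point worth spelling out is the identification of $(-v)^*(x_0)$ with the classical value $-v(x_0)$ at a point of continuity, which is immediate from the averaging definition.
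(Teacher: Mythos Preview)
Your proposal is correct and follows essentially the same approach as the paper's own proof: both argue by contradiction, invoke Lemma~\ref{lem:Lebptmult} to deduce that a bad point is isolated in $\{u\leq 0\}$, then apply Lemma~\ref{lem:6.7} to obtain smoothness of $u$ (and hence of $v=Lu$) near $x_0$, and finally observe that continuity of $v$ forces $(-v)^*(x_0)=-v(x_0)\in\mathbb{R}$, contradicting $(-v)^*(x_0)=\infty$. Your version even makes the last step slightly more explicit by citing the ball inclusions from Lemma~\ref{lem:meanvalueprop}.
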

\begin{proof}
Assume that there exists some bad point $x_0$ in the singular nodal set, i.e. $u(x_0) =0, \nabla u(x_0) =0$ and $(-v)^*(x_0)= \infty$. By Lemma \ref{lem:Lebptmult}, $x_0$ is an isolated point of $\{ u \leq 0 \}.$
    Furthermore, by Lemma \ref{lem:6.7}, $u$ is smooth in a neighborhood of $x_0$. This also implies that $v = Lu$ is smooth in a neighborhood of $x_0$. Due to this fact we obtain 
\begin{equation}
    \lim_{r \rightarrow 0} \fint_{D_r(x_0)} (-v)(x) \; \mathrm{d}x = (-v)(x_0) \in \mathbb{R}.
\end{equation}
On the contrary, $x_0$ is a bad point, i.e.
$
    \lim_{r \rightarrow 0} \fint_{D_r(x_0)} (-v)(x) \; \mathrm{d}x = (-v)^*(x_0) = \infty.
$
A contradiction. 
\end{proof}

As a consequence of this and Corollary \ref{eq:cornogoodpoints} we obtain 

\begin{lemma}\label{lem:actual5.10}
   Let $u \in \mathcal{A}(u_0)$ be a minimizer. Then for each $x_0 \in \{ u = 0 \}$ one has $\nabla u(x_0) \neq 0$ (i.e. $x_0$ is a regular point). In particular, $\{u = 0 \}$ is a compact $C^{1,\alpha}$-submanifold of $\Omega$. 
\end{lemma}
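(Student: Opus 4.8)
The plan is to deduce the statement directly from the good/bad point dichotomy established above, so that essentially all the work has already been done. First I would fix $x_0 \in \{u = 0\}$ and argue by contradiction, assuming $\nabla u(x_0) = 0$; then $x_0$ belongs to the singular nodal set. Since $-v = -Lu \geq 0$ almost everywhere by Lemma \ref{lem:5.14} and $L(-v) \geq 0$ weakly by Lemma \ref{lem:4.3}, Lemma \ref{lem:meanvalueprop} guarantees that the pointwise representative $(-v)^*(x_0) \in \mathbb{R} \cup \{\infty\}$ is well defined, so Definition \ref{def:goodptbadpt} applies and classifies $x_0$ either as a good point or as a bad point. The first alternative is excluded by Corollary \ref{eq:cornogoodpoints} and the second by Corollary \ref{eq:cornobadpoints}. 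This contradiction shows $\nabla u \neq 0$ everywhere on $\{u = 0\}$, i.e.\ every nodal point is regular.

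For the remaining structural assertion I would first establish compactness of $\{u = 0\}$. By Lemma \ref{lem:53} there is $\delta > 0$ with $u > 0$ on $\Omega_\delta := \{x \in \Omega : \mathrm{dist}(x,\partial\Omega) < \delta\}$, hence $\{u = 0\} \subset \Omega \setminus \Omega_\delta$, which is a compact subset of $\Omega$; since $u$ is continuous, $\{u = 0\}$ is closed and therefore compact. Then I would invoke $u \in C^{1,\alpha}(\overline{\Omega})$ for every $\alpha \in (0,1)$ (Lemma \ref{lem:5.13B}) together with $\nabla u \neq 0$ on $\{u = 0\}$ and apply the $C^{1,\alpha}$ implicit function theorem: near each of its points $\{u = 0\}$ is the graph of a $C^{1,\alpha}$ function of one real variable (recall $n = 2$), so it is a one-dimensional, compact $C^{1,\alpha}$-submanifold of $\Omega$.

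The only point requiring a little care --- and hence the ``main obstacle'', though it is a mild one --- is verifying that the good/bad classification of Definition \ref{def:goodptbadpt} really is exhaustive on the singular nodal set, that is, that $(-v)^*$ is defined at every such point; this is exactly the content of Lemma \ref{lem:meanvalueprop} applied to the $L$-superharmonic function $-v$. Everything else is either a direct citation of Corollaries \ref{eq:cornogoodpoints} and \ref{eq:cornobadpoints} or the standard implicit function theorem argument, so no further genuine difficulty is expected.
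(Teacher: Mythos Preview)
Your proof is correct and follows essentially the same approach as the paper: cite Corollaries \ref{eq:cornogoodpoints} and \ref{eq:cornobadpoints} to rule out singular nodal points, then invoke the $C^{1,\alpha}$-regularity of $u$ from Lemma \ref{lem:5.13B} for the submanifold property. You supply a few more details than the paper's terse proof (the compactness argument via Lemma \ref{lem:53} and the explicit check that the good/bad dichotomy is exhaustive), but these are exactly the routine verifications the paper leaves implicit.
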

\begin{proof}
    That each point in the nodal set is regular follows from Corollary \ref{eq:cornobadpoints} and Corollary \ref{eq:cornogoodpoints}. The submanifold property follows then immediately from the fact that $u \in C^{1,\alpha}(\overline{\Omega})$, cf.  Lemma \ref{lem:5.13B}.
\end{proof}

%As a corollary we obtain that $\partial \{ u > 0 \} = \partial^* \{ u > 0 \} = \{ u  = 0 \}$. Here $\partial^* \{u > 0 \}$ refers to the \emph{reduced boundary}, cf. \cite[Chapter 5]{EvansGariepy}.

%\begin{prop}
%Suppose that $u \in C^1(\overline{\Omega})$ is such that $u > 0$ on $\partial \Omega$ and $\nabla u \neq 0$ on $\{ u = 0 \}$. Then \begin{equation}
%   \partial \{ u > 0 \} = \partial^* \{ u > 0 \} = \{ u  = 0 \}. 
%\end{equation}    
%\end{prop}
%\begin{proof}
%    This is a word by word copy of \cite[Lemma 5.1]{MuellerAMPA}.
%\end{proof}

%\section{Analysis of the nodal set}

%\subsection{Emptyness of the singular nodal set}

%\section{The nonvanishing gradient}

\section{A measure-valued equation}\label{eq:Regularity}

Next we use \emph{inner variation techniques} to characterize the measure $\mu$ in  Lemma \ref{lem:4.3} explicitly. To this end we first derive a lemma about an expansion of $Lu$ under domain variation.

\begin{lemma}[Domain variation]\label{lem:domainvariation}
    Let $\psi = (\psi_1,\psi_2) \in C_0^\infty(\Omega; \mathbb{R}^2)$. Define for $t \in \mathbb{R}$ the function $\varphi_t: \Omega \rightarrow \mathbb{R}^2$ by $\varphi_t(x) := x + t \psi(x)$. Then there exists $\varepsilon_0> 0$ such that for all $t \in (-\varepsilon_0,\varepsilon_0)$ one has $\varphi_t(\Omega) = \Omega$ and $\varphi_t$ is a diffeomorphism of $\Omega$. Moreover, one has for any $u \in W^{2,2}(\Omega)$ the expansion
    \begin{equation}
        L(u\circ \varphi_t) = (L u) \circ \varphi_t + R_t + t X_t + t^2 Y_t \qquad \textrm{as $t \rightarrow 0$}
    \end{equation}
    with 
    \begin{align}\label{eq:defRT}
       -R_t & = (A- A \circ \varphi_t): D^2 u \circ \varphi_t + (\mathbf{div}(A)- \mathbf{div}(A)\circ \varphi_t) \cdot \nabla u \circ \varphi_t,  \\
    \label{eq:X_tt}
       - X_t  & = \sum_{i,j=1}^2  a_{ij} (2 \partial_i \psi \cdot (\partial_j \nabla u) \circ \varphi_t + \partial^2_{ij}\psi \cdot \nabla u \circ \varphi_t)  + 
        \sum_{i,j = 1}^2 (\partial_ja_{ij}) \partial_i \psi \cdot \nabla u \circ \varphi_t,
    \\
        -Y_t & = \sum_{i,j=1}^2 a_{ij} \partial_i \psi \cdot   (D^2u \circ \psi_t) \partial_j \psi. 
    \end{align}
   Furthermore, if $u \in W^{3,q}(\Omega)$ for some $q \in [1,\infty)$ then 
    \begin{equation}\label{eq:X_0}
        X_0 = L (\nabla u \cdot \psi) - \sum_{m = 1}^2 L(\partial_m u ) \psi_m. 
    \end{equation}
\end{lemma}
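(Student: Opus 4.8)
The plan is to establish the three assertions of the lemma in turn: that $\varphi_t$ is a diffeomorphism of $\Omega$ for $|t|$ small, the $t$-expansion of $L(u\circ\varphi_t)$ with the identification of $R_t,X_t,Y_t$, and the closed form \eqref{eq:X_0} of $X_0$ under the extra regularity. The computations are all elementary; the only points requiring care are organizing the index sums and using the symmetry $a_{ij}=a_{ji}$ consistently, and passing from smooth $u$ to $u\in W^{2,2}(\Omega)$ (resp.\ $W^{3,q}(\Omega)$) by continuity of the composition operators $v\mapsto v\circ\varphi_t$ in the relevant norms.

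\textbf{Diffeomorphism and expansion.} Since $\psi\in C_0^\infty(\Omega;\R^2)$, the differential $D\varphi_t=I+tD\psi$ is invertible for $|t|<\varepsilon_0:=(1+\|D\psi\|_{L^\infty})^{-1}$, and $|\varphi_t(x)-\varphi_t(y)|\ge(1-|t|\,\|D\psi\|_{L^\infty})|x-y|$ gives injectivity; as $\varphi_t$ is the identity outside the compact set $\supp\psi\subset\subset\Omega$, it restricts to a bijection of $\Omega$, and the inverse function theorem makes it a diffeomorphism. For the expansion, use that $A\in C^\infty(\overline{\Omega})$ permits the non-divergence form $Lf=-A:D^2f-\mathbf{div}(A)\cdot\nabla f=-\sum_{i,j}a_{ij}\partial^2_{ij}f-\sum_{i,j}(\partial_i a_{ij})\partial_j f$. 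For smooth $u$ the chain rule gives $\partial_j(u\circ\varphi_t)=\sum_m(\partial_m u\circ\varphi_t)(\delta_{jm}+t\partial_j\psi_m)$ and, after one more differentiation, an expression for $\partial^2_{ij}(u\circ\varphi_t)$ that is a degree-$2$ polynomial in $t$ with coefficients built from $\partial\psi,\partial^2\psi$ and $Du\circ\varphi_t,D^2u\circ\varphi_t$. Substituting both into the non-divergence formula and sorting by powers of $t$: the $t$-free part is $-\sum a_{ij}(\partial^2_{ij}u\circ\varphi_t)-\sum(\partial_i a_{ij})(\partial_j u\circ\varphi_t)$, which differs from $(Lu)\circ\varphi_t$ precisely by re-centring the coefficients at $\varphi_t(x)$, hence equals $(Lu)\circ\varphi_t+R_t$ with $R_t$ as in \eqref{eq:defRT}; the $O(t)$-part, after using $a_{ij}=a_{ji}$ to merge the two sums coming from $\partial_i\psi_k$ and $\partial_j\psi_k$ into the factor $2$, is $tX_t$ with $X_t$ as in \eqref{eq:X_tt}; the $O(t^2)$-part is $t^2Y_t$. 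For general $u\in W^{2,2}(\Omega)$ take smooth $u_n\to u$ in $W^{2,2}$; since $\varphi_t$ is bi-Lipschitz, $v\mapsto v\circ\varphi_t$ is bounded on $L^2(\Omega)$, so every term is continuous in $u$ and the identity passes to the limit in $L^1_{loc}(\Omega)$.

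\textbf{Formula for $X_0$.} When $u\in W^{3,q}(\Omega)$ one has $\nabla u\in W^{2,q}(\Omega)$, so $L(\partial_m u)\in L^q$ and $L(\nabla u\cdot\psi)\in L^q$ are well defined and all Leibniz/chain rules below hold almost everywhere. Setting $t=0$ (so $\varphi_0=\mathrm{id}$) in \eqref{eq:X_tt} gives, componentwise,
\[
-X_0=\sum_{i,j,m}a_{ij}\bigl(2\,\partial_i\psi_m\,\partial^2_{jm}u+\partial^2_{ij}\psi_m\,\partial_m u\bigr)+\sum_{i,j,m}(\partial_j a_{ij})\,\partial_i\psi_m\,\partial_m u.
\]
On the other hand, writing $L$ in the symmetric non-divergence form $Lf=-\sum_{i,j}a_{ij}\partial^2_{ij}f-\sum_{i,j}(\partial_j a_{ij})\partial_i f$ and applying the Leibniz rule to $L(\partial_m u\,\psi_m)$ — using $\partial^2_{ij}(\partial_m u\,\psi_m)=(\partial^2_{ij}\partial_m u)\psi_m+\partial^2_{im}u\,\partial_j\psi_m+\partial^2_{jm}u\,\partial_i\psi_m+\partial_m u\,\partial^2_{ij}\psi_m$ and the symmetrization $i\leftrightarrow j$ that collapses the two mixed terms — yields
\[
L(\partial_m u\,\psi_m)=\psi_m\,L(\partial_m u)-2\sum_{i,j}a_{ij}\,\partial^2_{jm}u\,\partial_i\psi_m-\sum_{i,j}a_{ij}\,\partial_m u\,\partial^2_{ij}\psi_m-\sum_{i,j}(\partial_j a_{ij})\,\partial_m u\,\partial_i\psi_m.
\]
Summing over $m$ and subtracting $\sum_m L(\partial_m u)\psi_m$ leaves exactly the displayed expression for $-X_0$ with the opposite sign, i.e.\ $L(\nabla u\cdot\psi)-\sum_m L(\partial_m u)\psi_m=X_0$, which is \eqref{eq:X_0}. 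As a consistency check one may instead differentiate the expansion at $t=0$, using $R_0=0$, $\frac{d}{dt}\big|_0(u\circ\varphi_t)=\nabla u\cdot\psi$, and $L(\partial_m u)=\partial_m(Lu)+(\partial_m A):D^2u+\partial_m(\mathbf{div}(A))\cdot\nabla u$; this recovers the same identity. The hardest part is purely bookkeeping together with the approximation argument just described.
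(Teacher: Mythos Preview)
Your proof is correct and follows essentially the same approach as the paper's: both compute $\partial_i(u\circ\varphi_t)$ and $\partial^2_{ij}(u\circ\varphi_t)$ by the chain rule, substitute into the non-divergence form of $L$, sort by powers of $t$ using the symmetry $a_{ij}=a_{ji}$, and then verify \eqref{eq:X_0} via the Leibniz identity $L(fg)=gLf-2A\nabla f\cdot\nabla g+fLg$ applied to $f=\partial_m u$, $g=\psi_m$. The only addition on your side is the density argument to pass from smooth $u$ to $u\in W^{2,2}(\Omega)$, which the paper omits (the identities hold directly for weak derivatives since $\varphi_t$ is a smooth diffeomorphism).
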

\begin{proof}
    The diffeomorphism property of $\varphi_t$ for $|t|$ small enough is trivial. It remains to show the expansion of $L(u\circ \varphi_t)$. To this end, first compute
    \begin{align}
        \partial_i (u \circ \varphi_t) & = \partial_i \varphi_t \cdot   \nabla u \circ \varphi_t = (e_i + t \partial_i \psi) \cdot \nabla u \circ \varphi_t = \partial_i u \circ \varphi_t + t \partial_i \psi \cdot \nabla u \circ \varphi_t,
\\
        \partial^2_{ij} (u \circ \varphi_t)  & = \partial_j \varphi_t \cdot (D^2 u \circ \varphi_t) \partial_i \varphi_t + \partial^2_{ij} \varphi_t \nabla u \circ \varphi_t 
        =  (e_j + t \partial_j \psi) \cdot  (D^2 u \circ \varphi_t) (e_i+ t \partial_i\psi) +  t \partial^2_{ij} \psi \cdot  \nabla u  \circ \varphi_t
        \\ & = \partial^2_{ij} u \circ \varphi_t + t \partial_j \psi  \cdot (\partial_i \nabla u) \circ \varphi_t + t \partial_i \psi \cdot (\partial_j \nabla u) \circ \varphi_t + t \partial^2_{ij} \psi  \cdot \nabla u \circ \varphi_t + t^2 \partial_j \psi \cdot (D^2 u \circ \varphi_t) \partial_i \psi.
    \end{align}
    Using this we obtain
    \begin{align}
       - L (u\circ \varphi_t)  & = \mathrm{div}(A \nabla (u \circ \varphi_t)) = A : D^2 (u\circ \varphi_t)  + \mathbf{div}(A) \cdot \nabla (u \circ \varphi_t)
        \\ & = \sum_{i,j= 1}^2 a_{ij}    \partial^2_{ij} (u \circ \varphi_t) + \sum_{i,j= 1}^2 \partial_j( a_{ij})  \partial_i (u \circ \varphi_t) 
       % \\ & = \sum_{i,j= 1}^2 a_{ij}    [\partial^2_{ij} u \circ \varphi_t + t \partial_j \psi  \cdot (\partial_i \nabla u) \circ \varphi_t + t \partial_i \psi \cdot (\partial_j \nabla u) \circ \varphi_t + t \partial^2_{ij} \psi  \cdot \nabla u \circ \varphi_t  ]  - t^2 Y_t  \\ & \qquad + \sum_{i,j= 1}^2 \partial_j (a_{ij}) [\partial_i u \circ \varphi_t + t \partial_i \psi \cdot \nabla u \circ \varphi_t]
         \\ & =  A : D^2u \circ \varphi_t + \mathbf{div}(A) \cdot  \nabla u \circ \varphi_t + t \sum_{i,j = 1}^2 a_{ij} \partial_j \psi \cdot \partial_i \nabla u \circ \varphi_t + t \sum_{i,j = 1}^2 a_{ij} \partial_i \psi \partial_j \nabla u  \circ \varphi_t  \\ & \qquad + t \sum_{i,j= 1}^2  a_{ij} \partial^2_{ij} \psi \cdot \nabla u \circ \varphi_t   + t \sum_{i,j = 1}^2 \partial_j (a_{ij}) \partial_i \psi \cdot \nabla u \circ \varphi_t  - t^2 Y_t.  \label{eq:EXPAAANSION}
       % \\ & = (Lu) \circ \varphi_t + (A-A \circ \varphi_t): D^2u \circ \varphi_t + (\mathbf{div}(A) - \mathbf{div}(A) \circ \varphi_t) \cdot \nabla u \circ \varphi_t + t X_t
     \end{align}
     Since $a_{ij} = a_{ji}$ we may exchange the role of $i$ and $j$ in $\sum_{i,j = 1}^2 a_{ij} \partial_j \psi \cdot \partial_i \nabla u \circ \varphi_t$ and conclude 
     \begin{equation}
         \sum_{i,j = 1}^2 a_{ij} \partial_j \psi \cdot \partial_i \nabla u \circ \varphi_t + \sum_{i,j = 1}^2 a_{ij} \partial_i \psi \cdot \partial_j \nabla u  \circ \varphi_t = 2 \sum_{i,j= 1}^2 a_{ij}\partial_j \psi \cdot \partial_i \nabla u \circ \varphi_t.
     \end{equation}
     Moreover,
     \begin{equation}
         A : D^2u \circ \varphi_t + \mathbf{div}(A) \cdot  \nabla u \circ \varphi_t = -(Lu) \circ \varphi_t + (A-A \circ \varphi_t) :D^2 u \circ \varphi_t + (\mathbf{div}(A) - \mathbf{div}(A) \circ \varphi_t) \cdot \nabla u \circ \varphi_t. 
     \end{equation}
    Using these computations in \eqref{eq:EXPAAANSION} we find
     \begin{align}
        & -L(u \circ \varphi_t) =  -(Lu) \circ \varphi_t + (A-A \circ \varphi_t) :D^2 u \circ \varphi_t + (\mathbf{div}(A) - \mathbf{div}(A) \circ \varphi_t) \cdot \nabla u \circ \varphi_t \\ & \qquad - t^2 Y_t + t \left( 2 \sum_{i,j= 1}^2 a_{ij} \partial_j \psi \partial_i \nabla u \circ \varphi_t +  \sum_{i,j= 1}^2  a_{ij} \partial^2_{ij} \psi \cdot \nabla u \circ \varphi_t + \sum_{i,j = 1}^2 \partial_j (a_{ij}) \partial_i \psi \cdot \nabla u \circ \varphi_t  \right) 
         \\ & =  -(Lu) \circ \varphi_t - R_t - t^2 Y_t - tX_t .
     \end{align}
     The claim follows by multiplying by $(-1)$. It remains to show  formula \eqref{eq:X_0} for $X_0$. First one readily checks that for $f,g \in W^{2,q}(\Omega) $ one has
     $
         L (fg ) = g (Lf) - 2 A \nabla f \cdot \nabla g + (Lg)f. 
     $
     In particular, 
     \begin{equation}
         L ( \nabla u \cdot \psi) = L \left( \sum_{m = 1}^2 \partial_m u \psi^m \right) = \sum_{m = 1}^2 L(\partial_m u \psi^m) = \sum_{m = 1}^2 L(\partial_m u) \psi_m -2 A \nabla( \partial_m u)  \cdot \psi_m  + \partial_m u L(\psi_m). 
     \end{equation}
     Observe that
     \begin{align}
        - \sum_{m = 1}^2 \partial_m u L\psi_m & = \sum_{i ,j, m = 1}^2 \partial_m u (  a_{ij} \partial^2_{ij} \psi_m  + \partial_j a_{ij} \partial_i \psi_m) = \sum_{i,j= 1}^2 a_{ij} \partial^2_{ij} \psi \cdot \nabla u + \sum_{i,j = 1}^2 \partial_j a_{ij} \partial_i \psi \cdot \nabla u, \\
       2 \sum_{m = 1}^2  A \nabla( \partial_m u)  \cdot  \nabla  \psi_m  & = 2\sum_{i,j,m = 1}^2 a_{ij} \partial_j(\partial_m u)  \partial_i \psi^m = 2\sum_{i,j= 1}^2 a_{ij} \partial_i \psi \cdot \partial_j (\nabla u).  
     \end{align}
     Evaluating \eqref{eq:X_tt} at $t= 0$ we see that the terms of the previous two equations appear and conclude
     \begin{equation}
         L ( \nabla u \cdot \psi)  = \sum_{m  = 1}^2 L(\partial_m u ) \psi_m + X_0.
     \end{equation}
     The claim follows. 
     \end{proof}
     Based on this we can derive an equation representing the measure $\mu$ obtained in Lemma  \ref{lem:4.3}.
\begin{lemma}\label{lem:7.3}
    Let $u \in \mathcal{A}(u_0)$ be a minimizer and $\mu$ be as in Lemma \ref{lem:4.3}. Then for all $\psi \in C_0^\infty(\Omega; \mathbb{R}^2)$ one has
    \begin{equation}\label{eq:EulerLagggg}
       - \int_\Omega \chi_{ \{ u > 0 \} } \, \mathrm{div}(\psi) \; \mathrm{d}x = 2 \int_\Omega \psi \cdot \nabla u \; \mathrm{d}\mu.
    \end{equation}
\end{lemma}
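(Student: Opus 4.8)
The plan is to combine the inner variation (domain variation) computed in Lemma~\ref{lem:domainvariation} with the minimality of $u$, using that the Lebesgue-measure term of $\mathcal{E}$ transforms under the diffeomorphism $\varphi_t$ by the Jacobian change of variables. First I would fix $\psi \in C_0^\infty(\Omega;\mathbb{R}^2)$, let $\varphi_t(x) = x + t\psi(x)$, and consider the competitor $u_t := u \circ \varphi_t^{-1} \in \mathcal{A}(u_0)$ (note $\varphi_t = \mathrm{id}$ near $\partial\Omega$, so the boundary datum is preserved and $u_t$ is admissible for $|t|$ small). Minimality gives $\mathcal{E}(u_t) \geq \mathcal{E}(u)$, hence $\frac{d}{dt}\big|_{t=0}\mathcal{E}(u_t) = 0$ provided the derivative exists. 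I would split $\mathcal{E}(u_t) = \int_\Omega (L u_t)^2\,dx + |\{u_t > 0\}|$ and handle the two summands separately.

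For the volume term: $\{u_t > 0\} = \varphi_t(\{u > 0\})$, so by the change of variables formula $|\{u_t>0\}| = \int_{\{u>0\}} |\det D\varphi_t(x)|\,dx = \int_{\{u>0\}}(1 + t\,\mathrm{div}(\psi) + O(t^2))\,dx$, whence $\frac{d}{dt}\big|_{t=0}|\{u_t>0\}| = \int_\Omega \chi_{\{u>0\}}\,\mathrm{div}(\psi)\,dx$. For the bending term, it is cleaner to change variables in the integral $\int_\Omega (L u_t)^2\,dx$ so that it becomes an integral over $\Omega$ against $u\circ\varphi_t \circ \varphi_t^{-1}\ldots$; actually the slick route is to write $\int_\Omega (Lu_t)^2\,dx = \int_\Omega (L(u\circ\varphi_t^{-1})\circ\varphi_t)^2 \,|\det D\varphi_t|\,dx$ and apply Lemma~\ref{lem:domainvariation} with the roles set up so that $L(u\circ\varphi_t^{-1})\circ\varphi_t$ has the stated expansion $(Lu) + R_t + tX_t + t^2Y_t$ (after relabeling, this is exactly the content of the lemma applied to $\varphi_t^{-1}$, whose first-order term is $-\psi$). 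Expanding $(Lu + R_t + tX_t + \ldots)^2(1 + t\,\mathrm{div}\psi + \ldots)$ and collecting the $t^1$ coefficient, the main term is $2\int_\Omega Lu\, (\dot R + X_0)\,dx$ plus $\int_\Omega (Lu)^2\,\mathrm{div}(\psi)\,dx$, where $\dot R := \frac{d}{dt}\big|_{t=0}R_t$. Since $A \in C^\infty$, one computes $\dot R = -(\nabla\psi\text{-contractions of }A)\cdot D^2u - \ldots$; the key algebraic identity to exploit is the formula \eqref{eq:X_0}, $X_0 = L(\nabla u\cdot\psi) - \sum_m L(\partial_m u)\psi_m$, valid here because $u \in W^{3,q}(\Omega)$ by Lemma~\ref{lem:5.13B}.

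The heart of the computation is then to show that $2\int_\Omega Lu\,(\dot R + X_0)\,dx + \int_\Omega (Lu)^2 \mathrm{div}(\psi)\,dx$ simplifies, after integration by parts and using the equation \eqref{eq:LULphimeasure} from Lemma~\ref{lem:4.3}, to $-2\int_\Omega \psi\cdot\nabla u \,d\mu$. Concretely: $\int_\Omega Lu\, X_0\,dx = \int_\Omega Lu\, L(\nabla u\cdot\psi)\,dx - \sum_m \int_\Omega Lu\, L(\partial_m u)\psi_m\,dx$; the first integral equals $-\int_\Omega \nabla u\cdot\psi\,d\mu$ by \eqref{eq:LULphimeasure} (applicable since $\nabla u\cdot\psi \in W^{2,s}(\Omega)\cap W_0^{1,s}(\Omega)$ — it is compactly supported), and the remaining terms involving $\dot R$, the $(Lu)^2\mathrm{div}\psi$ term, and $\sum_m\int Lu\,L(\partial_m u)\psi_m$ should cancel among themselves; this cancellation is a ``Pohozaev-type'' identity reflecting that $\int_\Omega(Lu)^2\,dx$ is itself (half of) a divergence-form expression whose inner variation produces only boundary or lower-order bulk terms — more precisely one expects $\int_\Omega (Lu)^2\mathrm{div}(\psi)\,dx + 2\int_\Omega Lu\,(\dot R - \sum_m L(\partial_m u)\psi_m)\,dx = 0$ because this is exactly $\frac{d}{dt}\big|_{t=0}$ of the inner variation of the smooth functional $v\mapsto\int(Lv)^2$ evaluated on the (locally smooth, away from $\{u=0\}$) part, combined with the fact that $\mu$ is supported on $\{u=0\}$ where $\nabla u \neq 0$. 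Putting it together yields $0 = -2\int_\Omega\nabla u\cdot\psi\,d\mu + \int_\Omega\chi_{\{u>0\}}\mathrm{div}(\psi)\,dx$, which rearranges to \eqref{eq:EulerLagggg}.

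I expect the main obstacle to be two-fold: first, justifying that $t\mapsto\mathcal{E}(u_t)$ is actually differentiable at $t=0$ with the formal expansion valid — this requires the $W^{3,q}$-regularity of $u$ (Lemma~\ref{lem:5.13B}) to make sense of terms like $\partial_i\nabla u$ appearing in $X_t$, plus dominated convergence to differentiate under the integral, and care that $u \notin W^{3,q}$ for $q\geq 2$ so one must work with $q\in[1,2)$ and track integrability of each product against $Lu \in L^p$; second, verifying the ``Pohozaev cancellation'' of the purely bending-energy terms, i.e. that all contributions not hitting $\mu$ recombine into $-\int\chi_{\{u>0\}}\mathrm{div}\psi$. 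The cleanest way to secure the latter is probably to bypass the termwise bookkeeping: apply the whole inner-variation argument instead to the auxiliary identity that $\int_\Omega(Lu)^2\,dx = \int_\Omega Lu\cdot Lu\,dx$ and integrate by parts once using $Lu|_{\partial\Omega}=0$, or alternatively to invoke that on $\Omega\setminus\{u=0\}$ one has $L^2u = 0$ (Lemma~\ref{eq:Lemma510}) so the domain variation there is exact, localizing the defect precisely to a neighborhood of $\{u=0\}$ where \eqref{eq:LULphimeasure} and the regularity $\nabla u\neq 0$ (Lemma~\ref{lem:actual5.10}) take over.
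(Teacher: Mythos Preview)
Your approach is essentially the same as the paper's, with one cosmetic difference: the paper takes $u_t := u\circ\varphi_t$ rather than $u\circ\varphi_t^{-1}$, so that Lemma~\ref{lem:domainvariation} applies verbatim (no relabeling with $-\psi$) and the volume term becomes $|\{u_t>0\}| = |\varphi_t^{-1}(\{u>0\})| = |\{u>0\}| - t\int\chi_{\{u>0\}}\mathrm{div}(\psi)\,dx + O(t^2)$. This is immaterial for the argument.

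The one place where your proposal is imprecise is exactly the ``Pohozaev cancellation'' you flag. The paper carries it out explicitly, and the missing identity is simply the product-rule computation
\[
\partial_k(Lu) = -\partial_k A : D^2u - \mathbf{div}(\partial_kA)\cdot\nabla u + L(\partial_k u),
\]
valid since $u\in W^{3,q}(\Omega)$ by Lemma~\ref{lem:5.13B}. This means that the $\dot R$-contribution together with the $-\sum_m L(\partial_m u)\psi_m$ piece of $X_0$ combine to $-\psi\cdot\nabla(Lu)$. Thus the three residual terms become
\[
-\int_\Omega (Lu)^2\,\mathrm{div}(\psi)\,dx - 2\int_\Omega Lu\,\psi\cdot\nabla(Lu)\,dx = -\int_\Omega \mathrm{div}\bigl((Lu)^2\psi\bigr)\,dx = 0,
\]
where the last equality uses that $(Lu)^2\psi\in W_0^{1,1}(\Omega;\mathbb{R}^2)$ (here $Lu\in W^{1,q}(\Omega)$ and $Lu|_{\partial\Omega}=0$ from Lemma~\ref{eq:Lemma510}, while $\psi$ is compactly supported). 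With this in hand your sketch is complete and matches the paper; no localization near $\{u=0\}$ or separate Pohozaev argument is needed.
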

\begin{proof}
    Let $\psi = (\psi_1,\psi_2) \in C_0^\infty(\Omega; \mathbb{R}^2)$. Define for $t \in \mathbb{R}$ the function $\varphi_t: \Omega \rightarrow \mathbb{R}^2$ by $\varphi_t(x) := x + t \psi(x)$. As discussed in Lemma \ref{lem:domainvariation}, there exists $\varepsilon_0> 0$ such that for all $t \in (-\varepsilon_0,\varepsilon_0)$ one has $\varphi_t(\Omega) = \Omega$ and $\varphi_t$ is a diffeomorphism of $\Omega$. Define $u_t := u \circ \varphi_t.$
    From Lemma \ref{lem:domainvariation} follows that 
    \begin{align}
        \int_\Omega (Lu_t)^2 \; \mathrm{d}x &  = \int_\Omega [(Lu) \circ \varphi_t]^2 \; \mathrm{d}x + 2 \int_\Omega (Lu) \circ \varphi_t R_t \; \mathrm{d}x + 2 t \int_\Omega  (Lu) \circ \varphi_t X_t \; \mathrm{d}x +   \int_\Omega R_t^2 \; \mathrm{d}x + O(t^2),
    \end{align}
   % where we have concluded that the remainder lies in $O(t^2)$ as it is given by the multiplication of $t^2$ with some family of functions that is bounded in $L^2(\Omega)$.
    Now recall that $A$ is smooth, which is why we can expand
    \begin{equation}
        A - A\circ \varphi_t =- t \sum_{k = 1}^2 \partial_k A \psi_k + O(t^2), \quad  \mathbf{div}(A) - \mathbf{div}(A)\circ \varphi_t = -t \partial_k \mathbf{div}(A) \psi_k + O(t^2)= -t  \mathbf{div}(\partial_k A) \psi_k + O(t^2).  
    \end{equation}
    %where $O(t^2)$ here refers to the multiplication of $t^2$ with some bounded function.
    Using this in the definition of $R_t$ (cf. \eqref{eq:defRT}) we find
   $
        \int_\Omega R_t^2 = O(t^2)
$ and
    \begin{equation}
         2 \int_\Omega (Lu) \circ \varphi_t R_t \; \mathrm{d}x = 2t \int_\Omega Lu \circ \varphi_t \left( \sum_{k = 1}^2  \psi_k \left(  \partial_k A : D^2 u \circ \varphi_t  + \mathbf{div}(\partial_k A) \cdot \nabla u \circ \varphi_t \right) \; \right) \mathrm{d}x+ O(t^2). 
    \end{equation}
    This implies 
    \begin{align}
         \int_\Omega (Lu_t)^2 \; \mathrm{d}x &  = \int_\Omega [(Lu) \circ \varphi_t]^2 + 2t \int_\Omega  Lu \circ \varphi_t \left( \sum_{k = 1}^2  \psi_k \left(  \partial_k A : D^2 u \circ \varphi_t  + \mathbf{div}(\partial_k A) \cdot \nabla u \circ \varphi_t \right) \right) \; \mathrm{d}x  \\ & \qquad + 2t \int_\Omega (Lu) \circ \varphi_t X_t \; \mathrm{d}x + O(t^2) . \label{eq:intomegLut}
    \end{align}
    Note next that for $t \in (-\varepsilon_0,\varepsilon_0)$
    \begin{equation}
        \int_\Omega [(Lu) \circ \varphi_t]^2 \mathrm{d}x = \int_\Omega (Lu)^2 \mathrm{det}(D\varphi_t^{-1}) \; \mathrm{d}x
    \end{equation}
    and 
    \begin{equation}\label{eq:divonceagain}
         \mathrm{det}(D\varphi_t^{-1})  = \frac{1}{\mathrm{det}(D\varphi_t \circ \varphi_t^{-1})} = \frac{1}{1+ t \mathrm{div}(\psi)\circ \varphi_t^{-1} + O(t^2)} = 1-t \mathrm{div}(\psi) + O(t^2). 
    \end{equation}
    Therefore we have 
    \begin{equation}
         \int_\Omega [(Lu) \circ \varphi_t]^2 \; \mathrm{d}x = \int_\Omega (Lu)^2 \; \mathrm{d}x - t \int_\Omega (Lu)^2 \mathrm{div}(\psi) \; \mathrm{d}x + O(t^2). 
    \end{equation}
    Using this in \eqref{eq:intomegLut} yields
    \begin{align}
       &  \int_\Omega (Lu_t)^2 \; \mathrm{d}x  = \int_\Omega (Lu)^2 \; \mathrm{d}x - t \int_\Omega (Lu)^2 \mathrm{div}(\psi) \; \mathrm{d}x \\ &  \quad +  2t \int_\Omega  Lu \circ \varphi_t \left( \sum_{k = 1}^2  \psi_k \left(  \partial_k A : D^2 u \circ \varphi_t  + \mathbf{div}(\partial_k A) \cdot \nabla u \circ \varphi_t \right) \right) \; \mathrm{d}x  + 2t \int_\Omega (Lu) \circ \varphi_t X_t \; \mathrm{d}x  + O(t^2).  \label{eq:LUTEXPANSION}
    \end{align}
    Moreover, we can use \eqref{eq:divonceagain} to compute
     \begin{equation}
         |\{ u_t > 0 \}| = |\varphi_t^{-1} (\{ u > 0 \}) | = \int_\Omega \chi_{ \{u > 0 \} } \mathrm{det}(D\varphi_t^{-1}) \; \mathrm{d}x = \int_\Omega \chi_{ \{u > 0 \} }  \; \mathrm{d}x - t \int_\Omega \chi_{\{ u > 0 \} } \mathrm{div}(\psi) \; \mathrm{d}x + O(t^2). 
     \end{equation}
     This and \eqref{eq:LUTEXPANSION} imply
     \begin{align}
      &   \mathcal{E}(u_t)   = \mathcal{E}(u) - t \int_\Omega \chi_{\{ u > 0 \} } \mathrm{div}(\psi) \; \mathrm{d}x- t \int_\Omega (Lu)^2 \mathrm{div}(\psi) \; \mathrm{d}x  \\ &  \qquad +  2t \int_\Omega  Lu \circ \varphi_t \left( \sum_{k = 1}^2  \psi_k \left(  \partial_k A : D^2 u \circ \varphi_t  + \mathbf{div}(\partial_k A) \cdot \nabla u \circ \varphi_t \right) \right) \; \mathrm{d}x + 2t \int_\Omega (Lu) \circ \varphi_t X_t \; \mathrm{d}x  + O(t^2). 
     \end{align}
     Given this we can compute the derivative of $t \mapsto \mathcal{E}(u_t)$ at $t =0$, i.e. 
     \begin{align}
         \frac{d}{dt} \mathcal{E}(u_t) \Big\vert_{t= 0} & = -\int_\Omega \chi_{\{u > 0 \} } \mathrm{div}(\psi) \; \mathrm{d}x - \int_\Omega (Lu)^2 \; \mathrm{div}(\psi) \; \mathrm{d}x \\ &  \qquad + 2 \int_\Omega Lu \left( \sum_{k = 1}^2 \psi_k (\partial_k A : D^2 u + \mathbf{div}(\partial_k A) \cdot \nabla u ) \right) \; \mathrm{d}x + 2 \int_\Omega Lu X_0 \; \mathrm{d}x.
     \end{align}
     Using that by Lemma \ref{lem:5.13B} and \eqref{eq:X_0} $X_0 = L(\nabla u \cdot \psi) - \sum_{k = 1}^2 L(\partial_k u) \psi_k$ we find 
          \begin{align}
         \frac{d}{dt} \mathcal{E}(u_t) \Big\vert_{t= 0} & =- \int_\Omega \chi_{\{u > 0 \} } \mathrm{div}(\psi) \; \mathrm{d}x - \int_\Omega (Lu)^2 \; \mathrm{div}(\psi) \; \mathrm{d}x \\ &  \quad + 2 \int_\Omega Lu \left( \sum_{k = 1}^2 \psi_k (\partial_k A : D^2 u + \mathbf{div}(\partial_k A) \cdot \nabla u - L(\partial_k u)  ) \right) \; \mathrm{d}x + 2 \int_\Omega Lu L(\psi \cdot \nabla u) \; \mathrm{d}x. \label{eq:dtetutgleich0}
     \end{align}
     Next observe that for all $k = 1,2$ one has (again since $u \in W^{3,q}(\Omega)$ by Lemma \ref{lem:5.13B})  
     \begin{align}
         \partial_k (Lu) 
         %& = \partial_k (A:D^2 u + \mathbf{div}(A) \cdot \nabla u)  = \partial_k A : D^2 u + \mathbf{div}(\partial_kA) \cdot \nabla u + A : D^2(\partial_k u) + \mathbf{div}(A) \cdot \nabla (\partial_k u)
        % \\ &
         = -\partial_k A : D^2 u - \mathbf{div}(\partial_kA) \cdot \nabla u + L(\partial_k u).
     \end{align}
     Thus, 
    $
         \sum_{k = 1}^2 \psi_k (\partial_k A : D^2 u + \mathbf{div}(\partial_k A) \cdot \nabla u - L(\partial_k u))  =- \psi \cdot \nabla Lu.
 $
     Hence, \eqref{eq:dtetutgleich0} yields 
     \begin{equation}
          \frac{d}{dt} \mathcal{E}(u_t) \Big\vert_{t= 0}  = \int_\Omega \chi_{\{u > 0 \} } \mathrm{div}(\psi) \; \mathrm{d}x  - \int_\Omega (Lu)^2 \; \mathrm{div}(\psi) \; \mathrm{d}x  - 2 \int_\Omega Lu \psi \cdot \nabla Lu + 2 \int_\Omega. Lu L(\psi \cdot \nabla u) \label{eq:equationalignvorher}
     \end{equation}
     Using Lemma \ref{lem:4.3} and Lemma \ref{lem:5.13B} (needed due to the fact that $\psi \cdot \nabla u \in W^{2,q}(\Omega) \cap W_0^{1,q}(\Omega)$ only for  $q < 2$) we find 
     \begin{equation}
          \int_\Omega Lu L(\psi \cdot \nabla u) \; \mathrm{d}x = - \int_\Omega \psi \cdot \nabla u \; \mathrm{d}\mu.
     \end{equation}
     This and the fact that  $\mathrm{div}((Lu)^2 \psi) = (Lu)^2 \mathrm{div}(\psi) + \nabla (Lu)^2 \cdot \psi = (Lu)^2 \mathrm{div}(\psi) + 2 Lu \nabla Lu  \cdot \psi$ yield in \eqref{eq:equationalignvorher}
     \begin{equation}
          \frac{d}{dt} \mathcal{E}(u_t) \Big\vert_{t= 0}  =- \int_\Omega \chi_{\{u > 0 \} } \mathrm{div}(\psi) \; \mathrm{d}x - \int_\Omega \mathrm{div}((Lu)^2 \psi) \; \mathrm{d}x - 2 \int_\Omega \psi \cdot \nabla u \; \mathrm{d}\mu. 
     \end{equation}
     Since $u= u \circ \varphi_0 = u_0$ is a minimizer of $\mathcal{E}$ we find $\frac{d}{dt} \mathcal{E}(u_t) \big\vert_{t= 0} = 0$. Moreover, since $v = Lu \in W_0^{1,q}(\Omega) $ by Lemma \ref{eq:Lemma510} we have $(Lu)^2 \psi \in W_0^{1,1}(\Omega;\mathbb{R}^2)$ so that
     \begin{equation}
         \int_\Omega \mathrm{div}((Lu)^2 \psi) \; \mathrm{d}x = 0. 
     \end{equation}
     All in all we have 
     \begin{equation}
         0 = -\int_\Omega \chi_{\{u > 0 \} } \mathrm{div}(\psi) \; \mathrm{d}x - 2 \int_\Omega \psi \cdot \nabla u \; \mathrm{d}\mu.
     \end{equation}
     The claim follows. 
\end{proof}

We obtain further information by analyzing the left hand side of \eqref{eq:EulerLagggg} with the Gauss divergence theorem. In the following $\partial^* \{u > 0 \}$ refers to the \emph{reduced boundary} of $\{ u > 0 \}$, cf. \cite[Chapter 5]{EvansGariepy}.

\begin{lemma}\label{lem:7.3A}
    Each component of $\{u > 0 \}$ has $C^1$-boundary. Moreover, $\partial \{u > 0 \} = \partial^* \{ u > 0 \} = \partial \Omega \cup \{u = 0\}$ with outward pointing unit normal $\nu = -\frac{\nabla u}{|\nabla u|}$. Furthermore, 
    \begin{equation}
       - \int_\Omega \chi_{\{u > 0 \}} \mathrm{div}(\psi) \; \mathrm{d}x =  \int_{\{u = 0 \}} \frac{1}{|\nabla u|} \psi \cdot \nabla u \; \mathrm{d}\mathcal{H}^1 \qquad \textrm{for all }\psi \in C_0^\infty(\Omega; \mathbb{R}^2).
    \end{equation}
\end{lemma}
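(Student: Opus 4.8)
The plan is to combine the nodal-set regularity from Lemma~\ref{lem:actual5.10} with the Gauss--Green theorem for sets of finite perimeter.

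First I would pin down $\partial\{u>0\}$. Since $u\in C^{1,\alpha}(\overline\Omega)$ by Lemma~\ref{lem:5.13B} and $u>0$ on a tubular neighbourhood of $\partial\Omega$ by Lemma~\ref{lem:53}, the open set $\{u>0\}=\{x\in\Omega:u(x)>0\}$ satisfies $\partial\{u>0\}\subseteq\partial\Omega\cup\{u=0\}$; conversely $u=u_0>0$ on $\partial\Omega$ forces $\partial\Omega\subseteq\overline{\{u>0\}}\setminus\{u>0\}$, and $\nabla u\neq 0$ on $\{u=0\}$ (Lemma~\ref{lem:actual5.10}) forces every point of $\{u=0\}$ to be approximated both by points with $u>0$ and by points with $u<0$. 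Hence $\partial\{u>0\}=\partial\Omega\cup\{u=0\}$, a \emph{disjoint} union (as $u>0$ on $\partial\Omega$) of two compact $C^1$ hypersurfaces, the first smooth and the second $C^{1,\alpha}$ by Lemma~\ref{lem:actual5.10}. Near a point of $\{u=0\}$ the level set is a $C^1$ graph with $\{u>0\}$ lying on a fixed side, and near a point of $\partial\Omega$ the set $\{u>0\}$ agrees locally with $\Omega$; in both cases the local piece of $\{u>0\}$ is connected, so each connected component of $\{u>0\}$ is, near every boundary point, one side of a $C^1$ graph and therefore has $C^1$ boundary.

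Next, being bounded with $C^1$ boundary, $\{u>0\}$ is a set of finite perimeter, $\partial^*\{u>0\}=\partial\{u>0\}$, and the measure-theoretic outer unit normal agrees $\mathcal{H}^1$-a.e.\ with the classical one (cf.\ \cite[Chapter~5]{EvansGariepy}). To identify this normal on $\{u=0\}$: at $x_0\in\{u=0\}$ the function $u$ is strictly increasing in the direction $\nabla u(x_0)$, so $\{u>0\}$ lies locally on the $\nabla u(x_0)$-side of $\{u=0\}$, and the outer normal, pointing away from $\{u>0\}$, equals $\nu=-\nabla u/|\nabla u|$.

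Finally I would apply the divergence theorem: for $\psi\in C_0^\infty(\Omega;\mathbb{R}^2)$,
\[
\int_\Omega\chi_{\{u>0\}}\,\mathrm{div}(\psi) \; \mathrm{d}x = \int_{\partial\{u>0\}}\psi\cdot\nu \; \mathrm{d}\mathcal{H}^1 .
\]
Since $\mathrm{supp}(\psi)$ is a compact subset of $\Omega$, $\psi$ vanishes near $\partial\Omega$, so the $\partial\Omega$ part contributes nothing; on $\{u=0\}$ we insert $\nu=-\nabla u/|\nabla u|$ to obtain
\[
-\int_\Omega\chi_{\{u>0\}}\,\mathrm{div}(\psi) \; \mathrm{d}x = \int_{\{u=0\}}\frac{1}{|\nabla u|}\,\psi\cdot\nabla u \; \mathrm{d}\mathcal{H}^1 ,
\]
which is the claim. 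The only nonroutine point is the identification $\partial\{u>0\}=\partial^*\{u>0\}=\partial\Omega\cup\{u=0\}$ together with the correct orientation of $\nu$ on $\{u=0\}$; once Lemma~\ref{lem:actual5.10} is in hand, the remainder is the classical Gauss--Green formula and bookkeeping about $\mathrm{supp}(\psi)$.
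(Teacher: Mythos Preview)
Your proof is correct and takes essentially the same approach as the paper: the paper's own proof merely cites \cite[Proof of Lemma 5.1]{MuellerAMPA} for the identification $\partial\{u>0\}=\partial^*\{u>0\}=\partial\Omega\cup\{u=0\}$ and then invokes the Gauss divergence theorem, whereas you have spelled out those details explicitly using Lemmas~\ref{lem:5.13B}, \ref{lem:53}, and \ref{lem:actual5.10}.
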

\begin{proof}
The proof that $\partial \{u > 0 \} = \partial^*\{ u > 0 \} = \partial \Omega \cup \{u = 0\}$ would be a word by word copy of \cite[Proof of Lemma 5.1]{MuellerAMPA}. The remaining formula is a standard application of the Gauss divergence theorem. 
\end{proof}

As a consequence of the previous two lemmas we infer

\begin{lemma}\label{lem74}
     Let $u \in \mathcal{A}(u_0)$ be a minimizer and $\mu$ be as in Lemma \ref{lem:4.3}. Then for all $\psi \in C^0(\overline{\Omega})$ one has
     \begin{equation}\label{eq:mucahrfast}
        2 \int_\Omega \psi \cdot \nabla u \; \mathrm{d}\mu = \int_{\{u =0 \}} \frac{1}{|\nabla u |} \psi \cdot \nabla u \; \mathrm{d}\mathcal{H}^1
     \end{equation}
\end{lemma}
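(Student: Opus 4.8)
The plan is to derive \eqref{eq:mucahrfast} first for compactly supported smooth vector fields, where it follows immediately by combining the two preceding lemmas, and then to upgrade to general $\psi\in C^0(\overline\Omega;\mathbb{R}^2)$ by a straightforward density argument. Indeed, for every $\psi\in C_0^\infty(\Omega;\mathbb{R}^2)$, Lemma \ref{lem:7.3} gives $-\int_\Omega\chi_{\{u>0\}}\,\mathrm{div}(\psi)\,\mathrm dx = 2\int_\Omega\psi\cdot\nabla u\,\mathrm d\mu$, while Lemma \ref{lem:7.3A} gives $-\int_\Omega\chi_{\{u>0\}}\,\mathrm{div}(\psi)\,\mathrm dx = \int_{\{u=0\}}\frac{1}{|\nabla u|}\psi\cdot\nabla u\,\mathrm d\mathcal H^1$. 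Equating the two right-hand sides establishes \eqref{eq:mucahrfast} for all $\psi\in C_0^\infty(\Omega;\mathbb{R}^2)$.

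To pass to continuous $\psi$, I would first record the relevant finiteness and boundedness facts. By Lemma \ref{lem:4.3}, $\mu$ is a finite Radon measure with $\mathrm{spt}(\mu)\subset\{u=0\}$, and $\{u=0\}$ is a compact subset of $\Omega$. By Lemma \ref{lem:actual5.10}, $\{u=0\}$ is moreover a compact $C^{1,\alpha}$-submanifold of $\Omega$, so its $\mathcal H^1$-measure is finite, and since $u\in C^{1,\alpha}(\overline\Omega)$ by Lemma \ref{lem:5.13B} with $\nabla u\neq0$ on $\{u=0\}$, the function $x\mapsto\frac{1}{|\nabla u(x)|}$ is continuous and bounded on a neighborhood of $\{u=0\}$; also $\nabla u$ is bounded on $\overline\Omega$. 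Hence both sides of \eqref{eq:mucahrfast}, regarded as linear functionals of $\psi$, are continuous with respect to uniform convergence on any fixed compact set $K\subset\Omega$ with $\{u=0\}\subset K$.

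Given $\psi\in C^0(\overline\Omega;\mathbb{R}^2)$, I would fix such a compact neighborhood $K$ of $\{u=0\}$ inside $\Omega$ together with a cutoff $\rho\in C_0^\infty(\Omega)$ satisfying $\rho\equiv1$ on $K$. Approximating $\psi$ uniformly on $\overline\Omega$ by smooth vector fields $\widetilde\psi_j$ (for instance by mollifying a continuous extension, or by the Stone--Weierstrass theorem) and setting $\psi_j:=\rho\,\widetilde\psi_j\in C_0^\infty(\Omega;\mathbb{R}^2)$, one has $\psi_j=\widetilde\psi_j\to\psi$ uniformly on $K$. Applying the already-proved version of \eqref{eq:mucahrfast} to each $\psi_j$ and letting $j\to\infty$ gives the identity for $\psi$: on the left-hand side, $\psi_j\cdot\nabla u\to\psi\cdot\nabla u$ uniformly on $\{u=0\}\subset K$ (using that $\nabla u$ is continuous and bounded) and $\mu$ is a finite measure carried by $\{u=0\}$; on the right-hand side, $\frac{1}{|\nabla u|}\psi_j\cdot\nabla u\to\frac{1}{|\nabla u|}\psi\cdot\nabla u$ uniformly on $\{u=0\}$, which has finite $\mathcal H^1$-measure.

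The argument is essentially routine; the only point requiring attention is the localization: one must ensure that both integrals live on (a neighborhood of) the compact set $\{u=0\}$, so that uniform control of the approximants on $K$ suffices. This is precisely what $\mathrm{spt}(\mu)\subset\{u=0\}$, the compactness of $\{u=0\}$ in $\Omega$, and the finiteness of $\mu$ and of $\mathcal H^1$ restricted to $\{u=0\}$ provide; there is no genuine analytic obstacle.
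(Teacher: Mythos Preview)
Your proposal is correct and takes essentially the same approach as the paper: combine Lemmas \ref{lem:7.3} and \ref{lem:7.3A} for smooth compactly supported $\psi$, then pass to general continuous $\psi$ by a density/cutoff argument exploiting that $\mathrm{spt}(\mu)\subset\{u=0\}$ is compact in $\Omega$ and has finite $\mathcal H^1$-measure. The only cosmetic difference is that the paper first extends to $C_0^0(\Omega)$ by uniform approximation and then applies the cutoff, whereas you combine cutoff and approximation into a single step; the substance is identical.
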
 
\begin{proof}
    If we additionally assume $\psi \in C_0^\infty(\Omega)$ this is a direct consequence of Lemma \ref{lem:7.3} and Lemma \ref{lem:7.3A}.
    We show first that \eqref{eq:mucahrfast} holds for all $\psi \in C_0^0(\Omega) := \{ \varphi \in C^0(\overline{\Omega}) : \mathrm{spt}(\varphi) \; \textrm{is compactly contained in $\Omega$} \}$.  
    For $\psi \in C_0^0(\Omega)$ one can choose $(\psi_j)_{j \in \mathbb{N}} \subset C_0^\infty(\Omega)$ such that $\psi_j \rightarrow \psi$ uniformly on $\overline{\Omega}$. In particular,
    \begin{equation}\label{eq:passagetothelimit}
       2  \int_\Omega \psi_j \cdot \nabla u \; \mathrm{d}\mu = \int_{\{u =0 \}} \frac{1}{|\nabla u |} \psi_j \cdot \nabla u \; \mathrm{d}\mathcal{H}^1 \qquad \textrm{for all $j \in \mathbb{N}$.}
     \end{equation}
    Since $\sup_{x \in \overline{\Omega}} |\psi_j(x) \cdot \nabla u(x) - \psi(x) \cdot \nabla u(x) |  \rightarrow 0$ and  $\sup_{x \in \{ u = 0 \}} | \psi_j(x) \cdot \tfrac{\nabla u}{|\nabla u|}(x) - \psi(x) \cdot \tfrac{\nabla u}{|\nabla u|}(x) | \rightarrow 0$ as $j \rightarrow \infty$
   % \begin{equation}
   %   \lVert \psi_j \cdot \nabla u - \psi \cdot \nabla u \rVert_\infty = \lVert (\psi_j - \psi) \cdot \nabla u\rVert_\infty \leq \lVert \psi_j - \psi \rVert_\infty \lVert \nabla u\rVert_\infty \rightarrow 0 \qquad (j \rightarrow \infty)
   % \end{equation}
   % and 
   % \begin{equation}
   %   \lVert \psi_j \cdot \tfrac{\nabla u}{|\nabla u|} - \psi \cdot \tfrac{\nabla u}{|\nabla u|} \rVert_\infty = \lVert (\psi_j - \psi) \cdot \tfrac{\nabla u}{|\nabla u|}\rVert_\infty \leq \lVert \psi_j - \psi \rVert_\infty  \rightarrow 0 \qquad (j \rightarrow \infty)
   % \end{equation}
    we can pass to the limit in \eqref{eq:passagetothelimit} and obtain
    \begin{equation}
       2 \int_\Omega  \psi \cdot \nabla u \; \mathrm{d}\mu = \int_{\{u =0 \}} \frac{1}{|\nabla u |} \psi \cdot \nabla u \; \mathrm{d}\mathcal{H}^1. 
    \end{equation}
    Hence \eqref{eq:mucahrfast} holds on $C_0^0(\Omega)$. Next let $\psi \in C^0(\overline{\Omega})$ be arbitrary. Recall that by Lemma \ref{lem:53} $\{u = 0 \}$ is a compact subset of $\Omega$. Therefore one can choose $\eta \in C_0^\infty(\Omega)$ such that $\eta \equiv 1$ on $\{u = 0 \}$. Writing $\psi = \psi \eta + \psi (1-\eta)$ 
    %(i.e. a sum of a function in $C_0^0(\Omega)$ and a function that vanishes on $\{u = 0 \}$)
    and using that by Lemma \ref{lem:4.3} $\mathrm{spt}(\mu) \subset \{ u = 0 \}$ we find with the intermediate claim
    \begin{equation}
      2  \int_\Omega \psi \cdot \nabla u \; \mathrm{d}\mu =  2 \int_\Omega (\psi \eta) \cdot \nabla u \; \mathrm{d}\mu = \int_{\{ u = 0 \} } \frac{1}{|\nabla u|}  (\psi \eta) \cdot \nabla u \; \mathrm{d}\mathcal{H}^1 = \int_{\{ u = 0 \} } \frac{1}{|\nabla u|}  \psi \cdot \nabla u \; \mathrm{d}\mathcal{H}^1.
    \end{equation}
\end{proof}

From Lemma \ref{lem74} we can now obtain an explicit formula for the measure $\mu$  found in Lemma \ref{lem:4.3}.

\begin{lemma}\label{lem:endlichmucharakterisiert}
    Let $u \in \mathcal{A}(u_0)$ be a minimizer and  $\mu$ be as in Lemma \ref{lem:4.3}. Then 
    \begin{equation}
        \mu(A) = \int_{A \cap \{ u = 0 \}} \frac{1}{2 |\nabla u|} \; \mathrm{d}\mathcal{H}^1 \qquad \textrm{ for all Borel sets $A \subset \Omega$.}
    \end{equation}
    In particular, for each $\eta \in C^0(\overline{\Omega})$ there holds 
    \begin{equation}\label{eq:sufficienttoshow}
        \int_\Omega \eta \; \mathrm{d}\mu = \int_{\{u = 0 \}} \frac{1}{2 |\nabla u|} \eta \; \mathrm{d}\mathcal{H}^1. 
    \end{equation}
\end{lemma}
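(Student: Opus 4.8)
The plan is to read off the statement directly from Lemma~\ref{lem74} after one convenient choice of test vector field. Recall that by Lemma~\ref{lem:actual5.10} the nodal set $\{u=0\}$ is a compact subset of $\Omega$ on which $\nabla u\neq 0$, and by Lemma~\ref{lem:5.13B} we have $u\in C^{1,\alpha}(\overline\Omega)$, so $\nabla u$ is continuous. Consequently $m:=\min_{\{u=0\}}|\nabla u|$ is strictly positive, and one can fix an open set $U$ with $\{u=0\}\subset U\subset\subset\Omega$ and $|\nabla u|\geq m/2$ on $U$, together with a cutoff $\chi\in C_0^\infty(U)$ satisfying $\chi\equiv 1$ on $\{u=0\}$.

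Given $\eta\in C^0(\overline\Omega)$, I would apply Lemma~\ref{lem74} with the vector field
\begin{equation}
  \psi := \eta\,\chi\,\frac{\nabla u}{|\nabla u|^2}\in C^0(\overline\Omega;\mathbb{R}^2),
\end{equation}
extended by zero outside $U$; this is genuinely continuous on $\overline\Omega$ because on $\mathrm{spt}(\chi)\subset U$ the denominator is bounded away from zero, while outside $\mathrm{spt}(\chi)$ the field vanishes. On $\{u=0\}$ one has $\chi\equiv 1$, hence $\psi\cdot\nabla u=\eta$ there. Since $\mathrm{spt}(\mu)\subset\{u=0\}$ by Lemma~\ref{lem:4.3}, both integrals in Lemma~\ref{lem74} may be computed over $\{u=0\}$, where $\psi\cdot\nabla u$ equals $\eta$; this turns the identity of Lemma~\ref{lem74} into $2\int_\Omega\eta\,\mathrm{d}\mu=\int_{\{u=0\}}\frac{1}{|\nabla u|}\eta\,\mathrm{d}\mathcal{H}^1$, which is \eqref{eq:sufficienttoshow} after dividing by $2$.

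It then remains to promote \eqref{eq:sufficienttoshow} to the claimed equality of measures. Since $\{u=0\}$ is a compact $C^{1,\alpha}$ submanifold, it is a finite union of $C^{1,\alpha}$ closed curves of finite length, so $\mathcal{H}^1(\{u=0\})<\infty$; together with the bound $\frac{1}{2|\nabla u|}\leq\frac{1}{m}$ on $\{u=0\}$ this shows that $\nu(B):=\int_{B\cap\{u=0\}}\frac{1}{2|\nabla u|}\,\mathrm{d}\mathcal{H}^1$ defines a finite Radon measure on $\Omega$. By \eqref{eq:sufficienttoshow}, $\mu$ and $\nu$ induce the same bounded linear functional on $C^0(\overline\Omega)$, hence on $C_0(\Omega)$, so the uniqueness part of the Riesz--Markov--Kakutani representation theorem (cf.\ \cite[Theorem~1.39]{EvansGariepy}) gives $\mu=\nu$, which is precisely the asserted formula.

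I do not anticipate a real difficulty; the only points deserving care are verifying that the constructed $\psi$ lies in $C^0(\overline\Omega;\mathbb{R}^2)$ — the role of the cutoff $\chi$ is exactly to absorb the singularity of $1/|\nabla u|^2$ away from $\{u=0\}$ — and justifying the reduction of the two integrals to $\{u=0\}$, which is legitimate because $\mu$ is supported there and the integrands coincide with $\eta$ (respectively $\eta/|\nabla u|$) on that set.
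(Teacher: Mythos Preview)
Your proof is correct and takes essentially the same approach as the paper: construct a continuous vector field $\psi$ proportional to $\nabla u$ and localized near $\{u=0\}$ so that $\psi\cdot\nabla u=\eta$ on $\mathrm{spt}(\mu)$, apply Lemma~\ref{lem74}, and conclude by uniqueness in the Riesz representation theorem. The only cosmetic difference is that you build the cutoff $\chi$ directly into $\psi$ and handle all $\eta\in C^0(\overline\Omega)$ at once, whereas the paper first treats compactly supported $\eta$ and then extends via a separate cutoff argument; your normalization $\nabla u/|\nabla u|^2$ is indeed the one that makes $\psi\cdot\nabla u=\eta$ hold pointwise on $\{u=0\}$.
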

\begin{proof}
Notice that it is sufficient to show \eqref{eq:sufficienttoshow} for each $\eta \in C^0(\overline{\Omega})$ as \eqref{eq:sufficienttoshow} characterizes $\mu$ uniquely.
%(due to the fact that the space of finite signed Radon measure is the dual space of $C^0(\overline{\Omega})$). 
    By continuity of $|\nabla u|$ and the fact that $|\nabla u | > 0$ on $\{ u = 0 \}$ we deduce that there exists an open neighborhood $\Omega'$ of $\{u = 0 \}$ (that is compactly contained in $\Omega$) such that $|\nabla u| > 0$ on $\Omega'$. First we show  \eqref{eq:sufficienttoshow} for $\eta \in C_0^0(\Omega')$. For such a function $\eta$, define $\psi: \Omega \rightarrow \mathbb{R}$
    \begin{equation}
        \psi(x) := \begin{cases}
             \eta(x) \frac{\nabla u(x)}{|\nabla u(x)|} & x \in \Omega' \\ 0 & x \in \Omega \setminus \Omega'
        \end{cases}.
    \end{equation}
    This function lies in $C^0(\overline{\Omega})$ and hence  Lemma \ref{lem74} yields 
    \begin{equation}
       2 \int_\Omega \psi \cdot \nabla u \; \mathrm{d}\mu = \int_{ \{ u = 0 \} } \frac{1}{|\nabla u|}  \psi \cdot \nabla u \; \mathrm{d}\mathcal{H}^1.  
    \end{equation}
    One readily checks that $\psi \cdot \nabla u = \eta$ and obtains
    \begin{equation}
       2 \int_\Omega \eta \; \mathrm{d}\mu = \int_{ \{ u = 0 \} } \frac{1}{|\nabla u|} \eta \; \mathrm{d}\mathcal{H}^1.  
    \end{equation}
    This implies \eqref{eq:sufficienttoshow} for functions in $C_0^0(\Omega').$
    Using that $\mathrm{spt}(\mu) = \{ u = 0 \} \subset \Omega'$ (cf.\ Lemma \ref{lem:4.3}) one can use a similar argument as in the end of the proof of Lemma \ref{lem74} to obtain \eqref{eq:sufficienttoshow} also for $\eta \in C^0(\overline{\Omega}).$ %This is based on the fact that both measures appearing in \eqref{eq:sufficienttoshow} are supported in $\{u = 0 \} \subset \Omega'$. 
\end{proof}

\subsection{Optimal regularity}
\begin{remark}\label{rem:7.6}
Let $u \in \mathcal{A}(u_0)$ be a minimizer. From Lemma \ref{lem:4.3} and Lemma \ref{lem:endlichmucharakterisiert} follows that 
\begin{equation}
    \int_\Omega Lu L\varphi \; \mathrm{d}x =  -\int_\Omega \varphi \; \mathrm{d}\mu = -\int_{\{u = 0 \}} \frac{1}{2|\nabla u|} \varphi \; \mathrm{d}\mathcal{H}^1 \qquad \textrm{for all $\varphi \in W^{2,2}(\Omega) \cap W_0^{1,2}(\Omega)$}.
\end{equation}
Notice that we have used  $W^{2,2}(\Omega) \hookrightarrow C^0(\overline{\Omega})$. Next we study the regularity of solutions of this equation. 
\end{remark}

\begin{lemma}\label{lem:77}
    Suppose that $u \in \mathcal{A}(u_0)$ is a minimizer. Then $u \in W^{3,p}(\Omega)$ for all $p \in [1,\infty)$. In particular, for any $\beta \in (0,1)$ one has $u \in C^{2,\beta}(\overline{\Omega})$ and $\{u = 0 \}$ is a $C^{2,\beta}$-submanifold. 
\end{lemma}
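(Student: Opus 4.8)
The plan is to bootstrap regularity from the identity in Remark \ref{rem:7.6}, which says that $v := Lu$ is the weak solution of the Dirichlet problem
\begin{equation}
    \begin{cases}
        L v = -\tfrac{1}{2}\mathcal{H}^1\mres\{u=0\} \cdot \tfrac{1}{|\nabla u|} & \textrm{in }\Omega, \\ v = 0 & \textrm{on }\partial\Omega,
    \end{cases}
\end{equation}
in the distributional sense of that remark. The key point is that the right-hand side is a measure supported on the one-dimensional $C^{1,\alpha}$-manifold $\{u=0\}$ (compact in $\Omega$ by Lemma \ref{lem:53} and Lemma \ref{lem:actual5.10}), with density $\frac{1}{2|\nabla u|}$, and on $\{u=0\}$ one has $|\nabla u|>0$ and $u \in C^{1,\alpha}(\overline\Omega)$ by Lemma \ref{lem:5.13B}, so the density is $C^{0,\alpha}$ along the manifold. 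Hence the measure $\mu$ is a single-layer (simple-layer) distribution on a $C^{1,\alpha}$-hypersurface with Hölder density. The standard mapping properties of $L$ (or the representation via $G_L$, together with Lemma \ref{lem:2.4}) show that the solution $v$ of $Lv=\mu$ with such a single-layer right-hand side is better than $W^{1,q}$: away from $\{u=0\}$ it is smooth, and across the layer it is continuous with a jump in the conormal derivative. Concretely, $v \in W^{1,p}_{loc}(\Omega)$ for all $p<\infty$ — in fact $v\in C^{0,\gamma}(\overline\Omega)$ for all $\gamma\in(0,1)$ — which is the crucial gain over the $q<2$ integrability coming from Lemma \ref{lem:5.13B}.

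First I would make precise the claim that a single-layer potential on a $C^{1,\alpha}$-curve with $C^{0,\alpha}$ density lies in $W^{1,p}_{loc}$ for every $p\in[1,\infty)$. Using the representation $v(x) = \int_{\{u=0\}} G_L(x,y)\frac{1}{2|\nabla u(y)|}\,\mathrm d\mathcal H^1(y)$ (valid up to a smooth harmonic correction by Lemma \ref{lem:RepresentationFormula}-type arguments for $L$ in place of $L^2$) and Lemma \ref{lem:2.4}(i), the kernel $G_L(x,y)$ has a logarithmic singularity, so $\nabla_x v$ has at worst a Cauchy-type $|x-y|^{-1}$ singularity integrated against $\mathrm d\mathcal H^1$ over a curve; this is locally in every $L^p$ by the standard estimate for layer potentials on Lipschitz (here $C^{1,\alpha}$) curves. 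Then I would invoke the boundary regularity machinery of Lemma \ref{eq:Lemma510} together with $Lv=0$ away from $\{u=0\}$ to upgrade $v$ to $C^{0,\gamma}(\overline\Omega)$ for all $\gamma<1$. Once $v = Lu \in W^{1,p}_{loc}(\Omega)\cap C^{0,\gamma}(\overline\Omega)$ is in hand, elliptic regularity for the second-order operator $L$ applied to $Lu=v$ with smooth coefficients $A$ and smooth boundary data $u_0$ gives $u\in W^{3,p}_{loc}(\Omega)$ for all $p<\infty$; combining with the boundary estimate on $\Omega_\delta$ (where $u$ is already smooth by Lemma \ref{eq:Lemma510}) yields $u\in W^{3,p}(\Omega)$ for all $p\in[1,\infty)$.

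The final assertions follow from Sobolev embedding in dimension two: $W^{3,p}(\Omega)\hookrightarrow C^{2,\beta}(\overline\Omega)$ with $\beta = 1-\tfrac{2}{p}$, and letting $p\to\infty$ covers all $\beta\in(0,1)$. For the submanifold claim, since $u\in C^{2,\beta}(\overline\Omega)$ and $\nabla u\neq 0$ on $\{u=0\}$ by Lemma \ref{lem:actual5.10}, the implicit function theorem in its $C^{2,\beta}$ version upgrades the $C^{1,\alpha}$-manifold structure of $\{u=0\}$ to a $C^{2,\beta}$-submanifold.

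The main obstacle I expect is the first step: establishing the sharp $W^{1,p}_{loc}$ (equivalently $C^{0,\gamma}$) regularity of the single-layer potential $v$ on the $C^{1,\alpha}$-curve $\{u=0\}$ with Hölder density. One must control $\nabla_x v$ near the curve, where the kernel $\nabla_x G_L(x,y) \sim \frac{c_1(x)}{\psi_x(y)}A(y)^{-1}(y-x) + (\text{better})$ behaves like $|x-y|^{-1}$; integrating this against arclength on a $C^{1,\alpha}$-curve is precisely the setting where one needs the curve to be flat enough (which $C^{1,\alpha}$ provides) and the density Hölder continuous (which $\frac{1}{2|\nabla u|}$ is, since $u\in C^{1,\alpha}$). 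Here one should invoke the classical theory of layer potentials on regular curves — or, as in \cite{MuellerPoly}, iterate: the curve is a priori only $C^{1,\alpha}$, so one first gets $v\in C^{0,\gamma}$, then $u\in C^{2,\beta}$, then the curve is $C^{2,\beta}$, and a second pass through the layer-potential estimate (now on a $C^{2,\beta}$-curve) confirms that no further improvement of $u$ beyond $C^{2,\beta}$ is forced — consistent with the optimality of $C^{2,1}$ noted after Theorem \ref{thm:1.2}. The bookkeeping of this bootstrap, and making sure the harmonic correction terms and the boundary behavior on $\partial\Omega$ (where $Lu=0$, cf.\ Lemma \ref{eq:Lemma510}) are handled, is the technical heart of the argument; the rest is routine elliptic theory and Sobolev embedding.
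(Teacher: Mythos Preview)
Your proposal is correct and follows the same overall strategy as the paper: recognize that $v=Lu$ solves $Lv=-\mu$ with $\mu$ a surface measure on the $C^{1,\alpha}$-curve $\Gamma=\{u=0\}$ with $C^{0,\alpha}$ density $Q=\tfrac{1}{2|\nabla u|}$, deduce $v\in W^{1,p}(\Omega)$ for all $p<\infty$, then apply elliptic regularity for $L$ and Sobolev embedding. The only difference is in how the key step $v\in W^{1,p}$ is obtained. The paper does not use the Green's function representation and direct kernel estimates you propose; instead it invokes \cite[Lemma 2.1]{MuellerEllipticSurface} to show that the distribution $\varphi\mapsto\int_\Gamma Q\varphi\,\mathrm{d}\mathcal{H}^1$ extends to an element of $W^{-1,\infty}(\Omega)=W_0^{1,1}(\Omega)^*$, and then a standard duality argument (as in \cite[Lemma 2.4]{MuellerEllipticSurface}) gives $v\in W^{1,p}(\Omega)$ for all $p\in(1,\infty)$. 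This bypasses entirely the layer-potential analysis you flag as the main obstacle. Your route would also work and is more self-contained, but requires carrying out the pointwise $|x-y|^{-1}$ kernel estimates near the curve; the paper's route is shorter because it packages that analysis into a cited black box. Note also that no bootstrap iteration is needed: a single pass with $\Gamma\in C^{1,\alpha}$ and $Q\in C^{0,\alpha}$ already yields $v\in W^{1,p}$ for every finite $p$, hence $u\in W^{3,p}(\Omega)$ directly.
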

\begin{proof}
Choose again an open neighborhood $\Omega'$ of $\{ u = 0 \}$ such that $\nabla u \neq 0$ on $\Omega'$ and choose $\eta \in C_0^\infty(\Omega')$ such that $\eta = 1$ on $\{ u = 0 \}$. 
    Define $\Gamma := \{ u = 0 \}$ and $Q := \frac{1}{2|\nabla u|}\eta $ and $v := Lu$. Then $v$ is by Remark \ref{rem:7.6} (in the sense of \cite[Definition 1.1]{MuellerEllipticSurface}) a very weak solution solution of 
    \begin{equation}
        \begin{cases}
            -\mathrm{div}(A(x) \nabla v ) = Q \; \mathrm{d}\mathcal{H}^{1} \mres \Gamma & \textrm{in $\Omega$} \\ v = 0  & \textrm{on $\partial \Omega$}
        \end{cases}
    \end{equation}
    We next analyze the regularity of this very weak solution.
    Using that (for any $\alpha \in (0,1)$) $\Gamma \subset\subset\Omega$ is a compact $C^{1,\alpha}$ manifold and $Q \in C^{0,\alpha}(\Omega)$ we find with  \cite[Lemma 2.1]{MuellerEllipticSurface} that the distribution
    \begin{equation}
        T : C_0^\infty(\Omega) \rightarrow \mathbb{R} , \qquad T(\varphi) :=  \int_\Gamma Q \varphi \; \mathrm{d}\mathcal{H}^1
    \end{equation}
    extends to an element of $W_0^{1,1}(\Omega)^* =: W^{-1,\infty}(\Omega)$ (notice that in \cite[Lemma 2.1]{MuellerEllipticSurface} it is assumed that $\Gamma$ is connected, but the assumption is not needed in the proof). % In particular $T \in W^{-1,p}(\Omega)$ for all $p \in (1,\infty)$. 
   % with \cite[Theorem 1.2]{MuellerEllipticSurface} (applied to $Q$ and $\Omega' := \{ u < 0 \}$) that $v  \in W^{1,\infty}(\Omega)$, in particular
    A standard duality argument (see e.g. the proof of \cite[Lemma 2.4]{MuellerEllipticSurface}) yields $v \in W^{1,p}(\Omega)$ for all $p\in (1,\infty)$. Elliptic regularity for $L$ yields that $u \in W^{3,p}(\Omega)$ for all $p \in (1,\infty)$. The $C^{2,\beta}$-regularity follows from the fact that for $p > 2$ one has $W^{3,p}(\Omega) \hookrightarrow C^{2,1- \frac{2}{p}}(\overline{\Omega})$.  Since $\nabla u \neq 0$ on  $\{u =0 \}$ the manifold $\{ u = 0 \}$ is at least as regular as $u$ and therefore the $C^{2,\beta}$-regularity of $\{u = 0 \} $ is shown.
\end{proof}

We next deduce that $\{u < 0 \}$ has finitely many connected components.

\begin{lemma}\label{lem:7.8}
    Suppose that $u \in \mathcal{A}(u_0)$ be a minimizer. Then $\{u < 0 \}$ has finitely many connected components $G_1,...,G_N$, $N \in \mathbb{N}$. Moreover, the sets $ \partial G_1,...,\partial G_N$ are pairwise disjoint and
    $
        \{u = 0 \} = \bigcup_{k = 1}^N \partial G_k.
  $
\end{lemma}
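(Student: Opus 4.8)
The plan is to use the regularity already established together with compactness of the nodal set. First I would recall from Lemma~\ref{lem:actual5.10} that $\{u=0\}$ is a compact $C^{1,\alpha}$-submanifold of $\Omega$ (and by Lemma~\ref{lem:77} even $C^{2,\beta}$), and that $\{u<0\}\subset\Omega$ is compactly contained in $\Omega$ by Lemma~\ref{lem:53} (since $u>0$ near $\partial\Omega$). Hence $\overline{\{u<0\}}$ is a compact subset of $\Omega$.

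The key step is the following. Since $\nabla u\neq 0$ on $\{u=0\}$, for each $x_0\in\{u=0\}$ the implicit function theorem gives a ball $B_{r(x_0)}(x_0)$ in which $\{u=0\}$ is a $C^{1}$-graph separating $\{u>0\}$ from $\{u<0\}$; in particular $\{u<0\}\cap B_{r(x_0)}(x_0)$ is connected and nonempty, lying on one side of the graph. The sets $\{B_{r(x_0)}(x_0):x_0\in\{u=0\}\}$ cover the compact set $\{u=0\}$, so finitely many of them, say $B_1,\dots,B_m$, suffice. Now I claim $\{u<0\}$ has at most $m$ connected components: any connected component $G$ of $\{u<0\}$ is an open set whose boundary $\partial G$ is contained in $\{u=0\}$ (using that $u$ is continuous, $\{u<0\}$ is open, and $\overline{\{u<0\}}\subset\Omega$, so boundary points have $u=0$), hence $\partial G$ meets one of the $B_i$; and within such a $B_i$ the set $\{u<0\}$ is connected, so $G$ is forced to contain $\{u<0\}\cap B_i$. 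Since there are only $m$ such balls and distinct components are disjoint, the number $N$ of components is at most $m$, hence finite. Write them $G_1,\dots,G_N$.

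Next I would establish the boundary description. Each $\partial G_k\subset\{u=0\}$; conversely, if $x_0\in\{u=0\}$, then in the graph neighbourhood $B_{r(x_0)}(x_0)$ there are points with $u<0$ arbitrarily close to $x_0$, so $x_0\in\overline{\{u<0\}}=\bigcup_k\overline{G_k}$, and since $u(x_0)=0$ forces $x_0\notin G_k$, we get $x_0\in\partial G_k$ for some $k$. Thus $\{u=0\}=\bigcup_{k=1}^N\partial G_k$. For pairwise disjointness of the $\partial G_k$: suppose $x_0\in\partial G_j\cap\partial G_k$ with $j\neq k$. In the graph ball $B_{r(x_0)}(x_0)$ the complement of $\{u=0\}$ has exactly two connected pieces, one in $\{u>0\}$ and one in $\{u<0\}$; the $\{u<0\}$-piece is connected and must be contained in a single component of $\{u<0\}$, contradicting that both $G_j$ and $G_k$ accumulate at $x_0$ from the $\{u<0\}$-side. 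Hence $\partial G_j\cap\partial G_k=\emptyset$ for $j\neq k$.

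Finally, combining with Lemma~\ref{lem:77} (the $C^{2,\beta}$-regularity of $\{u=0\}$, hence of each $\partial G_k$ as a relatively open and closed piece of it) gives that each $G_k$ is a $C^{2,\beta}$-domain compactly contained in $\Omega$, which is what Theorem~\ref{thm:1.2} asserts. The main obstacle is the local structure argument: one must use $\nabla u\neq 0$ to guarantee that near each nodal point the negativity set is locally connected and lies on one side, so that a finite cover of the compact nodal set controls the global component count; everything else is soft topology plus the already-proven regularity. A minor technical point to be careful about is ensuring $\overline{G_k}\subset\Omega$, which follows from $\{u<0\}\subset\subset\Omega$ and $\{u=0\}\subset\subset\Omega$.
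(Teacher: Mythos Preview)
Your argument is correct and establishes exactly the stated lemma, but it proceeds along a different route than the paper. The paper starts from the nodal set: since $\{u=0\}$ is a compact $C^2$ one-manifold it has finitely many connected components $S_1,\dots,S_N$, each a Jordan curve; Jordan--Brouwer then produces bounded interiors $G_i\subset\Omega$ with $\partial G_i=S_i$, and the maximum principle for $L$ (using $Lu\le 0$ from Lemma~\ref{lem:5.14} and $u=0$ on $\partial G_i$) forces $u<0$ on each $G_i$. Pairwise disjointness of the $G_i$ follows from a clopen argument on $G_i\cap G_j$, and the inclusion $\{u<0\}\subset\bigcup_i G_i$ is obtained via a ray-shooting contradiction. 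Your approach is more elementary: it exploits only the local graph structure coming from $\nabla u\neq 0$ together with a finite cover of the compact nodal set, and avoids both Jordan--Brouwer and the maximum principle entirely. The paper's route has the side benefit of giving slightly more structure---each $G_k$ is the simply connected Jordan interior of a single curve $S_k$, so $\partial G_k$ is automatically connected---though this extra information is not asserted in the lemma and is not used downstream.
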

\begin{proof}
    Since $\{u = 0 \}$ is compact, it has only finitely many connected components $S_1,...,S_N$. Now fix $i \in \{ 1,...,N \}.$ By the Jordan-Brouwer separation theorem (applicable since $S_i$ is a compact $C^2$-hypersurface, cf.\ \cite{Lima}) $S_i$ divides $\mathbb{R}^2$ into two domains, one of which is bounded and lies inside $\Omega$. We call this domain $G_i$ and observe that $\partial G_i = S_i$. Since $u = 0$ on $\partial G_i$ and $Lu \leq 0$ (cf. Lemma \ref{lem:5.14}) we infer from the (classical) maximum principle that either $u = 0$ on $G_i$ or $u < 0$ on $ G_i$. Since the first possibility is ruled out by the fact that $\nabla u \neq 0$ on $\{u = 0 \}$ we obtain that $u< 0$ on $G_i$. \\
    \textbf{Intermediate claim} $G_1,...,G_N$ are pairwise disjoint. 
    To this end note that for $i \neq j$
    \begin{equation}
        \partial (G_i \cap G_j) = (\partial G_i \cap G_j) \cup (G_j \cap \partial G_i) \cup (\partial G_i \cap \partial G_j) = (S_i \cap G_j) \cup (G_i \cap S_j) \cup (S_i \cap S_j) = \emptyset,
    \end{equation}
    since $S_i \cap S_j = \emptyset$ by construction and $S_i \cap G_j = 0$ due to the fact that $u<0$ on $G_j$ and $u = 0$ on $S_i$. In particular, we have $G_i \cap G_j = \overline{G_i \cap G_j}$, meaning that $G_i \cap G_j$ is an open and closed subset of $\mathbb{R}^2$. Since $\mathbb{R}^2$ is connected, we infer $G_i \cap G_j = \emptyset.$ 
    It remains to show that 
    $
        \{u < 0 \} = \bigcup_{i = 1}^N G_i.
$
    To this end assume that there exists some $x_0 \not\in \bigcup_{i = 1}^N G_i$ such that $u(x_0)< 0$. 
    %Notice that actually $x_0 \not\in \bigcup_{i = 1}^N \overline{G_i}$ as $u = 0$ on $\bigcup_{i = 1}^N \partial G_i$.
    Define $R := \sup\{ r > 0 : u \vert_{B_r(x_0)} < 0 \}$. Then $u \vert_{B_R(x_0)} <0$ and there exists some $y \in \partial B_1(0)$ such that $u(x_0+ R y) = 0$.
    In particular $x_0+ R y \in S_j$ for some $j \in \{1,...,N \}$. Since $B_R(x_0)$ must touch $S_j$ tangentially we have $y = \pm \nu_{S_j} = \pm \frac{\nabla u(x_0 + Ry) }{|\nabla u(x_0 + Ry)|}$. Thus,
\begin{equation}
    \frac{d}{dt} u(x_0+ ty) \big\vert_{t= R} = \nabla u(x_0+ Ry) \cdot y = \pm |\nabla u (x_0 + Ry)| \neq 0
\end{equation}
Hence $t \mapsto u(x_0 + t y)$ must change sign at $t = R$. Since $ u(x_0+ ty)< 0$ for all $t \in (-R,R)$ we infer that there exists $\delta> 0$ such that  $u(x_0 + ty) >0$ for all $t \in (R,R+\delta)$. This implies that $u(x_0+ ty) \not \in G_j$ for all $t \in (R,R+\delta)$. Since the ray $x_0 + (-R, R+\delta) y$ hits $S_j$ only once at $x_0+ R y$ we must have that $x_0 + (-R,R) y \subset G_j$. In particular, $x_0 \in G_j$. A contradiction to the choice of $x_0$.  
%\\
%As $x_0 \not \in G_j$ and $B_R(x_0) \cap \partial G_j = \emptyset$ is connected we find that $B_R(x_0) \subset G_j^C$. In particular, $y$ must coincide with $-$
    %. Now, $u< 0$ on $B_R(x_0)$ and $u< 0$ on $G_j$. 
    %In particular, $\left( \bigcup_{i = 1}^N \overline{G_i} \right)^C$ is an open neighborhood of $x_0$ 
\end{proof}

Let $G_1,...,G_N$ be as in Lemma \ref{lem:7.8} and fix $k \in \{ 1,...,N \}$. Due to the fact that by Lemma \ref{lem:77} $G_i$ is now a domain with $C^2$-boundary $\Gamma_k := \partial G_k$ it is well known (cf.  \cite[Appendix E]{MuellerEllipticSurface}) that 
\begin{equation}
    d_{\Gamma_k}(x) := \begin{cases}
        -\mathrm{dist}(x,\Gamma_k) & x \in G_k \\ 0 & x \in \partial G_k \\ \mathrm{dist}(x,\Gamma) & x \not \in G_k
    \end{cases}
\end{equation}
defines a $C^2$-function in a neighborhood $B_{\varepsilon_k}(\Gamma_k):= \{x \in \Omega : \mathrm{dist}(x,\Gamma_k)< \varepsilon_k \}$ of $\Gamma_k$, which has itself $C^2$-boundary. Moreover on $B_{\varepsilon_k}(\Gamma_k)$ one has 
\begin{equation}\label{eq:nabladgammaK}
    \nabla d_{\Gamma_k} := \nu \circ \pi_{\Gamma_k},
\end{equation}
where $\nu= \frac{\nabla u}{|\nabla u|} \big\vert_{\Gamma_k}$ denotes the outward pointing unit normal and $\pi_{\Gamma_k}:B_\varepsilon(\Gamma_k) \rightarrow \Gamma_k$ denotes the nearest point projection. For $Q \in W^{2,s}(B_{\varepsilon_k}(\Gamma_k)), (s> 2)$ one has according to \cite[Lemma E.1]{MuellerEllipticSurface} (in the sense of distributions on $B_{\varepsilon_k}(\Gamma_k)$) 
\begin{equation}\label{eq:Dquadratdist}
    \partial^2_{ij} \left( \frac{Q}{2} |\mathrm{d}_{\Gamma_k}| \right) = Q \nu_i \nu_j \mathcal{H}^1\mres \Gamma_k  + f_{ijk}(Q), 
\end{equation}
where $f_{ijk}(Q) := \partial^2_{ij} (\frac{Q}{2} d_{\Gamma_k}) (\chi_{ \overline{G_k}^C } - \chi_{G_k})$ lies in $L^s(\Omega)$. 
This will help us to study regularity.
By possibly shrinking $\varepsilon_1,...,\varepsilon_N> 0$ we may assume (due to the disjointness and compactness of $\Gamma_1,...,\Gamma_N$) that
\begin{equation}
    B_{\varepsilon_1}(\Gamma_1),..., B_{\varepsilon_N}(\Gamma_N) \;\textrm{are pairwise disjoint and $|\nabla u| > \theta$ on $ B_{\varepsilon_1}(\Gamma_1) \cup ...\cup B_{\varepsilon_N}(\Gamma_N)$ for some $\theta > 0$.} 
\end{equation}
In particular, $\nu = \frac{\nabla u}{|\nabla u|}$ can be extended to $B_{\varepsilon_1}(\Gamma_1) \cup ... \cup B_{\varepsilon_N}(\Gamma_N)$. From now on $\nu$ means this extension.

\begin{lemma}\label{lem7.9}
  Let $B_{\varepsilon_k}(\Gamma_k)$ be chosen as above. Define $Q_k := \frac{1}{2|\nabla u| (\nu^T A \nu)} \big\vert_{B_{\varepsilon_k}(\Gamma_k)}$.
  Then for all $s \in (1,\infty)$ one has $Q_k \in W^{2,s}(B_{\varepsilon_k}(\Gamma_k))$ and
 $
      Lu + \frac{Q_k}{2} |d_{\Gamma_k}| \in W_{loc}^{2,s}(B_{\varepsilon_k}(\Gamma_k)). 
  $
\end{lemma}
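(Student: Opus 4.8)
The plan is to localize near $\Gamma_k$ and use the distributional identity \eqref{eq:Dquadratdist}. First I would check that $Q_k \in W^{2,s}(B_{\varepsilon_k}(\Gamma_k))$ for every $s \in (1,\infty)$: by Lemma \ref{lem:77} we know $u \in W^{3,p}(\Omega)$ for all $p < \infty$, hence $\nabla u \in W^{2,p}(\Omega) \hookrightarrow C^{1,\alpha}(\overline{\Omega})$; moreover $A \in C^\infty(\overline\Omega)$ and $|\nabla u| \ge \theta > 0$ on $B_{\varepsilon_k}(\Gamma_k)$, so $\nu = \nabla u/|\nabla u| \in W^{2,p}(B_{\varepsilon_k}(\Gamma_k))$ and $\nu^T A \nu \ge \lambda\theta^2 > 0$ is also $W^{2,p}$. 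Since $W^{2,p}$ with $p > 2$ is a Banach algebra, the reciprocal $Q_k = \tfrac{1}{2|\nabla u|(\nu^T A\nu)}$ lies in $W^{2,s}$ for all $s \in (1,\infty)$ (on the given neighbourhood). This is the routine part.

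Next I would identify the singular part of $Lu$. From Remark \ref{rem:7.6} we have $v := Lu$ satisfying $-\mathrm{div}(A\nabla v) = Q\, \mathcal{H}^1 \mres \Gamma$ with $Q = \tfrac{1}{2|\nabla u|}\eta$ and $\Gamma = \{u=0\} = \bigcup_k \Gamma_k$ (Lemma \ref{lem:7.8}). Restricting test functions to $B_{\varepsilon_k}(\Gamma_k)$, which meets $\{u=0\}$ only in $\Gamma_k$, we get $-\mathrm{div}(A\nabla v) = \tfrac{1}{2|\nabla u|}\mathcal{H}^1\mres\Gamma_k$ there (the cutoff $\eta \equiv 1$ near $\{u=0\}$). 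Now I would apply \eqref{eq:Dquadratdist} with the choice $Q = Q_k$: writing $w_k := \tfrac{Q_k}{2}|d_{\Gamma_k}|$, formula \eqref{eq:Dquadratdist} gives $\partial^2_{ij} w_k = Q_k \nu_i\nu_j\, \mathcal{H}^1\mres\Gamma_k + f_{ijk}(Q_k)$ with $f_{ijk}(Q_k) \in L^s$. Contracting against $A = (a_{ij})$ and using $\mathbf{div}(A)\cdot\nabla w_k \in L^s$ (since $w_k \in W^{1,s}$, indeed $w_k \in W^{2,s}$), we obtain
\begin{equation}
    -L w_k = A : D^2 w_k + \mathbf{div}(A)\cdot\nabla w_k = \Big(\sum_{i,j} a_{ij}\nu_i\nu_j\Big) Q_k\, \mathcal{H}^1\mres\Gamma_k + (\text{$L^s$ term}) = \tfrac{1}{2|\nabla u|}\,\mathcal{H}^1\mres\Gamma_k + (\text{$L^s$ term}),
\end{equation}
because $(\nu^T A \nu) Q_k = \tfrac{1}{2|\nabla u|}$ by definition of $Q_k$. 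Thus $L w_k$ has exactly the same measure-valued singular part as $-v$ on $B_{\varepsilon_k}(\Gamma_k)$, up to an $L^s$ function.

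Therefore $L(v + w_k) = -\mathrm{div}(A\nabla v) - L w_k \in L^s_{loc}(B_{\varepsilon_k}(\Gamma_k))$: the two $\mathcal{H}^1\mres\Gamma_k$ contributions cancel. Elliptic regularity for $L$ (the operator has $C^\infty$ coefficients) then yields $v + w_k \in W^{2,s}_{loc}(B_{\varepsilon_k}(\Gamma_k))$, which is precisely $Lu + \tfrac{Q_k}{2}|d_{\Gamma_k}| \in W^{2,s}_{loc}(B_{\varepsilon_k}(\Gamma_k))$, the assertion. The main obstacle, and the step requiring the most care, is verifying that the hypotheses of \cite[Lemma E.1]{MuellerEllipticSurface} (i.e. \eqref{eq:Dquadratdist}) are genuinely met: one needs $d_{\Gamma_k} \in C^2$ near $\Gamma_k$ — guaranteed by the $C^2$-regularity of $\Gamma_k$ from Lemma \ref{lem:77} — and $Q_k \in W^{2,s}$ with $s > 2$, established above; one must also make sure the localization is clean, i.e. that $\Gamma \cap B_{\varepsilon_k}(\Gamma_k) = \Gamma_k$ and that the neighbourhoods $B_{\varepsilon_k}(\Gamma_k)$ are pairwise disjoint, which was arranged just before the statement. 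A minor point is bookkeeping the case $s \le 2$, which follows a fortiori from higher integrability.
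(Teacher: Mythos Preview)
Your approach is correct and essentially identical to the paper's: both verify $Q_k \in W^{2,s}$ via the Banach-algebra property, use \eqref{eq:Dquadratdist} to compute the singular (measure) part of $L\big(\tfrac{Q_k}{2}|d_{\Gamma_k}|\big)$, match it against the surface measure coming from Remark \ref{rem:7.6}, and finish with interior elliptic regularity (the paper implements this last step by solving an auxiliary Dirichlet problem $Lw = h \in L^s$ on $U$ and invoking hypoellipticity on the difference, rather than citing $W^{2,s}$-regularity directly). Two minor slips in your writeup: $w_k = \tfrac{Q_k}{2}|d_{\Gamma_k}|$ is only $W^{1,\infty}$, not $W^{2,s}$ (its second derivatives carry the surface measure---that is precisely the point), and $L(v+w_k) = Lv + Lw_k$, not $Lv - Lw_k$; with the sign corrected the cancellation you claim goes through.
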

\begin{proof}
Since $|\nabla u| > \theta$ on $B_{\varepsilon_k}(\Gamma_k)$ we have $\frac{1}{|\nabla u|} \in W^{2,s}(B_{\varepsilon_k}(\Gamma_k))$. Moreover $\nu^T A \nu \geq \lambda$ (due to the uniform ellipticity of $A$) and $\nu= \frac{\nabla u}{|\nabla u|} \in W^{2,s}(B_{\varepsilon_k}(\Gamma_k))$, which is why $\frac{1}{\nu^T A \nu} \in W^{2,s}(B_{\varepsilon_k}(\Gamma_k))$. Using  that $W^{2,s} (B_{\varepsilon_k}(\Gamma_k)) \hookrightarrow C^0(\overline{B_{\varepsilon_k}(\Gamma_k))}$ we infer that $W^{2,s}$ is a Banach algebra and therefore $Q_k = \frac{1}{2|\nabla u|} \cdot \frac{1}{\nu^T A \nu} \in W^{2,s}(B_{\varepsilon_k}(\Gamma_k)).$
    Let $\varphi \in C_0^\infty(B_{\varepsilon_k}(\Gamma_k))$. We introduce the abbreviation $U:=B_{\varepsilon_k}(\Gamma_k)$ and compute with Remark \ref{rem:7.6}
    \begin{equation}\label{eq:RemarkFR}
        \int_U Lu L\varphi \; \mathrm{d}x = - \int_{\{ u= 0 \} }  \frac{1}{2|\nabla u|}\varphi \; \mathrm{d}\mathcal{H}^1 = - \int_{\Gamma_k} \frac{1}{2|\nabla u|}\varphi \; \mathrm{d}\mathcal{H}^1.  
    \end{equation}
    Moreover,
    \begin{align}
        \int_{U} \frac{Q_k}{2}|d_{\Gamma_k}| L \varphi \; \mathrm{d}x  & = \sum_{i,j= 1}^2 \int_U \frac{Q_k}{2}|d_{\Gamma_k}| (a_{ij} \partial^2_{ij} \varphi + \partial_j (a_{ij}) \partial_i \varphi )  \; \mathrm{d}x
       \\ & = \sum_{i,j= 1}^2 \int_U \frac{a_{ij} Q_k}{2} \partial^2_{ij} \varphi \; \mathrm{d}x + \int_U  \left( \partial_j(a_{ij}) \frac{Q_k}{2} |d_{\Gamma_k}| \right)  \partial_i \varphi \; \mathrm{d}x. \label{eq:brauchtnocheinlabel}
    \end{align}
    By \eqref{eq:Dquadratdist} one has
    \begin{equation}
      \int_U \frac{a_{ij} Q_k}{2} \partial^2_{ij} \varphi \; \mathrm{d}x = \int_{\Gamma_k} a_{ij} Q_k \nu_i \nu_j \varphi \; \mathrm{d}\mathcal{H}^1 + \int_U f_{ijk}(Q_k a_{ij}) \varphi \; \mathrm{d}x,  
    \end{equation}
    where $f_{ijk}(Q_k a_{ij}) \in L^s(U)$.
    Moreover, since by \eqref{eq:nabladgammaK} $|d_\Gamma| \in W^{1,\infty}(U)$ we have $\partial_j(a_{ij}) \frac{Q_k}{2} |d_{\Gamma_k}| \in W^{1,\infty}(U)$ and 
    \begin{equation}
         \int_U  \left( \partial_j(a_{ij}) \frac{Q_k}{2} |d_{\Gamma_k}| \right)  \partial_i \varphi = - \int_U \partial_i \left( \partial_j(a_{ij}) \frac{Q_k}{2} |d_{\Gamma_k}| \right) \varphi \; \mathrm{d}x.
    \end{equation}
    Defining $g_{ijk} := - \partial_i \left( \partial_j(a_{ij}) \frac{Q_k}{2} |d_{\Gamma_k}| \right) \in L^\infty(U)$ we find with \eqref{eq:brauchtnocheinlabel}
    \begin{equation}
        \int_{U} \frac{Q_k}{2}|d_{\Gamma_k}| L \varphi \; \mathrm{d}x   = \int_{\Gamma_k} \sum_{i,j = 1}^2 a_{ij} \nu_i \nu_j Q_k \varphi \; \mathrm{d}\mathcal{H}^1  + \int_U \left(  \sum_{i,j= 1}^2 f_{ij}(a_{ij}Q_k) + g_{ijk} \right) \varphi \; \mathrm{d}x.
    \end{equation}
  Observe that
$
        \sum_{i,j = 1}^2 a_{ij} \nu_i \nu_j Q_k  = (\nu^T A \nu) Q_k = \frac{1}{2|\nabla u|} 
    $
    and therefore 
    \begin{equation}
         \int_{U} \frac{Q_k}{2}|d_{\Gamma_k}| L \varphi \; \mathrm{d}x = \int_{\Gamma_k} \frac{1}{2|\nabla u|} \varphi \; \mathrm{d}\mathcal{H}^1 + \int_U \left(  \sum_{i,j= 1}^2 f_{ij}(a_{ij}Q_k) + g_{ijk} \right) \varphi \; \mathrm{d}x.
    \end{equation}
    Summing this and \eqref{eq:RemarkFR} we find 
    \begin{equation}\label{equaTINvorW}
        \int_U \left( Lu + \frac{Q_k}{2} |d_{\Gamma_k}| \right) L \varphi \; \mathrm{d}x = \int_U \left(  \sum_{i,j= 1}^2 f_{ij}(a_{ij}Q_k) + g_{ijk} \right) \varphi \; \mathrm{d}x.
    \end{equation}
    Due to the fact that $U = B_{\varepsilon_k}(\Gamma_k)$ has $C^2$-boundary and $\sum_{i,j= 1}^2 f_{ij}(a_{ij}Q_k) + g_{ijk}  \in L^s(U)$  we can find a unique $w \in W^{2,s}(U) \cap W_0^{1,s}(U)$ such that 
    \begin{equation}
        \begin{cases}
            L w = \sum_{i,j= 1}^2 f_{ij}(a_{ij}Q_k) + g_{ijk}  & \textrm{in $\Omega$}  \\ w =  0 & \textrm{on $\partial \Omega$}
        \end{cases}.
    \end{equation}
Using this in \eqref{equaTINvorW} we find
    \begin{equation}
        \int_U \left( Lu + \frac{Q_k}{2} |d_{\Gamma_k}| - w \right) L \varphi \; \mathrm{d}x  = 0 \qquad \textrm{for all } \varphi \in C_0^\infty(U).
    \end{equation}
    Now \cite[Theorem 6.33]{Folland} yields that $Lu + \frac{Q_k}{2} |d_{\Gamma_k}| - w  \in C^\infty(U)$. This shows that 
      $ Lu + \frac{Q_k}{2} |d_{\Gamma_k}| = w + h$
    for some $w \in W^{2,s}(U)$ and $h \in C^\infty(U)$. The asserted regularity follows. 
\end{proof}

\begin{lemma}\label{lem:optragularity}
    Let $u \in \mathcal{A}(u_0)$ be a minimizer and as above $U:= B_{\varepsilon_k}(\Gamma_k)$ and $Q_k$ as in Lemma \ref{lem7.9}. Then $u \in W_{loc}^{3,\infty}(U)$. 
\end{lemma}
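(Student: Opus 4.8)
The plan is to subtract from $u$ an explicit \emph{corrector} adapted to the $C^{2}$–interface $\Gamma_k=\partial G_k$, chosen so that it absorbs the singular part of $u$ across $\Gamma_k$, and then to apply interior elliptic regularity to the remainder. This mirrors the scheme of \cite{MuellerPoly} for measure–valued equations, now executed for the operator $L$.

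Concretely, write $U:=B_{\varepsilon_k}(\Gamma_k)$, $v:=Lu$, and let $\nu:=\nabla d_{\Gamma_k}$ be the extension of the unit normal furnished by \eqref{eq:nabladgammaK}, so that $|\nu|\equiv 1$ on $U$. By Lemma~\ref{lem7.9},
\[
   v+\tfrac{Q_k}{2}|d_{\Gamma_k}|\in W^{2,s}_{\mathrm{loc}}(U)\qquad\text{for all }s\in(1,\infty),
\]
and, tracing through its proof, the datum on the right is again of transmission type across $\Gamma_k$. I would first record the regularity of the coefficient: by Lemma~\ref{lem:77} one has $u\in W^{3,p}(\Omega)$ for every $p<\infty$, hence $\nabla u\in W^{2,p}_{\mathrm{loc}}$ and, since $|\nabla u|\ge\theta>0$ on $U$, also $\nu=\nabla u/|\nabla u|\in W^{2,p}_{\mathrm{loc}}(U)$; as $W^{2,p}$ (for $p>2$) is a Banach algebra in which functions bounded below have $W^{2,p}$ reciprocals and $A\in C^{\infty}$, both $Q_k$ and
\[
   R:=\frac{Q_k}{12\,(\nu^{T}A\,\nu)}
\]
lie in $W^{2,p}_{\mathrm{loc}}(U)$ for all $p<\infty$; it is exactly here (and in Lemma~\ref{lem7.9}) that only $A\in C^{4,\alpha}$ is really used.

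Next I would set $\Phi:=R\,|d_{\Gamma_k}|^{3}$ and compute, using $|\nabla d_{\Gamma_k}|\equiv1$ together with $\nabla(|d|^{3})=3d|d|\nabla d$ and $D^{2}(|d|^{3})=6|d|\,\nabla d\otimes\nabla d+3d|d|\,D^{2}d$, that
\[
   -L\Phi=6R\,|d_{\Gamma_k}|\,(\nu^{T}A\nu)+E=\tfrac{Q_k}{2}|d_{\Gamma_k}|+E ,
\]
where every term of $E$ carries a factor $|d_{\Gamma_k}|^{3}$ against $D^{2}R$, or a factor $d_{\Gamma_k}|d_{\Gamma_k}|\in C^{1,1}$ against first derivatives of $R$ and second derivatives of $d_{\Gamma_k}$. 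Thus $L(u-\Phi)=v-L\Phi=\bigl(v+\tfrac{Q_k}{2}|d_{\Gamma_k}|\bigr)+E$, and one is left to control $E$ and the corrector itself. Peeling off finitely many further correctors of the form $R_j|d_{\Gamma_k}|^{j}$ (with $R_j\in W^{2,p}_{\mathrm{loc}}$ for all $p$) one reduces the residual datum, using its transmission structure across the $C^{2}$–curve $\Gamma_k$, to something whose $L^{-1}$ gains the full two derivatives into $L^{\infty}_{\mathrm{loc}}$, so that $u$ minus the total corrector $\widetilde\Phi$ lies in $W^{4,\infty}_{\mathrm{loc}}(U)$; simultaneously each $R_j|d_{\Gamma_k}|^{j}$ (and hence $\widetilde\Phi$) is shown to be in $W^{3,\infty}_{\mathrm{loc}}(U)$, which forces an upgrade of the regularity of $\Gamma_k$ to $C^{2,1}$. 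Adding the two pieces gives $u\in W^{3,\infty}_{\mathrm{loc}}(U)$.

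The main obstacle is precisely this last bookkeeping: the plain $W^{2,s}\!\to\!W^{4,s}$ elliptic estimate only yields $u\in W^{3,p}_{\mathrm{loc}}(U)$ for every finite $p$ (which we already have from Lemma~\ref{lem:77}), so to reach the endpoint $W^{3,\infty}$ one must exploit the \emph{piecewise}–$W^{2,p}$ (transmission) structure of the error terms across $\Gamma_k$ together with the vanishing of the distance–function correctors, and show in tandem that $\Gamma_k$ is $C^{2,1}$. This is exactly the step where the high regularity $A\in C^{4,\alpha}$ is indispensable, and where the argument would have to be modified for rougher coefficients.
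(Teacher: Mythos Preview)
Your corrector strategy is genuinely different from the paper's route and, as written, contains a real gap that the vague ``transmission bookkeeping'' at the end does not close.

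The paper does \emph{not} subtract a corrector from $u$. Instead it differentiates twice and works with $\partial^2_{ij}u$: commuting $L$ with $\partial^2_{ij}$ (the commutators land in $L^s$ because $u\in W^{3,s}$ for all $s$ by Lemma~\ref{lem:77}) and combining Lemma~\ref{lem7.9} with \eqref{eq:Dquadratdist}, one obtains
\[
   L(\partial^2_{ij}u)=-Q_k\,\nu_i\nu_j\,\mathcal{H}^1\mres\Gamma_k+h\qquad\text{on }U,\qquad h\in L^s_{\mathrm{loc}}(U).
\]
The endpoint step is then a black box: \cite[Theorem~1.2]{MuellerEllipticSurface} supplies $X_1\in W^{1,\infty}(\Omega)$ with $LX_1=Q_k\nu_i\nu_j\,\mathcal{H}^1\mres\Gamma_k$, and after solving $LX_2=h$ with $X_2\in W^{2,s}_{\mathrm{loc}}\hookrightarrow W^{1,\infty}_{\mathrm{loc}}$ one gets $L(\partial^2_{ij}u+X_1-X_2)=0$, hence $\partial^2_{ij}u\in W^{1,\infty}_{\mathrm{loc}}(U)$. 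The $L^\infty$ endpoint is hidden entirely inside the cited theorem, which needs only $\Gamma_k\in C^{2,\beta}$ and a H\"older density.

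The gap in your scheme is the regularity of the corrector itself. You set $\Phi=R\,|d_{\Gamma_k}|^3$ with $R\in W^{2,p}_{\mathrm{loc}}$ (for all finite $p$), yet claim at the end that each $R_j|d_{\Gamma_k}|^j$ lies in $W^{3,\infty}_{\mathrm{loc}}$. But $D^3(R|d|^3)$ contains the summand $|d|^3\,D^3R$, and since $|d|^3>0$ off $\Gamma_k$, membership in $W^{3,\infty}_{\mathrm{loc}}$ would force $R\in W^{3,\infty}_{\mathrm{loc}}$ away from $\Gamma_k$; as $R$ is built from $\nabla u$, that is essentially the conclusion you are after, so the argument is circular. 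The same issue sits in the error term: $E$ contains $|d|^3\,A{:}D^2R\in L^p_{\mathrm{loc}}$ but not $W^{2,s}_{\mathrm{loc}}$, so $L(u-\Phi)=(v+\tfrac{Q_k}{2}|d|)+E$ is only $L^p_{\mathrm{loc}}$, and elliptic regularity returns $u-\Phi\in W^{2,p}_{\mathrm{loc}}$, no gain; further correctors with $W^{2,p}$ coefficients reproduce the same loss. If instead you pick a \emph{smooth} coefficient $\tilde R$ matching $R$ on $\Gamma_k$, then $\tilde\Phi=\tilde R|d|^3\in W^{3,\infty}_{\mathrm{loc}}$, but now the error contains $\tilde R\,d|d|\,A{:}D^2d$, whose second derivatives require $D^3d$ and $D^4d$; since $\Gamma_k$ is only $C^{2,\beta}$ at this stage, $d\in C^{2,\beta}$ and these derivatives are unavailable. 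In short, a direct corrector of order $|d|^3$ demands one more derivative either on $R$ or on $d_{\Gamma_k}$ than you have, and the ``tandem'' upgrade of $\Gamma_k$ to $C^{2,1}$ cannot be invoked before $u\in W^{3,\infty}$ is established. The paper sidesteps this precisely by passing to $\partial^2_{ij}u$ and quoting \cite[Theorem~1.2]{MuellerEllipticSurface}, which handles the surface measure at the level of a \emph{second}-order equation and needs only the $C^{2,\beta}$ regularity already furnished by Lemma~\ref{lem:77}.
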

\begin{proof}
    Let $i,j \in \{ 1,2 \}$ and choose $s > 2$. We claim first that in the sense of distributions on $U$ there holds 
    \begin{equation}\label{eq:TODOELLIREG}
        L(\partial^2_{ij} u ) =- Q_k \nu_i \nu_j \mathcal{H}^1\mres \Gamma_k +  h
    \end{equation}
   for some  $h \in L_{loc}^s(U)$. To this end define for % let $\varphi \in C_0^\infty(U)$. Define
   for $\psi \in C_0^\infty(U)$
    \begin{equation}
        L_i' \psi := (\partial_i A): D^2\psi + \mathbf{div}(\partial_i A) \cdot \nabla \psi ,  \qquad L_{ij}'' \psi := (\partial_{ij} A): D^2\psi + \mathbf{div}(\partial_{ij} A) \cdot \nabla \psi.
    \end{equation}
    One readily checks that for any $\varphi \in C_0^\infty(U)$ one has $\partial^2_{ij} (L\varphi) = L(\partial^2_{ij} \varphi) + L_i' (\partial_j \varphi) + L_j' (\partial_i \varphi) + L_{ij}'' \varphi$. Hence
    \begin{align}
        \int_U \partial^2_{ij}u L \varphi \; \mathrm{d}x & = \int_U u (L(\partial^2_{ij}\varphi) + L_i' \partial_j \varphi + L_j' \partial_i \varphi + L'_{ij} \varphi)  \; \mathrm{d}x
        \\& = \int_U Lu \partial^2_{ij} \varphi \; \mathrm{d}x + \int_U u(L_i'  \partial_j \varphi + L_j' \partial_i \varphi + L_{ij}' \varphi) \; \mathrm{d}x
        \\ & = \int_U \left(  Lu + \frac{Q_k}{2} |d_\Gamma| \right)    \partial^2_{ij} \varphi \; \mathrm{d}x - \int_U \frac{Q_k}{2} |d_\Gamma| \partial^2_{ij} \varphi \; \mathrm{d}x  + \int_U u( L_i'  \partial_j \varphi + L_j' \partial_i \varphi + L_{ij}' \varphi) \; \mathrm{d}x. \label{eq:threesummands}
    \end{align}
    We now examine all three summands in the above equation. For the first term note that $h_1 :=  Lu + \frac{Q_k}{2} |d_\Gamma|$ lies by Lemma \ref{lem7.9} in $W^{2,s}_{loc}(U)$. Therefore 
    \begin{equation}
        \int_U \left(  Lu + \frac{Q_k}{2} |d_\Gamma| \right)    \partial^2_{ij} \varphi \; \mathrm{d}x  = \int_U (\partial^2_{ij}h_1) \varphi \; \mathrm{d}x. \label{eq:A1}
    \end{equation}
    The second term can be expressed using \eqref{eq:Dquadratdist}. Indeed, 
    \begin{equation}
        \int_U \frac{Q_k}{2} |d_\Gamma| \partial^2_{ij} \varphi \; \mathrm{d}x = \int_U Q \nu_i \nu_j \varphi \; \mathrm{d}\mathcal{H}^1 + \int_U f_{ijk}(Q_k) \varphi \; \mathrm{d}x. \label{eq:A2}
    \end{equation}
    The third term consists of three summands which we have to treat separately, i.e. we first compute 
    \begin{align}
        \int_U u L_i' \partial_j \varphi \; \mathrm{d}x 
        %& = \sum_{m_1,m_2 = 1}^2  \int_U u [\partial_i a_{m_1m_2} \partial_{m_1,m_2}^2 (\partial_j \varphi)  + \partial_{m_1}(\partial_i a_{m_1,m_2}) \partial_{m_2}(\partial_j \varphi)] \; \mathrm{d}x \\ 
        & =  \sum_{m_1,m_2 = 1}^2  \int_U u \partial_i (a_{m_1m_2}) \partial_{m_1,m_2,j}^3 \varphi  \; \mathrm{d}x +  \sum_{m_1,m_2 = 1}^2  \int_U  u\partial^2_{m_1,i} a_{m_1,m_2} \partial^2_{m_2,j} \varphi \; \mathrm{d}x
        \\ & =  - \sum_{m_1,m_2 = 1}^2  \int_U  \partial_{m_1,m_2,j}^3 [u \partial_i (a_{m_1m_2}) ]  \varphi \; \mathrm{d}x  +  \sum_{m_1,m_2 = 1}^2  \int_U  \partial^2_{m_2,j} [u\partial^2_{m_1,i} a_{m_1,m_2} ] \varphi \; \mathrm{d}x. 
    \end{align}
    Since $u \in W^{3,s}(U)$  we find that   $\partial_{m_1,m_2,j}^3 [u \partial_i (a_{m_1m_2}) ] , \partial^2_{m_2,j} [u\partial^2_{m_1,i} a_{m_1,m_2} ] \in L^s(U)$ for all $m_1,m_2 \in \{ 1, 2 \}$. In particular, there exists $h_2 \in L^s(U)$ such that 
    \begin{equation}
        \int_U u L_i' \partial_j \varphi \; \mathrm{d}x  = \int_U h_2 \varphi \; \mathrm{d}x. \label{eq:A3}
    \end{equation}
    Similarly we obtain some $h_3,h_4 \in L^s(U)$ such that 
    \begin{equation}
        \int_U u L_j' \partial_i \varphi \; \mathrm{d}x  = \int_U h_3 \varphi \; \mathrm{d}x, \qquad   \int_U u L_{ij}'' \varphi \; \mathrm{d}x = \int_U h_4 \varphi \; \mathrm{d}x.\label{eq:A4}
    \end{equation}
    Using \eqref{eq:A1},\eqref{eq:A2},\eqref{eq:A3},\eqref{eq:A4} in \eqref{eq:threesummands} we find 
    \begin{equation} \label{eq:alltogether}
        \int_U \partial^2_{ij} u L\varphi \; \mathrm{d}x = \int_U [\partial^2_{ij}h_1 - f_{ijk}(Q_k) + h_2 + h_3 + h_4] \varphi \; \mathrm{d}x - \int_{\Gamma_k} Q_k \nu_i \nu_j  \varphi \; \mathrm{d}\mathcal{H}^1.
    \end{equation}
    The fact that $h := \partial^2_{ij}h_1 - f_{ijk}(Q_k) + h_2 + h_3 + h_4 \in L^s(\Omega)$ shows \eqref{eq:TODOELLIREG}.
    Now according to \cite[Theorem 1.2]{MuellerEllipticSurface} there exists some $X_1 \in W^{1,\infty}(\Omega)$ such that\footnote{We choose here indeed the larger domain $\Omega$ instead of $U$ to have a domain with smooth boundary.}
    \begin{equation}
        \int_\Omega X_1 L \psi \; \mathrm{d}x = \int_{\Gamma_k} Q_k \nu_i \nu_j  \psi \; \mathrm{d}\mathcal{H}^1  \qquad \textrm{for all } \psi \in W^{2,2}(\Omega) \cap W_0^{1,2}(\Omega).
    \end{equation}
    (Notice that instead of $Q_k \nu_i  \nu_j$ we would actually have to consider $Q_k \nu_i  \nu_j \eta$ for some $\eta \in C_0^\infty(U)$ such that $\eta \equiv 1$ on $\Gamma_k$, so that $Q_k \nu_i  \nu_j \eta$ extends to a function in $W^{2,s}(\Omega)\hookrightarrow C^{0,\alpha}(\overline{\Omega})$ for some $\alpha > 0$). This and  \eqref{eq:alltogether} imply
    \begin{equation}
        \int_U (\partial^2_{ij} u + X_1) L\varphi \; \mathrm{d}x = \int_U [\partial^2_{ij}h_1 + f_{ijk}(Q_k) + h_2 + h_3 + h_4] \varphi \; \mathrm{d}x = \int_\Omega h \varphi \; \mathrm{d}x \qquad \textrm{for all } \varphi \in C_0^\infty(U).
    \end{equation}
    By \cite[Corollary 9.18]{GilTru} there exists some $X_2 \in W^{2,s}_{loc}(\Omega)$ such that 
    \begin{equation}
        \int_U X_2 L \varphi \; \mathrm{d}x = \int_\Omega h \varphi \; \mathrm{d}x \qquad  \textrm{for all } \varphi \in C_0^\infty(U).
    \end{equation}
As a consequence,
    \begin{equation}
         \int_U (\partial^2_{ij} u + X_1 - X_2) L\varphi \; \mathrm{d}x = 0 \qquad  \textrm{for all } \varphi \in C_0^\infty(U),
    \end{equation}
    whereupon \cite[Theorem 6.33]{Folland} implies $\partial^2_{ij} u + X_1 -X_2  \in C^\infty(U)$. Using that $X_1 \in W^{1,\infty}(U)$ and $X_2 \in W^{2,s}_{loc}(U) \subset W^{1,\infty}_{loc}(U)$ we find that $\partial^2_{ij} u \in W^{1,\infty}_{loc}(U)$. The claim follows. 
    %This and the fact that $h \in L^s_{loc}(U)$ implies 
\end{proof}

\begin{remark}\label{rem:davor}
    From Lemma \ref{lem:optragularity} we infer that $u \in W^{3,\infty}_{loc}(B_{\varepsilon_1}(\Gamma_1) \cup... \cup B_{\varepsilon_N}(\Gamma_N))$, where $B_{\varepsilon_1}(\Gamma_1) \cup... \cup B_{\varepsilon_N}(\Gamma_N)$ is an open neighborhood of $\{u = 0 \}$. Due to the fact that $u \in C^\infty(\overline{\Omega} \setminus \{ u = 0 \})$, cf. Lemma \ref{eq:Lemma510}, we infer that $u \in W^{3,\infty}_{loc}(\Omega)$. Using Lemma \ref{eq:Lemma510} once again we infer that $u \in W^{3,\infty}(\Omega).$
\end{remark}

%\subsection{Smoothness of the free boundary}

We are now ready to prove Theorem \ref{thm:1.2}.

\begin{proof}[Proof of Theorem \ref{thm:1.2}] Let $u \in \mathcal{A}(u_0)$ be a minimizer of $\mathcal{E}$. Due to the fact that $C^{2,1}(\overline{\Omega})= W^{3,\infty}(\Omega)$, the $C^{2,1}$-regularity follows from Remark \ref{rem:davor}. The fact that $u \in C^\infty(\overline{\Omega} \setminus \{ u = 0 \})$  and $Lu \vert_{\partial \Omega} = 0$ follows from Lemma \ref{eq:Lemma510}. That $\nabla u \neq 0$ on $\{u = 0 \}$ is proved in Lemma \ref{lem:actual5.10}. This makes $\{u = 0 \}$ a submanifold of the same regularity as $u$ (i.e. of class $C^{2,1}$). That $\{ u < 0 \}$ consists of finitely many components is ensured by Lemma \ref{lem:7.8} and the fact that these components have $C^{2,1}$-boundary follows from Lemma \ref{lem:7.8} and the $C^{2,1}$-regularity of the manifold $ \{ u  =0 \}$. Equation \eqref{eq:ELequation} is a consequence of Remark \ref{rem:7.6}. The theorem is shown. 
\end{proof}

%\section{Proof of the main theorem}

\appendix

\section{Technical Facts}

%The following technical lemmas hold true in any dimension.  

\begin{lemma}\label{eq:lemmaA1}
    Let $\mu$ be a Radon measure on $\Omega$ such that $\mathrm{spt}(\mu) \subset \Omega$ is compact. Then there exists $(g_n)_{n \in \mathbb{N}} \subset L^1(\Omega)$ such that $\lVert g_n \rVert_{L^1} \leq \mu(\Omega)$ and $g_n \rightharpoonup^* \mu$, i.e. % for each $f \in C^0(\overline{\Omega})$ there holds
    \begin{equation}\label{eq:weakSTAR}
        \int_\Omega f g_n \; \mathrm{d}x \rightarrow \int_\Omega f \; \mathrm{d}\mu \qquad \textrm{for all $f \in C^0(\overline{\Omega})$}. 
    \end{equation}
\end{lemma}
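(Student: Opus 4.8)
The plan is to obtain the functions $g_n$ by mollifying $\mu$. Fix a standard mollifier $\eta \in C_0^\infty(B_1(0))$ with $\eta \ge 0$ and $\int_{\R^2}\eta\,\mathrm{d}x = 1$, and put $\eta_\varepsilon(x) := \varepsilon^{-2}\eta(x/\varepsilon)$. Since $K := \mathrm{spt}(\mu)$ is a compact subset of $\Omega$, we have $d := \mathrm{dist}(K,\partial\Omega) > 0$, and for every $\varepsilon \in (0,d/2)$ the set $K_\varepsilon := \{x \in \R^2 : \mathrm{dist}(x,K) \le \varepsilon\}$ is a compact subset of $\Omega$. Define
\[
    g_\varepsilon(x) := (\eta_\varepsilon * \mu)(x) = \int_\Omega \eta_\varepsilon(x-y)\,\mathrm{d}\mu(y), \qquad x \in \Omega.
\]
Then $g_\varepsilon \in C_0^\infty(\Omega)$ with $\mathrm{spt}(g_\varepsilon) \subset K_\varepsilon \subset\subset \Omega$, so in particular $g_\varepsilon \in L^1(\Omega)$, and $g_\varepsilon \ge 0$.

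For the $L^1$-bound I would invoke Tonelli's theorem. Since the integrand is nonnegative,
\[
    \lVert g_\varepsilon \rVert_{L^1(\Omega)} = \int_\Omega \int_\Omega \eta_\varepsilon(x-y)\,\mathrm{d}\mu(y)\,\mathrm{d}x = \int_\Omega \left( \int_\Omega \eta_\varepsilon(x-y)\,\mathrm{d}x \right) \mathrm{d}\mu(y) = \int_K 1\,\mathrm{d}\mu(y) = \mu(\Omega),
\]
where I used that for $y \in K$ and $\varepsilon < d/2$ one has $B_\varepsilon(y) \subset \Omega$, so $\int_\Omega \eta_\varepsilon(x-y)\,\mathrm{d}x = \int_{\R^2}\eta_\varepsilon(x-y)\,\mathrm{d}x = 1$. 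In particular $\lVert g_\varepsilon \rVert_{L^1} \le \mu(\Omega)$ as required.

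For the weak-$*$ convergence, fix $f \in C^0(\overline\Omega)$. Using Tonelli/Fubini again (the integrand is bounded and $\mu$ is finite because $\mathrm{spt}(\mu)$ is compact) one obtains
\[
    \int_\Omega f(x) g_\varepsilon(x)\,\mathrm{d}x = \int_\Omega \left( \int_\Omega \eta_\varepsilon(x-y) f(x)\,\mathrm{d}x \right)\mathrm{d}\mu(y) = \int_\Omega (\eta_\varepsilon * f)(y)\,\mathrm{d}\mu(y),
\]
where for $y \in K$ and $\varepsilon < d/2$ the value $(\eta_\varepsilon * f)(y) = \int_{B_\varepsilon(y)}\eta_\varepsilon(x-y) f(x)\,\mathrm{d}x$ is well defined since $B_\varepsilon(y)\subset\Omega$. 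By uniform continuity of $f$ on the compact set $K_{d/2}$ we have $\eta_\varepsilon * f \to f$ uniformly on $K$ as $\varepsilon \to 0$, and since $\mu(\Omega) < \infty$ this yields $\int_\Omega f g_\varepsilon\,\mathrm{d}x \to \int_\Omega f\,\mathrm{d}\mu$. Choosing $g_n := g_{1/n}$ for $n$ large enough that $1/n < d/2$ then gives the claim. The only points that need care are the support bookkeeping — ensuring $g_\varepsilon$ genuinely lies in $L^1(\Omega)$ and that $\eta_\varepsilon * f$ is only evaluated where it makes sense — and the justifications for exchanging the order of integration; both are routine once $\mathrm{spt}(\mu)$ is known to lie at positive distance from $\partial\Omega$, so I do not anticipate a genuine obstacle here.
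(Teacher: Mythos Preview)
Your proof is correct and follows essentially the same mollification approach as the paper: both define $g_n$ as the convolution of $\mu$ with a standard mollifier, use Fubini/Tonelli to rewrite $\int_\Omega f g_n\,\mathrm{d}x$ as an integral of the mollified $f$ against $\mu$, and conclude by uniform convergence on the compact support of $\mu$. Your write-up is in fact slightly more explicit about the $L^1$-bound and the support bookkeeping than the paper's version.
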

\begin{proof}
    Let $(\varphi_n)_{n \in \mathbb{N}}$ be a sequence of nonnegative standard mollifiers such that $\mathrm{spt}(\varphi_n) \in \overline{B_{\frac{1}{n}}(0)}$ and define 
    \begin{equation}
        g_n: \Omega \rightarrow \mathbb{R} \qquad g_n(x) := \int_\Omega \varphi_n(x-y) \, \mathrm{d}\mu(y).
    \end{equation}
    One readily checks (due to finiteness of $\mu$) that $g_n$ is smooth and the support  $\mathrm{spt}(g_n)$ lies in $\mathrm{spt}(\mu) +\overline{B_{\frac{1}{n}}(0)}$, i.e.\ it is compactly contained in $\Omega$ for $n$ large enough. One readily checks by Fubini's theorem that for $f \in C^0(\overline{\Omega})$
    \begin{equation}
        \int_\Omega g_n f \; \mathrm{d}x = \int_\Omega f \chi_\Omega *\varphi_n \; \mathrm{d}\mu.
    \end{equation}
    %where to write down the right hand side it may be necessary to extend $f$ to a function in $C^0(\mathbb{R}^2)$. 
    Now $f \chi_\Omega* \varphi_n$ converges  to $f$ on uniformly on compact subsets of $\Omega$. From this and the fact that $\mathrm{spt}(\mu) \subset \Omega$ is compact we infer \eqref{eq:weakSTAR}. 
\end{proof}

\begin{lemma}\label{lem:fgLeb}
    Suppose that $f\in L^1(\Omega)$ and $g \in L^\infty(\Omega)$. Let $x_0 \in \Omega$ be a Lebesgue point of $f$ and of $g$ and 
    \begin{equation}
        f^*(x_0) = \lim_{r \rightarrow 0} \fint_{B_r(x_0)}  f(x) \; \mathrm{d}x \in \mathbb{R}, \qquad g^*(x_0) =\lim_{r \rightarrow 0} \fint_{B_r(x_0)}  g(x) \; \mathrm{d}x  \in \mathbb{R}.
    \end{equation}
   Then, $x_0$ is a Lebesgue point of $fg \in L^1(\Omega)$ and 
    \begin{equation}
        \lim_{r \rightarrow 0} \fint_{B_r(x_0)} f(x)g(x) \; \mathrm{d}x = f^*(x_0) g^*(x_0). 
    \end{equation}
\end{lemma}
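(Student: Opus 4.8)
The plan is to prove this by decomposing $fg$ around $x_0$ and exploiting the Lebesgue-point property of $f$ together with the essential boundedness of $g$. First I would write, for $x$ near $x_0$,
\begin{equation*}
    f(x)g(x) - f^*(x_0)g^*(x_0) = \big(f(x) - f^*(x_0)\big) g(x) + f^*(x_0)\big(g(x) - g^*(x_0)\big).
\end{equation*}
Taking averages over $B_r(x_0)$ and applying the triangle inequality inside the average gives
\begin{equation*}
    \fint_{B_r(x_0)} |f(x)g(x) - f^*(x_0)g^*(x_0)| \,\mathrm{d}x \leq \|g\|_{L^\infty} \fint_{B_r(x_0)} |f(x) - f^*(x_0)| \,\mathrm{d}x + |f^*(x_0)| \fint_{B_r(x_0)} |g(x) - g^*(x_0)| \,\mathrm{d}x.
\end{equation*}
The first term on the right tends to $0$ as $r \to 0$ because $x_0$ is a Lebesgue point of $f$ and $g \in L^\infty$; the second tends to $0$ because $x_0$ is a Lebesgue point of $g$ (and $|f^*(x_0)| < \infty$ by hypothesis). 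Hence the left-hand side tends to $0$, which is precisely the statement that $x_0$ is a Lebesgue point of $fg$ with value $f^*(x_0)g^*(x_0)$; in particular $\fint_{B_r(x_0)} fg \to f^*(x_0)g^*(x_0)$.

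One small preliminary point to address is that $fg \in L^1(\Omega)$ at all: this is immediate since $f \in L^1(\Omega)$ and $g \in L^\infty(\Omega)$, so $\|fg\|_{L^1} \leq \|g\|_{L^\infty}\|f\|_{L^1}$. I would state this first so that "Lebesgue point of $fg$" is meaningful. The only genuinely delicate bookkeeping is making sure $\|g\|_{L^\infty}$ is used rather than a pointwise bound (which is all one has) — but since the averaging integral only sees $g$ up to null sets, replacing $|g(x)|$ by $\|g\|_{L^\infty}$ inside the average is legitimate. There is essentially no obstacle here; the argument is the standard product-of-Lebesgue-points computation and the whole proof is two displayed inequalities plus the two limit statements already recorded. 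If anything, the "main" point is simply to remember that one factor must be bounded — a product of two merely $L^1$ functions need not even be integrable, let alone have joint Lebesgue points — and this is exactly the hypothesis $g \in L^\infty$.
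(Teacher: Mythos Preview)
Your proof is correct and follows exactly the same route as the paper: the same algebraic decomposition $fg - f^*(x_0)g^*(x_0) = (f-f^*(x_0))g + f^*(x_0)(g-g^*(x_0))$, the same bound by $\lVert g\rVert_{L^\infty}$ on the first term, and the same appeal to the Lebesgue-point property of each factor. The only addition is your explicit remark that $fg\in L^1(\Omega)$, which the paper leaves implicit.
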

\begin{proof}
    Let $f,g, f^*(x_0) g^*(x_0)$ be as in the statement. Then 
    \begin{align}
        & \fint_{B_r(x_0)} |f(x) g(x) - f^*(x_0) g^*(x_0)| \; \mathrm{d}x  = \fint_{B_r(x_0)} |(f(x) - f^*(x_0)) g(x) + f^*(x_0) (g(x)- g^*(x_0))| \; \mathrm{d}x 
        \\ & \leq \fint_{B_r(x_0)} |f(x) - f^*(x_0)|  \; \lVert g \rVert_{L^\infty} + |f^*(x_0)| \; |g(x)- g^*(x_0))| \; \mathrm{d}x
        \\ & = \lVert g \rVert_{L^\infty} \fint_{B_r(x_0)} |f(x) - f^*(x_0)|  \; \mathrm{d}x + |f^*(x_0)| \fint_{B_r(x_0)} |g(x) - g^*(x_0)| \; \mathrm{d}x \rightarrow 0 \quad (r \rightarrow 0). 
    \end{align}
    The claim follows. 
\end{proof}

\bibliography{Lib}
\bibliographystyle{abbrv}

\end{document}